\documentclass[a4paper,11pt]{article}

%Packages fondamentaux
\usepackage[T1]{fontenc}
\usepackage{lmodern}
\usepackage[french,english]{babel}
\usepackage{csquotes}

%Packages mathématiques importants
\usepackage{amsmath}
\usepackage{amssymb}
\usepackage{amsthm}
\usepackage{tikz}
\usepackage{tikz-cd} 
\usetikzlibrary{positioning,intersections} %posizione relativa dei nodi
\usetikzlibrary{decorations.pathmorphing} %frecce pazze nei diagrammi
\usetikzlibrary{arrows}%qui dentro stanno le frecce con hook

%Packages plus optionels
\usepackage[hidelinks]{hyperref} %Pour éviter les carrés rouges autour des liens cliquables
\usepackage[many]{tcolorbox} %Pour faire des jolies boîtes colorées
\usepackage{stmaryrd} %Pour les double crochets
\usepackage{indentfirst}%Pour toujours indenter aux bons endroits
\usepackage{soul}%Pour avoir les retours à la ligne en soulignant (en utilisant \ul)
\usepackage{chngpage}%Pour pouvoir centrer les diagrammes un peu trop grands
\usepackage{todonotes}%Pour ajouter des notes personnalisées
\setuptodonotes{color=blue!10!white}
\usepackage{enumerate}		% ora\begin{enumerate}[I] dà la numerazione in romani maiuscoli, [i] in quelli minuscoli

%Packages personnalisés

%%% SIMBOLI SPECIALI
%%%

%%% YONEDA

\DeclareFontFamily{U}{min}{}
\DeclareFontShape{U}{min}{m}{n}{<-> dmjhira}{}

%%% SIMBOLI GENERALI 
\usepackage{xspace} %per def i.e. ed e.g.

\renewcommand{\epsilon}{\varepsilon}
\renewcommand{\phi}{\varphi}

 %hyphen in math mode

%%% APICI, PEDICI
%%%

%%% OPERATORI COMUNI
%%%
\newcommand{\Hom}{\operatorname{Hom}}

\renewcommand{\lim}{\operatorname{lim}}
\newcommand{\colim}{\operatorname{colim}}
\newcommand{\dom}{{\operatorname{dom}}}

\newcommand{\comma}[2]			% comma object
{\mbox{$(#1\!\downarrow\!#2)$}}

%%% FUNTORI
%%%
		%ft topologia
	%ft dimenticante
	%ft fascificazione
	%ft topologia di Giraud
	%ft insieme delle parti
	%stack canonica

%%% CATEGORIE COMUNI
%%%

%\newcommand{\Cosite}{\mathbf{Cosite}}

\newcommand{\Sh}{\mathbf{Sh}}

%%% lettere in ambiente blackboard

%%% lettere in corsivo

%%% lettere in frak

%%% topos, lettere in corsivo elegante

%%% FRECCE
%%%

\newcommand{\imp}{\!\Rightarrow\!}

%%% xrightarrow con coda
\makeatletter
\newbox\xrat@below
\newbox\xrat@above
\newcommand{\xrightarrowtail}[2][]{%
	\setbox\xrat@below=\hbox{\ensuremath{\scriptstyle #1}}%
	\setbox\xrat@above=\hbox{\ensuremath{\scriptstyle #2}}%
	\pgfmathsetlengthmacro{\xrat@len}{max(\wd\xrat@below,\wd\xrat@above)+.6em}%
	\mathrel{\tikz [>->,baseline=-.75ex]
		\draw (0,0) -- node[below=-2pt] {\box\xrat@below}
		node[above=-2pt] {\box\xrat@above}
		(\xrat@len,0) ;}}
\makeatother

%%% freccia tripla \Rrightarrow per tikzcd
\tikzset{Rightarrow/.style={double equal sign distance,>={Implies},->},
	triple/.style={-,preaction={draw,Rightarrow}}}

%% Teoremi i propositionale

\theoremstyle{plain}
\newtheorem{thm}{Theorem}
\numberwithin{thm}{subsection}
\newtheorem{prop}[thm]{Proposition}

\newtheorem{cor}[thm]{Corollary}

\theoremstyle{definition}
\newtheorem{defn}[thm]{Definition}

\newtheorem{remark}[thm]{Remark}
\newtheorem{remarks}[thm]{Remarks}

\renewcommand{\cal}[1]{\mathcal{#1}}

\begin{document}

\title{Local fibrations and morphisms of relative toposes}
\author{Léo Bartoli and Olivia Caramello}

\maketitle

\begin{abstract}
We introduce the notion of local fibration, a generalization of the notion of fibration which takes into account the presence of Grothendieck topologies on the two categories, and show that the classical results about fibrations lift to this more general setting. As an application of this notion, we obtain a characterization of the functors between relative sites that induce a morphism between the corresponding relative toposes: these are exactly the morphisms of sites which are morphisms of local fibrations. Also, we prove a weak version of Diaconescu's theorem, providing an equivalence between the continuous morphisms of local fibrations towards the canonical stack of a relative topos and the weak morphisms between the associated indexed toposes. The paper also contains a number of results of independent interest on morphisms of toposes and their associated stacks, including a fibrational characterization of locally connected (resp. totally connected) morphisms.    
\end{abstract}

\tableofcontents

\section{Introduction}

The theory of pseudofunctors provides a formalism to do category theory \textquoteleft
over a base category\textquoteright. In this setting, Grothendieck introduced the notion of a \emph{fibration}, which provides a more geometric and flexible way for approaching indexed categories. Fibrations have proven to be a powerful tool across various areas of mathematics, notably in geometry (through the theory of stacks) or in logic (via hyperdoctrines).

In particular, Giraud emphasized the usefulness of fibrations as a geometric framework for presenting relative toposes \cite{giraud.classifying}. His approach involves a topology on the base, a cartesian stack, and the so-called \emph{Giraud topology} on the fibration. This topology is the minimal one making the fibration into a \emph{comorphism of sites}, that is, the minimal topology ensuring that the fibration induces a relative topos; it serves as a relative analogue of the trivial topology. In this well-structured but restricted setting—which can be understood as the relative analogue of cartesian categories endowed with trivial topologies—Giraud was able to formulate a relative analogue of Diaconescu's theorem.

While this framework provides a foundational and conceptual setting, it can be extended. It has since been shown (see \cite{CaramelloZanfa}) that fibrations equipped with topologies containing this Giraud topology provide the right notion of \emph{relative site}: every relative topos admits a presentation by such a structured fibration over a site.

In our previous work \cite{bartolicaramello}, we generalized Giraud’s relative Diaconescu theorem to this more flexible framework of general fibrations endowed with topologies containing Giraud's one. These \emph{relative sites} allow for more intricate topological data than the trivial relative topology considered by Giraud.

However, it is worth noting that the definition of a fibration itself does not depend on any topology. This highlights a certain incompleteness: while fibrations can be equipped with topologies, their definition ignores them entirely. This observation motivates the need for a more refined notion—one that incorporates topological information more intrinsically. 

This paper introduces the notions of \emph{locally cartesian arrows} and \emph{local fibrations}, which refine the classical notions of cartesian arrows and fibrations by addressing the lack of topological data taken into account in their definitions. These structures sit naturally between fibrations—whose definition ignores topology—and general comorphisms of sites. They provide a more flexible yet structured framework for presenting relative toposes.

Within this framework, we establish a general criterion for when a morphism of sites between local fibrations induces a morphism of relative toposes: namely, this occurs if and only if the morphism of sites is a morphism of local fibrations. This result extends earlier theorems restricted to ordinary fibrations, and yields a relative version of Diaconescu’s theorem in a broader setting.

We then turn to the case where the comorphism of sites is also a continuous functor, which admits an extension at the topos level. We investigate the interaction between the extension being a fibration and the original comorphism being a local fibration. This leads to a new characterization of locally connected geometric morphisms.

Finally, we study weak morphisms of relative toposes, distinguishing between two different notions: weak relative geometric morphisms and weak indexed geometric morphisms. We prove a weak, indexed version of Diaconescu’s theorem: continuous morphisms of local fibrations correspond precisely to weak indexed geometric morphisms.

In more detail, here is the content of each section:

The preliminary section \ref{section:notprel} recalls the setting of relative topos theory via stacks, or more generally, via fibrations. In particular, we recall the definition of \emph{relative sites}: these consist of fibrations over a site, equipped with the minimal Giraud topology with respect to the topology on the base category. We then recall two central constructions: the \emph{canonical relative site}, which generalizes the absolute canonical site to the relative context, and the canonical functor $\eta$, which plays the role of the Yoneda embedding followed by sheafification in the absolute case. These notions lead naturally to the concept of \emph{$\eta$-extension}, an analogue of extension along the canonical functor in the relative setting. We recall the key characterization: a morphism of sites over a common base site induces a morphism between the associated relative toposes if and only if its $\eta$-extension is a morphism of fibrations. Finally, we briefly discuss another perspective on the canonical relative site, namely as a topos in its own right; from this viewpoint, the $\eta$-extension corresponds to the inverse image of a geometric morphism.

Section \ref{section3} is devoted to the introduction of the general notions of \emph{locally cartesian arrows} and \emph{local fibrations}. An arbitrary comorphism of sites is not necessarily a fibration, but it can always be compared to one—namely, the canonical stack via its canonical functor $\eta$. This naturally leads to defining an arrow as \emph{locally cartesian} when its image under $\eta$ in the canonical stack is cartesian. Similarly, whereas a fibration is characterized by a lifting property up to isomorphism, we define a local fibration as a comorphism satisfying a lifting property up to covering. We also show that local fibrations admit a factorization structure inherited from the vertical–horizontal factorization of classical fibrations, and we define the natural notion of morphisms of local fibrations.

Section \ref{section4} shows that this broader framework of local fibrations provides the natural setting for a conceptual proof of the fact that a morphism of sites between relative sites which also is a morphism of fibrations induces a morphism of relative toposes. Indeed, from the theory of $\eta$-extensions, we know that a morphism of sites over a base induces a geometric morphism between the associated relative toposes if and only if its $\eta$-extension is a morphism of fibrations. However, a conceptual proof that the fibrational nature of a morphism of sites is transmitted to its $\eta$-extension was missing. To address this, we show that cartesian arrows in the canonical stack can always be localized in two successive ways (via pullbacks and then via colimit presentations), and that this localization process reduces the claim to the preservation of locally cartesian arrows by the original morphism of sites. Notably, in this framework, the necessity of the condition of being a morphism of local fibrations becomes immediate from the very definition of locally cartesian arrows. We then deduce from this characterization a relative Diaconescu theorem for local fibrations. We also examine how these results specialize to ordinary fibrations: any functor between two fibrations comes equipped with canonical natural transformations that express how far it is to be a morphism of fibrations. We show that such a functor is a morphism of local fibrations if and only if these transformations become isomorphisms at the level of the associated toposes—meaning that any shortcoming in the preservation of cartesian arrows becomes invisible at the topos level. Furthermore, we explain how the relative Diaconescu theorem for local fibrations immediately specializes to the one previously established in the case of fibrations in \cite{bartolicaramello}.

Section \ref{section5} investigates the situation where the comorphism presenting a relative topos is also continuous. Indeed, a continuous comorphism of sites induces an essential geometric morphism, whose essential image can be seen as the extension of the base functor at the topos level. It is then natural to compare the condition under which this extension is a fibration with the condition under which the original continuous comorphism is a local fibration. Since such an essential image is a left adjoint, we study the general case of a left adjoint, and provide a reformulation of the notions of cartesian arrow and fibration using the unit of the adjunction. This reformulation leads to a conceptual interpretation: the essential image is a fibration if and only if the essential geometric morphism is locally connected. Observing that the canonical stack of a relative topos defines a totally connected morphism, and in view of our new characterization of locally connected morphisms, we present it as the free totally connected relative topos associated with a given relative topos. Finally, we provide a full characterization of continuous comorphisms inducing fibrations at the topos level, in terms of a cofinality condition.

In the final section \ref{section6}, we turn to the study of weak morphisms of relative toposes. We begin by distinguishing two notions of weak geometric morphism relevant to the relative setting. While relative geometric morphisms are equivalent to indexed geometric morphisms (i.e., indexed adjunctions between the canonical stacks of which inverse images preserve finite limits fiberwise), this correspondence breaks down in the weak setting. We first discuss the notion of weak relative geometric morphism, which admits a characterization in terms of morphisms of opfibrations. We then study weak indexed geometric morphisms, which constitute the appropriate generalization of the weak geometric morphisms we usually consider in the absolute case. In this context, we prove an indexed weak Diaconescu theorem, yielding an equivalence between continuous morphisms of local fibrations and weak indexed geometric morphisms. This theorem specializes to the case of fibrations, and allows us to conclude that the Giraud topos of a fibration is canonically equivalent to the Giraud topos of its stackification.

\section{Preliminaries}\label{section:notprel}
This work is set in the context of relative topos theory, approached via stacks or, more generally, via fibrations, according to \cite{CaramelloZanfa}. In this section, we recall the essential notions of relative topos theory on which we will rely: the notion of fibration and comorphism of sites leading to the concept of relative sites, the construction of the canonical relative site of a relative topos and the associated canonical functor, the interpretation of the canonical relative site as a topos, and finally the notion of $\eta$-extension, which provides a sound understanding of morphisms of sites in the relative setting. We also fix the notations that will be used throughout the article.

\subsection{Fibrations and relative sites}
A first notation that will appear repeatedly concerns the topos of sheaves on a site. To emphasize the idea of the topos of sheaves on \((\mathcal{C}, J)\) as a kind of cocompletion of the category \(\mathcal{C}\) subject to relations imposed by \(J\), we adopt the light and suggestive notation \(\widehat{\mathcal{C}}_J\) proposed by Laurent Lafforgue.

The functor denoted by \( l_J : \mathcal{C} \to \widehat{\mathcal{C}}_J \) is the \emph{canonical functor} related to the site $(\cal C,J)$, defined as the composite of the Yoneda embedding \( y_{\cal C} : \mathcal{C} \to \widehat{\mathcal{C}} \) with the sheafification functor \( a_J : \widehat{\mathcal{C}} \to \widehat{\mathcal{C}}_J \); for the sake of lighter notation, we will occasionally omit the index \( J \).

Also, we will often denote a natural transformation $y_{\cal C}(c) \to P$ by $ev_x$, where $x$ is the element of $P(c)$ to which the identity is sent.

There will be many different adjoint functors throughout the text, and we will use the following notation: for a pair of adjoint functors $F \vdash G$ and an arrow $u : G(d) \to c$, we will denote its transpose by $u^t : d \to F(c)$. Conversely, for an arrow $v : d \to F(c)$, we will also write $v^t : G(d) \to c$ for its transpose.

In this paper, we will be interested in \emph{relative toposes}: 

\begin{defn}
A relative topos over some base topos $\cal E$ is a geometric morphism $f: \cal F \to \cal E$. A \emph{relative geometric morphism} between two relative toposes $g : [f] \to [f']$ is a geometric morphism $g : \cal F \to \cal F'$ such that the following triangle of geometric morphisms commutes (up to iso):

% https://q.uiver.app/#q=WzAsMyxbMCwwLCJcXGNhbCBGIl0sWzIsMCwiXFxjYWwgRiciXSxbMSwxLCJcXGNhbCBFIl0sWzAsMiwiZiIsMl0sWzEsMiwiZiciXSxbMCwxLCJnIl1d
\[\begin{tikzcd}
	{\cal F} && {\cal F'} \\
	& {\cal E}
	\arrow["g", from=1-1, to=1-3]
	\arrow["f"', from=1-1, to=2-2]
	\arrow["{f'}", from=1-3, to=2-2]
\end{tikzcd}\]
\end{defn}

The natural way to induce such relative toposes is by the use of \emph{fibrations}. We recall that a (Street) fibration is a particular kind of functor $p : {\cal D} \to {\cal C}$ satisfying a certain lifting property:

\begin{defn}
A fibration is a functor $p : {\cal D} \to {\cal C}$ such that for each arrow $f : c \to p(d)$ in $\cal C$ there exists a \emph{cartesian} arrow $\widehat{f} : d' \to d$ in $\cal D$ with $\sigma : p(d') \simeq c$ such that $p(\widehat{f}) \simeq f\sigma$.
\end{defn} 

With the definition of a cartesian arrow being:

\begin{defn}
For a functor $p : {\cal D} \to {\cal C}$, a cartesian arrow is an arrow $f : d' \to d$ in $\cal D$ such that: for each arrow $h : p(d'') \to p(d')$ in $\cal C$ such that $p(f)\circ h = p(g)$ for $g : d'' \to d$, there exists a unique arrow $h' : d'' \to d'$ such that $h=p(h')$ and $fh'=g$.
\end{defn}

As is well known (see \cite{CaramelloZanfa}, Corollary 2.2.6), fibrations over some base category $\cal C$ are equivalent to $\cal C$-indexed categories. Accordingly, we shall often present a fibration $p : \cal D \to \cal C$ by means of such an indexed category $p : \mathcal{G}(\mathbb D) \to \cal C$ in order to have a more manageable description of it.

This definition of the notion of fibration naturally comes with a notion of morphisms:

\begin{defn}
Let  $p : \mathcal{G}(\mathbb D) \to \cal C$ and  $p' : \mathcal{G}(\mathbb D') \to \cal C$ be two fibrations over $\cal C$. A morphism of fibrations between them is a functor $A : \mathcal{G}(\mathbb D) \to  \mathcal{G}(\mathbb D')$ such that it sends cartesian arrows to cartesian arrows, and it commutes with the projections: $p'A \simeq p$.
\end{defn}

We will see that fibrations are related to the notion of \emph{comorphism of sites}, as defined:

\begin{defn}[Subsection 3.3 \cite{denseness}]
A functor $F : (\cal C',J') \to (\cal C,J)$ between two sites is called a comorphism of sites when: for every $J$-covering sieve $S$ on an object of the form $F(c')$, there exists a covering $S'$ for $J'$ on $c'$ such that $F(S') \subset S$.
\end{defn}

Those functors are aimed to induce geometric morphisms covariantly:

\begin{prop}[Subsection 3.3 \cite{denseness}]
Let  $F : (\cal C',J') \to (\cal C,J)$ be a comorphism of sites. It induces a geometric morphism denoted as $C_F : \widehat{\cal C'}_{J'} \to \widehat{\cal C}_J$ having for inverse image $C_F^* := a_{J'}(-\circ F^{op}) : \widehat{\cal C}_J \to \widehat{\cal C'}_{J'}$.
\end{prop}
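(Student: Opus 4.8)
The plan is to first build the geometric morphism at the level of presheaf toposes and then descend it to sheaves, the comorphism condition being exactly what makes this descent possible. Write $F^\circ := (-)\circ F\op : \widehat{\cal C} \to \widehat{\cal C'}$ for precomposition with $F\op$. Since $F^\circ$ is computed pointwise it preserves all limits and colimits; in particular it is left exact, and it admits a right adjoint $F_\circ$, namely the right Kan extension along $F\op$ (which exists because $\Set$ is complete). Thus $F^\circ$ is the inverse image of a geometric morphism $\widehat{\cal C'} \to \widehat{\cal C}$ between presheaf toposes. The candidate inverse image in the statement is the composite $C_F^* = a_{J'} \circ F^\circ \circ i_J$, where $i_J : \widehat{\cal C}_J \hookrightarrow \widehat{\cal C}$ and $i_{J'} : \widehat{\cal C'}_{J'} \hookrightarrow \widehat{\cal C'}$ denote the inclusions of sheaves; I must show that this composite is left exact and admits a right adjoint.

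The crux is the following lemma: $F^\circ$ sends the inclusion $S \hookrightarrow y_{\cal C}(c)$ of any $J$-covering sieve to a $J'$-dense monomorphism (equivalently, a $J'$-local isomorphism). That $F^\circ$ preserves monomorphisms is immediate, as these are detected pointwise, so the content is $J'$-density, and this is where the comorphism condition enters. A section of $F^\circ(y_{\cal C}(c))$ over an object $c' \in \cal C'$ is precisely an arrow $\xi : F(c') \to c$; pulling the covering sieve $S$ back along $\xi$ yields, by stability of $J$ under pullback, a $J$-covering sieve $\xi^* S$ on the object $F(c')$. As this is an object of the form $F(-)$, the comorphism condition applies and produces a $J'$-covering sieve $S'$ on $c'$ with $F(S') \subseteq \xi^* S$. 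For every $g \in S'$ the restriction $\xi\cdot g = \xi\circ F(g)$ then lies in $S$, hence in $F^\circ(S)(\dom g)$; thus $\xi$ is locally a section of the subobject $F^\circ(S)$, which is exactly the asserted density. I expect the matching of the comorphism hypothesis—stated only for sieves on objects of the form $F(c')$—with density over an \emph{arbitrary} object $c$ to be the main obstacle, and pulling back along $\xi$ is precisely what reduces the general case to the hypothesis.

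From the lemma the rest is formal. Since $a_{J'}$, $F^\circ$ and $i_J$ are each left exact (sheafification is left exact, precomposition preserves all limits, and the sheaf inclusion is a right adjoint), the composite $C_F^* = a_{J'}\circ F^\circ\circ i_J$ preserves finite limits. For the right adjoint, the lemma implies dually that $F_\circ$ carries $J'$-sheaves to $J$-sheaves: for a $J'$-sheaf $Q$ and a $J$-covering sieve $S \hookrightarrow y_{\cal C}(c)$, the adjunction $F^\circ \dashv F_\circ$ turns the sheaf condition for $F_\circ Q$ along $S$ into the bijectivity of $\Hom(F^\circ y_{\cal C}(c), Q) \to \Hom(F^\circ S, Q)$, which holds because $Q$, being a sheaf, is orthogonal to the $J'$-local isomorphism $F^\circ S \hookrightarrow F^\circ y_{\cal C}(c)$ supplied by the lemma. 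I then define $C_{F*}$ as the restriction of $F_\circ$ to sheaves and verify the adjunction $C_F^* \dashv C_{F*}$ by the chain
\begin{align*}
\Hom_{\widehat{\cal C'}_{J'}}(a_{J'} F^\circ i_J P,\, Q)
&\cong \Hom_{\widehat{\cal C'}}(F^\circ i_J P,\, i_{J'} Q) \\
&\cong \Hom_{\widehat{\cal C}}(i_J P,\, F_\circ i_{J'} Q) \\
&\cong \Hom_{\widehat{\cal C}_J}(P,\, C_{F*} Q),
\end{align*}
using $a_{J'} \dashv i_{J'}$, then $F^\circ \dashv F_\circ$, and finally that $F_\circ i_{J'} Q = i_J C_{F*} Q$ is a sheaf together with the full faithfulness of $i_J$. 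This exhibits $(C_F^*, C_{F*})$ as a geometric morphism $C_F : \widehat{\cal C'}_{J'} \to \widehat{\cal C}_J$ with inverse image $C_F^* = a_{J'}((-)\circ F\op)$, as claimed.
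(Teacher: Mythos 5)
Your proof is correct. Note that the paper does not prove this proposition at all: it is recalled as a preliminary, with a citation to Subsection 3.3 of \cite{denseness}, so there is no in-text argument to compare against. Your route is essentially the standard one found in the cited literature (and in the Elephant, C2.3.18): the whole content is concentrated in your key lemma that $F^\circ = (-)\circ F\op$ sends the inclusion of a $J$-covering sieve $S \hookrightarrow y_{\cal C}(c)$ to a $J'$-dense monomorphism, where pulling $S$ back along a section $\xi : F(c') \to c$ is precisely the step that reconciles the comorphism hypothesis (stated only for objects of the form $F(c')$) with density over an arbitrary $c$; from there, the preservation of sheaves by $\Ran_{F\op}$ via orthogonality, the left exactness of $a_{J'}\circ F^\circ \circ i_J$, and the three-step adjunction chain are all formal and correctly executed.
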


This notion of comorphism of sites will allow us to see fibrations as a very natural way to induce relative toposes: as we present any absolute topos (toposes over $\mathbf{Set}$) as coming from a category endowed with a topology (containing the trivial one, which is a minimal topology), we will see in \ref{canonicalresitecanonicalfunct} that we can present every relative topos by the means of a fibration endowed with some topology (containing a minimal one). This topology is called the \emph{Giraud topology}, defined in the following definition-proposition, and constitutes the relative analogue of the trivial topology in the absolute case in the sense that it is the \emph{minimal} topology making the projection into a comoprhism of sites: 

\begin{defn}\label{comorphinduitgeom}[Theorem 3.13 \cite{denseness}]
For a fibration $p : \mathcal{G}(\mathbb D) \to \cal C$ over a base site $(\cal C,J)$, the \emph{Giraud topology} (denoted as $J_{\mathbb D}$) on $\mathcal{G}(\mathbb D)$ for the topology $J$ is defined by declaring as covering sieves those containing a family of cartesian arrows $((f_i,1): (\mathbb D(f_i)(x),c_i) \to (x,c))_i$ whose projections $(f_i)_i$ form a $J$-covering family. 
\end{defn}

The Giraud topology $J_{\mathbb D}$ on $\mathcal{G}(\mathbb D)$ is is the smallest topology making $p$ a comorphism of sites towards $(\cal C, J)$.
 
Recall the following definition:

\begin{defn}[Definition 2.1 \cite{bartolicaramello}]
A \emph{relative site} is a fibration $p : (\mathcal{G}(\mathbb D),K) \to (\cal C,J)$ over a site such that $K$ contains the associated Giraud topology.
\end{defn}

The name of \emph{relative site} is justified by the fact that it induces a \emph{relative topos}, just as (absolute) sites induce (absolute) toposes. Indeed, since the projection functor $p$ of a relative site $p : (\mathcal{G}(\mathbb D),K) \to (\cal C,J)$ is a comorphism of sites, it induces a relative topos $C_p : \widehat{\cal G(\mathbb D)}_K \to \widehat{\cal C}_J$.

\subsection{Canonical relative site, canonical functor}\label{canonicalresitecanonicalfunct}
We have just seen that every relative site induces a relative topos; conversely, every relative topos arises from a relative site. This fact is provided by the so-called \emph{canonical relative site} of a relative topos, which we now describe in this subsection.

This canonical relative site being presented as a comma category, we introduce here our notation: for two functors $F: \cal D \to \cal C$ and $G : \cal D' \to \cal C$, we denote as $(F/G)$ the comma category having for objects the triplets $(d,d',u:F(d)\to G(d'))$. Also, if one of the two functors, for example $F$, is the identity, we use the more concise notation $(\cal D/G)$.

As already presented Subsection 8.2.2 \cite{CaramelloZanfa}, we summarize some important properties of the canonical relative site of a relative topos in the following definition/proposition:

\begin{defn}
Let $f : \cal F \to \cal E$ be a relative topos.

\begin{enumerate}
    \item We define the topology $J_f$ on the comma category $(\cal F/f^*)$ by saying that a sieve $S$ is covering for $J_f$ if and only if its projection $\pi_{\cal F}(S)$ in $(\cal F,J_{\cal F}^{can})$ is covering; that is, if and only if it is covering on the first component.
    \item The projection $\pi_f:(\cal F/f^* ) \to \cal E$ is a fibration, and even a stack for the canonical topology on $\cal E$.
    \item Its associated indexed category will be denoted $S_f : \cal E^{op} \to \mathbf{CAT}$ and acts by sending an object $E$ of $\cal E$ to the slice topos $(\cal F/f^*(E))$, and an arrow $u : E \to E'$ to the pullback functor $S_f(u) : (\cal F/f^*(E')) \to (\cal F/f^*(E))$.
    \item When the geometric morphism considered is the identity of $\cal E$, we will simply denote this fibration as $S_{\cal E}$.
    \item  Together with the topology $J_f$, this construction provides a relative site $\pi_f:((\cal F/f^* ),J_f) \to (\cal E,J_{\cal E}^{can})$.
\end{enumerate} 
\end{defn}

This canonical relative sites allows us to recover our initial relative topos:

\begin{prop}[Theorem 8.2.5. \cite{CaramelloZanfa}]
Let $f : \cal F \to \cal E$ be a relative topos. The functor $\pi_{\cal F} : ((\cal F/f^* ),J_f) \to  (\cal F,J_{\cal F}^{can})$ is a dense morphism and a dense comorphism of sites, and we have the following equivalence of relative toposes over $\cal E$:

% https://q.uiver.app/#q=WzAsMyxbMCwwLCJcXGNhbCBGIl0sWzIsMCwiXFx3aWRlaGF0e1xcY2FsIEYvZl4qfV97Sl9mfSJdLFsxLDEsIlxcY2FsIEUiXSxbMCwyLCJmIiwyXSxbMSwyLCJDX3tcXHBpX2Z9Il0sWzAsMSwiXFxTaChcXHBpX3tcXGNhbCBGfSkiLDAseyJvZmZzZXQiOi00fV0sWzEsMCwiQ197XFxwaV97XFxjYWwgRn19IiwwLHsib2Zmc2V0IjotMX1dLFswLDEsIlxcc2ltZXEiLDEseyJvZmZzZXQiOi0yLCJzdHlsZSI6eyJib2R5Ijp7Im5hbWUiOiJub25lIn0sImhlYWQiOnsibmFtZSI6Im5vbmUifX19XV0=
\[\begin{tikzcd}
	{\cal F} && {\widehat{\cal F/f^*}_{J_f}} \\
	& {\cal E}
	\arrow["{\Sh(\pi_{\cal F})}", shift left=4, from=1-1, to=1-3]
	\arrow["\simeq"{description}, shift left=2, draw=none, from=1-1, to=1-3]
	\arrow["f"', from=1-1, to=2-2]
	\arrow["{C_{\pi_{\cal F}}}", shift left, from=1-3, to=1-1]
	\arrow["{C_{\pi_f}}", from=1-3, to=2-2]
\end{tikzcd}\]
\end{prop}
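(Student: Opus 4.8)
The plan is to treat this as the relative analogue of the absolute recovery theorem $\Sh(\cal F,J_{\cal F}\can)\simeq\cal F$, and to prove it by showing that $\pi_{\cal F}$ is a dense morphism of sites (so that $\Sh(\pi_{\cal F})$ is an equivalence), with the dense-comorphism statement then following formally. The structural fact I would establish first is that each fibre of $\pi_{\cal F}$ has a terminal object: since $f^*$ is left exact, $f^*(1_{\cal E})\simeq 1_{\cal F}$, so for every $F\in\cal F$ the triple $(F,1_{\cal E},!\colon F\to f^*(1_{\cal E}))$ is terminal in $\pi_{\cal F}^{-1}(F)$, which exhibits a fully faithful section $s\colon\cal F\to(\cal F/f^*)$ right adjoint to $\pi_{\cal F}$. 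Combined with the fact that $J_f$ is pulled back from $J_{\cal F}\can$ along $\pi_{\cal F}$ (covers are detected on the first component only), this says that the fibre directions are invisible to $J_f$-sheaves, and this is the engine of the whole argument.

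That $\pi_{\cal F}$ is a \emph{comorphism} of sites is immediate: a $J_{\cal F}\can$-covering sieve $S$ on $F=\pi_{\cal F}(F,E,u)$ pulls back to the $J_f$-covering sieve $\pi_{\cal F}^{-1}(S)$, whose image is contained in $S$. The substance lies in showing that $\pi_{\cal F}$ is a \emph{morphism} of sites, i.e. continuous and covering-flat. For continuity I would show that the restriction $P\circ\pi_{\cal F}\op$ of a $J_{\cal F}\can$-sheaf $P$ is a $J_f$-sheaf: the only delicate point is that a matching family on a pullback cover $\pi_{\cal F}^{-1}(S)$ could a priori assign different elements to two comma-morphisms sharing the same base component, and I would force these to agree by precomposing with a morphism that collapses the fibre discrepancy onto the part where the two structure maps to $f^*(-)$ coincide — this is exactly where the comma structure enters. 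Covering-flatness I would reduce, via the explicit description of finite cones in a comma category, to the left-exactness of $f^*$ together with the existence of the terminal fibre objects.

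With $\pi_{\cal F}$ established as a morphism of sites, I would verify the denseness hypotheses of the Comparison Lemma of \cite{denseness} — essential surjectivity (here $\pi_{\cal F}$ is literally surjective on objects via $s$) and local full faithfulness relative to the two topologies (again supplied by the terminal fibre objects) — to conclude that $\Sh(\pi_{\cal F})\colon\cal F\to\widehat{\cal F/f^*}_{J_f}$ is an equivalence, with direct image the restriction $(-)\circ\pi_{\cal F}\op$. Since continuity makes this restriction land in $J_f$-sheaves, we get $\Sh(\pi_{\cal F})_*\simeq C_{\pi_{\cal F}}^*=a_{J_f}(-\circ\pi_{\cal F}\op)$; two geometric morphisms with isomorphic inverse images being isomorphic, this yields $C_{\pi_{\cal F}}\simeq\Sh(\pi_{\cal F})^{-1}$. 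Hence $C_{\pi_{\cal F}}$ is an equivalence too, so $\pi_{\cal F}$ is simultaneously a dense morphism and a dense comorphism of sites, the two induced geometric morphisms being mutually inverse.

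It remains to check that the equivalence lies over $\cal E$. The comma category carries the tautological $2$-cell $\lambda\colon\pi_{\cal F}\Rightarrow f^*\circ\pi_f$ with component $u$ at $(F,E,u)$; using $C_{\pi_f}^*=a_{J_f}(-\circ\pi_f\op)$ (with $\pi_f$ the comorphism attached to the relative site) and evaluating inverse images on representables $y_{\cal E}(E)$, $\lambda$ produces the comparison $C_{\pi_f}\circ\Sh(\pi_{\cal F})\simeq f$, equivalently $C_{\pi_f}\simeq f\circ C_{\pi_{\cal F}}$, which is the asserted commuting triangle over $\cal E$. I expect the genuine obstacle to be the morphism-of-sites half, namely continuity and covering-flatness of $\pi_{\cal F}$: these are the steps that must turn the non-triviality of the fibres into something $J_f$-sheaves cannot detect, and they rest precisely on the terminal-object structure of the fibres and the left-exactness of $f^*$. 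The comorphism property and the over-$\cal E$ compatibility are, by contrast, comparatively formal.
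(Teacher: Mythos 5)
Your proposal is correct in its approach, and it is worth noting that the paper itself contains no proof of this statement: it is recalled from \cite{CaramelloZanfa} (Theorem 8.2.5), so the only internal point of comparison is the analogous argument the paper gives for $\xi_{p_0}$ being a dense bimorphism of sites, and your route follows exactly that template (bimorphism of sites plus local fullness, local faithfulness and cover-lifting, then the comparison machinery of \cite{denseness}). The mechanisms you single out are the right ones and do work, but the two places you leave vague admit, and need, a concrete implementation, all resting on left exactness of $f^*$: (a) for continuity and for $J_f$-faithfulness, given two comma arrows $(g,h_1):(F',E_1,u_1)\to(F,E,u)$ and $(g,h_2):(F',E_2,u_2)\to(F,E,u)$ with the same first component, the ``collapse'' is achieved by precomposing with $(1_{F'},q_i):(F',E_1\times_E E_2,w)\to(F',E_i,u_i)$, where $w$ exists because $f^*$ sends the pullback $E_1\times_E E_2$ to $f^*E_1\times_{f^*E}f^*E_2$ and $f^*(h_1)u_1=ug=f^*(h_2)u_2$; these two composites into $(F,E,u)$ are then literally equal, which forces the matching-family elements to agree (for faithfulness one uses the equalizer of $h_1,h_2$ instead); (b) $J_f$-fullness of $\pi_{\cal F}$ uses the object $(F',E'\times E,(u',ug))$, and covering-flatness follows since finite limits in $({\cal F}/f^*)$ are computed componentwise and $\pi_{\cal F}$ preserves them. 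Finally, the over-${\cal E}$ triangle is not purely formal: it amounts to checking that the canonical map $\Hom_{\cal E}(\pi_f(-),X)\to\Hom_{\cal F}(\pi_{\cal F}(-),f^*X)$, $v\mapsto f^*(v)\circ u$ (i.e.\ composition with your $2$-cell $\lambda$), is $J_f$-locally bijective — local surjectivity via the cover $(1_F,p_1):(F,E\times X,(u,w))\to(F,E,u)$ with $p_2$ as preimage, local injectivity again via equalizers — after which $C_{\pi_f}^*\simeq C_{\pi_{\cal F}}^*\circ f^*$ gives the commutation. With these verifications written out, your argument is complete; there is no gap of substance.
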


Note that, when the base topos $\cal E$ is presented as $\widehat{\cal C}_J$, all the previous definitions and properties can be stated not only over $\cal E$, but also over $(\cal C,J)$, see \cite{bartolicaramello} subsections 2.2 and 2.3. But it can be more convenient to work over $\cal E$ and not an arbitrary base site, since this base category has finite limits; in this case, the canonical stack is a \emph{cartesian} fibration:

\begin{defn}
Let $\cal C$ be a cartesian category. We call $p : \cal G(\mathbb{D}) \to \cal C$ a cartesian fibration if $\cal G(\mathbb D)$ has finite limits and $p$ preserves them. Alternatively, from an indexed point of view, this is equivalent to $\mathbb D(c)$ having finite limits for every object $c$ of $\cal C$, and the functors $\mathbb D(f)$ preserving them for every $f : c' \to c$ in $\cal C$.
\end{defn}

The following proposition recalls some technical results about fibrations, and notably about the canonical stack, that will be used later on:

\begin{prop}\label{enumloccart}
Let $f : \cal F \to E$ be a relative topos.

\begin{enumerate}[(i)]
    \item A cartesian arrows in $({\cal F}/f^*)$ between $(F,E,\alpha: F \to f^*E)$ and  $(F',E',\alpha': F' \to f^*E')$ is a pair of arrows $(g:F \to F', g':E\to E')$ such that the commutative square given by $\alpha' \circ g = f^*(g') \circ \alpha$ is a pullback one:

    % https://q.uiver.app/#q=WzAsNCxbMCwwLCJGIl0sWzEsMCwiRiciXSxbMCwxLCJmXipFIl0sWzEsMSwiZl4qRSciXSxbMiwzLCJmXiooZycpIiwyXSxbMCwxLCJnIl0sWzAsMiwiXFxhbHBoYSIsMl0sWzEsMywiXFxhbHBoYSciXSxbMCwzLCIiLDEseyJzdHlsZSI6eyJuYW1lIjoiY29ybmVyIn19XV0=
\[\begin{tikzcd}
	F & {F'} \\
	{f^*E} & {f^*E'}
	\arrow["g", from=1-1, to=1-2]
	\arrow["\alpha"', from=1-1, to=2-1]
	\arrow["\lrcorner"{anchor=center, pos=0.125}, draw=none, from=1-1, to=2-2]
	\arrow["{\alpha'}", from=1-2, to=2-2]
	\arrow["{f^*(g')}"', from=2-1, to=2-2]
\end{tikzcd}\]

    \item In a fibration over a cartesian base (so, in particular in $({\cal F}/f^*)$), the pullback of a cartesian arrow along any arrow exists and is still cartesian.

    \item In any fibration, if we have two composable arrows $f$ and $g$, then $f$ and $f \circ g$ being cartesian implies that $g$ also is, and $f$ and $g$ being cartesian implies that $f \circ g$ also is.
\end{enumerate}   
\end{prop}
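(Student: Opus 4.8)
The plan is to derive all three items directly from the universal property defining cartesian arrows, translating that property in each case into the relevant limit condition. For (i), I would unwind what it means for $(g,g') : (F,E,\alpha) \to (F',E',\alpha')$ to be cartesian for the fibration $\pi_f$ projecting onto the second component. By definition this asks: for every object $(F'',E'',\alpha'')$, every arrow $k : E'' \to E$ in $\cal E$, and every morphism $(m,m') : (F'',E'',\alpha'') \to (F',E',\alpha')$ with $g' \circ k = m'$, there is a unique $h : F'' \to F$ with $g \circ h = m$ and $\alpha \circ h = f^*(k) \circ \alpha''$, the latter being exactly the condition that $(h,k)$ is a morphism of the comma category. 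Since $f^*$ is an inverse image and hence preserves finite limits, the data $(m,\ f^*(k)\circ\alpha'')$ is precisely a cone over the cospan $F' \xrightarrow{\alpha'} f^*E' \xleftarrow{f^*(g')} f^*E$: the compatibility $\alpha'\circ m = f^*(g')\circ(f^*(k)\circ\alpha'')$ follows from $(m,m')$ being a morphism together with $g'k = m'$. Existence and uniqueness of $h$ for every such cone is exactly the assertion that the square $\alpha'\circ g = f^*(g')\circ\alpha$ is a pullback, and reading the pullback property backwards recovers cartesianness, so the two notions coincide.

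For (ii), given a cartesian arrow $\kappa : X \to Z$ over $u : A \to C$ and an arbitrary $\gamma : Y \to Z$ over $v : B \to C$, I would first form the pullback $P = A\times_C B$ in the cartesian base, with projections $\pi_A,\pi_B$. I then take a cartesian lift $c_Y : \pi_B^*Y \to Y$ of $\pi_B$ with codomain $Y$, set $W := \pi_B^*Y$, and use cartesianness of $\kappa$ to produce the unique $w : W \to X$ over $\pi_A$ with $\kappa\circ w = \gamma\circ c_Y$. The key verification is that the resulting square is a pullback in the total category: given a competing cone $(s : T \to X,\ t : T \to Y)$ with $\kappa s = \gamma t$, I would project to the base to obtain a unique map into $P$, lift it through the cartesian arrow $c_Y$ (using $t$) to a map $h : T \to W$, and finally invoke the uniqueness clause of cartesianness of $\kappa$ to check $w\circ h = s$. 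Since $c_Y$ is a cartesian lift by construction, the pulled-back arrow is automatically cartesian, giving the second assertion.

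For (iii), both statements are routine cancellation properties obtained by chasing the universal property twice. For closure under composition, given cartesian $f$ and $g$, I would lift a test datum first through $f$ (the outer arrow) and then through $g$, assembling the two unique fillers into the required one for $f\circ g$. For the cancellation ``$f$ and $f\circ g$ cartesian $\Rightarrow g$ cartesian'', I would lift the test datum through $f\circ g$, then use the uniqueness part of cartesianness of $f$ to identify this filler with the one required for $g$, uniqueness being inherited directly from that of $f\circ g$.

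The only step demanding genuine care is the pullback verification in (ii): one must track which arrows lie over which base maps and apply the two universal properties — that of $P$ in the base and that of the cartesian lift $c_Y$ — in the correct order, using cartesianness of $\kappa$ only through its uniqueness clause to close the argument. The remaining items are formal once the universal properties are written out explicitly.
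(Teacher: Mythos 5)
Your proposal is correct, but it takes a genuinely different route from the paper's proof at each point, so a comparison is in order. For (i) the paper gives no argument at all (it cites Part 2.2 of \cite{bartolicaramello}), whereas you prove the equivalence by unwinding the lifting property; your argument works, with two small remarks: the appeal to $f^*$ preserving finite limits is a red herring (only functoriality of $f^*$ is used to get the cone compatibility), and in the direction ``cartesian $\Rightarrow$ pullback'' one must realize an \emph{arbitrary} cone $(T,\, m : T \to F',\, n : T \to f^*E)$ as lifting data --- take $(F'',E'',\alpha'') = (T,E,n)$, $k = 1_E$, $m' = g'$ --- since the cones produced by lifting data a priori have their $f^*E$-leg of the special form $f^*(k)\circ\alpha''$; your phrase ``every such cone'' silently performs this identification. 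For (ii) and (iii) the paper works in indexed coordinates: for (ii) it writes down the candidate pullback square in $\mathcal{G}(\mathbb{D})$ explicitly, with left edge the reindexed arrow $(f',1)$ over the base pullback, and leaves the verification that it is a pullback to the reader; for (iii) it uses the characterization of cartesian arrows as those with invertible vertical part together with the composition law $(f,u)\circ(g,v) = (f\circ g,\, \mathbb{D}(g)(u)\circ v)$. Your versions are coordinate-free: in (ii) you construct the same square --- the cartesian lift $c_Y$ of the base-pullback projection $\pi_B$, plus the comparison map $w$ induced by cartesianness of $\kappa$ --- and you actually sketch the verification the paper asserts (note that uniqueness of the induced map into $W$ also requires the uniqueness clause of $c_Y$, not only that of $\kappa$, together with uniqueness of the factorization in the base); in (iii) your double universal-property chase requires no fibration structure at all and proves the cancellation and composition properties for cartesian arrows of an arbitrary functor. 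What the paper's approach buys is brevity, given its standing convention of presenting fibrations as Grothendieck constructions; what yours buys is independence from that presentation and complete arguments where the paper defers or leaves checks to the reader.
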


\begin{proof}
The point (i) is just a reminder from Part 2.2 of \cite{bartolicaramello}

For point (ii), let $(f,1)$ be a cartesian arrow and $(g,v)$ any arrow in a fibration $p: \mathcal{G}(\mathbb D) \to {\cal C}$ where $\cal C$ is a cartesian category. If we denote by $g'$ and $f'$ the pullbacks in the basis of $g$ along $f$ and of $f$ along $g$, then one can easily check that the following square in $\mathcal{G}(\mathbb D)$ is a pullback:

% https://q.uiver.app/#q=WzAsNCxbMCwxLCIoeCcsYycnKSJdLFswLDAsIihcXG1hdGhiYiBEKGYnKSh4JyksXFxvdmVybGluZXtjfSkiXSxbMSwwLCIoXFxtYXRoYmIgRChmKSh4KSxjJykiXSxbMSwxLCIoeCxjKSJdLFswLDMsIihnLHYpIiwyXSxbMiwzLCIoZiwxKSJdLFsxLDIsIihnJyxcXG1hdGhiYiBEKGYnKSh2KSkiLDAseyJvZmZzZXQiOi0xfV0sWzEsMCwiKGYnLDEpIiwyXSxbMSwzLCIiLDIseyJzdHlsZSI6eyJuYW1lIjoiY29ybmVyIn19XV0=
\[\begin{tikzcd}
	{(\mathbb D(f')(x'),\overline{c})} & {(\mathbb D(f)(x),c')} \\
	{(x',c'')} & {(x,c)}
	\arrow["{(g',\mathbb D(f')(v))}", shift left, from=1-1, to=1-2]
	\arrow["{(f',1)}"', from=1-1, to=2-1]
	\arrow["\lrcorner"{anchor=center, pos=0.125}, draw=none, from=1-1, to=2-2]
	\arrow["{(f,1)}", from=1-2, to=2-2]
	\arrow["{(g,v)}"', from=2-1, to=2-2]
\end{tikzcd}\]

The last point follows directly from the indexed formulation: if we have two arrows $(f,u)$ and $(g,v)$ in a fibration $\mathcal{G}(\mathbb{D})$, the equality $(f,u) \circ (g,v) = (f \circ g, \mathbb{D}(g)(u) \circ v)$ holds. Recall also that an arrow is cartesian if and only if its vertical part is an isomorphism, so that $\mathbb{D}(g)(u) \circ v$ being an isomorphism implies that $v$ is one as well; and, if $u$ and $v$ are isomorphisms, then $\mathbb{D}(g)(u) \circ v$ is also an isomorphism.
\end{proof}

Now, the following is the definition of the relative analogue to the canonical functor denoted as $l$ in the absolute setting: it is a dense morphism and comorphism of sites having for domain the site and for codomain the canonical relative site. It exists in a full generality: for any comorphism of sites. A proof can be found in Proposition 2.5 \cite{fibered}.

\begin{prop}
Let $p : (\cal D,K) \to (\cal C,J)$ be a comorphism of sites. There is a dense comorphism and dense morphism of sites denoted as $\eta_{\cal D} : (\cal D,K) \to ((\widehat{\cal D}_K/C_p^*),J_{C_p})$. This functor is defined (with the help of the Yoneda lemma) by sending an object $d$ of $\cal D$ on $(l_K(d),l_J(p(d)),a_K(ev_{1_{p(d)}}) : l_K(d) \to C_p^*l_J(p(d)))$, and an arrow $f : d' \to d$ on $(l_K(f),C_p^*l_J(p(f)))$.
\end{prop}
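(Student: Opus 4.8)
The plan is to deduce all four assertions about $\eta_{\cal D}$ from a single structural observation, comparing it with two maps we already control. Write $\cal F := \widehat{\cal D}_K$ and $\cal E := \widehat{\cal C}_J$, so that $C_p : \cal F \to \cal E$ is a relative topos with canonical relative site $((\cal F/C_p^*),J_{C_p})$, and let $\pi_{\cal F} : ((\cal F/C_p^*),J_{C_p}) \to (\cal F,J_{\cal F}^{can})$ be the first-component projection, which is a dense morphism and dense comorphism of sites by Theorem 8.2.5. The key point is the strict factorisation $\pi_{\cal F}\circ\eta_{\cal D} = l_K$: indeed $\pi_{\cal F}(l_K(d),l_J(p(d)),\alpha_d) = l_K(d)$ on objects and $\pi_{\cal F}$ forgets the second component on arrows. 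I would also invoke the absolute case of the statement, namely that the canonical functor $l_K : (\cal D,K) \to (\cal F,J_{\cal F}^{can})$ is itself a dense morphism and dense comorphism of sites.

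First I would settle well-definedness. On objects this amounts to checking that $a_K(ev_{1_{p(d)}})$ really is an arrow $l_K(d) \to C_p^*l_J(p(d))$, which is immediate from the Yoneda lemma applied to the presheaf $l_J(p(d))\circ p^{op}$, together with $C_p^* = a_K(-\circ p^{op})$ and the universal property of $a_K$; on arrows it amounts to the commutativity of the comma square $\alpha_d\circ l_K(f) = C_p^*(l_J(p(f)))\circ\alpha_{d'}$, which is exactly the naturality in $d$ of the family $\alpha_d$ and again reduces to a presheaf-level Yoneda computation before sheafifying. Functoriality is then automatic.

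The crux is the comorphism-of-sites condition, which I would phrase as: $\eta_{\cal D}^{-1}(S)$ is $K$-covering for every $J_{C_p}$-covering sieve $S$ on $\eta_{\cal D}(d)$. By the definition of $J_{C_p}$, such an $S$ has $\pi_{\cal F}(S)$ covering for $J_{\cal F}^{can}$ on $l_K(d)$, and since $l_K$ is a comorphism of sites the sieve $l_K^{-1}(\pi_{\cal F}(S))$ is $K$-covering on $d$. The obstacle is that this is not yet what we want: membership $l_K(f) = \pi_{\cal F}(\eta_{\cal D}(f)) \in \pi_{\cal F}(S)$ only provides some arrow of $S$ sharing the first component of $\eta_{\cal D}(f)$, not $\eta_{\cal D}(f)$ itself, and there is in general no vertical map over $1_{l_K(d')}$ exhibiting $\eta_{\cal D}(f)$ as a factor of that witness. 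Resolving this discrepancy is the heart of the proof: I would exploit that $\pi_{\cal F}$ is a dense comorphism of sites and that the canonical stack $\pi_{C_p}$ is a fibration, using cartesian lifts to pull each witness back and then refining the $K$-cover on $d$ once more so that the $\eta_{\cal D}$-images of the refined arrows genuinely lie in $S$; this reduces the claim to the already established covering property of $l_K$.

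The remaining conditions are comparatively formal. Cover-preservation is immediate, since for a $K$-cover $R$ on $d$ one has $\pi_{\cal F}(\eta_{\cal D}(R)) = l_K(R)$, which is $J_{\cal F}^{can}$-covering because $l_K$ preserves covers, whence $\eta_{\cal D}(R)$ is $J_{C_p}$-covering; covering-flatness of $\eta_{\cal D}$ I would verify directly, using that finite limits in $(\cal F/C_p^*)$ are computed componentwise and that $l_K$ is covering-flat while $l_J$ is flat. Finally, for the two density conditions I would argue via the factorisation: once $\eta_{\cal D}$ is known to be both a morphism and a comorphism of sites, functoriality gives $C_{l_K} = C_{\pi_{\cal F}}\circ C_{\eta_{\cal D}}$ and $\Sh(l_K) = \Sh(\eta_{\cal D})\circ\Sh(\pi_{\cal F})$; since $C_{l_K},\Sh(l_K)$ and $C_{\pi_{\cal F}},\Sh(\pi_{\cal F})$ are equivalences (by density of $l_K$ and by Theorem 8.2.5), a two-out-of-three argument forces $C_{\eta_{\cal D}}$ and $\Sh(\eta_{\cal D})$ to be equivalences, which is precisely the density of $\eta_{\cal D}$ as a comorphism and as a morphism of sites. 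The principal difficulty throughout is the covering-lifting step; everything else is bookkeeping around the factorisation $\pi_{\cal F}\circ\eta_{\cal D}=l_K$.
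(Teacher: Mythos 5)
The paper does not actually prove this proposition: it recalls it, pointing to Proposition 2.5 of \cite{fibered}, so your attempt can only be measured against what a complete proof must contain and against the localization techniques the paper itself deploys in the closely related Proposition \ref{xilocal}. With that caveat, your skeleton is sound. The strict factorization $\pi_{\cal F}\circ\eta_{\cal D}=l_K$, the well-definedness and cover-preservation checks, and the concluding two-out-of-three argument are all correct: \emph{once} $\eta_{\cal D}$ is known to be a morphism and a comorphism of sites, the isomorphisms $C_{l_K}\cong C_{\pi_{\cal F}}\circ C_{\eta_{\cal D}}$ and $\Sh(l_K)\cong\Sh(\eta_{\cal D})\circ\Sh(\pi_{\cal F})$, together with the density of $l_K$ and of $\pi_{\cal F}$ (Theorem 8.2.5 of \cite{CaramelloZanfa}), do force $C_{\eta_{\cal D}}$ and $\Sh(\eta_{\cal D})$ to be equivalences.

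The genuine gap is the cover-lifting step, exactly where you place ``the heart of the proof'', and the tool you reach for cannot close it. Concretely, writing $\alpha_{d'}:=a_K(ev_{1_{p(d')}})$, suppose $S$ is a $J_{C_p}$-covering sieve on $\eta_{\cal D}(d)$, $(g,h):(F,E,\beta)\to\eta_{\cal D}(d)$ is a member of $S$, and $x:l_K(d')\to F$ is an arrow with $g\circ x=l_K(f)$ (such data exist locally by density of the generators and local fullness of $l_K$). To conclude that $\eta_{\cal D}(f)\in S$ you must produce a \emph{second} component $y:l_J(p(d'))\to E$ satisfying two conditions simultaneously: $\beta\circ x=C_p^*(y)\circ\alpha_{d'}$, so that $(x,y)$ is an arrow of the comma category, and $h\circ y=l_J(p(f))$, so that $(g,h)\circ(x,y)$ is literally $\eta_{\cal D}(f)$. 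Cartesian lifts in the canonical stack only tell you that members of $S$ factor \emph{through} cartesian arrows; since a sieve is closed under precomposition, not under such factorizations, nothing cartesian is thereby placed in $S$, and no candidate $y$ is produced. What a correct proof needs instead is the sheaf-theoretic localization of the proof of \ref{xilocal}: (a) every map $l_K(d')\to C_p^*E=a_K(E\circ p^{op})$ is, after refining $d'$ by a $K$-cover, of the form $C_p^*(y)\circ\alpha_{d'}$ (local surjectivity of the sheafification unit), which yields $y$ satisfying the first condition; and (b) --- the step entirely absent from your sketch --- the resulting arrows $h\circ y$ and $l_J(p(f))$ are then only known to have equal composites $C_p^*(-)\circ\alpha_{d'}$, i.e.\ equal images under the unit of sheafification, so one needs its local injectivity and a further $K$-refinement to turn this into an actual equality. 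Without (a) and (b) your ``refine once more so that the images genuinely lie in $S$'' is an assertion, not an argument, and both density conclusions hang on it. A smaller issue in the same spirit: covering-flatness is also merely asserted; here your own factorization gives a cleaner route, since $\eta_{\cal D}$ is a morphism of sites if and only if $l_{J_{C_p}}\circ\eta_{\cal D}$ is flat and cover-preserving, and transporting along the equivalence induced by $\pi_{\cal F}$ identifies this composite with $l_{J^{can}_{\widehat{\cal D}_K}}\circ l_K$, reducing everything to the flatness of $l_K$.
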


As in the absolute setting, where the index $J$ of the functor $l_J$ is often omitted, we will likewise write $\eta_{\cal D}$ simply as $\eta$ for the sake of simplicity.

When $p : (\cal G(\mathbb D),K) \to (\cal C,J)$ is a relative site, we sometimes denote $\eta_p$ as $\eta_{\mathbb D}$; in this case, it has the additional property that it preserves the fibration structure:

\begin{prop}\label{etamorphfib}[Proposition 3.14. \cite{bartolicaramello}]
Let $p : (\cal G(\mathbb D),K) \to (\cal C,J)$ be a relative site. The functor $\eta_{\mathbb D} : \cal G(\mathbb D) \to (\widehat{\cal G(\mathbb D)}_K/C_p^*)$ is a cover-preserving morphism of fibrations.
\end{prop}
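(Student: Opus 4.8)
The statement bundles three assertions about $\eta_{\mathbb D}$: that it is compatible with the projections, that it preserves cartesian arrows, and that it is cover-preserving. The first is immediate from the definition of $\eta_{\mathbb D}$: an object $d$ is sent to a triple whose $\widehat{\cal C}_J$-component is $l_J(p(d))$ and an arrow $f$ to a pair whose base component is $l_J(p(f))$, so that $\pi_{C_p}\circ\eta_{\mathbb D}=l_J\circ p$ and $\eta_{\mathbb D}$ is a morphism of fibrations lying over $l_J$. Cover-preservation is also essentially formal, since $\eta_{\mathbb D}$ is a morphism of sites; concretely, the projection to the first component of the sieve generated by $\eta_{\mathbb D}(R)$ is the sieve generated by $l_K(R)$, which is covering for the canonical topology of $\widehat{\cal G(\mathbb D)}_K$ whenever $R$ is a $K$-cover, because $l_K$ sends $K$-covering families to epimorphic families; by the definition of $J_{C_p}$ (covering on the first component) this makes $\eta_{\mathbb D}(R)$ covering. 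Hence the real content lies in the preservation of cartesian arrows.

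Since every cartesian arrow of $\cal G(\mathbb D)$ factors as a vertical isomorphism followed by a canonical lift $(f,1):(\mathbb D(f)(x),c')\to(x,c)$, and since functors preserve isomorphisms while cartesian arrows are stable under composition with them (Proposition \ref{enumloccart}(iii)), it suffices to treat an arrow of the form $(f,1)$. By Proposition \ref{enumloccart}(i), its image $\eta_{\mathbb D}(f,1)$ is cartesian exactly when the square with top edge $l_K(f,1)$, bottom edge $C_p^*l_J(f)$ and vertical edges the structure maps $a_K(ev_{1_{c'}})$ and $a_K(ev_{1_c})$ is a pullback in $\widehat{\cal G(\mathbb D)}_K$. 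The plan is to recognize this square as the image under $a_K$ of the square of presheaves with corners $y(d')$, $y(d)$, $(l_Jc')\circ p\op$ and $(l_Jc)\circ p\op$. As $a_K$ is left exact it preserves pullbacks, so it is enough to show that the canonical comparison $\theta: y(d')\to P$ into the presheaf pullback $P:=y(d)\times_{(l_Jc)\circ p\op}\bigl((l_Jc')\circ p\op\bigr)$ becomes invertible after sheafification, i.e. that $\theta$ is a $K$-local isomorphism.

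To verify this I would unwind $P(e)$ at an object $e=(z,b)$: a section is a pair $(g,\psi)$ with $g:e\to d$ in $\cal G(\mathbb D)$ and $\psi\in(l_Jc')(b)$ satisfying $(l_Jf)(\psi)=[p(g)]$, where $[p(g)]\in(l_Jc)(b)$ denotes the image of $p(g)$ under the unit $y_{\cal C}\to l_J$; and $\theta$ sends $g':e\to d'$ to $((f,1)\circ g',[p(g')])$. For local surjectivity I would use that $l_J=a_J\circ y_{\cal C}$, so that after restriction along a suitable $J$-covering sieve the element $\psi$ is represented by genuine arrows $s:b''\to c'$ of $\cal C$, and that the compatibility $(l_Jf)(\psi)=[p(g)]$ becomes, after a further $J$-local refinement, an honest equality in $\cal C$ between $f\circ s$ and the projection of the corresponding restriction of $g$; the universal property of the cartesian arrow $(f,1)$ then produces the required unique lift of $g$ to $d'$ over $s$. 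Local injectivity is symmetric: two lifts $g_1',g_2'$ with the same image in $P$ have $J$-locally equal projections to $c'$ and equal composite with $(f,1)$, so the uniqueness clause of cartesianness forces them to agree $J$-locally. In both cases the covering sieves used on $e$ are the cartesian liftings of $J$-covers, hence belong to the Giraud topology $J_{\mathbb D}$ and therefore to $K$, which is exactly where the relative-site hypothesis $K\supseteq J_{\mathbb D}$ enters.

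The main obstacle is this last step: turning the ``up to $J$-covering'' data (the fact that $\psi$ is only locally representable, and that $(l_Jf)(\psi)=[p(g)]$ is only an equality after $a_J$) into genuine arrows and equalities in $\cal C$ to which the strict universal property of $(f,1)$ applies, and then checking that the resulting lifts assemble into a well-defined $K$-local section. Everything else is bookkeeping: left-exactness of $a_K$ reduces the pullback claim to a local-isomorphism statement, and the inclusion $K\supseteq J_{\mathbb D}$ guarantees that the base covers one is forced to pass to are genuinely $K$-covering.
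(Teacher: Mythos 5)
Your proposal is correct, but note that the paper itself contains no proof of this proposition: it is recalled verbatim from \cite{bartolicaramello} (Proposition 3.14), so there is no in-paper argument to compare against. Your route is the natural one given the paper's toolkit, and it is consistent with how the paper uses the result elsewhere: the factorization $\eta_{\mathbb D}(f,u)=\eta_{\mathbb D}(f,1)\circ\eta_{\mathbb D}(1,u)$ with $\eta_{\mathbb D}(f,1)$ cartesian is exactly the decomposition invoked in the proof of Proposition \ref{kcartesianforfib}, and your pullback criterion is Proposition \ref{enumloccart}(i). The two steps you single out as the main obstacle are indeed where the work lies, and they go through exactly as you sketch: an element of $l_J(c')(b)=a_J(y_{\cal C}(c'))(b)$ is $J$-locally represented by honest arrows $s:b''\to c'$, and an equality in $a_J(y_{\cal C}(c))$ of elements coming from $y_{\cal C}(c)$ holds after a further $J$-refinement as an honest equality $f\circ s = p(g)\circ v$ in $\cal C$; pulling these $J$-covers of $b$ back to $K$-covers of $(z,b)$ via their cartesian (Giraud) lifts — which is precisely where $K\supseteq J_{\mathbb D}$ is used — the strict universal property of $(f,1)$ in the Grothendieck construction produces the local lifts (for surjectivity) and forces local agreement of two lifts (for injectivity). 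For local surjectivity no gluing of the lifts is actually required, so the last worry you raise ("assembling into a well-defined $K$-local section") dissolves: it suffices that each restriction of the given section lies in the image of $\theta$. One cosmetic imprecision: the first-component projection of the sieve generated by $\eta_{\mathbb D}(R)$ need not literally be the sieve generated by $l_K(R)$, but it contains the epimorphic family $l_K(R)$, which is all that cover-preservation requires.
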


\subsection{The notion of $\eta$-extension}\label{etaextsubsection}
Now that we have introduced the canonical relative site and $\eta$, the relative analogue of the canonical functor $l$, we are ready to present the notion of $\eta$-extension. 

In the absolute case, we study morphisms of sites by extending them along the $l$ functors. Indeed, a functor $A : ({\cal D},K) \to (\cal D',K')$ is a morphism of sites if and only if it admits an extension along the $l$ functors preserving colimits and finite limits, as dashed in the following diagram:

% https://q.uiver.app/#q=WzAsNCxbMCwxLCIoXFxjYWwgRCxLKSJdLFsxLDEsIihcXGNhbCBEJyxLJykiXSxbMCwwLCJcXHdpZGVoYXR7XFxjYWwgRH1fSyJdLFsxLDAsIlxcd2lkZWhhdHtcXGNhbCBEJ31fe0snfSJdLFswLDEsIkEiLDJdLFsyLDMsIiIsMCx7InN0eWxlIjp7ImJvZHkiOnsibmFtZSI6ImRhc2hlZCJ9fX1dLFswLDIsImxfSyJdLFsxLDMsImxfe0snfSIsMl0sWzAsMywiXFxzaW1lcSIsMSx7InN0eWxlIjp7ImJvZHkiOnsibmFtZSI6Im5vbmUifSwiaGVhZCI6eyJuYW1lIjoibm9uZSJ9fX1dXQ==
\[\begin{tikzcd}
	{\widehat{\cal D}_K} & {\widehat{\cal D'}_{K'}} \\
	{(\cal D,K)} & {(\cal D',K')}
	\arrow[dashed, from=1-1, to=1-2]
	\arrow["{l_K}", from=2-1, to=1-1]
	\arrow["\simeq"{description}, draw=none, from=2-1, to=1-2]
	\arrow["A"', from=2-1, to=2-2]
	\arrow["{l_{K'}}"', from=2-2, to=1-2]
\end{tikzcd}\]

A relative analogue of this notion of extension is already defined and systematically studied in Section 3 of \cite{bartolicaramello}. We recall its definition in the following proposition, along with its main properties (Proposition 3.4 of \cite{bartolicaramello}), which show that it provides the correct relative version of the extension along the functor $l$. Here, we work with a natural isomorphism $\phi$ as this is what will be relevant in our setting, although the definition given in \cite{bartolicaramello} applies to any natural transformation.

\begin{prop}\label{etaextensionproperties}[Section 3 \cite{bartolicaramello}]
Let $p : (\cal D,K) \to (\cal C,J)$ and $p : (\cal D',K') \to (\cal C,J)$ be two comorphisms of sites, together with a continuous functor $A : (\cal D,K) \to (\cal D',K')$ such that we have a natural isomorphism $\phi : p'A \simeq p $. 

\begin{enumerate}
    \item The natural isomorphism $\phi$ induces a mate $\overline{\phi} : lan_{A^{op}}(-\circ p^{op}) \Rightarrow (-\circ p'^{op})$ which in turns gives, by sheafification, a natural transformation $\widetilde{\phi} : \Sh(A)^*C_p^* \Rightarrow C_{p'}^*$.
    \item This allows to define the $\eta$-extension of $A$ as the functor $\widetilde{A}^* : (\widehat{\cal D}_K/C_p^*) \to (\widehat{\cal D'}_{K'}/C_{p'}^*)$ sending a triplet $(F,E, \alpha : F \to C_p^*E)$ to the triplet \\ $(\Sh(A)^*F,E,\widetilde{\phi}_E\Sh(A)^*(\alpha) : \Sh(A)^*F \to C_{p'}^*E)$ and an arrow $(g,h)$ to the arrow $(\Sh(A)^*(g),h)$.
    \item This extension is a continuous functor $\widetilde{A}^* : ((\widehat{\cal D}_K/C_p^*),J_{C_p}) \to ((\widehat{\cal D'}_{K'}/C_{p'}^*),J_{C_{p'}})$. Moreover, if $A$ is a morphism of sites, $\widetilde{A}^*$ also is.
    \item The following is a commutative square of colimit-preserving functors:

    % % https://q.uiver.app/#q=WzAsNCxbMCwxLCJcXHdpZGVoYXR7XFxjYWwgRH1fSyJdLFsxLDEsIlxcd2lkZWhhdHtcXGNhbCBEJ31fe0snfSJdLFswLDAsIlxcd2lkZWhhdHsoXFx3aWRlaGF0e1xcY2FsIER9X0svQ19wXiopfV97Sl97Q19wfX0iXSxbMSwwLCJcXHdpZGVoYXR7KFxcd2lkZWhhdHtcXGNhbCBEJ31fe0snfS9DX3twJ31eKil9X3tKX3tDX3twJ319fSJdLFswLDEsIlxcU2goQSleKiIsMl0sWzIsMywiXFx3aWRldGlsZGV7QX1eKiJdLFsyLDAsIlxcU2goXFxwaV97XFx3aWRlaGF0e1xcY2FsIER9X0t9KV4qIiwyLHsib2Zmc2V0IjoyLCJzdHlsZSI6eyJib2R5Ijp7Im5hbWUiOiJub25lIn0sImhlYWQiOnsibmFtZSI6Im5vbmUifX19XSxbMywxLCJcXFNoKFxccGlfe1xcd2lkZWhhdHtcXGNhbCBEJ31fe0snfX0pXioiLDAseyJvZmZzZXQiOi0zLCJzdHlsZSI6eyJib2R5Ijp7Im5hbWUiOiJub25lIn0sImhlYWQiOnsibmFtZSI6Im5vbmUifX19XSxbMiwwLCJcXHNpbWVxIiwxXSxbMywxLCJcXHNpbWVxIiwxLHsib2Zmc2V0IjozfV1d
        \[\begin{tikzcd}
	{\widehat{(\widehat{\cal D}_K/C_p^*)}_{J_{C_p}}} & {\widehat{(\widehat{\cal D'}_{K'}/C_{p'}^*)}_{J_{C_{p'}}}} \\
	{\widehat{\cal D}_K} & {\widehat{\cal D'}_{K'}}
	\arrow["{\widetilde{A}^*}", from=1-1, to=1-2]
	\arrow["{\Sh(\pi_{\widehat{\cal D}_K})^*}"', shift right=2, draw=none, from=1-1, to=2-1]
	\arrow["\simeq"{description}, from=1-1, to=2-1]
	\arrow["{\Sh(\pi_{\widehat{\cal D'}_{K'}})^*}", shift left=3, draw=none, from=1-2, to=2-2]
	\arrow["\simeq"{description}, shift right=3, from=1-2, to=2-2]
	\arrow["{\Sh(A)^*}"', from=2-1, to=2-2]
    \end{tikzcd}\]
    giving us that $\widetilde{A}^*$ and $A$ induces, at the topos-level, the same inverse image.
    \item The following is a commutative square of functors, giving us that $\widetilde{A}^*$ is an extension of $A$ along the $\eta$ functors:

    % https://q.uiver.app/#q=WzAsNCxbMCwxLCIoXFxjYWwgRCxLKSJdLFsxLDEsIihcXGNhbCBEJyxLJykiXSxbMCwwLCJcXHdpZGVoYXR7KFxcd2lkZWhhdHtcXGNhbCBEfV9LL0NfcF4qKX1fe0pfe0NfcH19Il0sWzEsMCwiXFx3aWRlaGF0eyhcXHdpZGVoYXR7XFxjYWwgRCd9X3tLJ30vQ197cCd9XiopfV97Sl97Q197cCd9fX0iXSxbMCwxLCJBIiwyXSxbMiwzLCJcXHdpZGV0aWxkZXtBfV4qIl0sWzAsMiwiXFxldGFfe1xcY2FsIEQnfSJdLFsxLDMsIlxcZXRhX3tcXGNhbCBEfSIsMl0sWzAsMywiXFxzaW1lcSIsMSx7InN0eWxlIjp7ImJvZHkiOnsibmFtZSI6Im5vbmUifSwiaGVhZCI6eyJuYW1lIjoibm9uZSJ9fX1dXQ==
    \[\begin{tikzcd}
	{\widehat{(\widehat{\cal D}_K/C_p^*)}_{J_{C_p}}} & {\widehat{(\widehat{\cal D'}_{K'}/C_{p'}^*)}_{J_{C_{p'}}}} \\
	{(\cal D,K)} & {(\cal D',K')}
	\arrow["{\widetilde{A}^*}", from=1-1, to=1-2]
	\arrow["{\eta_{\cal D}}", from=2-1, to=1-1]
	\arrow["\simeq"{description}, draw=none, from=2-1, to=1-2]
	\arrow["A"', from=2-1, to=2-2]
	\arrow["{\eta_{\cal D'}}"', from=2-2, to=1-2]
    \end{tikzcd}\] 
    \item When $A$ is a morphism of sites, the following points are equivalent:
    \begin{itemize}
        \item We have that $\widetilde{\phi} : C_p\Sh(A) \simeq C_{p'}$, that is, $(A,\phi)$ induces a morphism of relative toposes.
        \item We have that $\widetilde{A}^*$ is a morphism of fibrations.
        \item We have that $\widetilde{A}^*$ preserves finite limits not only globally, but also fiberwise.
    \end{itemize}
\end{enumerate}
\end{prop}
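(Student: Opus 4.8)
The plan is to observe that all three bullets are governed by a single question: whether the comparison transformation $\widetilde{\phi} : \Sh(A)^*C_p^* \Rightarrow C_{p'}^*$ of point (1) is a natural \emph{isomorphism}. Indeed, the first bullet is, by definition, exactly the assertion that $\widetilde{\phi}$ is invertible (this is what $\widetilde{\phi} : C_p\Sh(A) \simeq C_{p'}$ encodes at the level of inverse images, $\Sh(A)$ being a genuine geometric morphism since $A$ is a morphism of sites). So I would prove that each of the two remaining bullets is likewise equivalent to the invertibility of $\widetilde{\phi}$. Throughout I use that $\Sh(A)^*$ preserves finite limits, and that $\widetilde{A}^*$ strictly commutes with the projections to $\widehat{\cal C}_J$ (it leaves the $\widehat{\cal C}_J$-component $E$ untouched), so that it restricts to functors $\widetilde{A}^*_E : (\widehat{\cal D}_K/C_p^*E) \to (\widehat{\cal D'}_{K'}/C_{p'}^*E)$ between the (slice-topos) fibres, acting by $(F,\alpha) \mapsto (\Sh(A)^*F,\, \widetilde{\phi}_E \circ \Sh(A)^*\alpha)$.

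Assume first that $\widetilde{\phi}$ is invertible. For the morphism-of-fibrations condition, I would take a cartesian arrow $(g,g')$ and use the characterization of Proposition \ref{enumloccart}(i): its defining square in $\widehat{\cal D}_K$ is a pullback. Applying the lex functor $\Sh(A)^*$ yields a pullback square, and pasting it below with the naturality square of $\widetilde{\phi}$ along $g'$ — which is a pullback precisely because its vertical legs $\widetilde{\phi}_E,\widetilde{\phi}_{E'}$ are isomorphisms — exhibits the image square as a pullback; by \ref{enumloccart}(i) again this says that $\widetilde{A}^*(g,g')$ is cartesian. For the fiberwise-lex condition, I would factor $\widetilde{A}^*_E$ as the functor induced on slices by the lex functor $\Sh(A)^*$ (itself lex, since finite limits in a slice are computed in the base) followed by the postcomposition functor $\Sigma_{\widetilde{\phi}_E}$ along $\widetilde{\phi}_E$; as $\widetilde{\phi}_E$ is an isomorphism this second functor is an equivalence, so the composite preserves finite limits.

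For the converses I would extract the invertibility of each component $\widetilde{\phi}_E$ by testing on the terminal object of the fibre over $E$, namely $(C_p^*E,E,\id)$. Writing $t_E : E \to 1$ for the unique map to the terminal object of $\widehat{\cal C}_J$, the arrow $(C_p^*(t_E),t_E)$ from $(C_p^*E,E,\id)$ to $(1,1,\id)$ is cartesian by \ref{enumloccart}(i) (its defining square has identity verticals). If $\widetilde{A}^*$ is a morphism of fibrations it preserves this arrow, so by \ref{enumloccart}(i) the square over $C_{p'}^*(t_E)$ with vertical legs $\widetilde{\phi}_E$ and $\widetilde{\phi}_1 : 1 \to 1$ is a pullback; since its right-hand leg is an isomorphism between terminal objects, this holds exactly when $\widetilde{\phi}_E$ is an isomorphism. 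Similarly, if $\widetilde{A}^*$ is fiberwise lex, then $\widetilde{A}^*_E$ preserves terminal objects, and since the terminal object of $(\widehat{\cal D}_K/C_p^*E)$ is $(C_p^*E,\id)$ and its image $(\Sh(A)^*C_p^*E,\widetilde{\phi}_E)$ must be the terminal object $(C_{p'}^*E,\id)$ of $(\widehat{\cal D'}_{K'}/C_{p'}^*E)$, again $\widetilde{\phi}_E$ is forced to be invertible. As $E$ is arbitrary, $\widetilde{\phi}$ is a natural isomorphism in both cases.

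The routine parts are the two forward implications, which are pure pullback-pasting and slice-category bookkeeping. The genuinely load-bearing step is the pair of converses: the point is to recognize that the obstruction to $\widetilde{A}^*$ respecting the fibrational/limit structure is concentrated entirely in the behaviour over the terminal object, so that a single well-chosen test object — the fiberwise terminal object, equivalently the cartesian lift of $t_E : E \to 1$ — already detects non-invertibility of $\widetilde{\phi}_E$. I expect the main care to be needed in getting the reading of \emph{fiberwise} right: global lex-ness of $\widetilde{A}^*$ (which already holds by point (3), as $\widetilde{A}^*$ is itself a morphism of sites) genuinely does \emph{not} suffice, and one must work with the actual fibres, the slice toposes with terminal object $(C_p^*E,\id)$, in order for the terminal-object test to be legitimate.
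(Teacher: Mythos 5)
The paper never actually proves this proposition: it is recalled verbatim (as Proposition 3.4) from \cite{bartolicaramello}, so there is no internal proof to compare your attempt against, and I assess it on its own terms. What you do prove — the three-way equivalence of point (6) — is correct. The first bullet is indeed, by definition, the invertibility of $\widetilde{\phi}$; your pasting argument gives \textquotedblleft invertible $\Rightarrow$ morphism of fibrations\textquotedblright\ (a commutative square whose vertical legs are isomorphisms is a pullback, and pullbacks paste, so the defining square of $\widetilde{A}^*(g,g')$ in the sense of \ref{enumloccart}(i) is a pullback); and the converse, tested on the cartesian arrow $(C_p^*(t_E),t_E) : (C_p^*E,E,\id) \to (C_p^*\mathbf{1},\mathbf{1},\id)$, is sound because the right-hand leg $\widetilde{\phi}_{\mathbf{1}}$ of the image square is a map between terminal objects of $\widehat{\cal D'}_{K'}$, hence invertible, and in a pullback square one invertible leg forces the parallel leg to be invertible. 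The fiberwise-lex bullet via fiberwise terminal objects is equally fine. Notably, your key reduction (fiberwise lex $=$ global pullback-preservation $+$ fiberwise preservation of terminals, so everything concentrates at the terminal of each fibre) is exactly how the paper itself exploits this proposition later, in the proof of \ref{characcanonicalstack}.

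The gap is one of scope: the statement has six points and you prove only the sixth, while silently assuming the others. Points (3)--(5) — that $\widetilde{A}^*$ is continuous, that it is a morphism of sites when $A$ is, and the two commutation squares $\Sh(\pi_{\widehat{\cal D'}_{K'}})^*\widetilde{A}^* \simeq \Sh(A)^*\Sh(\pi_{\widehat{\cal D}_K})^*$ and $\widetilde{A}^*\eta_{\cal D} \simeq \eta_{\cal D'}A$ — are substantive assertions, not definitions, and your argument for (6) even relies on some of them (the fibrewise slice description of $\widetilde{A}^*$, and point (3) for global lex-ness); a complete proof of the proposition must establish them. One small repair within what you wrote: \textquotedblleft $\widetilde{A}^*$ is a morphism of sites\textquotedblright\ does not by itself yield global preservation of finite limits (morphisms of sites are covering-flat, not lex, in general); the correct and easy justification is that finite limits in both comma categories are computed componentwise (as the paper notes in the proof of \ref{xilocal}) and $\Sh(A)^*$ is lex because $A$ is a morphism of sites, so $\widetilde{A}^*$ preserves them componentwise.
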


\subsection{The canonical relative site as a topos}

The most important object of study in the context of relative toposes is the canonical stack of a relative topos. Up to now, we have considered it only as a site presenting a relative topos; however, the canonical stack has a dual nature: it is both a fibration over the base topos and a topos in its own right. Indeed, it is a special case of \emph{Artin gluing} along the inverse image functor of the relative topos. Since we present our relative toposes via comorphisms, we will show that, when a relative topos is presented via a comorphism of sites, this presentation naturally induces a site presentation for the canonical stack viewed as a topos. Moreover, this site presentation will provide an interpretation of the $\eta$-extension as an inverse image functor induced by a morphism of sites between these presentation sites of the canonical relative stacks as toposes.

For a relative topos $C_p : \widehat{\cal D}_K \to \widehat{\cal C}_J$ presented by some comorphism of sites $p$, the two following propositions will guide us in order to obtain a small set of generators for the comma category $(\widehat{\cal D}_K/C_p^*)$ coming from this comorphism of presentation:

\begin{prop}[Theorem 3.20. \cite{denseness}]\label{propdualcomma}
Let $p:({\cal D}, K)\to ({\cal C}, J)$ be a comorphism of sites. We have a functor
	\[
	\xi_{p}: (p/{\cal C}) \to (\widehat{\cal D}_K/C_{p}^{\ast})
	\]
	sending an object $(d, c, u:p(d)\to c)$ of the category $(p/{\cal C})$ to the object $(l_{K}d, l_Jc, l_Kd\to C_{p}^{\ast}(l_Jc))\cong a_{K}((- \circ p)(y_{\cal D}d))=a_{K}(\Hom_{\cal C}(p(-), c))$ which is the sheafification of the evaluation of the identity of the source representable onto the arrow $u$, well-defined by the Yoneda lemma. This functor is a dense bimorphism (comorphism and morphism) of sites
	$((p/{\cal C}), \overline{K}) \to ((\widehat{\cal D}_K/C_{p}^{\ast}), J_{C_{p}})$ where $\overline{K}$ has for coverings the ones having arrows of the first component being a cover in $\cal D$, and $J_{C_{p}}$ is the canonical relative topology of $C_p : \widehat{\cal D}_K \to \widehat{\cal C}_J$.
\end{prop}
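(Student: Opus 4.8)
The plan is to establish the three assertions --- that $\xi_p$ is a functor, a morphism of sites, and a comorphism of sites, all three moreover dense --- in that order, reducing the site-theoretic claims to a single density computation. First I would check well-definedness and functoriality. For an object $(d,c,u\colon p(d)\to c)$, the arrow $u$ determines, via the Yoneda lemma together with the adjunction $C_p^*\dashv (C_p)_*$, a morphism $\alpha_u\colon l_K d\to C_p^* l_J c$, namely the transpose of $l_J(u)$ read off from $(C_p^* l_J c)(d)\cong a_K\big((l_J c)\circ p^{op}\big)(d)$; this is exactly the sheafified evaluation in the statement. For a morphism $(f,g)\colon (d,c,u)\to (d',c',u')$, the comma relation $u'\circ p(f)=g\circ u$ together with naturality of this correspondence makes the square relating $\alpha_u,\alpha_{u'},l_K f,l_J g$ commute, so that $(l_K f,l_J g)$ is a morphism of $(\widehat{\cal D}_K/C_p^*)$ and functoriality is immediate. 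I would also record that along the embedding $\iota\colon\cal D\to (p/\cal C)$, $d\mapsto (d,p(d),1_{p(d)})$, one has $\xi_p\circ\iota=\eta_{\cal D}$, which sanity-checks the construction and connects it to Proposition~\ref{etamorphfib}.

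Next I would reduce the morphism- and comorphism-of-sites conditions to a statement about first components. By definition both $\overline K$ and $J_{C_p}$ are controlled entirely by the first projection: a sieve is $\overline K$-covering iff its image under $(d,c,u)\mapsto d$ is $K$-covering, and a sieve is $J_{C_p}$-covering iff its image under the first projection is covering for the canonical topology of $\widehat{\cal D}_K$. Since $\xi_p$ acts on first components as the canonical functor $l_K$, cover-preservation follows from the fact that $l_K$ sends $K$-covering families to jointly epimorphic, hence canonically covering, families, and the cover-lifting (comorphism) condition follows from the fact that $l_K$, viewed as a comorphism to the canonical site of $\widehat{\cal D}_K$, lets one refine any canonical cover of $l_K d$ to the image of a $K$-cover of $d$. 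The extra comma data $(c,u)$ and $(E,\alpha)$ is carried along freely, since neither topology constrains it.

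The crux is density, i.e. that $\xi_p$ induces an equivalence of toposes, for which I would verify the denseness conditions of \cite{denseness} (equivalently, apply the Comparison Lemma) by showing every object $(F,E,\alpha\colon F\to C_p^* E)$ is, locally for $J_{C_p}$, in the image of $\xi_p$. Covering $E$ by representables $l_J c$ and $F$ by representables $l_K d$ reduces this to the heart of the matter: an \emph{arbitrary} structure map $\beta\colon l_K d\to C_p^* l_J c$ need not be of the form $\alpha_u$, but I claim it is after passing to a $\overline K$-cover. Unwinding $C_p^* l_J c=a_K\big((a_J\Hom_{\cal C}(-,c))\circ p^{op}\big)$, an element of this sheaf at $d$ is, after a $K$-covering family $(d_i\to d)_i$ and an induced $J$-covering family on each $p(d_i)$, represented by genuine arrows $u_i\colon p(d_i)\to c$; thus $\beta$ restricts on this cover to the maps $\alpha_{u_i}$, exhibiting $(l_K d,l_J c,\beta)$ as $J_{C_p}$-locally in the image. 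Propagating this through the two colimit presentations gives the required local factorization for every object, and a parallel argument on hom-sets handles morphisms, establishing fullness and faithfulness up to covers.

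I expect this double-locality step to be the main obstacle: one must manage the two nested sheafifications $a_K$ and $a_J$ simultaneously and invoke the comorphism property of $p$ at precisely the point where a $K$-cover upstairs is converted into a $J$-cover of the base, so that abstract local sections of $C_p^* l_J c$ become actual arrows $u\colon p(d)\to c$. Once this is in place, the denseness criteria are satisfied and we conclude that $\xi_p$ is simultaneously a dense morphism and a dense comorphism of sites, yielding the equivalence $\widehat{(p/\cal C)}_{\overline K}\simeq\widehat{(\widehat{\cal D}_K/C_p^*)}_{J_{C_p}}$.
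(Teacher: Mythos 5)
This proposition is not proved in the paper at all: it is quoted from Theorem 3.20 of \cite{denseness}, so there is no in-paper proof to compare against. The closest argument the paper does contain is the proof of Proposition \ref{xilocal}, and your density computation coincides with the localization technique used there: cover $E$ by objects $l_J(c_i)$, pull back along $\alpha$ (legitimate, since $C_p^*$ and pullback preserve epimorphic families), cover the resulting first components by objects $l_K(d)$, and then use $C_p^*l_J(c)\cong a_K(\Hom_{\cal C}(p(-),c))$ --- an isomorphism which indeed requires $p$ to be a comorphism --- to replace an arbitrary structure map $\beta : l_K(d)\to C_p^*l_J(c)$, locally on a $K$-cover, by maps of the form $a_K(ev_{u_i})$. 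Your functoriality step is correct, as is the observation that $\xi_p\circ\iota=\eta_{\cal D}$, and you are right that no initial-object augmentation is needed at this stage: $J_{C_p}$ only asks epimorphy on first components, so the empty sieve covers objects with initial first component; the augmentation $(p_0/{\cal C}_0)$ only becomes necessary for the canonical-topology refinement of \ref{xilocal}.

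There are, however, two genuine gaps. First, your justification of the comorphism (cover-lifting) condition --- that ``the extra comma data $(c,u)$ and $(E,\alpha)$ is carried along freely, since neither topology constrains it'' --- is false. For $\xi_p(f,g)$ to lie in a given $J_{C_p}$-covering sieve $T$ on $\xi_p(d,c,u)$, it must factor through some $(h,k):(F,E,\beta)\to\xi_p(d,c,u)$ in $T$, and this constrains the \emph{second} component: already for $p$ the identity of $({\cal C},J)$, take $T$ generated by a single arrow $(e,e):(A,A,1_A)\to(l(c),l(c),1)$ with $e$ epimorphic but not split; then no arrow of the form $(l(f),1_{l(c)})$ belongs to $T$, even though $T$ is $J_{C_p}$-covering. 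The lift does exist (e.g.\ with second components running over the cover itself rather than identities), but finding it requires exactly the machinery you defer: cover the domains $(F,E,\beta)$ of the arrows of $T$ by $\xi_p$-objects (density), compose into $T$, represent these composites by arrows of $(p/{\cal C})$ after a further $\overline{K}$-cover (local fullness), and conclude via the fact that $l_K$ reflects covers. Second, that local fullness, together with local faithfulness, is precisely what you dismiss as ``a parallel argument on hom-sets''; it is not parallel, since it requires aligning a representability cover for the first component with one for the second, invoking the comorphism property of $p$ once more, and it carries real weight: it is needed both for the comorphism claim (as just explained) and for the morphism-of-sites claim, because cover-preservation alone does not give covering-flatness --- only the full denseness package (dense, locally full, locally faithful, cover-preserving, cover-reflecting) yields that $\xi_p$ is a morphism of sites. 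A minor slip in the same vein: the comorphism property of $p$ converts $J$-covers of $p(d)$ into $K$-covers of $d$, not ``a $K$-cover upstairs into a $J$-cover of the base''.
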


We want to show that $(\widehat{\cal D}_K/C_{p}^{\ast})$ is a topos; to do so we will refine the previous density result to the (absolute) canonical topology on it. A first step is to identify the epimorphic families of this comma category:

\begin{prop}\label{epimorphismsincanonicalstack}
For a relative topos $f : \cal F \to \cal E$, the epimorphic families of $(\cal F/f^*)$ are the families $((g_i,h_i) : (F_i,E_i,\alpha_i:F_i \to f^*(E_i)) \to (F,E,\alpha:F \to f^*(E)))_i$ such that their projections on the two components are epimorphic families in $\cal F$ and $\cal E$, respectively.
\end{prop}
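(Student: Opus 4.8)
The plan is to read the statement as saying that the two projections $\pi_{\cal F} : (\cal F/f^*) \to \cal F$ and $\pi_f : (\cal F/f^*) \to \cal E$ jointly detect epimorphic families, and to prove the two implications separately.

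For the direction asserting that a family whose two projections are epimorphic is itself epimorphic, I would argue directly from the componentwise description of the comma category. Since a morphism of $(\cal F/f^*)$ is a pair $(g,h)$ and composition is computed componentwise, the functor $(\pi_{\cal F},\pi_f) : (\cal F/f^*) \to \cal F \times \cal E$ is faithful. Hence, given a parallel pair $(u,w),(u',w') : (F,E,\alpha) \to (G,D,\beta)$ that becomes equal after precomposition with every $(g_i,h_i)$, reading off the two components yields $u g_i = u' g_i$ and $w h_i = w' h_i$ for all $i$; epimorphy of $(g_i)_i$ in $\cal F$ and of $(h_i)_i$ in $\cal E$ then forces $u = u'$ and $w = w'$, so the family is epimorphic. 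This direction uses nothing beyond faithfulness of the two projections taken together.

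The converse — that an epimorphic family in $(\cal F/f^*)$ projects to epimorphic families in each factor — is the substantive part, and I would deduce it from the fact that both projections are \emph{left adjoints}, hence cocontinuous and in particular preserve epimorphic families. Concretely, I would exhibit a right adjoint $\Gamma$ to $\pi_{\cal F}$ by $\Gamma(G) = (G, 1_{\cal E}, G \to f^*(1_{\cal E}))$: because $f^*$ preserves the terminal object, $f^*(1_{\cal E}) \cong 1$, so the second-component compatibility for a morphism $(F,E,\alpha) \to \Gamma(G)$ is automatic and $\Hom((F,E,\alpha),\Gamma(G)) \cong \Hom_{\cal F}(F,G)$. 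Dually, I would exhibit a right adjoint $\rho$ to $\pi_f$ by $\rho(D) = (f^* D, D, \id_{f^* D})$: here the first component of a morphism $(F,E,\alpha) \to \rho(D)$ is forced to be $u = f^*(w)\circ\alpha$, whence $\Hom((F,E,\alpha),\rho(D)) \cong \Hom_{\cal E}(E,D)$. Thus $\pi_{\cal F} \dashv \Gamma$ and $\pi_f \dashv \rho$.

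To conclude I would invoke the standard fact that a left adjoint $L \dashv R$ preserves epimorphic families: if $(a_i)_i$ is jointly epic and $u \circ L a_i = v \circ L a_i$ for all $i$, then the transposes satisfy $u^t \circ a_i = v^t \circ a_i$, so $u^t = v^t$ and therefore $u = v$. Applying this to $\pi_{\cal F}$ and to $\pi_f$ sends an epimorphic family in $(\cal F/f^*)$ to the epimorphic families $(g_i)_i$ in $\cal F$ and $(h_i)_i$ in $\cal E$, which finishes the proof. I expect the only genuine obstacle to be the strategic one of handling the converse through cocontinuity of the projections rather than a direct diagram chase; once the two right adjoints are written down, checking the adjunctions and the preservation of epimorphic families is routine.
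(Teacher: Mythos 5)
Your proof is correct. The easy direction (if both projected families are jointly epimorphic, so is the family in the comma category) is the same componentwise observation the paper makes; but for the substantive converse you take a genuinely different route. The paper disposes of it in one line by noting that joint epimorphy is a colimit-theoretic condition (cokernel pairs) and that colimits in $(\cal F/f^*)$ are computed componentwise because $f^*$ is cocontinuous, so the projections preserve the relevant colimits. You instead exhibit explicit right adjoints to the two projections, namely $G \mapsto (G,\mathbf{1}_{\cal E},\mathbf{!})$ for $\pi_{\cal F}$ (using that $f^*$ preserves the terminal object, so the compatibility square is automatic) and $D \mapsto (f^*D, D, \id_{f^*D})$ for $\pi_f$ (where the first component of any morphism into it is forced), and then run the standard transposition argument showing that any left adjoint preserves jointly epimorphic families. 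Both arguments are sound, and they are close in spirit (the componentwise computation of colimits invoked by the paper is itself a consequence of the cocontinuity that your adjunctions encode), but yours buys two things: it is more elementary and self-contained, requiring no cocompleteness hypotheses and no reformulation of epimorphy via cokernel pairs, only the two adjunctions and a transposition; and it anticipates structure the paper only introduces later, since your $\Gamma$ and $\rho$ are precisely the direct images $\mathbf{t}_{\widehat{\cal D}_K}$ and $\tau_{\widehat{\cal C}_J}$ of the closed and open embeddings $j_{C_p}$ and $i_{C_p}$ appearing in the Artin-gluing description of the canonical stack, so your proof amounts to observing that both projections are inverse image functors of subtopos inclusions. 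The paper's argument is shorter, but yours makes the mechanism explicit rather than appealing to general facts about comma categories of colimit-preserving functors.
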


\begin{proof}
It is immediate to see that it is a sufficient condition, and it also is necessary since epimorphisms are colimits, which are computed componentwise in this comma category of colimit-preserving functors.
\end{proof}

To obtain a separating set for $(\widehat{\cal D}_K/C_{p}^{\ast})$, we will need to proceed with a minor augmentation of the comorphism of sites $p : (\cal D, K) \to (\cal C, J)$, consisting in the addition of an initial object to the sites, as in (a) of Remarks 5.37. in \cite{denseness}:

\begin{prop}\label{ajoutinitial}
Let $p : (\cal D, K) \to (\cal C, J)$ be a comorphism of sites. We have a Morita-equivalent comorphism of sites $p_0 : (\cal D_0, K_0) \to (\cal C_0, J_0)$: the categories $\cal C_0$ and $\cal D_0$ are just the categories $\cal C$ and $\cal D$ where we freely added an initial object (no arrow has this new initial object for codomain); the topologies $J_0$ and $K_0$ are the same as $J$ and $K$ plus the empty sieves covering the new initial objects; and $p_0$ acts as $p$ on the objects coming from $\cal D$ and sends the initial object of $\cal D_0$ to the one of $\cal C_0$. The Morita-equivalence is given by the commutation of the following diagram:

% https://q.uiver.app/#q=WzAsNCxbMCwwLCJcXHdpZGVoYXR7XFxjYWwgRH1fSyJdLFsxLDAsIlxcd2lkZWhhdHtcXGNhbCBEXzB9X3tLXzB9Il0sWzAsMSwiXFx3aWRlaGF0e1xcY2FsIEN9X0oiXSxbMSwxLCJcXHdpZGVoYXR7XFxjYWwgQ18wfV97Sl8wfSJdLFsxLDAsIkNfe2pfMH0iLDJdLFszLDIsIkNfe2lfMH0iXSxbMCwyLCJDX3AiLDJdLFsxLDMsIkNfe3BfMH0iXSxbMSwwLCJcXHNpbWVxIiwxLHsib2Zmc2V0IjotMiwic3R5bGUiOnsiYm9keSI6eyJuYW1lIjoibm9uZSJ9LCJoZWFkIjp7Im5hbWUiOiJub25lIn19fV0sWzMsMiwiXFxzaW1lcSIsMSx7Im9mZnNldCI6Miwic3R5bGUiOnsiYm9keSI6eyJuYW1lIjoibm9uZSJ9LCJoZWFkIjp7Im5hbWUiOiJub25lIn19fV1d
\[\begin{tikzcd}
	{\widehat{\cal D}_K} & {\widehat{\cal D_0}_{K_0}} \\
	{\widehat{\cal C}_J} & {\widehat{\cal C_0}_{J_0}}
	\arrow["{C_p}"', from=1-1, to=2-1]
	\arrow["{C_{j_0}}"', from=1-2, to=1-1]
	\arrow["\simeq"{description}, shift left=2, draw=none, from=1-2, to=1-1]
	\arrow["{C_{p_0}}", from=1-2, to=2-2]
	\arrow["{C_{i_0}}", from=2-2, to=2-1]
	\arrow["\simeq"{description}, shift right=2, draw=none, from=2-2, to=2-1]
\end{tikzcd}\]

\noindent where $i_0 : (\cal C,J) \to (\cal C_0,J_0)$, the obvious inclusion, is a dense bimorphism of sites, and same for $j_0$ with $\cal D$.
\end{prop}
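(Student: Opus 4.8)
The plan is to establish the three ingredients of the statement in turn: that $p_0$ is a comorphism of sites, that the inclusions $i_0$ and $j_0$ are dense bimorphisms of sites (hence induce equivalences of toposes), and that the resulting square of geometric morphisms commutes. I would begin with the comorphism property of $p_0$, distinguishing the two kinds of objects of $\cal D_0$. For an object $d$ coming from $\cal D$, a $J_0$-covering sieve on $p_0(d)=p(d)$ is, by the very definition of $J_0$, just an ordinary $J$-covering sieve on $p(d)$; since $p$ is a comorphism of sites there is a $K$-cover $S'$ on $d$ with $p(S')\subseteq S$, and $S'$ remains $K_0$-covering. For the freely added initial object $0_{\cal D}$, the unique covering sieve on $p_0(0_{\cal D})=0_{\cal C}$ is the empty sieve, and the empty sieve on $0_{\cal D}$ is $K_0$-covering with (vacuously) empty image contained in it. Hence $p_0$ is a comorphism of sites.

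Next I would show that the inclusion $i_0:(\cal C,J)\to(\cal C_0,J_0)$ is a dense bimorphism of sites, the argument for $j_0$ being verbatim the same. The functor $i_0$ is fully faithful, and by construction $J_0$ restricts to $J$ on the objects of $\cal C$, so $i_0$ is at once cover-preserving, cover-reflecting, and a comorphism. For density one checks that every object of $\cal C_0$ carries a $J_0$-covering sieve generated by arrows out of objects of $\cal C$: an object of $\cal C$ is covered by its identity, while the only arrow into $0_{\cal C}$ is its identity, so the sieve on $0_{\cal C}$ generated by $\cal C$ is the empty sieve, which is exactly the $J_0$-cover we have declared. This is precisely the situation of Remarks 5.37(a) in \cite{denseness}, so the Comparison Lemma applies and yields that $C_{i_0}$ and $C_{j_0}$ are equivalences $\widehat{\cal C}_J\simeq\widehat{\cal C_0}_{J_0}$ and $\widehat{\cal D}_K\simeq\widehat{\cal D_0}_{K_0}$.

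For the commutation I would observe that the underlying square of functors commutes on the nose: both $p_0\circ j_0$ and $i_0\circ p$ are the functor $\cal D\to\cal C_0$ agreeing with $p$ on objects of $\cal D$. All four functors $j_0,p_0,i_0,p$ are comorphisms of sites, so by pseudofunctoriality of the assignment $F\mapsto C_F$ we obtain $C_{p_0}\circ C_{j_0}\simeq C_{i_0}\circ C_p$. Since $C_{i_0}$ and $C_{j_0}$ are equivalences, rearranging this isomorphism gives $C_p\circ C_{j_0}^{-1}\simeq C_{i_0}^{-1}\circ C_{p_0}$, which is exactly the asserted commutation of the square (the horizontal equivalences being drawn in the direction $\widehat{\cal C_0}_{J_0}\to\widehat{\cal C}_J$ and $\widehat{\cal D_0}_{K_0}\to\widehat{\cal D}_K$, i.e. as $C_{i_0}^{-1}$ and $C_{j_0}^{-1}$), and thereby the claimed Morita equivalence.

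The only genuinely delicate point I expect is the density verification at the adjoined initial object: one must be convinced that declaring the empty sieve to cover $0_{\cal C}$ (resp. $0_{\cal D}$) is precisely what makes the inclusion dense and the Comparison Lemma applicable, that is, that freely adjoining such a strict initial object leaves the topos of sheaves unchanged. Everything else is a routine unwinding of the definitions of $J_0$ and $K_0$ together with the pseudofunctoriality of $C_{(-)}$.
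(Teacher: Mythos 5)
Your proof is correct. The paper itself gives no argument for this proposition — it is stated with a \qed, deferring to Remark 5.37(a) of \cite{denseness} — and your verification (the comorphism property of $p_0$ checked separately on old objects and on the adjoined initial object, denseness of $i_0$ and $j_0$ plus the Comparison Lemma, and pseudofunctoriality of $C_{(-)}$ applied to the strictly commuting square of functors $p_0 j_0 = i_0 p$, with the horizontal equivalences of the paper's diagram read as the quasi-inverses $C_{i_0}^{-1}$, $C_{j_0}^{-1}$) is precisely the expected unwinding. One small point of precision, which does not affect correctness: since every nonempty sieve in $\cal C_0$ on an object $c$ of $\cal C$ automatically contains the unique arrow $0_{\cal C}\to c$, a $J_0$-covering sieve on $p(d)$ is not literally a $J$-covering sieve but one whose trace on $\cal C$ is $J$-covering; accordingly, the $K_0$-cover you produce should be taken to be the sieve generated in $\cal D_0$ by the $K$-cover $S'$, whose extra arrow $0_{\cal D}\to d$ is sent into $S$ for the same closure reason (and in the degenerate case where $S$ has empty trace, the empty sieve on $d$ is already $K$-covering by the comorphism property of $p$).
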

\qed

Since these new sites and this new comorphism are Morita-equivalent to the previous ones, the codomain of the functor $\xi_{p_0} : (p_0/\cal C_0) \to (\widehat{\cal D}_K/C_{p}^{\ast})$ is indeed the canonical stack we want to study. This allows to explicit a separating set:

\begin{prop}\label{xilocal}
Let $p:({\cal D}, K)\to ({\cal C}, J)$ be a comorphism of sites. 

\begin{enumerate}
    \item The objects of the form $\xi_{p_0}(d,c,u)$ form a separating family in $(\widehat{\cal D}_K/C_{p}^{\ast})$, hence it is small-generated.
    \item The comma category $(\widehat{\cal D}_K/C_{p}^{\ast})$ is a topos.
\end{enumerate}
\end{prop}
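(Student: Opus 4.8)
The plan is to establish (1) directly, using the description of epimorphic families from Proposition~\ref{epimorphismsincanonicalstack}, and then to deduce (2) from (1) by Giraud's theorem. For (1), fix an object $(F,E,\alpha : F\to C_p^*E)$. Since a family is jointly epimorphic in the comma category exactly when its two projections are jointly epimorphic (Proposition~\ref{epimorphismsincanonicalstack}), it suffices to show that the first (resp.\ second) components of all morphisms $\xi_{p_0}(d,c,u)\to (F,E,\alpha)$ are jointly epimorphic onto $F$ in $\widehat{\cal D}_K$ (resp.\ onto $E$ in $\widehat{\cal C}_J$). The second component is exactly what the augmentation $p_0$ of Proposition~\ref{ajoutinitial} is designed to handle: the objects $\xi_{p_0}(0_{\cal D},c,u)\cong(\emptyset,l_Jc,!)$, whose first component is the initial sheaf (the empty sieve covers the new initial object), receive, for each $y\in E(c)$, the morphism $(\emptyset\to F,\ ev_y:l_Jc\to E)$, commutativity being automatic because $C_p^*$ preserves the initial object. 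As $c$ and $y$ vary these second components run over all representables mapping to $E$, which are jointly epimorphic since they generate $\widehat{\cal C}_J$.

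For the first component I would, given $x\in F(d)$, examine $\alpha\circ ev_x\in (C_p^*E)(d)$ and exploit that $C_p^*E=a_K(E\circ p^{op})$: after passing to a suitable $K$-covering $\{f_i:d_i\to d\}$, each restriction $(\alpha\circ ev_x)|_{d_i}$ should be represented by a genuine section $y_i\in E(p(d_i))$ of the presheaf $E\circ p^{op}$. This produces morphisms $\xi_p(d_i,p(d_i),\id)\to (F,E,\alpha)$ whose first components are $ev_x\circ l_K(f_i)$; as the $l_K(f_i)$ are jointly epimorphic and the $ev_x$ generate $F$, the collection of first components is jointly epimorphic onto $F$. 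Smallness is then clear, since up to isomorphism there is only a set of objects $\xi_{p_0}(d,c,u)$, which proves (1).

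For (2) I would verify Giraud's axioms. The comma category is cocomplete with colimits computed on the two corner components (as already used in Proposition~\ref{epimorphismsincanonicalstack}), and it has finite limits, again computed componentwise, here using that the inverse image $C_p^*$ is cartesian; hence disjointness and universality of coproducts and effectivity of equivalence relations are inherited componentwise from the toposes $\widehat{\cal C}_J$ and $\widehat{\cal D}_K$. Together with the small separating family of (1) this makes $(\widehat{\cal D}_K/C_p^*)$ a Grothendieck topos, which is the concrete manifestation of its being the Artin gluing of the left-exact functor $C_p^*$. The hard part will be the sheafification step in the first-component argument, namely controlling when an element of $C_p^*E$ is, on a $K$-cover, represented by actual sections of $E\circ p^{op}$; closely tied to it is the observation that the plain objects $\xi_p(d,c,u)$ cannot separate on the second component—for instance they admit no morphism into an object with initial first component and nontrivial second component—which is exactly what forces the passage to the Morita-equivalent comorphism $p_0$.
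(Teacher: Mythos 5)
Your proof is correct, but it reaches the separating family by a genuinely different route than the paper. The paper argues by two successive reductions: it covers $E$ by representables $u_i : l_J(c_i) \to E$, pulls the $C_p^*(u_i)$ back along $\alpha$ to cover $F$, covers those pullbacks by representables from ${\cal D}_0$, thereby reducing to objects of the form $(l_K(d), l_J(c), \alpha)$; for these it invokes the co-Yoneda presentation $C_p^* l_J(c) \simeq \colim_{(p/c)} l_K \pi_p^c$, covers by the colimit legs, pulls back along $\alpha$ once more, and recognizes the resulting composites as objects $\xi_p(d_i, c, u \circ p(f_i))$. You instead split the problem along the two projections of Proposition~\ref{epimorphismsincanonicalstack} and never form a pullback: the second component $E$ is covered outright by the objects $\xi_{p_0}(0_{{\cal D}_0}, c, \mathbf{!}) \cong (0, l_J(c), \mathbf{!})$, and the first component $F$ is covered by objects $\xi_p(d_i, p(d_i), 1_{p(d_i)})$ (the image of $\tau_{\cal D}$), using that the unit $E \circ p^{op} \to a_K(E \circ p^{op}) = C_p^* E$ is locally surjective, so that each section $\alpha \circ ev_x$ is, on a suitable $K$-cover $(f_i : d_i \to d)_i$, induced by honest sections $y_i \in E(p(d_i))$; the commutativity of the relevant squares is then exactly the defining property of the $y_i$. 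The step you flag as ``the hard part'' is the standard local surjectivity of the sheafification unit (plus construction), so it is not a gap; note only that for the initial-component objects the square commutes because maps out of the initial object are unique, rather than because $C_p^*$ preserves it. Your route is more elementary (elementwise, with no co-Yoneda argument and no appeal to stability of epimorphic families under pullback and multi-composition), it sidesteps entirely the degenerate-pullback issue that the paper's subsequent remark must address, and it proves slightly more, namely that the subfamily of $\xi_{p_0}$-objects having either initial first component or identity structure map already separates; both arguments need the augmentation $p_0$, and you identify the correct reason. The paper's localization technique, on the other hand, is the one reused afterwards (e.g., to show that $\xi_{p_0}$ is a dense bimorphism of sites). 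Part (2) you deduce exactly as the paper does, via Giraud's axioms with componentwise limits and colimits.
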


\begin{proof}
First, it is immediate that $(\widehat{\cal D}_K/C_{p}^{\ast})$ satisfies all of Giraud's axioms a priori except for the existence of a small generating set: indeed, since arbitrary colimits and finite limits exist in $\widehat{\cal D}_K$ and $\widehat{\cal C}_J$, and since the identity on $\widehat{\cal D}_K$ and $C_p^*$ preserve them, the comma category $(\widehat{\cal D}_K /C_p^*)$ is cocomplete and finitely complete (and these finite limits and arbitrary colimits are preserved by the projections on $\widehat{\cal D}_K$ and $\widehat{\cal C}_J$). From this, it is also easy to check that, since this holds component-wise, coproducts are disjoint (monomorphisms being characterized by finite limits, they are also computed component-wise) and distribute over pullbacks in $(\widehat{\cal D}_K /C_p^*)$. Again, since equivalence relations and coequalizers are computed component-wise, and since they exist and are effective in $\widehat{\cal D}_K$ and $\widehat{\cal C}_J$, they also exist and are effective in $(\widehat{\cal D}_K /C_p^*)$. Thus, it remains only to show that $(\widehat{\cal D}_K /C_p^*)$ has a small generating set in order to conclude, by Giraud's axioms, that it is a topos. 

To this end, we begin by recalling \ref{epimorphismsincanonicalstack}: a family of arrows in $(\widehat{\cal D}_K/C_{p}^{\ast})$ is jointly epimorphic if and only if the two families of its components are jointly epimorphic in respectively $\widehat{\cal D}_K$ and $\widehat{\cal C}_J$.

Let $(F,E,\alpha : F \to C_p^*E)$ be an object of the comma. We can cover $E$ with objects coming from $\cal C_0$: $(u_i : l_J(c_i) \to E)_i$ and pullback the $C_p^*(u_i)$ along $\alpha$ to obtain moreover a covering of $E$:
% https://q.uiver.app/#q=WzAsNCxbMCwxLCJGIl0sWzAsMCwiRl9pIl0sWzEsMSwiQ19wXipFIl0sWzEsMCwiQ19wXipsX0ooY19pKSJdLFswLDIsIlxcYWxwaGEiLDJdLFsxLDAsInZfaSIsMl0sWzMsMiwiQ19wXioodV9pKSJdLFsxLDMsIlxcYWxwaGFfaSJdLFsxLDIsIiIsMSx7InN0eWxlIjp7Im5hbWUiOiJjb3JuZXIifX1dXQ==
\[\begin{tikzcd}
	{F_i} & {C_p^*l_J(c_i)} \\
	F & {C_p^*E}
	\arrow["{\alpha_i}", from=1-1, to=1-2]
	\arrow["{v_i}"', from=1-1, to=2-1]
	\arrow["\lrcorner"{anchor=center, pos=0.125}, draw=none, from=1-1, to=2-2]
	\arrow["{C_p^*(u_i)}", from=1-2, to=2-2]
	\arrow["\alpha"', from=2-1, to=2-2]
\end{tikzcd}\]

Then, we can cover the $F_i$ by some objects coming from $\cal D_0$: $(v_{ij} : l_K(d_{ij}) \to F_i)_{ij}$:

% https://q.uiver.app/#q=WzAsNixbMCwyLCJGIl0sWzAsMSwiRl9pIl0sWzEsMiwiQ19wXipFIl0sWzEsMSwiQ19wXipsX0ooY19pKSJdLFswLDAsImxfSyhkX2kpIl0sWzEsMCwiQ19wXipsX0ooY19pKSJdLFswLDIsIlxcYWxwaGEiLDJdLFsxLDAsInZfaSIsMl0sWzMsMiwiQ19wXioodV9pKSJdLFsxLDMsIlxcYWxwaGFfaSJdLFsxLDIsIiIsMSx7InN0eWxlIjp7Im5hbWUiOiJjb3JuZXIifX1dLFs0LDEsInZfe2lqfSIsMl0sWzUsMywiIiwyLHsibGV2ZWwiOjIsInN0eWxlIjp7ImhlYWQiOnsibmFtZSI6Im5vbmUifX19XSxbNCw1LCJcXGFscGhhX2l2X3tpan0iXV0=
\[\begin{tikzcd}
	{l_K(d_i)} & {C_p^*l_J(c_i)} \\
	{F_i} & {C_p^*l_J(c_i)} \\
	F & {C_p^*E}
	\arrow["{\alpha_iv_{ij}}", from=1-1, to=1-2]
	\arrow["{v_{ij}}"', from=1-1, to=2-1]
	\arrow[equals, from=1-2, to=2-2]
	\arrow["{\alpha_i}", from=2-1, to=2-2]
	\arrow["{v_i}"', from=2-1, to=3-1]
	\arrow["\lrcorner"{anchor=center, pos=0.125}, draw=none, from=2-1, to=3-2]
	\arrow["{C_p^*(u_i)}", from=2-2, to=3-2]
	\arrow["\alpha"', from=3-1, to=3-2]
\end{tikzcd}\]

Since both components $v_i v_{ij}$ and $u_i$ have been chosen to form epimorphic families, by the stability of epimorphic families under multi-composition, we are reduced to consider an object of the form $(l_K(d),l_J(c),\alpha : l_K(d) \to C_p^*l_J(c))$.

So let $(l_K(d),l_J(c),\alpha : l_K(d) \to C_p^*l_J(c))$ be an object of $(\widehat{\cal D}_K/C_{p}^{\ast})$ of this form.

Recall that, by the co-Yoneda lemma, $C_p^*l_J(c) \simeq \colim_{(p / c)} l_K\pi_p^c$ where $\pi_p^c : (p / c) \to \cal D$ is the projection functor. We have the diagrams:

% https://q.uiver.app/#q=WzAsNCxbMCwxLCJsX0soZCkiXSxbMSwxLCJDX3BeKmxfSihjKSJdLFswLDAsIkZfe1xcb3ZlcmxpbmV7ZH0sdX0iXSxbMSwwLCJsX0soXFxwaV9wXmMoXFxvdmVybGluZXtkfSx1KSkiXSxbMywxLCJhX2soZXZfdSkiXSxbMiwzXSxbMCwxLCJcXGFscGhhIiwyXSxbMiwwXSxbMiwxLCIiLDEseyJzdHlsZSI6eyJuYW1lIjoiY29ybmVyIn19XV0=
\[\begin{tikzcd}
	{F_{\overline{d},u}} & {l_K(\pi_p^c(\overline{d},u))} \\
	{l_K(d)} & {C_p^*l_J(c)}
	\arrow[from=1-1, to=1-2]
	\arrow[from=1-1, to=2-1]
	\arrow["\lrcorner"{anchor=center, pos=0.125}, draw=none, from=1-1, to=2-2]
	\arrow["{a_k(ev_u)}", from=1-2, to=2-2]
	\arrow["\alpha"', from=2-1, to=2-2]
\end{tikzcd}\]

\noindent where $ev_u$ is given by the Yoneda lemma, evaluating the identity at $u$. Because of the expression of $C_p^*l_J(c)$ as a colimit, we have that it is in particular covered by the legs of this colimit cocone. Hence, their pullbacks $F_{\overline{d},u}$ are covering $l_K(d)$. Now we can cover the $F_{\overline{d},u}$ by some $l_K(d_i)$ such that the composition of the projections of this pullback with this covering come from $\cal D$:

% https://q.uiver.app/#q=WzAsNSxbMSwyLCJsX0soZCkiXSxbMiwyLCJDX3BeKmxfSihjKSJdLFsxLDEsIkZfe1xcb3ZlcmxpbmV7ZH0sdX0iXSxbMiwxLCJsX0soXFxwaV9wXmMoXFxvdmVybGluZXtkfSx1KSkiXSxbMCwwLCJsX0soZF9pKSJdLFszLDEsImFfayhldl91KSJdLFsyLDNdLFswLDEsIlxcYWxwaGEiLDJdLFsyLDBdLFsyLDEsIiIsMSx7InN0eWxlIjp7Im5hbWUiOiJjb3JuZXIifX1dLFs0LDJdLFs0LDAsImxfSyhnX2kpIiwyXSxbNCwzLCJsX0soZl9pKSJdXQ==
\[\begin{tikzcd}
	{l_K(d_i)} \\
	& {F_{\overline{d},u}} & {l_K(\pi_p^c(\overline{d},u))} \\
	& {l_K(d)} & {C_p^*l_J(c)}
	\arrow[from=1-1, to=2-2]
	\arrow["{l_K(f_i)}", from=1-1, to=2-3]
	\arrow["{l_K(g_i)}"', from=1-1, to=3-2]
	\arrow[from=2-2, to=2-3]
	\arrow[from=2-2, to=3-2]
	\arrow["\lrcorner"{anchor=center, pos=0.125}, draw=none, from=2-2, to=3-3]
	\arrow["{a_k(ev_u)}", from=2-3, to=3-3]
	\arrow["\alpha"', from=3-2, to=3-3]
\end{tikzcd}\]

Finally, we see that $a_K(ev_u)l_K(f_i)$ is just $a_K(ev_{u\circ p(f_i)})$: the covering $(g_i)_i$ and the arrows $p(f_i)u$ give us the wished covering, as pictured:

% https://q.uiver.app/#q=WzAsNCxbMCwxLCJsX0soZCkiXSxbMSwxLCJDX3BeKmxfSihjKSJdLFswLDAsImxfSyhkX2kpIl0sWzEsMCwiQ19wXipsX0ooYykiXSxbMCwxLCJcXGFscGhhIiwyXSxbMiwwLCJsX0soZ19pKSIsMl0sWzMsMSwiIiwyLHsibGV2ZWwiOjIsInN0eWxlIjp7ImhlYWQiOnsibmFtZSI6Im5vbmUifX19XSxbMiwzLCJcXHhpX3AoZF9pLGMsdVxcY2lyYyBwKGZfaSkpIiwwLHsib2Zmc2V0IjotMn1dXQ==
\[\begin{tikzcd}
	{l_K(d_i)} & {C_p^*l_J(c)} \\
	{l_K(d)} & {C_p^*l_J(c)}
	\arrow["{\xi_p(d_i,c,u\circ p(f_i))}", shift left=2, from=1-1, to=1-2]
	\arrow["{l_K(g_i)}"', from=1-1, to=2-1]
	\arrow[equals, from=1-2, to=2-2]
	\arrow["\alpha"', from=2-1, to=2-2]
\end{tikzcd}\]

Hence, every object $(F,E,\alpha)$ can be covered by objects coming from $(p_0 / {\cal C}_0)$ for the canonical topology: finally, $(\widehat{\cal D}_K /C_p^*)$ is a topos.
\end{proof}

\begin{remark}
We must be careful about a particular case in this proof, which motivates the introduction of the slightly augmented categories $\cal D_0$ and $\cal C_0$. Indeed, it is important to observe that, in a general topos, the initial object is obtained as a colimit of generators by taking the empty diagram.

In the proof, we covered our object $E$ with generators as $(u_i : l_J(c_i) \to E)_i$, and then we formed the pullbacks:

\[\begin{tikzcd}
	{F_i} & {C_p^*l_J(c_i)} \\
	F & {C_p^*E}
	\arrow["{\alpha_i}", from=1-1, to=1-2]
	\arrow["{v_i}"', from=1-1, to=2-1]
	\arrow["\lrcorner"{anchor=center, pos=0.125}, draw=none, from=1-1, to=2-2]
	\arrow["{C_p^*(u_i)}", from=1-2, to=2-2]
	\arrow["\alpha"', from=2-1, to=2-2]
\end{tikzcd}\]

We stated that we can further cover the $F_i$ with objects coming from the site $(\cal D_0,K_0)$. It may happen, however, that one of these pullbacks is the initial object:

\[\begin{tikzcd}
	0 & E \\
	{C_p^*l_J(c_i)} & {C_p^*F}
	\arrow[from=1-1, to=1-2]
	\arrow[from=1-1, to=2-1]
	\arrow["\lrcorner"{anchor=center, pos=0.125}, draw=none, from=1-1, to=2-2]
	\arrow["\alpha", from=1-2, to=2-2]
	\arrow["{C_p^*(u_i)}"', from=2-1, to=2-2]
\end{tikzcd}\]

There is an obstruction to covering an object of the form $(0, l_J(c), \mathbf{!}_{C_p^*l_J(c)})$ in $(\widehat{\cal D}_K/C_p^*)$ with objects simply coming from $(p/\cal C)$: one might be tempted to consider the empty diagram, but it would not cover the object $l_J(c)$ (a covering in $(\widehat{\cal D}_K/C_p^*)$ must be epimorphic on \emph{both} components), and the initial object $0$ does not in general come from the site, making impossible to have any arrow $l_K(d) \to 0$ to cover it on the first component. This is the reason we need to make the initial object existing already at the site-level: for any object of the form $(0, l_J(c), \mathbf{!}_{C_p^*l_J(c)})$, it provides the following (trivial, since it is just an isomorphism) covering with an object from $(p_0/\cal C_0)$:

% https://q.uiver.app/#q=WzAsNCxbMCwxLCJDX3BeKmxfSihjKSJdLFsxLDAsIjAiXSxbMCwwLCJsX0soMCkiXSxbMSwxLCJDX3BeKmxfSihjKSJdLFsyLDEsIlxcc2ltZXEiXSxbMiwwLCJcXHhpX3twXzB9KDBfe1xcY2FsIER9LGMsXFxtYXRoYmZ7IX1fYykiLDJdLFsxLDMsIlxcbWF0aGJmeyF9X3tDX3BeKmxfaihjKX0iXSxbMCwzLCIiLDIseyJsZXZlbCI6Miwic3R5bGUiOnsiaGVhZCI6eyJuYW1lIjoibm9uZSJ9fX1dXQ==
\[\begin{tikzcd}
	{l_{K_0}(0_{\cal D_0})} & 0 \\
	{C_p^*l_J(c)} & {C_p^*l_J(c)}
	\arrow["\simeq", from=1-1, to=1-2]
	\arrow["{\xi_{p_0}(0_{\cal D},c,\mathbf{!}_c)}"', from=1-1, to=2-1]
	\arrow["{\mathbf{!}_{C_p^*l_j(c)}}", from=1-2, to=2-2]
	\arrow[equals, from=2-1, to=2-2]
\end{tikzcd}\]
\end{remark}

The objects coming from $(p_0/\cal C_0)$ forming a generating set for the topos $(\widehat{\cal D}_K/C_p^*)$, this category is a suitable candidate for a site representation of this topos. In the following proposition, we establish the existence of an appropriate topology:

\begin{prop}
Let $p : ({\cal D},K) \to ({\cal C},J)$ be a comorphism of sites. We have a topology $(K_0 / J_0)$ on $(p_0 / \cal C_0)$ for which the covering sieves are those $S$ such that their projections on the two categories $\cal D$ and $\cal C$ are coverings for $K_0$ and $J_0$.
\end{prop}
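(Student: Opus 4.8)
The plan is to verify directly that $(K_0/J_0)$ satisfies the three axioms of a Grothendieck topology, reducing each one to the corresponding property of $K_0$ on $\cal D_0$ through the projection $\pi_{\cal D} : (p_0/\cal C_0) \to \cal D_0$, and of $J_0$ on $\cal C_0$ through $\pi_{\cal C} : (p_0/\cal C_0) \to \cal C_0$. A preliminary point, needed for the definition to even make sense, is that the projections $\pi_{\cal D}(S)$ and $\pi_{\cal C}(S)$ of a sieve $S$ on $(d,c,u)$ are again sieves on $d$ and $c$. For $\pi_{\cal D}$ this is seen by precomposing a given element with the tautological arrow $(h, u\circ p_0(h)) : (d_h, p_0(d_h), \id) \to (d,c,u)$, which realizes an arbitrary $\cal D$-component $h$; this is exactly the cartesian-lift construction witnessing that $\pi_{\cal D}$ is a fibration. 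For $\pi_{\cal C}$ one cannot proceed symmetrically, since $p_0$ only produces arrows in one direction; here the strict initial object added in Proposition \ref{ajoutinitial} is essential, as the arrows out of $0_{\cal D_0}$ allow one to realize an arbitrary $\cal C$-component $k$ by an arrow $(0, k) : (0_{\cal D_0}, c_k, !) \to (d,c,u)$.

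Maximality and transitivity are then essentially formal. For maximality, the two constructions above show that $\pi_{\cal D}$ and $\pi_{\cal C}$ send the maximal sieve on $(d,c,u)$ to the maximal sieves on $d$ and $c$, which cover. For transitivity, given a covering $S$ and a sieve $R$ with $\psi^* R$ covering for every $\psi \in S$, I would use the inclusion $\pi_{\cal D}(\psi^* R) \subseteq \pi_{\cal D}(\psi)^*\,\pi_{\cal D}(R)$ — which holds for any functor and follows immediately from the definitions — to see that $\pi_{\cal D}(\psi)^*\pi_{\cal D}(R)$ covers for every $\psi \in S$; since every arrow of $\pi_{\cal D}(S)$ is of the form $\pi_{\cal D}(\psi)$, transitivity of $K_0$ gives that $\pi_{\cal D}(R)$ covers $d$. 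The identical argument with $\pi_{\cal C}$ gives that $\pi_{\cal C}(R)$ covers $c$, so $R$ is $(K_0/J_0)$-covering.

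The main obstacle is stability under pullback, because the inclusion just used goes the wrong way: for $\psi = (f,g) : (d',c',u') \to (d,c,u)$ and $S$ covering, the naive containment only yields $\pi_{\cal D}(\psi^* S) \subseteq f^*\pi_{\cal D}(S)$, whereas I need the reverse inclusion to conclude that $\pi_{\cal D}(\psi^* S)$ covers $d'$. I would prove the reverse inclusion by an explicit lifting: given $h$ with $fh \in \pi_{\cal D}(S)$, witnessed by some $s = (fh, m) : (d_h, c_s, u_s) \to (d,c,u)$ in $S$, set $\rho = (h, u'\circ p_0(h)) : (d_h, p_0(d_h), \id) \to (d',c',u')$ and check that $\psi \circ \rho$ factors through $s$ inside $S$; the required identity $m\circ u_s = g\circ u' \circ p_0(h)$ is forced by the two commuting validity squares of $s$ and $\psi$, both sides being equal to $u\circ p_0(fh)$, so $\psi\circ\rho \in S$ and hence $h \in \pi_{\cal D}(\psi^* S)$. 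This gives $f^*\pi_{\cal D}(S) \subseteq \pi_{\cal D}(\psi^* S)$, so the latter covers by stability of $K_0$. For the $\cal C$-component the strict initial object does the decoupling once more: given $k$ with $gk \in \pi_{\cal C}(S)$ witnessed by $t \in S$, the arrow $\rho = (0, k) : (0_{\cal D_0}, c_k, !) \to (d',c',u')$ satisfies $\psi\circ\rho \in S$ automatically, since any two arrows out of $0$ agree, whence $g^*\pi_{\cal C}(S) \subseteq \pi_{\cal C}(\psi^* S)$ and the latter covers $c'$. With all three axioms verified, $(K_0/J_0)$ is a Grothendieck topology; I expect the bookkeeping of the validity squares in the stability step, together with the careful use of the initial object to handle the $\cal C$-component, to be the only genuinely delicate parts.
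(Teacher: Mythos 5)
Your proof is correct, and it takes a genuinely different route from the paper's on the one step that is actually delicate. The preliminary point (projections of sieves are again sieves), maximality, and transitivity are treated by the paper as immediate, and your arguments for them are the correct fleshing-out; moreover, both you and the paper handle the $\cal C$-component of pullback stability by the same initial-object decoupling. The real divergence is the $\cal D$-component of stability: the paper pulls the covering $(w_{ij})_{ij}$ back along $u' : p_0(d') \to c'$ and invokes the comorphism property of $p_0$ to lift the result to a $K_0$-covering $(v_k)_k$ of $d'$, whereas you prove the containment $f^*\pi_{\cal D}(S) \subseteq \pi_{\cal D}(\psi^*S)$ directly via tautological arrows and conclude by stability of $K_0$ alone. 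Your route buys two things. First, economy of hypotheses: nowhere in your verification of the three axioms is the comorphism property of $p$ used, so your argument in fact establishes the statement for an arbitrary functor $p$. Second, rigor at the crucial point: membership of an arrow in the pullback sieve $\psi^*S$ requires that its composite with $\psi$ lie in $S$, and for the paper's lifted arrows this amounts to factoring $(fv_k,\, gw_{ij})$ through an arrow of $S$ — a factorization that requires an arrow $d'_k \to d_i$ over $v_i$ which the construction of the $v_k$ (produced from $u'$ and the $w_{ij}$, with no relation to $\pi_{\cal D}(S)$) does not supply. Your computation, in which both sides of the required identity $m \circ u_s = g \circ u' \circ p_0(h)$ reduce to $u \circ p_0(fh)$, is exactly the missing certificate of membership, and it actually yields the sharper equality $\pi_{\cal D}(\psi^*S) = f^*\pi_{\cal D}(S)$. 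One small correction to a side remark: the tautological arrow $(h,\, u \circ p_0(h)) : (d_h, p_0(d_h), 1_{p_0(d_h)}) \to (d,c,u)$ is not the cartesian lift of $h$ for $\pi_{\cal D}$ — that lift is $(h, 1_c) : (d_h, c, u \circ p_0(h)) \to (d,c,u)$ — but since your argument only needs the tautological arrow to exist and be well defined, not to be cartesian, this affects only the parenthetical comment and not the proof.
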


\begin{proof}
The maximality and transitivity axioms are immediately verified in virtue of $K$ and $J$ satisfying them. 

For the pullback stability: let $(v_i,w_i)_i$ be a covering on an object $(d,c,u)$ of $(p_0 / \cal C_0)$, and $(f,g) : (d',c',u') \to (d,c,u)$ an arrow along which we want to pull it back. The situation is depicted in the following diagram:

% https://q.uiver.app/#q=WzAsNixbMSwxLCJwXzAoZCkiXSxbMiwxLCJjIl0sWzEsMCwicF8wKGRfaSkiXSxbMiwwLCJjX2kiXSxbMCwyLCJwXzAoZCcpIl0sWzEsMiwiYyciXSxbMCwxLCJ1Il0sWzUsMSwiZyIsMl0sWzQsMCwicF8wKGYpIl0sWzQsNSwidSciLDJdLFsyLDAsInBfMCh2X2kpIiwyXSxbMywxLCJ3X2kiXSxbMiwzLCJ1X2kiXV0=
\[\begin{tikzcd}
	& {p_0(d_i)} & {c_i} \\
	& {p_0(d)} & c \\
	{p_0(d')} & {c'}
	\arrow["{u_i}", from=1-2, to=1-3]
	\arrow["{p_0(v_i)}"', from=1-2, to=2-2]
	\arrow["{w_i}", from=1-3, to=2-3]
	\arrow["u", from=2-2, to=2-3]
	\arrow["{p_0(f)}", from=3-1, to=2-2]
	\arrow["{u'}"', from=3-1, to=3-2]
	\arrow["g"', from=3-2, to=2-3]
\end{tikzcd}\]

Since the $w_i$ are covering for $J_0$, we can pull them back along $g$ in order to obtain a $J_0$-covering $(w_{ij})_{ij}$ of $c'$, as depicted:

% https://q.uiver.app/#q=WzAsNyxbMiwyLCJwXzAoZCkiXSxbMywyLCJjIl0sWzIsMSwicF8wKGRfaSkiXSxbMywxLCJjX2kiXSxbMCwzLCJwXzAoZCcpIl0sWzEsMywiYyciXSxbMSwwLCJjX3tpan0iXSxbMCwxLCJ1Il0sWzUsMSwiZyIsMSx7ImxhYmVsX3Bvc2l0aW9uIjo3MH1dLFs0LDAsInBfMChmKSIsMSx7ImxhYmVsX3Bvc2l0aW9uIjo3MH1dLFs0LDUsInUnIiwyXSxbMiwwLCJwXzAodl9pKSIsMl0sWzMsMSwid19pIl0sWzIsMywidV9pIiwyXSxbNiwzLCJnX3tpan0iXSxbNiw1LCJ3X3tpan0iLDFdXQ==
\[\begin{tikzcd}
	& {c_{ij}} \\
	&& {p_0(d_i)} & {c_i} \\
	&& {p_0(d)} & c \\
	{p_0(d')} & {c'}
	\arrow["{g_{ij}}", from=1-2, to=2-4]
	\arrow["{w_{ij}}"{description}, from=1-2, to=4-2]
	\arrow["{u_i}"', from=2-3, to=2-4]
	\arrow["{p_0(v_i)}"', from=2-3, to=3-3]
	\arrow["{w_i}", from=2-4, to=3-4]
	\arrow["u", from=3-3, to=3-4]
	\arrow["{p_0(f)}"{description, pos=0.7}, from=4-1, to=3-3]
	\arrow["{u'}"', from=4-1, to=4-2]
	\arrow["g"{description, pos=0.7}, from=4-2, to=3-4]
\end{tikzcd}\]

Here, we can already notice that the projection onto $\cal C_0$ of the pullback of the covering $(v_i,w_i)_i$ along $(f,g)$ is covering for $J_0$; indeed, all the objects $(0_{\cal D_0},c_{ij},\mathbf{!}_{c_{ij}})$ belong in it, giving us that every arrow $w_{ij}$ is in its projection onto $\cal C_0$. This is depicted in the following diagram:

% https://q.uiver.app/#q=WzAsOCxbMiwyLCJwXzAoZCkiXSxbMywyLCJjIl0sWzIsMSwicF8wKGRfaSkiXSxbMywxLCJjX2kiXSxbMCwzLCJwXzAoZCcpIl0sWzEsMywiYyciXSxbMSwwLCJjX3tpan0iXSxbMCwwLCJwXzAoMF97XFxjYWwgRH0pXFxzaW1lcTBfe1xcY2FsIEN9Il0sWzAsMSwidSJdLFs1LDEsImciLDEseyJsYWJlbF9wb3NpdGlvbiI6NzB9XSxbNCwwLCJwXzAoZikiLDEseyJsYWJlbF9wb3NpdGlvbiI6NzB9XSxbNCw1LCJ1JyIsMl0sWzIsMCwicF8wKHZfaSkiLDJdLFszLDEsIndfaSJdLFsyLDMsInVfaSIsMl0sWzYsMywiZ197aWp9Il0sWzYsNSwid197aWp9IiwxXSxbNyw2LCJcXG1hdGhiZnshfV97Y197aWp9fSJdLFs3LDQsInBfMChcXG1hdGhiZnshfV97ZCd9KSIsMV0sWzcsMiwicF8wKFxcbWF0aGJmeyF9X3tkX2l9KSIsMSx7ImxhYmVsX3Bvc2l0aW9uIjozMH1dXQ==
\[\begin{tikzcd}
	{p_0(0_{\cal D})\simeq0_{\cal C}} & {c_{ij}} \\
	&& {p_0(d_i)} & {c_i} \\
	&& {p_0(d)} & c \\
	{p_0(d')} & {c'}
	\arrow["{\mathbf{!}_{c_{ij}}}", from=1-1, to=1-2]
	\arrow["{p_0(\mathbf{!}_{d_i})}"{description, pos=0.3}, from=1-1, to=2-3]
	\arrow["{p_0(\mathbf{!}_{d'})}"{description}, from=1-1, to=4-1]
	\arrow["{g_{ij}}", from=1-2, to=2-4]
	\arrow["{w_{ij}}"{description}, from=1-2, to=4-2]
	\arrow["{u_i}"', from=2-3, to=2-4]
	\arrow["{p_0(v_i)}"', from=2-3, to=3-3]
	\arrow["{w_i}", from=2-4, to=3-4]
	\arrow["u", from=3-3, to=3-4]
	\arrow["{p_0(f)}"{description, pos=0.7}, from=4-1, to=3-3]
	\arrow["{u'}"', from=4-1, to=4-2]
	\arrow["g"{description, pos=0.7}, from=4-2, to=3-4]
\end{tikzcd}\]

There remains to see that the pullback of the covering $(v_i,w_i)_i$ along $(f,g)$ also has for projection onto $\cal D_0$ a covering for $K_0$. We can pull back the covering $(w_{ij})_{ij}$ previously obtained along $u' : p_0(d') \to c'$ in order to obtain a covering of $p_0(d')$; but $p_0$ is a comorphism of sites, so we can lift it into a covering $(v_k)_k$ for $K_0$ in $\cal D_0$:

% https://q.uiver.app/#q=WzAsOCxbMiwyLCJwXzAoZCkiXSxbMywyLCJjIl0sWzIsMSwicF8wKGRfaSkiXSxbMywxLCJjX2kiXSxbMCwzLCJwXzAoZCcpIl0sWzEsMywiYyciXSxbMSwwLCJjX3tpan0iXSxbMCwwLCJwXzAoZCdfaykiXSxbMCwxLCJ1Il0sWzUsMSwiZyIsMSx7ImxhYmVsX3Bvc2l0aW9uIjo3MH1dLFs0LDAsInBfMChmKSIsMSx7ImxhYmVsX3Bvc2l0aW9uIjo3MH1dLFs0LDUsInUnIiwyXSxbMiwwLCJwXzAodl9pKSIsMl0sWzMsMSwid19pIl0sWzIsMywidV9pIiwyXSxbNiwzLCJnX3tpan0iXSxbNiw1LCJ3X3tpan0iLDFdLFs3LDZdLFs3LDQsInBfMCh2X2spIiwxXSxbNywyXV0=
\[\begin{tikzcd}
	{p_0(d'_k)} & {c_{ij}} \\
	&& {p_0(d_i)} & {c_i} \\
	&& {p_0(d)} & c \\
	{p_0(d')} & {c'}
	\arrow[from=1-1, to=1-2]
	\arrow[from=1-1, to=2-3]
	\arrow["{p_0(v_k)}"{description}, from=1-1, to=4-1]
	\arrow["{g_{ij}}", from=1-2, to=2-4]
	\arrow["{w_{ij}}"{description}, from=1-2, to=4-2]
	\arrow["{u_i}"', from=2-3, to=2-4]
	\arrow["{p_0(v_i)}"', from=2-3, to=3-3]
	\arrow["{w_i}", from=2-4, to=3-4]
	\arrow["u", from=3-3, to=3-4]
	\arrow["{p_0(f)}"{description, pos=0.7}, from=4-1, to=3-3]
	\arrow["{u'}"', from=4-1, to=4-2]
	\arrow["g"{description, pos=0.7}, from=4-2, to=3-4]
\end{tikzcd}\]

This ensures that the projection onto $\cal D_0$ of the pullback of the $(v_i,w_i)$ contains a covering for $K_0$.

Hence, the pullback stability is satisfied for $(K_0/J_0)$, so that it is indeed a topology.
\end{proof}

With this definition, we can express a site presentation for the canonical relative site as a topos:

\begin{prop}
Let $p : ({\cal D},K) \to ({\cal C},J)$ be a comorphism of sites. We have $(\widehat{\cal D}_K /C_p^*) \simeq \widehat{(p_0 / {\cal C}_0)}_{(K_0 / J_0)}$, induced by the dense bimorphism (morphism and comorphism) of sites $ \xi_{p_0}: ((p_0/{\cal C}_0),(K_0/J_0)) \to ((\widehat{\cal D}_K/C_{p}^{\ast}),J_{(\widehat{\cal D}_K/C_{p}^{\ast})}^{can})$.
\end{prop}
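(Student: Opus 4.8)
The plan is to recognize the statement as an instance of the Comparison Lemma. Having already established in \ref{xilocal} that $\mathcal T := (\widehat{\cal D}_K/C_p^*)$ is a Grothendieck topos and that the objects $\xi_{p_0}(d,c,u)$ form a small separating family in it, the only thing left is to identify the topology that the canonical topology $J^{can}_{\mathcal T}$ induces on $(p_0/\cal C_0)$ through $\xi_{p_0}$ with the topology $(K_0/J_0)$; then I would invoke the standard fact that a functor from a small category onto a separating family of a topos exhibits that topos as the category of sheaves on its domain for the induced topology (this is exactly the framework of dense morphisms of sites towards the canonical topology from \cite{denseness}, and does not require $\xi_{p_0}$ to be full or faithful, only that its image be separating).

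First I would make precise the induced topology $J_{\xi_{p_0}}$ on $(p_0/\cal C_0)$: its covering sieves are those sieves $S$ whose image $\xi_{p_0}(S)$ is an epimorphic family in $\mathcal T$ (equivalently, generates a $J^{can}_{\mathcal T}$-covering sieve). By \ref{epimorphismsincanonicalstack}, a family in $\mathcal T$ is epimorphic precisely when both of its projections onto $\widehat{\cal D}_K$ and $\widehat{\cal C}_J$ are epimorphic. Since $\xi_{p_0}(d,c,u)=(l_{K_0}d,\,l_{J_0}c,\,\dots)$, these two projections are $l_{K_0}$ and $l_{J_0}$ applied to the projections of $S$ onto $\cal D_0$ and $\cal C_0$. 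I would then use the classical fact that $l_{K_0}$ (resp. $l_{J_0}$) sends a family to an epimorphic family \emph{if and only if} that family is $K_0$-covering (resp. $J_0$-covering): both implications follow from $a_{K_0}$ being left exact and cocontinuous, so that the image of $\coprod l_{K_0}(d_i)\to l_{K_0}(d)$ is the sheafification of the generated sieve, which is the whole representable exactly when the sieve is $K_0$-dense. Hence $\xi_{p_0}(S)$ is epimorphic iff both projections of $S$ are covering for $K_0$ and $J_0$ respectively, i.e. iff $S$ is $(K_0/J_0)$-covering, so $J_{\xi_{p_0}}=(K_0/J_0)$. In particular $\xi_{p_0}$ is a morphism of sites towards $(\mathcal T,J^{can}_{\mathcal T})$, since $(K_0/J_0)$-covers go to epimorphic, hence canonical-covering, families.

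The Morita-equivalence \ref{ajoutinitial} is what licenses replacing $p$ by $p_0$ throughout: it identifies $\widehat{\cal D_0}_{K_0}\simeq\widehat{\cal D}_K$ and $\widehat{\cal C_0}_{J_0}\simeq\widehat{\cal C}_J$ compatibly with the comorphisms, so that covering for $K_0$ (resp. $J_0$) agrees with covering for $K$ (resp. $J$) on non-initial objects, while the freely added initial objects carry the empty covering sieve. This is precisely the case singled out in the preceding remark, handled by the (iso) covering $\xi_{p_0}(0_{\cal D_0},c,\mathbf{!}_c)$ of an object $(0,l_Jc,\mathbf{!}_{C_p^*l_Jc})$, which could not be covered from $(p/\cal C)$ alone. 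With the induced topology identified as $(K_0/J_0)$, the Comparison Lemma yields the equivalence $\widehat{(p_0/\cal C_0)}_{(K_0/J_0)}\simeq\mathcal T=(\widehat{\cal D}_K/C_p^*)$ realized by $\xi_{p_0}$; and being an equivalence realized through the canonical-topology comparison, $\xi_{p_0}$ is automatically the announced dense bimorphism of sites (the comorphism direction amounting to refining any epimorphic family into $\xi_{p_0}(d,c,u)$ by the images of a $(K_0/J_0)$-cover, which is exactly the covering construction carried out in the proof of \ref{xilocal}).

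I expect the main obstacle to be the bookkeeping in the topology-identification step: establishing the two-sided correspondence between covering families and epimorphic images (so that epimorphicity of $\xi_{p_0}(S)$ is \emph{equivalent} to, not merely implied by, the covering condition on both components), together with tracking the role of the added initial objects so that the delicate object $(0,l_Jc,\mathbf{!})$ highlighted in the remark is genuinely covered from $(p_0/\cal C_0)$. Once this equivalence between coverings and epimorphic families is in hand, the appeal to the Comparison Lemma is purely formal.
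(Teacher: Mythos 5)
Your two preparatory steps (density of the image via \ref{xilocal}, and the identification of the topology induced on $(p_0/{\cal C}_0)$ by the canonical topology of $(\widehat{\cal D}_K/C_p^*)$ with $(K_0/J_0)$, using \ref{epimorphismsincanonicalstack} together with the fact that a sieve has epimorphic $l_{K_0}$-image exactly when it is $K_0$-covering) are sound, and they coincide with the first part of the paper's proof (density and cover-reflection). The genuine gap is your parenthetical claim that the conclusion ``does not require $\xi_{p_0}$ to be full or faithful, only that its image be separating''. That claim is false, and it is exactly where the remaining mathematical content sits. A functor $F$ from a small category into a Grothendieck topos with separating image, equipped with the induced topology $J_F$ (sieves whose image is epimorphic), need \emph{not} exhibit that topos as sheaves on its domain. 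Concretely: let ${\cal A}$ be the discrete category on two objects $a,b$ and let $F : {\cal A} \to \mathbf{Set}$ send both objects to the singleton $1$. The image $\{1\}$ is separating in $\mathbf{Set}$, and the induced topology is the trivial one (the empty sieve has non-epimorphic image), so sheaves on $({\cal A},J_F)$ form $\mathbf{Set} \times \mathbf{Set}$, which is not equivalent to $\mathbf{Set}$. What fails is precisely \emph{local} fullness: the arrow $1 \to 1$, read as an arrow $F(a) \to F(b)$, does not factor, even after passing to a cover of $a$, through arrows in the image of $F$.

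Accordingly, the dense-morphism-of-sites theorem of \cite{denseness} that you invoke has among its hypotheses that the functor be $(K_0/J_0)$-full and $(K_0/J_0)$-faithful (fullness and faithfulness up to covering), besides covering-dense, cover-preserving, cover-reflecting and cover-lifting; the first two cannot be dropped, and for $\xi_{p_0}$ they are not vacuous: an arrow $\xi_{p_0}(d,c,u) \to \xi_{p_0}(d',c',u')$ in the comma category is an arbitrary pair of arrows of sheaf toposes compatible with $u$ and $u'$, and such a pair only \emph{locally} comes from an arrow of $(p_0/{\cal C}_0)$. This is exactly the part of the paper's proof that your proposal skips: after density and cover-reflection, the paper verifies, by the same localization techniques as in the proof of \ref{xilocal}, that $\xi_{p_0}$ is $(K_0/J_0)$-full, $(K_0/J_0)$-faithful and cover-lifting, and only then concludes. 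To repair your argument, keep your two steps and add these verifications (for instance, given an arrow $(g,h)$ between objects in the image of $\xi_{p_0}$, cover its domain by generators as in \ref{xilocal} so as to write it, on a $(K_0/J_0)$-covering, as $\xi_{p_0}$ of arrows of $(p_0/{\cal C}_0)$, and argue similarly for local faithfulness); your sketch of the comorphism (cover-lifting) direction in the last paragraph is fine as it stands. Only once all of these conditions are in hand does the appeal to the comparison lemma become formal.
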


\begin{proof}
By \ref{xilocal} we know that every object of the target topos is covered by objects coming from $\xi_{p_0}$, so that it is $J_{(\widehat{\cal D}_K/C_{p}^{\ast})}^{can}$-dense. It is immediate that $\xi_{p_0}$ reflects coverings, since they are, for both topologies, those covering on the two components. Using the same kind of techniques of localization as in the proof of \ref{xilocal}, it is straightforward to verify that $\xi_{p_0}$ also is $(K_0 / J_0)$-full, $(K_0 / J_0)$-faithful, and cover-lifting; hence it is a dense bimorphism of sites, inducing the wished equivalence.
\end{proof}

The following proposition is an explicit description of this equivalence when the topologies are trivial, yielding a clear correspondence between sheaves on $(p_0/\cal C_0)$ and the topos $(\widehat{\cal D}/(-\circ p))$. The proof is lengthy but straightforward.

\begin{prop}
Let $p : \cal D \to \cal C$ be a functor. We denote the trivial topologies on $\cal C$ and $\cal D$ by $M^{\cal C}$ and $M^{\cal D}$. The topologies $M^{\cal C}_0$ and $M^{\cal D}_0$ on $\cal C_0$ and $\cal D_0$ are just the trivial topologies where we added the empty sieves as coverings for the initial objects. The equivalence $\widehat{(p_0/\cal C_0)}_{(M^{\cal D}_0/M^{\cal C}_0)} \simeq (\widehat{\cal D}/(-\circ p))$ is given by:

\begin{enumerate}
    \item For the first direction: to an object $(F,E, \alpha : F \to E\circ p)$ of the comma $(\widehat{\cal D} /(-\circ p))$, we associate the sheaf $\overline{\alpha}$ on $((p_0/ {\cal C}_0),(M^{\cal D}_0/M^{\cal C}_0))$ sending an object $(d,c,u)$ to the following pullback:

        %           https://q.uiver.app/#q=WzAsNCxbMCwxLCJFKGMpIl0sWzEsMSwiRXAoZCkiXSxbMSwwLCJGKGQpIl0sWzAsMCwiXFxvdmVybGluZXtcXGFscGhhfVt1XSJdLFsyLDEsIlxcYWxwaGFfZCJdLFswLDEsIkUodSkiLDJdLFszLDBdLFszLDJdLFszLDEsIiIsMSx7InN0eWxlIjp7Im5hbWUiOiJjb3JuZXIifX1dXQ==
    \[\begin{tikzcd}
	{\overline{\alpha}[u]} & {F(d)} \\
	{E(c)} & {Ep(d)}
	\arrow[from=1-1, to=1-2]
	\arrow[from=1-1, to=2-1]
	\arrow["\lrcorner"{anchor=center, pos=0.125}, draw=none, from=1-1, to=2-2]
	\arrow["{\alpha_d}", from=1-2, to=2-2]
	\arrow["{E(u)}"', from=2-1, to=2-2]
    \end{tikzcd}\]
    \item For the second direction: to a sheaf $Q$ on $((p_0/ {\cal C}_0),(M^{\cal D}_0/M^{\cal C}_0))$, we associate the triplet $(Q[-,p(-),1_{p(-)}],Q[0_{\cal D_0},-,\mathbf{!}_{-}],\alpha_Q)$ where the component at an object $d$ of $\cal D$ of the natural transformation $\alpha_Q^d$ is defined with the help of the action of $Q$ by: $Q(\mathbf{!}_d,1_{p(d)}) : Q[d,p(d),1_{p(d)}] \to Q[0_{\cal D_0},p(d),\mathbf{!}_{p(d)}]$.
\end{enumerate}
\end{prop}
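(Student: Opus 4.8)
The plan is to verify directly that the two assignments of the statement define mutually quasi-inverse functors $\Phi : (\widehat{\cal D}/(-\circ p)) \to \widehat{(p_0/\cal C_0)}_{(M^{\cal D}_0/M^{\cal C}_0)}$ (sending $(F,E,\alpha)$ to $\overline{\alpha}$) and $\Psi$ in the opposite direction; since the preceding proposition already gives the equivalence abstractly via $\xi_{p_0}$, one could instead identify these explicit functors with it, but a hands-on verification is cleaner. Throughout I extend a presheaf $F$ on $\cal D$ (resp.\ $E$ on $\cal C$) to $\cal D_0$ (resp.\ $\cal C_0$) by setting its value at the adjoined initial object to be terminal; under the Morita equivalence $\widehat{\cal D_0}_{M^{\cal D}_0}\simeq\widehat{\cal D}$ this is exactly the passage to the associated $M^{\cal D}_0$-sheaf, and it makes $\overline{\alpha}[0_{\cal D_0},c,\mathbf{!}_c]=E(c)$ and $\overline{\alpha}[d,p(d),1_{p(d)}]=F(d)$ hold on the nose. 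First I would check that $\overline{\alpha}$ is functorial in $(d,c,u)$: a morphism $(f,g)$ of $(p_0/\cal C_0)$ satisfies the comma identity relating $u,u',p(f),g$, and combining this with naturality of $\alpha$ yields a compatible cone into the pullback defining $\overline{\alpha}$, hence a transition map. Functoriality of $\Psi$ is immediate, as $F=Q[-,p(-),1_{p(-)}]$ and $E=Q[0_{\cal D_0},-,\mathbf{!}_{-}]$ are precompositions of $Q$ with the two evident functors $\cal D\to(p_0/\cal C_0)$ and $\cal C\to(p_0/\cal C_0)$, and $\alpha_Q$ is natural because $Q$ is.

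The structural fact underlying everything is a description of the covers. I would first show that the minimal $(M^{\cal D}_0/M^{\cal C}_0)$-covering sieve of an object $(d,c,u)$, with $d,c$ non-initial, is the one generated by the two arrows $(1_d,u):(d,p(d),1_{p(d)})\to(d,c,u)$ and $(\mathbf{!}_d,1_c):(0_{\cal D_0},c,\mathbf{!}_c)\to(d,c,u)$: any covering sieve $S$ has maximal projections on both factors, and a short sieve-closure argument forces both arrows into $S$. A direct computation then identifies the fibre product of these two arrows in $(p_0/\cal C_0)$ as $(0_{\cal D_0},p(d),\mathbf{!}_{p(d)})$, the point being that the only arrow with the adjoined initial object as codomain is its identity, which pins the $\cal D$-component of the fibre product to $0_{\cal D_0}$. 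Consequently the sheaf axiom for this cover reads, for any sheaf $Q$, as the pullback $Q[d,c,u]\cong Q[d,p(d),1_{p(d)}]\times_{Q[0_{\cal D_0},p(d),\mathbf{!}_{p(d)}]}Q[0_{\cal D_0},c,\mathbf{!}_c]$, while the empty cover of the genuine initial object $(0_{\cal D_0},0_{\cal C_0},\mathrm{id})$ forces the value there to be terminal.

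With this in hand both round-trips are short. That $\overline{\alpha}$ is a sheaf follows because descent for the generating covers above is literally the pullback square defining $\overline{\alpha}$, and its value at $(0_{\cal D_0},0_{\cal C_0},\mathrm{id})$ is $1\times_1 1\cong 1$; the reduction of an arbitrary cover to these minimal ones is the standard reduction to a generating family of covers, using that the pullback of a minimal cover again refines the minimal cover of the new object. For $\Psi\Phi\cong\mathrm{id}$, evaluating $\overline{\alpha}$ on the two embeddings gives $\overline{\alpha}[d,p(d),1_{p(d)}]\cong F(d)$ (a pullback along an identity leg) and $\overline{\alpha}[0_{\cal D_0},c,\mathbf{!}_c]=1\times_{E(0_{\cal C_0})}E(c)\cong E(c)$, and one checks that $\overline{\alpha}(\mathbf{!}_d,1_{p(d)})$ recovers $\alpha$. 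For $\Phi\Psi\cong\mathrm{id}$, the sheaf-axiom pullback displayed above says precisely $\overline{\alpha_Q}[d,c,u]\cong Q[d,c,u]$, naturally in $(d,c,u)$.

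The main obstacle, and really the only non-bookkeeping point, is the sheaf-theoretic step of the second paragraph: correctly identifying the minimal covers and, above all, computing the fibre product of the two generating arrows to be $(0_{\cal D_0},p(d),\mathbf{!}_{p(d)})$, since it is this single identification that turns the abstract sheaf condition into the concrete pullback formula and simultaneously powers the $\Phi\Psi$ round-trip. Once the covers are understood, the remaining naturality checks are routine, which is the sense in which the proof is lengthy but straightforward.
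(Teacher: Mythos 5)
Your overall strategy is reasonable, and your two structural computations are correct: for $d,c$ non-initial the minimal $(M^{\cal D}_0/M^{\cal C}_0)$-covering sieve of $(d,c,u)$ is indeed generated by $a_1=(1_d,u):(d,p(d),1_{p(d)})\to(d,c,u)$ and $a_2=(\mathbf{!}_d,1_c):(0_{\cal D_0},c,\mathbf{!}_c)\to(d,c,u)$, and their fibre product in $(p_0/\cal C_0)$ is $(0_{\cal D_0},p(d),\mathbf{!}_{p(d)})$, precisely because the only arrow into the adjoined initial object is its identity. (The paper offers no proof to compare with, only the remark that it is lengthy but straightforward.) However, there is a genuine gap at exactly the step you call the crux: the claim that \emph{the sheaf axiom for this cover reads, for any sheaf $Q$, as the pullback} $Q[d,c,u]\cong Q[d,p(d),1_{p(d)}]\times_{Q[0_{\cal D_0},p(d),\mathbf{!}_{p(d)}]}Q[0_{\cal D_0},c,\mathbf{!}_c]$. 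For a sieve generated by two arrows, a matching family is a pair $(x_1,x_2)$ compatible not only over the mixed fibre product but also over the \emph{kernel pairs} of the generators: whenever $a_1k=a_1k'$ one must have $Q(k)(x_1)=Q(k')(x_1)$. This self-overlap condition is vacuous only when the generators are monomorphisms, and here $a_1=(1_d,u)$ is \emph{not} a monomorphism unless $u$ is: the two distinct arrows $(\mathbf{!}_d,g),(\mathbf{!}_d,g'):(0_{\cal D_0},c',\mathbf{!}_{c'})\to(d,p(d),1_{p(d)})$ with $g\neq g'$, $ug=ug'$ are identified by composition with $a_1$. So the sheaf axiom is a priori strictly stronger than your pullback formula, and both of your round-trip arguments, as well as your verification that $\overline{\alpha}$ is a sheaf, rest on this unproved identification.

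The identification is true, but it needs an argument, which can be supplied as follows. For the direction $(F,E,\alpha)\mapsto\overline{\alpha}$: a pair in the pullback has $x_1=(y,\alpha_d(y))$ with $\alpha_d(y)=E(u)(z)$, and whenever $ug=ug'$ one has $E(g)E(u)=E(ug)=E(ug')=E(g')E(u)$; so the kernel-pair compatibility holds because the relevant component of $x_1$ lies in the image of $E(u)$. Hence matching families for the minimal cover coincide with elements of the defining pullback, and $\overline{\alpha}$ is a sheaf (first for minimal covers, then for all covers by the basis argument you cite). For a general sheaf $Q$ and a pair $(x_1,x_2)$ agreeing over $(0_{\cal D_0},p(d),\mathbf{!}_{p(d)})$: given $k=(f,g)$, $k'=(f,g')$ with $a_1k=a_1k'$, restrict the two sections $Q(k)(x_1)$, $Q(k')(x_1)$ along the minimal cover of their common domain $(d',c',u')$. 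Along $(1_{d'},u')$ both composites equal $(f,p_0(f))$ by the comma identity $gu'=g'u'=p_0(f)$, so the restrictions agree on the nose; along $(\mathbf{!}_{d'},1_{c'})$ both composites factor through the fibre product via its first projection, so by the assumed agreement $Q(p_1)(x_1)=Q(p_2)(x_2)$ they equal $Q((1,ug))(x_2)$ and $Q((1,ug'))(x_2)$ respectively, which coincide since $ug=ug'$. Separatedness of $Q$ for the minimal cover of $(d',c',u')$ then yields $Q(k)(x_1)=Q(k')(x_1)$. With this supplement inserted at the point where you invoke the "sheaf axiom $=$ pullback" identification, your proof goes through.
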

\qed

When we endow the categories with non trivial topologies, it is straightforward to verify that:

\begin{prop}
Let $p : ({\cal D},K) \to ({\cal C},J)$ be a comorphism of sites. Under the equivalence of the previous proposition, we have that the sheafification functor is given by: $a_{(K_0 / J_0)}: (\widehat{\cal D} / (- \circ p)) \to (\widehat{\cal D}_K /C_p^*)$ sends an object $(P,Q, \alpha : P \to Qp)$ to the object given by the arrow $a_K(\alpha) : a_K(P) \to C_p^*a_J(Q)$. Its right adjoint is given by the pullback:

% https://q.uiver.app/#q=WzAsNCxbMCwwLCJFJyJdLFswLDEsIkYgXFxjaXJjIHAiXSxbMSwxLCJpX0thX0soRlxcY2lyYyBwKSJdLFsxLDAsIkUiXSxbMSwyLCJcXG11X0YiLDJdLFszLDIsIlxcYWxwaGEiXSxbMCwxLCJpX3soSyBcXGRvd25hcnJvdyBKKX0oXFxhbHBoYSkiLDJdLFswLDNdLFswLDIsIiIsMSx7InN0eWxlIjp7Im5hbWUiOiJjb3JuZXIifX1dXQ==
\[\begin{tikzcd}
	{F'} & i_K(F) \\
	{E \circ p} & {i_Ka_K(E\circ p)}
	\arrow["{\eta_E}"', from=2-1, to=2-2]
	\arrow["i_K(\alpha)", from=1-2, to=2-2]
	\arrow["{i_{(K_0 / J_0)}(\alpha)}"', from=1-1, to=2-1]
	\arrow[from=1-1, to=1-2]
	\arrow["\lrcorner"{anchor=center, pos=0.125}, draw=none, from=1-1, to=2-2]
\end{tikzcd}\]

along the unit $\eta_F$ of the sheafification for $K$.
\end{prop}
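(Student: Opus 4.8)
The plan is to establish both formulas simultaneously by realizing the two claimed functors as an adjoint pair and then identifying that pair, through the equivalences already obtained, with the sheafification–inclusion adjunction. Throughout, write $L$ for the functor $(\widehat{\cal D}/(-\circ p))\to(\widehat{\cal D}_K/C_p^*)$ sending $(P,Q,\alpha:P\to Q\circ p)$ to $(a_K P,\,a_J Q,\,a_K(\alpha))$, and $R$ for the pullback construction of the statement.

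First I would recall the compatibility $a_K((-)\circ p)\simeq C_p^*\circ a_J$: since $p$ is a comorphism of sites, the composite $a_K((-)\circ p):\widehat{\cal C}\to\widehat{\cal D}_K$ inverts $J$-local isomorphisms and therefore factors through $a_J$, which is exactly the commutativity of the square relating the two sheafifications to $(-\circ p)$ and $C_p^*$. This makes the structure map $a_K(\alpha):a_K P\to a_K(Q\circ p)\simeq C_p^* a_J Q$ of $L(P,Q,\alpha)$ well-typed, so that $L$ is indeed the functor asserted in the first claim.

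Next I would check $L\dashv R$ by a direct transposition. Given $(P,Q,\alpha)$ and a sheaf object $(F,E,\gamma:F\to C_p^* E)$, a morphism $(P,Q,\alpha)\to R(F,E,\gamma)$ unfolds, via the universal property of the pullback $F'=i_K F\times_{i_K a_K(i_J E\circ p)}(i_J E\circ p)$, into a map $g:Q\to i_J E$ together with a map $f_1:P\to i_K F$ satisfying $i_K(\gamma)\circ f_1=\eta\circ(g\circ p)\circ\alpha$, where $g\circ p$ denotes the whiskering of $g$ by $p$ and $\eta$ is the $K$-sheafification unit. Transposing the two components across $a_J\dashv i_J$ and $a_K\dashv i_K$ replaces $(g,f_1)$ by $(\overline g:a_J Q\to E,\ \overline f:a_K P\to F)$, and—using the compatibility of the previous paragraph to rewrite $a_K(g\circ p)$ as $C_p^*(\overline g)$—turns the constraint into $\gamma\circ\overline f=C_p^*(\overline g)\circ a_K(\alpha)$. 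This is precisely the condition for $(\overline f,\overline g)$ to be a morphism $L(P,Q,\alpha)\to(F,E,\gamma)$ in $(\widehat{\cal D}_K/C_p^*)$, and the bijection is natural, so $L\dashv R$.

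It then remains to identify $L\dashv R$ with the sheafification–inclusion adjunction under the equivalences $(\widehat{\cal D}/(-\circ p))\simeq\widehat{(p_0/\cal C_0)}$ and $(\widehat{\cal D}_K/C_p^*)\simeq\widehat{(p_0/\cal C_0)}_{(K_0/J_0)}$. I would first verify that $R$ is fully faithful by showing the counit $LR\Rightarrow\id$ is invertible: applying the left-exact functor $a_K$ to the pullback defining $F'$ collapses it, since $a_K i_K\simeq\id$ and $a_K$ sends the unit $\eta$ to an isomorphism, giving $a_K F'\simeq F$ and hence $LR(F,E,\gamma)\simeq(F,E,\gamma)$. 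Together with the (componentwise) left-exactness of $L$, this exhibits $(\widehat{\cal D}_K/C_p^*)$ as a left-exact reflective, hence sheaf, subtopos of the presheaf topos $(\widehat{\cal D}/(-\circ p))$. The main obstacle is the final identification of the associated topology with $(K_0/J_0)$: for this I would compute the class of morphisms inverted by $L$—namely those whose $\cal D$-component is a $K$-local isomorphism and whose $\cal C$-component is a $J$-local isomorphism—and match it with the class of $(K_0/J_0)$-local isomorphisms, using that $(K_0/J_0)$ is generated by the covers coming from $K$ on the $\cal D$-component and from $J$ on the $\cal C$-component, so that $a_K$ inverts the former and $a_J$ the latter and nothing else is forced. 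By the correspondence between Grothendieck topologies and left-exact localizations this pins the localization down as $(K_0/J_0)$-sheafification, yielding $L\simeq a_{(K_0/J_0)}$ and $R\simeq i_{(K_0/J_0)}$; the structure map of $R(F,E,\gamma)$ is then the pullback projection $F'\to i_J E\circ p$, as in the displayed diagram, so that $R$ indeed lands in $(\widehat{\cal D}/(-\circ p))$ as claimed.
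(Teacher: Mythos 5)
The paper gives no argument for this proposition at all (it is one of the statements the authors dismiss as ``straightforward to verify'' and close with an empty proof), so there is no official route to compare yours against; judged on its own, your proof is correct and well structured. Your two main steps are sound: the adjunction $L\dashv R$ is established by a genuine transposition computation (the unwinding of a map into $R(F,E,\gamma)$ through the universal property of the pullback, and the rewriting of $a_K(g\circ p)$ as $C_p^*(\overline g)$ via the compatibility $a_K((-)\circ p)\simeq C_p^*\circ a_J$, which does hold because precomposition with a comorphism of sites preserves local isomorphisms); and the identification of $L\dashv R$ with sheafification--inclusion via left exactness of $L$, invertibility of the counit, and the class of inverted morphisms is the right way to pin down the topology, rather than merely observing that the images are abstractly equivalent subcategories.

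Two spots are terser than they should be, though neither would make the argument fail. First, the equivalence of the previous proposition identifies $(\widehat{\cal D}/(-\circ p))$ with $\widehat{(p_0/\cal C_0)}_{(M^{\cal D}_0/M^{\cal C}_0)}$, not with the full presheaf topos $\widehat{(p_0/\cal C_0)}$; this is harmless since $(K_0/J_0)$ contains $(M^{\cal D}_0/M^{\cal C}_0)$ and the correspondence between topologies and left-exact localizations restricts accordingly, but it should be stated. Second, the assertion that the $(K_0/J_0)$-local isomorphisms are exactly the componentwise local isomorphisms is the real content of your last paragraph and is claimed rather than proved. It is true, and the proof is pleasant: the $\cal D$- and $\cal C$-projections of a sieve on $(d,c,u)$ are computed by restricting to arrows out of objects of the form $(d',p(d'),1_{p(d')})$ and $(0_{\cal D},c',\mathbf{!})$, at which the pullback formula for values of the corresponding presheaves degenerates to $P(d')$ and $Q(c')$ respectively, so the local surjectivity and injectivity conditions decouple into the two components (the comorphism property of $p$ entering to lift $J$-coverings where needed). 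Relatedly, matching inverted classes only identifies $L$ with $a_{(K_0/J_0)}$ up to \emph{some} equivalence of the targets; to obtain the statement literally, i.e. under the specific equivalence $\Sh(\xi_{p_0})$, you should add the one-line density argument that your induced equivalence and $\Sh(\xi_{p_0})$ are both cocontinuous and agree on the generators, both sending (the image of) $y(d,c,u)$ to $\xi_{p_0}(d,c,u)=(l_K(d),l_J(c),a_K(ev_u))$.
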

\qed

Now that we have established that the comma category $(\widehat{\cal D}_K/C_p^*)$ is a topos, we can describe certain geometric morphisms relating it to the toposes $\widehat{\cal C}_J$ and $\widehat{\cal D}_K$ from which it is built. In particular, since it arises as a specific case of \emph{Artin gluing}, in which the toposes $\widehat{\cal C}_J$ and $\widehat{\cal D}_K$ are glued along $C_p^*$, we have natural embeddings of both into this glued topos. But our case is even more specific: in order to construct an Artin gluing, in general, we just need a finite limits preserving functor; here, we not only have that $C_p^*$ preserves finite limits, but it also preserves arbitrary colimits. This fact makes $\widehat{\cal C}_J$ a \emph{dense} open subtopos of $(\widehat{\cal D}_K/C_p^*)$, and gives us that the embedding of toposes it induces comes with a retraction. The following proposition describes those geometric morphisms relating $\widehat{\cal C}_J$ to $(\widehat{\cal D}_K/C_p^*)$, as already expressed in \cite{nlab:artin_gluing}:

\begin{prop}\label{rhodef}
Let $p : ({\cal D},K) \to ({\cal C},J)$ be a comorphism of sites.

\begin{enumerate}[(i)]
    \item We have the geometric morphism $i_{C_p} : \widehat{\cal C}_J \hookrightarrow  (\widehat{\cal D}_K /C_p^*) $ having for inverse image the functor $\pi_{\widehat{\cal C}_J} : (\widehat{\cal D}_K /C_p^*) \to \widehat{\cal C}_J $ and for direct image the functor $\tau_{\widehat{\cal C}_J} :  \widehat{\cal C}_J \to (\widehat{\cal D}_K /C_p^*) $ sending an object $E$ to the object $(C_p^*E,E,1_{C_p^*E})$. 
    \item This geometric morphism is an étale open embedding, it can be identified with the geometric morphism: $ (\widehat{\cal D}_K /C_p^*)/(\mathbf{0}_{\widehat{\cal D}_K},\mathbf{1}_{\widehat{\cal C}_J},\mathbf{!}_{\mathbf{0}}) \to (\widehat{\cal D}_K /C_p^*)$, where $ !_{\mathbf{0}} : \mathbf{0}_{\widehat{\cal D}_K} \to C_p^*(\mathbf{1}_{\widehat{\cal C}_J})$ is the unique possible morphism from the initial to the terminal objects of $\widehat{\cal D}_K$.
    \item In particular, this étale geometric morphism comes with an essential image: $\mathcal{O}_{\cal C} : \widehat{\cal C}_J \to (\widehat{\cal D}_K /C_p^*)$ sending an object $E$ to $(\mathbf{0}_{\widehat{\cal D}_K},E,\mathbf{!}_{C_p^*E})$.
    \item The functor $\tau_{\widehat{\cal C}_J}$ also has a right adjoint $(\rho_{C_p})_* : (\widehat{\cal D}_K /C_p^*) \to  \widehat{\cal C}_J$ which sends a triplet $(E,F,\alpha)$ to the pullback:

    % https://q.uiver.app/#q=WzAsNCxbMCwxLCJ7Q19wfV8qRiJdLFsxLDAsIkUiXSxbMSwxLCJ7Q19wfV8qQ19wXipFIl0sWzAsMCwiXFxyaG8oRSxGLFxcYWxwaGEpIl0sWzEsMiwiXFxtdV5wX0UiXSxbMCwyLCJ7Q19wfV8qKFxcYWxwaGEpIiwyXSxbMywwXSxbMywxXSxbMywyLCIiLDEseyJzdHlsZSI6eyJuYW1lIjoiY29ybmVyIn19XV0=
    \[\begin{tikzcd}
	{(\rho_{C_p})_*(E,F,\alpha)} & E \\
	{{C_p}_*F} & {{C_p}_*C_p^*E}
	\arrow["{\mu^p_E}", from=1-2, to=2-2]
	\arrow["{{C_p}_*(\alpha)}"', from=2-1, to=2-2]
	\arrow[from=1-1, to=2-1]
	\arrow[from=1-1, to=1-2]
	\arrow["\lrcorner"{anchor=center, pos=0.125}, draw=none, from=1-1, to=2-2]
    \end{tikzcd}\]
    \noindent where $\mu^p_E$ is the unit of $({C_p}_* \vdash C_p^*)$. 
    \item This functor together with the further left adjoint $\pi_{\widehat{\cal C}_J}$ of $\tau_{\widehat{\cal C}_J}$ give an essential and connected geometric morphism: $\rho_{C_p} : (\widehat{\cal D}_K /C_p^*) \to  \widehat{\cal C}_J$
    \item These two geometric morphisms constitute an adjoint retraction: we have $ \rho_{C_p} \circ i_{C_p} \simeq 1_{\widehat{\cal C}_J}$ with $i_{C_p}\vdash \rho_{C_p}$.
    \item In this light, the embedding $i_{C_p}$ is not only a geometric morphism, but also a relative geometric $i_{C_p} : [1_{\widehat{\cal C}_J}] \to [\rho_{C_p}]$ over $\widehat{\cal C}_J$.
    % https://q.uiver.app/#q=WzAsMyxbMCwwLCJcXHdpZGVoYXR7XFxjYWwgQ31fSiJdLFsyLDAsIihcXHdpZGVoYXR7XFxjYWwgRH1fSy9DX3BeKikiXSxbMSwxLCJcXHdpZGVoYXR7XFxjYWwgQ31fSiJdLFswLDEsImlfe0NfcH0iLDAseyJzdHlsZSI6eyJ0YWlsIjp7Im5hbWUiOiJob29rIiwic2lkZSI6InRvcCJ9fX1dLFswLDIsIjFfe1xcd2lkZWhhdHtcXGNhbCBDfV9KfSIsMix7ImxldmVsIjoyLCJzdHlsZSI6eyJoZWFkIjp7Im5hbWUiOiJub25lIn19fV0sWzEsMiwiXFxyaG9fe0NfcH0iLDAseyJzdHlsZSI6eyJoZWFkIjp7Im5hbWUiOiJlcGkifX19XV0=
\[\begin{tikzcd}
	{\widehat{\cal C}_J} && {(\widehat{\cal D}_K/C_p^*)} \\
	& {\widehat{\cal C}_J}
	\arrow["{i_{C_p}}", hook, from=1-1, to=1-3]
	\arrow["{1_{\widehat{\cal C}_J}}"', equals, from=1-1, to=2-2]
	\arrow["{\rho_{C_p}}", two heads, from=1-3, to=2-2]
\end{tikzcd}\]
    Since $[1_{\widehat{\cal C}_J}]$ is the terminal relative topos, this inclusion is a $\widehat{\cal C}_J$-point of the relative topos $[\rho_{C_p}]$.
\end{enumerate}
\end{prop}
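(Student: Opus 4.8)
The plan is to exploit the fact, recorded in the proof of \ref{xilocal}, that all limits and colimits in $(\widehat{\cal D}_K/C_p^*)$ are computed componentwise, together with the strictness of the initial object of a topos, so that each assertion reduces either to an elementary hom-set computation inside the comma category or to a single transposition across the adjunction $C_p^* \dashv {C_p}_*$. The backbone of the whole statement is the \emph{adjoint string}
\[
\mathcal{O}_{\cal C} \dashv \pi_{\widehat{\cal C}_J} \dashv \tau_{\widehat{\cal C}_J} \dashv (\rho_{C_p})_*,
\]
from which parts (i) and (iii)--(vi) all follow; I write $\pi,\tau$ for $\pi_{\widehat{\cal C}_J},\tau_{\widehat{\cal C}_J}$. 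For (i), I observe that a morphism $(F,E',\alpha) \to \tau(E) = (C_p^*E,E,1_{C_p^*E})$ is a pair $(g,h)$ whose first component is forced to equal $C_p^*(h)\circ\alpha$, hence is determined by $h\colon E'\to E$; this gives $\pi \dashv \tau$ at once, and since $\pi$ preserves finite limits (computed componentwise) the pair $(\pi,\tau)$ is the geometric morphism $i_{C_p}$. For (iii), a morphism $\mathcal{O}_{\cal C}(E)=(\mathbf{0},E,\mathbf{!}) \to (F,E',\alpha)$ has a uniquely determined first component (maps out of $\mathbf{0}$ are unique) and an automatically commuting square, so the hom-set is again $\Hom(E,E')$, yielding $\mathcal{O}_{\cal C} \dashv \pi$.

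For (ii), I would compute the slice $(\widehat{\cal D}_K/C_p^*)/(\mathbf{0},\mathbf{1},\mathbf{!})$: an object mapping to $(\mathbf{0},\mathbf{1},\mathbf{!})$ has a first component mapping to $\mathbf{0}$, and by strictness of the initial object that component must itself be $\mathbf{0}$, so the slice is equivalent to $\widehat{\cal C}_J$ via $E \mapsto (\mathbf{0},E,\mathbf{!})$. The object $(\mathbf{0},\mathbf{1},\mathbf{!})$ is subterminal (its map to the terminal object is $\mathbf{0}\hookrightarrow\mathbf{1}$ on the first component and an identity on the second), so the étale morphism over it is an open embedding. Under the equivalence, its inverse image is the product with $(\mathbf{0},\mathbf{1},\mathbf{!})$, which componentwise is $F\times\mathbf{0}=\mathbf{0}$ and $E\times\mathbf{1}=E$, hence coincides with $\pi$; its essential (domain) part is $E\mapsto(\mathbf{0},E,\mathbf{!})=\mathcal{O}_{\cal C}$. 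This identifies $i_{C_p}$ with an open étale embedding and, with (iii), exhibits the essential image.

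The crux is (iv). Here I transpose: a morphism $\tau(E') = (C_p^*E',E',1) \to (F,E,\alpha)$ is a pair $(g\colon C_p^*E'\to F,\ h\colon E'\to E)$ subject to $\alpha\circ g = C_p^*(h)$. Writing $g^t\colon E'\to{C_p}_*F$ for the transpose of $g$ under $C_p^*\dashv{C_p}_*$ and transposing the constraint, naturality of the unit $\mu^p$ turns it into ${C_p}_*(\alpha)\circ g^t = \mu^p_E\circ h$. Such pairs $(g^t,h)$ are exactly cones over the cospan ${C_p}_*F \xrightarrow{{C_p}_*(\alpha)} {C_p}_*C_p^*E \xleftarrow{\mu^p_E} E$, i.e.\ maps from $E'$ into the displayed pullback; naturality in $E'$ then gives $\tau \dashv (\rho_{C_p})_*$ with the stated formula. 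This transposition, together with the bookkeeping that the pullback is functorial and natural in the comma object, is the one genuinely computational step and the main obstacle.

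Finally, (v)--(vii) are formal consequences. Since $\tau$ preserves finite limits and, by (iv), admits the right adjoint $(\rho_{C_p})_*$, it is the inverse image of a geometric morphism $\rho_{C_p}$; the left adjoint $\pi$ of $\tau$ from (i) makes $\rho_{C_p}$ essential, and because $\pi\tau\simeq\Id$ the counit of $\pi\dashv\tau$ is invertible, so $\tau$ is fully faithful and $\rho_{C_p}$ is connected, giving (v). For (vi), the adjoint string yields $i_{C_p}\dashv\rho_{C_p}$, while $\pi\tau\simeq\Id$ gives $\rho_{C_p}\circ i_{C_p}\simeq 1_{\widehat{\cal C}_J}$, i.e.\ an adjoint retraction. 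Part (vii) is then immediate: the commuting triangle $\rho_{C_p}\circ i_{C_p}\simeq 1_{\widehat{\cal C}_J}$ exhibits $i_{C_p}$ as a relative geometric morphism $[1_{\widehat{\cal C}_J}]\to[\rho_{C_p}]$ over $\widehat{\cal C}_J$, and since $[1_{\widehat{\cal C}_J}]$ is the terminal relative topos this is a $\widehat{\cal C}_J$-point of $[\rho_{C_p}]$.
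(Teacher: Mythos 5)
Your proof is correct, and it is worth noting that the paper itself offers no argument for this proposition: it is stated with a \qed and a pointer to the literature on Artin gluing, so your proposal supplies precisely what the paper delegates to a citation. Your organization around the adjoint string $\mathcal{O}_{\cal C} \dashv \pi_{\widehat{\cal C}_J} \dashv \tau_{\widehat{\cal C}_J} \dashv (\rho_{C_p})_*$ is the standard and most economical route: the hom-set computations for (i) and (iii) are right (in each case the first component of a morphism is forced, so the hom-set collapses to a hom-set in $\widehat{\cal C}_J$); the identification of the slice over $(\mathbf{0},\mathbf{1},\mathbf{!})$ with $\widehat{\cal C}_J$ via strictness of the initial object, together with subterminality of that object, correctly yields (ii); and the transposition argument for (iv) is exactly the point where the hypothesis that $C_p^*$ has a right adjoint ${C_p}_*$ enters, turning the compatibility constraint $\alpha\circ g = C_p^*(h)$ into the cone condition over the cospan ${C_p}_*F \to {C_p}_*C_p^*E \leftarrow E$. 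Parts (v)--(vii) then follow formally as you say. (For (v), note that the full faithfulness of $\tau$ can also be read off directly from the hom-set computation in (i), which avoids appealing to the general fact that a natural isomorphism $\pi\tau \simeq \Id$ forces the counit to be invertible -- a true but not completely obvious lemma.)

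One caveat on (vi): be careful with the handedness of the adjunction between the geometric morphisms. The paper declares the convention that ``$F \vdash G$'' means $\Hom(G(d),c)\cong\Hom(d,F(c))$, i.e.\ $G$ is the \emph{left} adjoint; so its assertion ``$i_{C_p}\vdash\rho_{C_p}$'' says that $\rho_{C_p}$ is left adjoint to $i_{C_p}$. With geometric transformations taken as natural transformations of inverse images, an adjunction $\rho_{C_p}\dashv i_{C_p}$ of geometric morphisms is equivalent to the adjunction $i_{C_p}^*\dashv\rho_{C_p}^*$, that is, exactly to your $\pi_{\widehat{\cal C}_J}\dashv\tau_{\widehat{\cal C}_J}$. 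Your statement ``$i_{C_p}\dashv\rho_{C_p}$'', read in the standard notation you use elsewhere, would instead amount to $\tau_{\widehat{\cal C}_J}\dashv\pi_{\widehat{\cal C}_J}$, which is false (a map $\tau(E)\to(\mathbf{0},E',\mathbf{!})$ requires $C_p^*E\cong\mathbf{0}$). So the adjunction you derive from the string is the right one, but it should be recorded with $\rho_{C_p}$ as the left adjoint.
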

\qed

\begin{remark}
This geometric morphism $\rho_{C_p} : (\widehat{\cal D}_K /C_p^*) \to  \widehat{\cal C}_J$ allows to see this canonical relative site not only as a topos, or a relative site, but also as a relative topos over $\widehat{\cal C}_J$.
\end{remark}

Again, as it can be seen for example in \cite{nlab:artin_gluing}, we have the following proposition expressing how $\widehat{\cal D}_K$ is related to $(\widehat{\cal D}_K/C_p^*)$:

\begin{prop}
Let $p : ({\cal D},K) \to ({\cal C},J)$ be a comorphism of sites.

\begin{enumerate}[(i)]
    \item The functor $\pi_{\cal D} : (\widehat{\cal D}_K /C_p^*) \to \widehat{\cal D}_K $ is the inverse image part of a geometric embedding $j_{C_p}$ having for direct image the functor $\mathbf{t}_{\widehat{\cal D}_K} : \widehat{\cal D}_K \to (\widehat{\cal D}_K /C_p^*)$ sending an object $F$ of $\widehat{\cal D}_K$ to $(F,\mathbf{1}_{\widehat{\cal C}_J},\mathbf{!}_{F})$ where $\mathbf{!}_{F} : F \to C_p^*(\mathbf{1}_{\widehat{\cal C}_J}) \simeq \mathbf{1}_{\widehat{\cal D}_K}$ is the unique possible arrow into the terminal object of $\widehat{\cal D}_K$.
    \item This embedding is a \emph{closed} subtopos of  $(\widehat{\cal D}_K /C_p^*)$.
    \item It is moreover a relative embedding, that is, the following triangle of geometric morphisms commutes:

        % https://q.uiver.app/#q=WzAsMyxbMCwwLCJcXHdpZGVoYXR7XFxjYWwgRH1fSyJdLFsyLDAsIihcXHdpZGVoYXR7XFxjYWwgRH1fSy9DX3BeKikiXSxbMSwxLCJcXHdpZGVoYXR7XFxjYWwgQ31fSiJdLFswLDEsImpfe0NfcH0iLDAseyJzdHlsZSI6eyJ0YWlsIjp7Im5hbWUiOiJob29rIiwic2lkZSI6InRvcCJ9fX1dLFswLDIsIkNfcCIsMl0sWzEsMiwiXFxyaG9fe0NfcH0iLDAseyJzdHlsZSI6eyJoZWFkIjp7Im5hbWUiOiJlcGkifX19XV0=
    \[\begin{tikzcd}
    	{\widehat{\cal D}_K} && {(\widehat{\cal D}_K/C_p^*)} \\
    	& {\widehat{\cal C}_J}
    	\arrow["{j_{C_p}}", hook, from=1-1, to=1-3]
    	\arrow["{C_p}"', from=1-1, to=2-2]
    	\arrow["{\rho_{C_p}}", two heads, from=1-3, to=2-2]
    \end{tikzcd}\]
    \item If moreover the geometric morphism $C_p$ is essential, we have another functor $\eta_{C_p}:\widehat{\cal D}_K \to (\widehat{\cal D}_K /C_p^*) $ sending $F$ to the unit $(F,{C_p}_!(F),F \to C_p^*{C_p}_!(F))$ of $C_p^* \vdash {C_p}_!$ at $F$, which is an essential image for the geometric embedding $j_{C_p}$.
\end{enumerate}
\end{prop}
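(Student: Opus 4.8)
The plan is to establish the four points in turn; points (i), (iii) and (iv) are straightforward adjunction computations in the comma category, while the closedness in (ii) is the one genuinely topos-theoretic point. Throughout I would use, as in the proof of \ref{xilocal}, that finite limits and arbitrary colimits in $(\widehat{\cal D}_K/C_p^*)$ are computed componentwise, and that $C_p^*$ preserves finite limits, so in particular $C_p^*(\mathbf{1}_{\widehat{\cal C}_J}) \simeq \mathbf{1}_{\widehat{\cal D}_K}$.

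For (i), I would first verify the adjunction $\pi_{\cal D} \vdash \mathbf{t}_{\widehat{\cal D}_K}$. A morphism $(F,E,\alpha) \to \mathbf{t}_{\widehat{\cal D}_K}(F') = (F',\mathbf{1}_{\widehat{\cal C}_J},\mathbf{!}_{F'})$ is a pair $(g : F \to F',\, h : E \to \mathbf{1}_{\widehat{\cal C}_J})$; since the target is terminal, $h$ is forced and the compatibility square commutes automatically, so the datum is exactly that of $g : \pi_{\cal D}(F,E,\alpha) \to F'$. Taking $(F,E,\alpha) = \mathbf{t}_{\widehat{\cal D}_K}(F'')$ in the same computation shows that $\mathbf{t}_{\widehat{\cal D}_K}$ is fully faithful. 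As $\pi_{\cal D}$ is a componentwise projection it preserves finite limits, so $\pi_{\cal D} \vdash \mathbf{t}_{\widehat{\cal D}_K}$ is a geometric morphism $j_{C_p}$ with fully faithful direct image, i.e.\ a geometric embedding.

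For (iii) I would compose inverse images, using from \ref{rhodef} that $\rho_{C_p}^* = \tau_{\widehat{\cal C}_J}$: one has $(\rho_{C_p} \circ j_{C_p})^* = \pi_{\cal D} \circ \tau_{\widehat{\cal C}_J}$ and $\pi_{\cal D}(\tau_{\widehat{\cal C}_J}E) = \pi_{\cal D}(C_p^*E,E,1_{C_p^*E}) = C_p^*E$, whence $\rho_{C_p} \circ j_{C_p} \simeq C_p$, which is exactly the asserted commuting triangle. For (iv), assuming $C_p$ essential with ${C_p}_! \vdash C_p^*$, I would check $\eta_{C_p} \vdash \pi_{\cal D}$: a morphism $\eta_{C_p}(F) = (F,{C_p}_!(F),u_F) \to (F',E',\alpha')$ is a pair $(g,h)$ with $\alpha' \circ g = C_p^*(h)\circ u_F$, and transposing $h$ across ${C_p}_! \vdash C_p^*$ rewrites this constraint as $h^t = \alpha'\circ g$, so that $h$ is uniquely determined by $g$. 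This gives $\Hom(\eta_{C_p}(F),-) \simeq \Hom(F,\pi_{\cal D}(-))$, exhibiting $\eta_{C_p}$ as the further left adjoint $(j_{C_p})_!$, i.e.\ as the essential image of $j_{C_p}$.

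The main obstacle is (ii). Here I would read $(\widehat{\cal D}_K/C_p^*)$ as the Artin gluing of $\widehat{\cal C}_J$ and $\widehat{\cal D}_K$ along $C_p^*$, in which, by \ref{rhodef}, the open subtopos $i_{C_p}$ corresponds to the subterminal object $U = (\mathbf{0}_{\widehat{\cal D}_K},\mathbf{1}_{\widehat{\cal C}_J},\mathbf{!})$, and show that $j_{C_p}$ is its complementary closed subtopos. Concretely I would compute the closed local operator $c_U$: for $X = (F,E,\alpha)$ one has $U \times X \simeq (\mathbf{0}_{\widehat{\cal D}_K},E,\mathbf{!})$, so a monomorphism into $X$ is $c_U$-dense precisely when it is an isomorphism on the $\widehat{\cal D}_K$-component. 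Testing the sheaf condition against the $c_U$-dense monos $(\mathbf{0}_{\widehat{\cal D}_K},E',\mathbf{!}) \rightarrowtail (\mathbf{0}_{\widehat{\cal D}_K},E,\mathbf{!})$ coming from an arbitrary monomorphism $E' \rightarrowtail E$ reduces, on the $\widehat{\cal C}_J$-component $E_X$ of a $c_U$-sheaf $X$, to $\Hom(E,E_X) \simeq \Hom(E',E_X)$ for every mono $E'\rightarrowtail E$; this orthogonality to all monomorphisms forces $E_X \simeq \mathbf{1}_{\widehat{\cal C}_J}$, and conversely every object $(F,\mathbf{1}_{\widehat{\cal C}_J},\mathbf{!}_{F})$ is readily seen to be a $c_U$-sheaf. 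Thus the category of $c_U$-sheaves is exactly the essential image of $\mathbf{t}_{\widehat{\cal D}_K}$, which identifies $j_{C_p}$ with the closed subtopos at $U$. The delicate point is this orthogonality argument pinning down the $c_U$-sheaves; the remaining verifications are the routine componentwise bookkeeping of the previous paragraphs, and one may alternatively invoke the standard Artin-gluing description recorded in \cite{nlab:artin_gluing}.
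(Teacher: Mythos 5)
Your proposal is correct, and it is worth noting that the paper itself offers no proof of this proposition: it is stated with a qed mark and justified only by appeal to the standard theory of Artin gluing (the nLab reference given just before the statement). So your argument does not diverge from the paper's — it supplies the verification the paper omits. Points (i), (iii) and (iv) are, as you say, routine componentwise and adjunction computations, and your key formula in (iv) — that the commutativity constraint on a map out of $\eta_{C_p}(F)$ reads $h^t=\alpha'\circ g$, so the second component is redundant data — is exactly why $\eta_{C_p}$ is left adjoint to $\pi_{\cal D}$. The only substantive step is (ii), and your treatment is sound: because finite limits, coproducts and image factorizations in $(\widehat{\cal D}_K/C_p^*)$ are computed componentwise, unions of subobjects are too, so a monomorphism is $c_U$-dense for $U=(\mathbf{0}_{\widehat{\cal D}_K},\mathbf{1}_{\widehat{\cal C}_J},\mathbf{!})$ precisely when its $\widehat{\cal D}_K$-component is invertible; testing against the dense monos $(\mathbf{0},E',\mathbf{!})\rightarrowtail(\mathbf{0},E,\mathbf{!})$ — the single case $E'=\mathbf{0}_{\widehat{\cal C}_J}$ already gives $\Hom(E,E_X)\simeq\{*\}$ for every $E$ — forces the second component of a $c_U$-sheaf to be terminal, and conversely the adjunction of (i) makes every $\mathbf{t}_{\widehat{\cal D}_K}(F)$ a $c_U$-sheaf, since dense monos project to isomorphisms in $\widehat{\cal D}_K$. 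What your self-contained route buys over the paper's citation is precisely this explicit identification of the closed local operator, which is also what makes the complementarity with the open subtopos $i_{C_p}$ of Proposition \ref{rhodef} visible. One notational quibble: with the paper's convention for adjunctions (cf.\ the unit of $({C_p}_*\vdash C_p^*)$ in Proposition \ref{rhodef}), the essential adjunction should be written $C_p^*\vdash {C_p}_!$ rather than ${C_p}_!\vdash C_p^*$; your transpose computation is nonetheless the right one.
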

\qed

\begin{remark}
These two inclusions give us a lax triangle of toposes:
% https://q.uiver.app/#q=WzAsMyxbMCwwLCJcXHdpZGVoYXR7XFxjYWwgRH1fSyJdLFsxLDEsIihcXHdpZGVoYXR7XFxjYWwgRH1fSy9DX3BeKikiXSxbMiwwLCJcXHdpZGVoYXR7XFxjYWwgQ31fSiJdLFswLDEsImpfe0NfcH0iLDIseyJzdHlsZSI6eyJ0YWlsIjp7Im5hbWUiOiJob29rIiwic2lkZSI6InRvcCJ9fX1dLFswLDIsIkNfcCJdLFsyLDEsImlfe0NfcH0iLDAseyJzdHlsZSI6eyJ0YWlsIjp7Im5hbWUiOiJob29rIiwic2lkZSI6ImJvdHRvbSJ9fX1dLFszLDIsIlxcYWxwaGFfe0NfcH0iLDEseyJzaG9ydGVuIjp7InNvdXJjZSI6MjAsInRhcmdldCI6MzB9fV1d
\[\begin{tikzcd}
	{\widehat{\cal D}_K} && {\widehat{\cal C}_J} \\
	& {(\widehat{\cal D}_K/C_p^*)}
	\arrow["{C_p}", from=1-1, to=1-3]
	\arrow[""{name=0, anchor=center, inner sep=0}, "{j_{C_p}}"', hook, from=1-1, to=2-2]
	\arrow["{i_{C_p}}", hook', from=1-3, to=2-2]
	\arrow["{\alpha_{C_p}}"{description}, shorten <=10pt, shorten >=16pt, Rightarrow, from=0, to=1-3]
\end{tikzcd}\]

Indeed, an object $(F,E,\alpha : F \to C_p^*E)$ of $(\widehat{\cal D}_K/C_p^*)$ is sent to $F$ by $j_{C_p}^*$ and to $C_p^*E$ by $C_p^*i_{C_p}^*$, which are two objects of $\widehat{\cal D}_K$ naturally related by $\alpha$.
\end{remark}

Our study of relative toposes relies on their presentation via fibrations, or more generally, via comorphisms of sites $p : (\cal D,K) \to (\cal C,J)$. It is therefore natural to seek descriptions of all the previously discussed geometric morphisms arising from a relative topos, in terms of a representation of that relative topos through a comorphism. The following proposition gives us such presentations for the previously discussed geometric morphisms relating $\widehat{\cal C}_{J}$ and $(\widehat{\cal D}_K/C_p^*)$:

\begin{prop}
Let $p : ({\cal D},K) \to ({\cal C},J)$ be a comorphism of sites. 
\begin{enumerate}
    \item The functor $\pi_{\cal C_0} : ((p_0 / {\cal C}_0),(K_0 / J_0)) \to ({\cal C_0},J_0)$ is a morphism of sites, and $i_{C_p}$ and $ \Sh(\pi_{\cal C})$ are the same geometric morphisms, in the sense that the following square commutes:
     % https://q.uiver.app/#q=WzAsNCxbMCwwLCJcXHdpZGVoYXR7XFxjYWwgQ18wfV97Sl8wfSJdLFswLDEsIlxcd2lkZWhhdHsocF8wL1xcY2FsIENfMCl9X3soS18wL0pfMCl9Il0sWzEsMCwiXFx3aWRlaGF0e1xcY2FsIEN9X3tKfSJdLFsxLDEsIihcXHdpZGVoYXR7XFxjYWwgRH1fSy9DX3BeKikiXSxbMiwzLCJpX3tDX3B9IiwwLHsic3R5bGUiOnsidGFpbCI6eyJuYW1lIjoiaG9vayIsInNpZGUiOiJ0b3AifX19XSxbMCwyLCJcXFNoKGlfMCkiXSxbMCwxLCJcXFNoKFxccGlfe1xcY2FsIENfMH0pIiwyLHsic3R5bGUiOnsidGFpbCI6eyJuYW1lIjoiaG9vayIsInNpZGUiOiJib3R0b20ifX19XSxbMSwzLCJcXFNoKFxceGlfe3BfMH0pIiwyLHsib2Zmc2V0IjoxfV0sWzAsMiwiXFxzaW1lcSIsMSx7Im9mZnNldCI6Miwic3R5bGUiOnsiYm9keSI6eyJuYW1lIjoibm9uZSJ9LCJoZWFkIjp7Im5hbWUiOiJub25lIn19fV0sWzEsMywiXFxzaW1lcSIsMSx7Im9mZnNldCI6LTIsInN0eWxlIjp7ImJvZHkiOnsibmFtZSI6Im5vbmUifSwiaGVhZCI6eyJuYW1lIjoibm9uZSJ9fX1dXQ==
    \[\begin{tikzcd}
	{\widehat{\cal C_0}_{J_0}} & {\widehat{\cal C}_{J}} \\
	{\widehat{(p_0/\cal C_0)}_{(K_0/J_0)}} & {(\widehat{\cal D}_K/C_p^*)}
	\arrow["{\Sh(i_0)}", from=1-1, to=1-2]
	\arrow["\simeq"{description}, shift right=2, draw=none, from=1-1, to=1-2]
	\arrow["{\Sh(\pi_{\cal C_0})}"', hook', from=1-1, to=2-1]
	\arrow["{i_{C_p}}", hook, from=1-2, to=2-2]
	\arrow["{\Sh(\xi_{p_0})}"', shift right, from=2-1, to=2-2]
	\arrow["\simeq"{description}, shift left=2, draw=none, from=2-1, to=2-2]
    \end{tikzcd}\]
    
    \item The functor $\pi_{\cal C_0} : ((p_0 / {\cal C}_0),(K_0 / J_0)) \to ({\cal C_0},J_0)$ is a comorphism of sites, and $\rho_{C_p}$ and $C_{\pi_{\cal C_0}}$ are the same geometric morphisms, in the sense that the following square commutes:

 % https://q.uiver.app/#q=WzAsNCxbMCwwLCJcXHdpZGVoYXR7XFxjYWwgQ18wfV97Sl8wfSJdLFswLDEsIlxcd2lkZWhhdHsocF8wL1xcY2FsIENfMCl9X3soS18wL0pfMCl9Il0sWzEsMCwiXFx3aWRlaGF0e1xcY2FsIEN9X3tKfSJdLFsxLDEsIihcXHdpZGVoYXR7XFxjYWwgRH1fSy9DX3BeKikiXSxbMywyLCJcXHJob197Q19wfSIsMix7InN0eWxlIjp7ImhlYWQiOnsibmFtZSI6ImVwaSJ9fX1dLFsyLDAsIkNfe2lfMH0iLDJdLFsxLDAsIkNfe1xccGlfe1xcY2FsIENfMH19IiwwLHsic3R5bGUiOnsiaGVhZCI6eyJuYW1lIjoiZXBpIn19fV0sWzMsMSwiQ197XFx4aV97cF8wfX0iLDAseyJvZmZzZXQiOi0xfV0sWzMsMSwiXFxzaW1lcSIsMSx7Im9mZnNldCI6Miwic3R5bGUiOnsiYm9keSI6eyJuYW1lIjoibm9uZSJ9LCJoZWFkIjp7Im5hbWUiOiJub25lIn19fV0sWzIsMCwiXFxzaW1lcSIsMSx7Im9mZnNldCI6LTIsInN0eWxlIjp7ImJvZHkiOnsibmFtZSI6Im5vbmUifSwiaGVhZCI6eyJuYW1lIjoibm9uZSJ9fX1dXQ==
\[\begin{tikzcd}
	{\widehat{\cal C_0}_{J_0}} & {\widehat{\cal C}_{J}} \\
	{\widehat{(p_0/\cal C_0)}_{(K_0/J_0)}} & {(\widehat{\cal D}_K/C_p^*)}
	\arrow["{C_{i_0}}"', from=1-2, to=1-1]
	\arrow["\simeq"{description}, shift left=2, draw=none, from=1-2, to=1-1]
	\arrow["{C_{\pi_{\cal C_0}}}", two heads, from=2-1, to=1-1]
	\arrow["{\rho_{C_p}}"', two heads, from=2-2, to=1-2]
	\arrow["{C_{\xi_{p_0}}}", shift left, from=2-2, to=2-1]
	\arrow["\simeq"{description}, shift right=2, draw=none, from=2-2, to=2-1]
\end{tikzcd}\]
\end{enumerate}
\end{prop}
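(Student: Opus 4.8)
The plan is to reduce both statements to two facts about the projection $\pi_{\cal C_0}\colon(p_0/\cal C_0)\to\cal C_0$: that it is at once a morphism and a comorphism of sites, and that restriction along it corresponds, under the equivalences of the previous propositions, to the functor $\tau_{\widehat{\cal C}_J}$ of \ref{rhodef}. The economy here is that a single functor does all the work. On the one hand, by \ref{rhodef} we have $(i_{C_p})_*=\tau_{\widehat{\cal C}_J}=\rho_{C_p}^*$. On the other hand, once $\pi_{\cal C_0}$ is a bimorphism of sites, its induced direct image $\Sh(\pi_{\cal C_0})_*=(-\circ\pi_{\cal C_0}\op)$ and its induced inverse image $C_{\pi_{\cal C_0}}^*=a_{(K_0/J_0)}(-\circ\pi_{\cal C_0}\op)$ are both restriction along $\pi_{\cal C_0}$ (the sheafification being trivial, as noted below). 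Hence it suffices to identify this restriction functor with $\tau_{\widehat{\cal C}_J}$: matching it with $(i_{C_p})_*$ yields $\Sh(\pi_{\cal C_0})\simeq i_{C_p}$ and the first square, while matching it with $\rho_{C_p}^*$ yields $C_{\pi_{\cal C_0}}\simeq\rho_{C_p}$ and the second square.

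First I would check that $\pi_{\cal C_0}$ is a morphism of sites. It is cover-preserving by the very definition of $(K_0/J_0)$, since the $\cal C_0$-projection of a $(K_0/J_0)$-covering sieve on $(d,c,u)$ is a $J_0$-covering sieve on $c=\pi_{\cal C_0}(d,c,u)$. For flatness I would exhibit a left adjoint $L$ to $\pi_{\cal C_0}$ sending $c$ to $(0_{\cal D_0},c,\mathbf{!}_{c})$: a morphism $(0_{\cal D_0},c,\mathbf{!}_{c})\to(d,c',u)$ amounts to a pair $(f,g)$ with $f\colon 0_{\cal D_0}\to d$ forced to be the unique arrow out of the initial object and $g\colon c\to c'$ arbitrary, the comma square commuting automatically since both legs issue from $0_{\cal C_0}$. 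Thus every comma category $(c\downarrow\pi_{\cal C_0})$ has an initial object, hence is cofiltered, so $\pi_{\cal C_0}$ is representably flat; being cover-preserving as well, it is a morphism of sites, with direct image the restriction $G\mapsto G\circ\pi_{\cal C_0}\op$.

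The main obstacle is to show that $\pi_{\cal C_0}$ is also a comorphism of sites, because the naive lift of a cover does not saturate the $\cal C_0$-component. Given a $J_0$-covering sieve $S$ on $c$ generated by a family $(w_i\colon c_i\to c)_i$, I would pull $S$ back along $u\colon p_0(d)\to c$ to a $J_0$-covering sieve on $p_0(d)$ and, using that $p_0$ is a comorphism of sites, lift it to a $K_0$-covering family $T=(v\colon d'\to d)$ of $d$ with $u\circ p_0(v)\in S$ for all $v$. The arrows $(v,\,u\circ p_0(v))\colon(d',p_0(d'),1_{p_0(d')})\to(d,c,u)$ make the $\cal D_0$-projection covering, but their $\cal C_0$-projection need not be. This is precisely where the augmentation intervenes: I would adjoin the arrows $(\mathbf{!}_d,w_i)\colon(0_{\cal D_0},c_i,\mathbf{!}_{c_i})\to(d,c,u)$, which are well defined because any square out of the initial object commutes. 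The sieve $S'$ generated by both families has $\cal D_0$-projection containing $T$ (hence $K_0$-covering) and $\cal C_0$-projection containing $(w_i)_i$ (hence $J_0$-covering), so it is $(K_0/J_0)$-covering, while $\pi_{\cal C_0}(S')\subseteq S$ by construction; thus $\pi_{\cal C_0}$ is a comorphism of sites.

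Finally I would carry out the identification by a Yoneda computation. For $E\in\widehat{\cal C}_J$, a morphism $\xi_{p_0}(d,c,u)=(l_Kd,l_Jc,\beta)\to(C_p^*E,E,1_{C_p^*E})=\tau_{\widehat{\cal C}_J}(E)$ is a pair $(\gamma,\delta)$ constrained by $\gamma=C_p^*(\delta)\circ\beta$, hence determined by the single datum $\delta\colon l_Jc\to E$, i.e. by an element of $E(c)$. Therefore, under the equivalence induced by $\xi_{p_0}$, the object $\tau_{\widehat{\cal C}_J}(E)$ corresponds to the sheaf $(d,c,u)\mapsto E(c)$, which, transporting $E$ along the equivalence of \ref{ajoutinitial}, is exactly the restriction $(-\circ\pi_{\cal C_0}\op)$ of the corresponding sheaf on $\cal C_0$; being a restriction along a morphism of sites, this is already a $(K_0/J_0)$-sheaf, so no sheafification is needed in the comorphism case. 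Reading this identification through the two coincidences of the first paragraph then gives $\Sh(\pi_{\cal C_0})_*\simeq(i_{C_p})_*$ and $C_{\pi_{\cal C_0}}^*\simeq\rho_{C_p}^*$, that is, the commutativity of both squares.
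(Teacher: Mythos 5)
Your proof is correct and follows essentially the same route as the paper's: the comorphism argument with the lifted family plus the adjoined arrows $(\mathbf{!}_d,w_i)$ is identical, the morphism-of-sites property rests on the same objects $(0_{\cal D_0},c,\mathbf{!}_c)$ (your left adjoint $L$; the paper phrases this via generalized elements, and uses exactly your left-adjoint argument for the companion statement about $\pi_{\cal D_0}$), and both squares are reduced to the single identification of restriction along $\pi_{\cal C_0}$ with $\tau_{\widehat{\cal C}_J}=(i_{C_p})_*=\rho_{C_p}^*$. The only cosmetic difference is that you verify the commutativity on direct images, via the comparison-lemma computation $\Hom(\xi_{p_0}(d,c,u),\tau_{\widehat{\cal C}_J}(E))\simeq E(c)$, whereas the paper checks the inverse images on the generators and then deduces the second square from the first by the same matching of inverse and direct images.
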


\begin{proof}
The fact that $\pi_{\cal C_0}$ preserves coverings is immediate in virtue of the definition of $(K_0/J_0)$. The conditions for it to be a morphism of sites can be easily checked in the light of $p_0$ preserving the initial object; for example, the first condition: if we have an object $c$ of $\cal C_0$ then $1_c : c \to \pi_{\cal C_0}(0_{\cal D_0},c,\mathbf{!}_c)$ is a generalized element as needed; the other conditions go the same way.

To deduce that $\pi_{\cal C_0}$ is a comorphism of sites., let $(d,c,u)$ be an object of $(p_0/\cal C_0)$ and $(w_i : c_i \to c)_i$ be a $J_0$-covering of $c$. We can pull it back along $u$, and then lift it into $\cal D_0$, since $p_0$ is a comorphisme. It is pictured in the following diagram:

% https://q.uiver.app/#q=WzAsNSxbMiwyLCJjIl0sWzEsMiwicChkKSJdLFsyLDEsImNfaSJdLFsxLDEsImNfe2lqfSJdLFswLDAsInAoZF9rKSJdLFsxLDAsInUiLDJdLFsyLDAsIndfaSJdLFszLDEsIndfe2lqfSJdLFszLDJdLFs0LDEsInAodl9rKSIsMl0sWzQsMywiYV9rIl1d
\[\begin{tikzcd}
	{p(d_k)} \\
	& {c_{ij}} & {c_i} \\
	& {p(d)} & c
	\arrow["{a_k}", from=1-1, to=2-2]
	\arrow["{p(v_k)}"', from=1-1, to=3-2]
	\arrow[from=2-2, to=2-3]
	\arrow["{w_{ij}}", from=2-2, to=3-2]
	\arrow["{w_i}", from=2-3, to=3-3]
	\arrow["u"', from=3-2, to=3-3]
\end{tikzcd}\]

We can add the arrows of the form $(\mathbf{!}_d,w_i)$:

% https://q.uiver.app/#q=WzAsNCxbMSwxLCJjIl0sWzAsMSwicChkKSJdLFsxLDAsImNfaSJdLFswLDAsInAoMF97XFxjYWwgRF8wfSlcXHNpbWVxIDBfe1xcY2FsIENfMH0iXSxbMSwwLCJ1IiwyXSxbMiwwLCJ3X2kiXSxbMywxLCJwKFxcbWF0aGJmeyF9X2QpIiwyXSxbMywyLCJcXG1hdGhiZnshfV97Y19pfSJdXQ==
\[\begin{tikzcd}
	{p(0_{\cal D_0})\simeq 0_{\cal C_0}} & {c_i} \\
	{p(d)} & c
	\arrow["{\mathbf{!}_{c_i}}", from=1-1, to=1-2]
	\arrow["{p(\mathbf{!}_d)}"', from=1-1, to=2-1]
	\arrow["{w_i}", from=1-2, to=2-2]
	\arrow["u"', from=2-1, to=2-2]
\end{tikzcd}\]

\noindent to the arrows $(v_k,w_i)_k$ already obtained, so that the projection of all those arrows onto $\cal C_0$ gives us the covering $(w_i)_i$ which is obviously included in itself, and all those arrows constitute a covering for $(K_0/J_0)$: their projection on $\cal C_0$ is the $J_0$-covering $(w_i)_i$, and their projection on $\cal D_0$ contains the $K_0$-covering $(v_k)_k$. So the arrows $(v_k,w_i)_k$ together with the arrows $(\mathbf{!}_d,w_i)_i$ form a covering $R$ for $(K_0/J_0)$ with $\pi_{\cal C_0}(R) \subset (w_i)_i$: that is, $\pi_{\cal C_0}$ is a comorphism of sites. 

Now, if we want to check the commutation of the first square of geometric morphisms, we can reduce to verify it for the inverse images evaluated on the generators of $(\widehat{\cal D}_K/C_p^*)$. A triplet $(l_K(d),l_J(c),u : l_K(d) \to C_p^*l_J(c))$ is sent by $i_{C_p}^* \simeq \pi_{\widehat{\cal C}_J}$ to the object $l_J(c)$, which in turn is sent to the object $l_{J_0}(i_0(c))$ (where $i_0 : \cal C \to \cal C_0$ is the obvious inclusion). On the other hand, the object $(l_K(d),l_J(c),u : l_K(d) \to C_p^*l_J(c)$ is sent to the generator $l_{(K_0/J_0)}(j_0(d),i_0(c),u:p_0(d)\to c)$ which is, in turn, sent to its projection $l_{J_0}(i_0(c))$, so that the square commutes.

The second square also commutes, since the inverse images of the geometric morphisms it involves are the direct images of the previous square which is commutative.
\end{proof}

We also have the same kind of site-presentation for the inclusion of the other subtopos:

\begin{prop}
Let $p : ({\cal D},K) \to ({\cal C},J)$ be a comorphism of sites. The functor $\pi_{\cal D_0} : ((p_0 / {\cal C_0}),(K_0 / J_0)) \to ({\cal D_0},K_0)$ is a morphism of sites, having for left adjoint the comorphism of sites $\tau_{\cal D}: ({\cal D}_0,K_0) \to ((p_0 / {\cal C}_0),(K_0 / J_0))$ (sending $d$ to $(d,p_0(d),1_{p_0(d)})$). We have that $j_{C_p}$ and $ \Sh(\pi_{\cal D_0}) \simeq C_{\tau_{\cal D_0}}$ are equivalent geometric morphisms, in the sense that the following square commutes:

% https://q.uiver.app/#q=WzAsNCxbMCwwLCJcXHdpZGVoYXR7XFxjYWwgRF8wfV97S18wfSJdLFswLDEsIlxcd2lkZWhhdHsocF8wL1xcY2FsIENfMCl9X3soS18wL0pfMCl9Il0sWzEsMCwiXFx3aWRlaGF0e1xcY2FsIER9X3tLfSJdLFsxLDEsIihcXHdpZGVoYXR7XFxjYWwgRH1fSy9DX3BeKikiXSxbMiwzLCJqX3tDX3B9IiwwLHsic3R5bGUiOnsidGFpbCI6eyJuYW1lIjoiaG9vayIsInNpZGUiOiJ0b3AifX19XSxbMCwyLCJcXFNoKGpfMCkiXSxbMCwxLCJcXFNoKFxccGlfe1xcY2FsIERfMH0pIiwyLHsic3R5bGUiOnsidGFpbCI6eyJuYW1lIjoiaG9vayIsInNpZGUiOiJib3R0b20ifX19XSxbMSwzLCJcXFNoKFxceGlfe3BfMH0pIiwyLHsib2Zmc2V0IjoxfV0sWzAsMiwiXFxzaW1lcSIsMSx7Im9mZnNldCI6Miwic3R5bGUiOnsiYm9keSI6eyJuYW1lIjoibm9uZSJ9LCJoZWFkIjp7Im5hbWUiOiJub25lIn19fV0sWzEsMywiXFxzaW1lcSIsMSx7Im9mZnNldCI6LTIsInN0eWxlIjp7ImJvZHkiOnsibmFtZSI6Im5vbmUifSwiaGVhZCI6eyJuYW1lIjoibm9uZSJ9fX1dXQ==
\[\begin{tikzcd}
	{\widehat{\cal D_0}_{K_0}} & {\widehat{\cal D}_{K}} \\
	{\widehat{(p_0/\cal C_0)}_{(K_0/J_0)}} & {(\widehat{\cal D}_K/C_p^*)}
	\arrow["{\Sh(j_0)}", from=1-1, to=1-2]
	\arrow["\simeq"{description}, shift right=2, draw=none, from=1-1, to=1-2]
	\arrow["{\Sh(\pi_{\cal D_0})}"', hook', from=1-1, to=2-1]
	\arrow["{j_{C_p}}", hook, from=1-2, to=2-2]
	\arrow["{\Sh(\xi_{p_0})}"', shift right, from=2-1, to=2-2]
	\arrow["\simeq"{description}, shift left=2, draw=none, from=2-1, to=2-2]
\end{tikzcd}\]

Moreover, if $p$ is continuous, the functor $\eta_{C_p}:\widehat{\cal D}_K \to (\widehat{\cal D}_K /C_p^*)$ is induced by $\tau_{\cal D_0}$ as a continuous comorphism of sites.
\end{prop}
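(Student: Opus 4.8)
The plan is to follow the template of the preceding proposition for $\pi_{\cal C_0}$, replacing the role played there by the section $c \mapsto (0_{\cal D_0},c,\mathbf{!}_c)$ with the section $\tau_{\cal D}$, and to read off every assertion from the adjunction $\tau_{\cal D}\vdash\pi_{\cal D_0}$. First I would record this adjunction by a one-line hom-set computation: a morphism $\tau_{\cal D}(d)=(d,p_0(d),1_{p_0(d)})\to(d',c',u')$ is a pair $(f,g)$ subject to $g=u'\circ p_0(f)$, so it is uniquely determined by its first component $f\colon d\to d'=\pi_{\cal D_0}(d',c',u')$; hence $\tau_{\cal D}\vdash\pi_{\cal D_0}$, with unit at $d$ the identity $1_d\colon d\to\pi_{\cal D_0}\tau_{\cal D}(d)=d$. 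The claim that $\pi_{\cal D_0}$ is a morphism of sites then splits into cover-preservation, which is immediate from the definition of $(K_0/J_0)$ (a covering sieve projects to a $K_0$-covering sieve on its $\cal D_0$-component), and the covering-flatness conditions. For the latter I would argue as in the $\pi_{\cal C_0}$ case: since $\pi_{\cal D_0}$ is a right adjoint, each comma category $(d\downarrow\pi_{\cal D_0})$ has an initial object, namely $(\tau_{\cal D}(d),1_d)$, hence is cofiltered, so $\pi_{\cal D_0}$ is flat and the generalized-element conditions are witnessed on the nose by $\tau_{\cal D}(d)$ with the maximal covering. This is the concrete content of the authors' phrase that \enquote{the other conditions go the same way}.

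Next I would check that $\tau_{\cal D}$ is a comorphism of sites. Given a $(K_0/J_0)$-covering sieve $S$ on $\tau_{\cal D}(d)$, I would set $S':=\pi_{\cal D_0}(S)$, which is $K_0$-covering on $d$, and note that for each $f\in S'$ a chosen lift $(f,g)\in S$ has codomain $(d,p_0(d),1_{p_0(d)})$, so $p_0(f)=g\circ u'$ and therefore $\tau_{\cal D}(f)=(f,p_0(f))=(f,g)\circ(1,u')$ lies in $S$; thus $\tau_{\cal D}(S')\subseteq S$. The equivalence $\Sh(\pi_{\cal D_0})\simeq C_{\tau_{\cal D}}$ is then formal: because $\tau_{\cal D}\vdash\pi_{\cal D_0}$, restriction along $\tau_{\cal D}$ coincides with left Kan extension along $\pi_{\cal D_0}$, so the two inverse images $a_{K_0}(-\circ\tau_{\cal D}^{\textup{op}})$ and $a_{K_0}\circ\mathrm{Lan}_{\pi_{\cal D_0}^{\textup{op}}}$ agree. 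To pin this down to $j_{C_p}$ and to verify the displayed square, I would evaluate the inverse images of the two composites on a generator $\xi_{p_0}(d,c,u)=(l_K(d),l_J(c),u)$: the path through $j_{C_p}^*=\pi_{\cal D}$ followed by $\Sh(j_0)^*$ returns $l_{K_0}(d)$, while the path through $\Sh(\xi_{p_0})^*$ followed by the morphism of sites $\pi_{\cal D_0}$ returns $l_{K_0}(\pi_{\cal D_0}(d,c,u))=l_{K_0}(d)$; as the generators generate under colimits and all functors involved are cocontinuous, the square commutes and $j_{C_p}\simeq\Sh(\pi_{\cal D_0})\simeq C_{\tau_{\cal D_0}}$.

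For the final clause I would use that $p$ continuous makes $C_p$ essential, so the embedding $j_{C_p}$ acquires the essential image $\eta_{C_p}$, i.e. the left adjoint to $j_{C_p}^*=\pi_{\cal D}$; transporting along the equivalences of the commuting square, $C_{\tau_{\cal D_0}}$ becomes essential, which is precisely the assertion that the comorphism $\tau_{\cal D_0}$ is continuous, and its essential image ${C_{\tau_{\cal D_0}}}_!$, being the left adjoint of $C_{\tau_{\cal D_0}}^*\simeq\pi_{\cal D}$, is identified with $\eta_{C_p}$. The hard part will be exactly this continuity clause: I must ensure that \emph{essentiality of the induced geometric morphism is equivalent to} (not merely implied by) \emph{continuity of the comorphism} $\tau_{\cal D_0}$, so that the phrase \enquote{induced by $\tau_{\cal D_0}$ as a continuous comorphism of sites} is genuinely justified, and that the resulting extra left adjoint really matches $\eta_{C_p}$ as given by the unit $(F,{C_p}_!(F),F\to C_p^*{C_p}_!(F))$ rather than up to some uncontrolled isomorphism. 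A secondary point deserving care is the rigorous passage from \enquote{$\pi_{\cal D_0}$ is a right adjoint} to \enquote{$\pi_{\cal D_0}$ is covering-flat}, which I handle above by exhibiting initial objects in the relevant comma categories.
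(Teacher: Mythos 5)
Most of your proposal is sound and in fact runs parallel to the paper's own (very terse) argument: the paper also gets \enquote{morphism of sites} from the conjunction of cover-preservation and the existence of the left adjoint $\tau_{\cal D_0}$, also obtains that this left adjoint is a comorphism inducing the same geometric morphism (you verify the comorphism property by hand and identify $-\circ\tau_{\cal D_0}^{\textup{op}}$ with $\mathrm{Lan}_{\pi_{\cal D_0}^{\textup{op}}}$, which is a perfectly good substitute for the general fact the paper invokes), and also checks the square by evaluating inverse images on the generators $\xi_{p_0}(d,c,u)$. Up to that point your proof is correct.

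The genuine gap is in the final clause, and it is exactly where you flagged it, but flagging it does not fill it. Your plan is: transport essentiality of $j_{C_p}$ across the commuting square to conclude that $C_{\tau_{\cal D_0}}$ is essential, and then assert that essentiality of $C_{\tau_{\cal D_0}}$ \emph{is} continuity of $\tau_{\cal D_0}$. That equivalence is false in the direction you need. For a comorphism of sites $F$, continuity (i.e. $-\circ F^{\textup{op}}$ sends sheaves to sheaves) implies that $C_F^*=a(-\circ F^{\textup{op}})\,i$ preserves all limits and hence that $C_F$ is essential; but essentiality only says that the \emph{sheafified} composite preserves limits, and gives no control on whether $-\circ F^{\textup{op}}$ of a sheaf is already a sheaf. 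So from your transport argument you may conclude that $C_{\tau_{\cal D_0}}$ is essential and (by uniqueness of left adjoints of $\pi_{\cal D}$) that its essential image agrees with $\eta_{C_p}$, but you cannot conclude the statement actually asserted, namely that $\tau_{\cal D_0}$ is a \emph{continuous} comorphism. To prove continuity one must use the hypothesis on $p$ directly, which your argument never does beyond invoking essentiality of $C_p$. A direct verification goes through the explicit description of $(K_0/J_0)$-sheaves established earlier in the paper: a $(K_0/J_0)$-sheaf $G$ corresponding to $(F,E,\alpha:F\to C_p^*E)$ has underlying presheaf $G[(d,c,u)]\cong F'(d)\times_{E(p(d))}E(c)$, where $F'$ is the pullback of $i_K(F)$ and $E\circ p$ over $i_Ka_K(E\circ p)$; hence $G\circ\tau_{\cal D_0}^{\textup{op}}(d)\cong F'(d)$, and $F'$ is a $K$-sheaf precisely because $E\circ p$ is one --- which is exactly the continuity of $p$. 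This is the content of the paper's \enquote{lengthy but straightforward} step, and it is the piece missing from your proposal.
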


\begin{proof}
Since $\pi_{\cal D_0}$ preserves the coverings and has a left adjoint, it is a morphism of sites and its left adjoint $\tau_{\cal D_0}$ is a comorphism inducing the same geometric morphism. The proof that the square commutes goes the same way as in the previous proposition. The proof that $p$ being continuous implies that $\tau_{\cal D_0}$ is continuous is lengthy but straightforward.
\end{proof}

Now, in this setting of the canonical stack seen as a topos, we can express the $\eta$-extension of a morphism of sites as the inverse image of a geometric morphism between the two canonical stacks:

\begin{prop}
Let $p : ({\cal D},K) \to ({\cal C},J)$ and $p' : ({\cal D'},K') \to ({\cal C},J)$ be comorphisms of sites, and $A: ({\cal D},K) \to ({\cal D'},K')$ a morphism of sites together with $\phi : p'A \Rightarrow p$ a natural transformation. The $\eta$-extension $\widetilde{A}^* : (\widehat{\cal D}_K /C_p^*) \to (\widehat{\cal D'}_{K'} /C_{p'}^*)$ of $A$ is the inverse image of a geometric morphism $\widetilde{A} : (\widehat{\cal D'}_{K'} /C_{p'}^*) \to (\widehat{\cal D}_K /C_p^*)$, having for direct image $\widetilde{A}_* : (\widehat{\cal D'}_{K'} / C_{p'}^*) \to (\widehat{\cal D}_K / C_{p}^*)$ sending a triplet $(F',E, \alpha' : F' \to C_{p'}^*E)$ to the triplet given by the pullback:
% https://q.uiver.app/#q=WzAsNCxbMSwwLCJcXFNoKEEpXyooRicpIl0sWzEsMSwiXFxTaChBKV8qQ197cCd9XiooRSkiXSxbMCwxLCJDX3BeKihFKSJdLFswLDAsIkYiXSxbMiwxLCJhX0soLSBcXGNpcmMgXFxwaGkpIiwyXSxbMywyLCJcXHdpZGV0aWxkZXtBfV8qW1xcYWxwaGFdIiwyXSxbMywwXSxbMywxLCIiLDEseyJzdHlsZSI6eyJuYW1lIjoiY29ybmVyIn19XSxbMCwxLCJcXFNoKEEpXyooXFxhbHBoYScpIl1d
\[\begin{tikzcd}
	F & {\Sh(A)_*(F')} \\
	{C_p^*(E)} & {\Sh(A)_*C_{p'}^*(E)}
	\arrow[from=1-1, to=1-2]
	\arrow["{\widetilde{A}_*[\alpha]}"', from=1-1, to=2-1]
	\arrow["\lrcorner"{anchor=center, pos=0.125}, draw=none, from=1-1, to=2-2]
	\arrow["{\Sh(A)_*(\alpha')}", from=1-2, to=2-2]
	\arrow["{a_K(- \circ \phi)}"', from=2-1, to=2-2]
\end{tikzcd}\]
\end{prop}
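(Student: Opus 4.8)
The plan is to produce the functor displayed in the statement as an explicit right adjoint to $\widetilde{A}^*$, and then to promote the resulting adjunction to a geometric morphism by checking that $\widetilde{A}^*$ is left exact. Since both comma categories are toposes by \ref{xilocal}, a geometric morphism $\widetilde{A}:(\widehat{\cal D'}_{K'}/C_{p'}^*)\to(\widehat{\cal D}_K/C_p^*)$ with inverse image $\widetilde{A}^*$ is by definition nothing more than an adjunction $\widetilde{A}^*\dashv\widetilde{A}_*$ in which the left adjoint preserves finite limits; so the whole content is to verify the pullback formula for $\widetilde{A}_*$ and the left-exactness of $\widetilde{A}^*$.

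I would first record the two formal ingredients. On the one hand, $\widetilde{A}^*$ is left exact and cocontinuous: finite limits and arbitrary colimits in $(\widehat{\cal D}_K/C_p^*)$ are computed componentwise (as already used in \ref{xilocal}, since the identity and $C_p^*$ preserve them), while $\widetilde{A}^*$ acts by the inverse image $\Sh(A)^*$ on the first component and by the identity on the second, the structure map being transported by $\widetilde{\phi}$; since $\Sh(A)^*$ is both left exact and cocontinuous and $\widetilde{\phi}$ is natural, $\widetilde{A}^*$ inherits both properties. On the other hand, the arrow $a_K(-\circ\phi):C_p^*E\to\Sh(A)_*C_{p'}^*E$ occurring in the pullback is exactly the transpose (mate) of $\widetilde{\phi}_E:\Sh(A)^*C_p^*E\to C_{p'}^*E$ under $\Sh(A)^*\dashv\Sh(A)_*$; this follows from the construction of $\widetilde{\phi}$ from the mate $\overline{\phi}$ by sheafification in point (1) of \ref{etaextensionproperties}, together with naturality in $E$.

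The heart of the argument is the hom-set bijection. Fix $(G,D,\beta)$ in $(\widehat{\cal D}_K/C_p^*)$ and $(F',E,\alpha')$ in $(\widehat{\cal D'}_{K'}/C_{p'}^*)$. A morphism $\widetilde{A}^*(G,D,\beta)\to(F',E,\alpha')$ is a pair $(g':\Sh(A)^*G\to F',\,h:D\to E)$ satisfying $\alpha'\circ g'=C_{p'}^*(h)\circ\widetilde{\phi}_D\circ\Sh(A)^*(\beta)$. Transposing $g'$ to $\bar{g}:G\to\Sh(A)_*F'$ and transposing the whole compatibility equation along $\Sh(A)^*\dashv\Sh(A)_*$ — using the identification of the mate of $\widetilde{\phi}$ with $a_K(-\circ\phi)$ and the naturality of the latter — turns the condition into $\Sh(A)_*(\alpha')\circ\bar{g}=a_K(-\circ\phi)\circ C_p^*(h)\circ\beta$. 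This is precisely the cone condition cut out by the pullback square defining $\widetilde{A}_*(F',E,\alpha')$, so the pair $(\bar{g},\,C_p^*(h)\circ\beta)$ determines a unique map $k:G\to F$ whose composite with the projection $F\to C_p^*E$ equals $C_p^*(h)\circ\beta$; this is exactly the datum of a morphism $(G,D,\beta)\to\widetilde{A}_*(F',E,\alpha')$ in the comma category. Naturality in both variables is inherited from naturality of the transpose and functoriality of pullbacks, so $\widetilde{A}_*$ is right adjoint to $\widetilde{A}^*$.

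I expect the main obstacle to be the bookkeeping in this transposition: verifying cleanly that the mate of $\widetilde{\phi}_E$ is $a_K(-\circ\phi)$, and that transposing the structure-map compatibility reorganizes it into exactly the cone condition of the displayed pullback (moving $h$ past $a_K(-\circ\phi)$ via its naturality square). Once this identification is in hand, functoriality of $\widetilde{A}_*$ and the componentwise left-exactness of $\widetilde{A}^*$ are routine, and the pair $(\widetilde{A}^*,\widetilde{A}_*)$ is the geometric morphism $\widetilde{A}$ claimed in the statement.
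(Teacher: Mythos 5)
Your proof is correct and follows essentially the same route as the paper: the paper's own proof is the one-line remark that the result is ``straightforward in the light of the universal property of the pullback and the adjunction $\Sh(A)_* \vdash \Sh(A)^*$'', and your hom-set bijection---transposing the compatibility equation along $\Sh(A)^*\dashv\Sh(A)_*$, identifying $a_K(-\circ\phi)$ as the mate of $\widetilde{\phi}$, and invoking the universal property of the defining pullback---is exactly that argument written out in full. Your additional componentwise verification that $\widetilde{A}^*$ is left exact is a sound complement, consistent with what the paper already records in \ref{etaextensionproperties}.
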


\begin{proof}
Straightforward in the light of the universal property of the pullback and the adjunction $\Sh(A)_* \vdash \Sh(A)^*$.
\end{proof}

In the case of the base category $\cal C$ being cartesian, this geometric morphism admits a description at the site-level:

\begin{prop}
Let $p : ({\cal D},K) \to ({\cal C},J)$ and $p' : ({\cal D'},K') \to ({\cal C},J)$ be comorphisms of sites with $\cal C$ a cartesian category, and $A: ({\cal D},K) \to ({\cal D'},K')$ a morphism of sites together with $\phi : p'A \Rightarrow p$ a natural transformation. 

The functor $(A_0/{\cal C_0}) : ((p_0/\cal C_0),(K_0/J_0)) \to ((p'_0/\cal C_0),(K'_0/J_0))$ sending an object $(d,c,u:p(d)\to c)$ to $(A_0(d),c,u\phi^0_d:p'(A_0(d))\to c)$ (where $A_0 : \cal D_0 \to \cal D'_0$ is the extension of $A$ preserving the initial objects) is a morphism of sites. Moreover, $\Sh(A/\cal C)$ and $ \widetilde{A} : (\widehat{\cal D'}_{K'} /C_{p'}^*) \to (\widehat{\cal D}_K /C_p^*)$ are equivalent geometric morphisms, in the sense that the following square commutes:

% https://q.uiver.app/#q=WzAsNCxbMCwwLCIoXFx3aWRlaGF0e1xcY2FsIER9X0svQ19wXiopIl0sWzEsMCwiKFxcd2lkZWhhdHtcXGNhbCBEJ31fe0snfS9DX3AnXiopIl0sWzAsMSwiXFx3aWRlaGF0eyhwXzAvXFxjYWwgQ18wKX1feyhLXzAvSl8wKX0iXSxbMSwxLCJcXHdpZGVoYXR7KHBfMCcvXFxjYWwgQ18wKX1feyhLXzAnL0pfMCl9Il0sWzMsMiwiXFxTaChBXzAvXFxjYWwgQ18wKSIsMCx7Im9mZnNldCI6LTF9XSxbMSwwLCJcXHdpZGV0aWxkZXtBfSIsMl0sWzAsMiwiXFxTaChcXHhpX3twXzB9KSIsMl0sWzEsMywiXFxTaChcXHhpX3twJ18wfSkiXSxbMCwyLCJcXHNpbWVxIiwxLHsib2Zmc2V0IjotMywic3R5bGUiOnsiYm9keSI6eyJuYW1lIjoibm9uZSJ9LCJoZWFkIjp7Im5hbWUiOiJub25lIn19fV0sWzEsMywiXFxzaW1lcSIsMSx7Im9mZnNldCI6Mywic3R5bGUiOnsiYm9keSI6eyJuYW1lIjoibm9uZSJ9LCJoZWFkIjp7Im5hbWUiOiJub25lIn19fV1d
\[\begin{tikzcd}
	{(\widehat{\cal D}_K/C_p^*)} & {(\widehat{\cal D'}_{K'}/C_p'^*)} \\
	{\widehat{(p_0/\cal C_0)}_{(K_0/J_0)}} & {\widehat{(p_0'/\cal C_0)}_{(K_0'/J_0)}}
	\arrow["{\Sh(\xi_{p_0})}"', from=1-1, to=2-1]
	\arrow["\simeq"{description}, shift left=3, draw=none, from=1-1, to=2-1]
	\arrow["{\widetilde{A}}"', from=1-2, to=1-1]
	\arrow["{\Sh(\xi_{p'_0})}", from=1-2, to=2-2]
	\arrow["\simeq"{description}, shift right=3, draw=none, from=1-2, to=2-2]
	\arrow["{\Sh(A_0/\cal C_0)}", shift left, from=2-2, to=2-1]
\end{tikzcd}\]
\end{prop}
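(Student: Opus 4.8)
The plan is to exploit that the two vertical functors $\Sh(\xi_{p_0})$ and $\Sh(\xi_{p'_0})$ are equivalences of toposes, as established by the earlier proposition presenting the canonical stack as $\widehat{(p_0/\cal C_0)}_{(K_0/J_0)}$. The genuine content then splits into showing that $(A_0/\cal C_0)$ is a morphism of sites and that transporting the geometric morphism $\widetilde A$ through these equivalences reproduces $\Sh(A_0/\cal C_0)$, i.e. that the square commutes. I would first dispatch cover-preservation: the functor $(A_0/\cal C_0)$ acts as $A_0$ on the $\cal D_0$-component and as the identity on the $\cal C_0$-component, while the covers for $(K_0/J_0)$ and $(K'_0/J_0)$ are exactly those families whose two projections are covering. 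Since $A$, hence $A_0$, is cover-preserving, the image of a $(K_0/J_0)$-cover projects to a $K'_0$-cover on $\cal D'_0$ and to the same $J_0$-cover on $\cal C_0$, so it is a $(K'_0/J_0)$-cover. I would then arrange for the commutativity computation to feed back into the structural (covering-flatness) conditions, rather than verifying these by hand; the cartesianness of $\cal C$ and the preservation of initial objects by $A_0,p_0,p'_0$ enter only through that identification.

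Next I would prove the commutativity of the square by evaluating the two composite inverse images on a generating set. Both $\widetilde A^* \circ \Sh(\xi_{p_0})^*$ and $\Sh(\xi_{p'_0})^* \circ \Sh(A_0/\cal C_0)^*$ preserve colimits, and the representables $l_{(K_0/J_0)}(d,c,u)$ generate $\widehat{(p_0/\cal C_0)}_{(K_0/J_0)}$, so it suffices to compare them there. Using $\Sh(\xi_{p_0})^*\,l_{(K_0/J_0)}(d,c,u) \simeq \xi_{p_0}(d,c,u) = (l_K d, l_J c, \alpha_u)$ together with the explicit formula for the $\eta$-extension and $\Sh(A)^* l_K d \simeq l_{K'}(A_0 d)$, I obtain
\[
\widetilde A^*\bigl(\xi_{p_0}(d,c,u)\bigr) \;\simeq\; \bigl(l_{K'}(A_0 d),\; l_J c,\; \widetilde\phi_{l_J c}\circ \Sh(A)^*(\alpha_u)\bigr).
\]
On the other side, $\Sh(A_0/\cal C_0)^*$ sends the representable to $l_{(K'_0/J_0)}(A_0 d, c, u\phi^0_d)$, whence $\Sh(\xi_{p'_0})^*$ yields $\xi_{p'_0}(A_0 d, c, u\phi^0_d) = (l_{K'}(A_0 d), l_J c, \alpha'_{u\phi^0_d})$. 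The first two components agree, so everything reduces to the single identity of structure maps $\widetilde\phi_{l_J c}\circ \Sh(A)^*(\alpha_u) = \alpha'_{u\phi^0_d}$.

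This identity is where I expect the real work to lie. I would unwind the mate $\widetilde\phi$ of $\phi$: under the co-Yoneda presentations $C_p^* l_J c \simeq a_K\Hom_{\cal C}(p(-),c)$ and $C_{p'}^* l_J c \simeq a_{K'}\Hom_{\cal C}(p'(-),c)$, the component $\widetilde\phi_{l_J c}$ is precisely the sheafification of precomposition by the components of $\phi$, sending a generalized element $u : p(d)\to c$ to $u\circ\phi^0_d : p'(A_0 d)\to c$. Since $\alpha_u$ is by definition the sheafified evaluation classifying $u$, its image $\Sh(A)^*(\alpha_u)$ classifies $u$ over $A_0 d$, and postcomposing with $\widetilde\phi_{l_J c}$ produces exactly the element $u\circ\phi^0_d$ classified by $\alpha'_{u\phi^0_d}$ — which is identical to the recipe defining $(A_0/\cal C_0)$ on structure maps. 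Checking naturality of this identification in $(d,c,u)$ and treating the initial-object objects $(0_{\cal D_0},c,\mathbf{!})$ separately (using that $A_0,p_0,p'_0$ preserve initial objects) completes the comparison on generators, hence the commutativity of the square.

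Finally I would harvest the morphism-of-sites property from this comparison. Transporting through the equivalences $\Sh(\xi_{p_0})$ and $\Sh(\xi_{p'_0})$, the commutativity just proved identifies the colimit-preserving functor $\Sh(A_0/\cal C_0)^*$ with $\widetilde A^*$; as the latter is the inverse image of the geometric morphism $\widetilde A$ it preserves finite limits, and since the vertical equivalences preserve and reflect finite limits, $\Sh(A_0/\cal C_0)^*$ does too. Together with cover-preservation this shows $(A_0/\cal C_0)$ is a morphism of sites with $\Sh(A_0/\cal C_0)$ equivalent to the transport of $\widetilde A$, which is exactly the asserted commuting square. The main obstacle is the structure-map identity of the previous paragraph, namely confirming that the mate $\widetilde\phi$ implements precomposition by $\phi$ and therefore matches the defining action $u\mapsto u\phi^0_d$ of $(A_0/\cal C_0)$.
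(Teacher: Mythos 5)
Your proposal is correct, but it inverts the paper's logical structure. The paper verifies that $(A_0/\cal C_0)$ is a morphism of sites entirely at the site level: after the (shared) observation that cover-preservation follows from that of $A$, it checks the covering-flatness conditions by hand, producing generalized elements with the help of the terminal object $\mathbf{1}_{\cal C_0}$ — this is precisely where the cartesianness hypothesis on $\cal C$ enters — and then merely asserts that the commutativity of the square ``can easily be checked on the generators.'' You do the opposite: you only verify cover-preservation directly, carry out in detail the generator computation that the paper waves through (correctly reducing it to the mate identity $\widetilde\phi_{l_J c}\circ \Sh(A)^*(\alpha_u) = \alpha'_{u\phi^0_d}$, which indeed holds since $\widetilde\phi$ is the sheafified precomposition by $\phi$), and then \emph{derive} the flatness half of the morphism-of-sites property from the fact that the induced colimit-preserving extension coincides, through the equivalences $\Sh(\xi_{p_0})$ and $\Sh(\xi_{p'_0})$, with the finite-limit-preserving functor $\widetilde A^*$. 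Both routes are sound; yours is more economical and has the interesting by-product of showing that the cartesianness of $\cal C$ is not really needed for the flatness verification (it is bypassed), whereas the paper's route is self-contained at the site level and makes visible where that hypothesis is used. One point of care in your write-up: until flatness is established, the symbol $\Sh(A_0/\cal C_0)^*$ may not denote the inverse image of a geometric morphism; you must read it as the colimit-preserving extension $\widehat{(p_0/\cal C_0)}_{(K_0/J_0)} \to \widehat{(p'_0/\cal C_0)}_{(K'_0/J_0)}$ guaranteed by cover-preservation alone (left Kan extension followed by sheafification, which already satisfies $E \circ l_{(K_0/J_0)} \simeq l_{(K'_0/J_0)} \circ (A_0/\cal C_0)$), so that no circularity arises; with that reading, your final step — finite-limit preservation of $E$ plus cover-preservation implies morphism of sites — is a standard characterization and closes the argument.
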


\begin{proof}
Since $A$ is a morphism of sites it is cover-preserving, so that $(A_0/\cal C_0)$ also preserves covering. The three conditions for it to be a morphism of sites are proved the same way, so we just give the proof for the first condition. Let $(d',c,u':p_0(d') \to c)$ be an object of $(p'_0/\cal C_0)$. Since $A$ is a morphism of sites (it is easy to verify that it implies $A_0$ morphism of sites), there exist a $K'_0$-covering $(v_i' : d'_i \to d')_i$ together with generalized elements $(e_i' : d'_i \to A(d_i))_i$. With the help of the terminal object of the cartesian category $\cal C_0$ (it is easy to verify that $\cal C$ cartesian implies $\cal C_0$ cartesian), we can derive from it generalized elements for $(A_0/\cal C_0)$, as the $(e_i',\mathbf{!}_{c}) : (d'_i,c,u'p_0'(v_i')) \to (A_0/\cal C_0)(d_i,\mathbf{1}_{\cal C_0},\mathbf{!}_{p_0(d_i)})$ where $\mathbf{1}_{\cal C_0}$ is the terminal object. In picture, this is represented as:
% https://q.uiver.app/#q=WzAsNyxbMSwwLCJwXzAnKGQnKSJdLFsyLDAsImMiXSxbMSwxLCJwXzAnKGQnX2kpIl0sWzIsMSwiYyJdLFswLDIsInAnXzAoQShkX2kpKSJdLFsxLDIsInBfMChkX2kpIl0sWzIsMiwiXFxtYXRoYmZ7MX1fe1xcY2FsIENfMH0iXSxbMiwwLCJwXzAnKHZfaScpIl0sWzAsMSwidSciXSxbMiwzLCJ1J3BfMCcodl9pJykiLDJdLFszLDEsIiIsMix7ImxldmVsIjoyLCJzdHlsZSI6eyJoZWFkIjp7Im5hbWUiOiJub25lIn19fV0sWzIsNCwicCdfMChlX2knKSIsMl0sWzQsNSwiXFxwaGleMF97ZF9pfSIsMl0sWzMsNiwiXFxtYXRoYmZ7IX1fe2N9Il0sWzUsNiwiXFxtYXRoYmZ7IX1fe3BfMChkX2kpfSIsMl1d
\[\begin{tikzcd}
	& {p_0'(d')} & c \\
	& {p_0'(d'_i)} & c \\
	{p'_0(A(d_i))} & {p_0(d_i)} & {\mathbf{1}_{\cal C_0}}
	\arrow["{u'}", from=1-2, to=1-3]
	\arrow["{p_0'(v_i')}", from=2-2, to=1-2]
	\arrow["{u'p_0'(v_i')}"', from=2-2, to=2-3]
	\arrow["{p'_0(e_i')}"', from=2-2, to=3-1]
	\arrow[equals, from=2-3, to=1-3]
	\arrow["{\mathbf{!}_{c}}", from=2-3, to=3-3]
	\arrow["{\phi^0_{d_i}}"', from=3-1, to=3-2]
	\arrow["{\mathbf{!}_{p_0(d_i)}}"', from=3-2, to=3-3]
\end{tikzcd}\]

The commutation of the square of geometric morphisms can easily be checked on the generator for the inverse images.
\end{proof}

Finally, we notice that $A$ induces a relative geometric morphism if and only if it also is the case for $\widetilde{A}^*$, relating these two geometric morphisms:

\begin{prop}\label{characcanonicalstack}
Let $p : ({\cal D},K) \to ({\cal C},J)$ and $p' : ({\cal D'},K') \to ({\cal C},J)$ be comorphisms of sites, and $A: ({\cal D},K) \to ({\cal D'},K')$ a morphism of sites together with $\phi : p'A \Rightarrow p$ a natural transformation. It is a morphism of sites over $({\cal C},J)$ (that is, it induces a relative geometric morphism) if and only if $\widetilde{A}$ is a relative geometric morphism, that is, the following diagram commutes:

% https://q.uiver.app/#q=WzAsMyxbMCwwLCIoXFxTaCh7XFxjYWwgRH0sSykgXFxkb3duYXJyb3cgQ197cH1eKikiXSxbMiwwLCIoXFxTaCh7XFxjYWwgRH0nLEsnKSBcXGRvd25hcnJvdyBDX3twJ31eKikiXSxbMSwxLCJcXFNoKHtcXGNhbCBDfSxKKSJdLFswLDIsIlxccmhvX3tcXFNoKHtcXGNhbCBDfSxKKX0iLDJdLFsxLDIsIlxccmhvX3tcXFNoKHtcXGNhbCBDfSxKKX0nIl0sWzAsMSwiKFxcd2lkZXRpbGRle0F9IFxcZGFzaHZcXHdpZGV0aWxkZXtBfV8qKSJdXQ==
\[\begin{tikzcd}
	{(\widehat{\cal D'}_{K'} / C_{p'}^*)} && {(\widehat{\cal D}_{K} /C_{p}^*)} \\
	& {\widehat{\cal C}_J}
	\arrow["{\rho_{C_{p'}}}"', from=1-1, to=2-2]
	\arrow["{\rho_{C_{p}}}", from=1-3, to=2-2]
	\arrow["{\widetilde{A}}", from=1-1, to=1-3]
\end{tikzcd}\]

Moreover, the following square of geometric morphisms commutes, that is, $\Sh(A)$ is the restriction of $\widetilde{A}$:
% https://q.uiver.app/#q=WzAsNCxbMCwwLCIoXFx3aWRlaGF0e1xcY2FsIER9X0svQ19wXiopIl0sWzEsMCwiKFxcd2lkZWhhdHtcXGNhbCBEfV97Syd9L0Nfe3AnfV4qKSJdLFswLDEsIlxcd2lkZWhhdHtcXGNhbCBEfV9LIl0sWzEsMSwiXFx3aWRlaGF0e1xcY2FsIEQnfV97Syd9Il0sWzMsMSwial97Q197cCd9fSIsMix7InN0eWxlIjp7InRhaWwiOnsibmFtZSI6Imhvb2siLCJzaWRlIjoiYm90dG9tIn19fV0sWzIsMCwial97Q19wfSIsMCx7InN0eWxlIjp7InRhaWwiOnsibmFtZSI6Imhvb2siLCJzaWRlIjoiYm90dG9tIn19fV0sWzMsMiwiXFxTaChBKSJdLFsxLDAsIlxcd2lkZXRpbGRle0F9IiwyXV0=
\[\begin{tikzcd}
	{(\widehat{\cal D}_K/C_p^*)} & {(\widehat{\cal D}_{K'}/C_{p'}^*)} \\
	{\widehat{\cal D}_K} & {\widehat{\cal D'}_{K'}}
	\arrow["{\widetilde{A}}"', from=1-2, to=1-1]
	\arrow["{j_{C_p}}", hook', from=2-1, to=1-1]
	\arrow["{j_{C_{p'}}}"', hook', from=2-2, to=1-2]
	\arrow["{\Sh(A)}", from=2-2, to=2-1]
\end{tikzcd}\]
\end{prop}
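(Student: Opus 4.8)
The plan is to reduce both assertions to computations on inverse image functors, since a geometric morphism is determined up to isomorphism by its inverse image and each of the two commutation conditions can therefore be tested there. The explicit description of $\widetilde{A}^*$ from Proposition~\ref{etaextensionproperties}(2), together with the inverse images of $\rho_{C_p}$ and $j_{C_p}$ recorded in the previous propositions, will make both verifications essentially formal substitutions.

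For the equivalence, recall from Proposition~\ref{rhodef} that the inverse image $\rho_{C_p}^*$ is the functor $E \mapsto (C_p^* E, E, 1_{C_p^* E})$, and likewise for $\rho_{C_{p'}}^*$. Saying that $\widetilde{A}$ is a relative geometric morphism means $\rho_{C_p}\circ\widetilde{A}\simeq\rho_{C_{p'}}$, which at the level of inverse images reads $\widetilde{A}^*\circ\rho_{C_p}^*\simeq\rho_{C_{p'}}^*$. Applying the formula for $\widetilde{A}^*$ to $(C_p^* E, E, 1_{C_p^* E})$ yields $(\Sh(A)^* C_p^* E, E, \widetilde{\phi}_E)$, since the structure map $1_{C_p^* E}$ is carried to $\widetilde{\phi}_E\circ\Sh(A)^*(1_{C_p^* E})=\widetilde{\phi}_E$. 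Comparing this with $\rho_{C_{p'}}^*(E)=(C_{p'}^* E, E, 1_{C_{p'}^* E})$, an isomorphism in the comma category $(\widehat{\cal D'}_{K'}/C_{p'}^*)$ with identity on the $E$-component exists precisely when $\widetilde{\phi}_E$ is invertible, and since $C_{p'}^*$ preserves isomorphisms, an isomorphism over an arbitrary automorphism of $E$ still forces $\widetilde{\phi}_E$ to be invertible. Hence $\rho_{C_p}\circ\widetilde{A}\simeq\rho_{C_{p'}}$ if and only if $\widetilde{\phi}$ is a natural isomorphism. By Proposition~\ref{etaextensionproperties}(6), this is exactly the condition that $\widetilde{\phi}:C_p\Sh(A)\simeq C_{p'}$, i.e.\ that $(A,\phi)$ induces a relative geometric morphism, which establishes the stated equivalence.

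The commutativity of the second square is unconditional and is checked directly on inverse images. The inverse image of $j_{C_p}$ is the first projection $\pi_{\cal D}:(F,E,\alpha)\mapsto F$, so $\Sh(A)^*\circ\pi_{\cal D}$ sends $(F,E,\alpha)$ to $\Sh(A)^* F$. On the other side, $\widetilde{A}^*(F,E,\alpha)=(\Sh(A)^* F, E, \widetilde{\phi}_E\Sh(A)^*\alpha)$, whose first projection under $\pi_{\cal D'}$ is again $\Sh(A)^* F$. Thus $\pi_{\cal D'}\circ\widetilde{A}^*=\Sh(A)^*\circ\pi_{\cal D}$ on the nose and naturally in $(F,E,\alpha)$, whence $\widetilde{A}\circ j_{C_{p'}}\simeq j_{C_p}\circ\Sh(A)$; this is the assertion that $\Sh(A)$ is the restriction of $\widetilde{A}$ along the closed embeddings.

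The only point requiring care, and the sole potential obstacle, lies in the first part: one must verify that a pointwise isomorphism of the two functors $\widetilde{A}^*\rho_{C_p}^*$ and $\rho_{C_{p'}}^*$ genuinely forces $\widetilde{\phi}$ to be invertible rather than merely producing some comparison built from it, which is precisely where the stability of isomorphisms under $C_{p'}^*$ is used, and that the resulting identification is natural in $E$ so as to upgrade to an isomorphism of geometric morphisms. Everything else is a direct substitution into the formulas already established for $\widetilde{A}^*$, $\rho^*_{C_p}$, and $\pi_{\cal D}$.
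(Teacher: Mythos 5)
Your proof is correct, and it shares the paper's overall strategy: both arguments test the two commutations on inverse images, using the description of $\rho_{C_p}^*$ from Proposition \ref{rhodef} as the functor picking out the terminal object of each fiber, together with the explicit formula for $\widetilde{A}^*$; your treatment of the second square (projections as inverse images of the $j$'s, and $\widetilde{A}^*$ acting as $\Sh(A)^*$ on first components) is exactly the paper's. The genuine difference lies in the middle step of the equivalence. The paper invokes the bullet of Proposition \ref{etaextensionproperties}(6) identifying relative geometric morphisms with \emph{fiberwise finite-limit preservation} of $\widetilde{A}^*$, and then reduces fiberwise finite-limit preservation to fiberwise terminal preservation, using that $\widetilde{A}^*$ is already left exact globally. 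You instead compute directly that $\widetilde{A}^*(C_p^*E, E, 1_{C_p^*E}) = (\Sh(A)^*C_p^*E, E, \widetilde{\phi}_E)$, observe that this is isomorphic to the fiberwise terminal $(C_{p'}^*E, E, 1_{C_{p'}^*E})$ precisely when $\widetilde{\phi}_E$ is invertible (your care about a possibly non-identity automorphism on the $E$-component is right), and conclude via the other bullet of the same proposition, namely that invertibility of $\widetilde{\phi}$ is exactly the condition that $(A,\phi)$ induces a relative geometric morphism. Your route buys explicitness: it exhibits $\widetilde{\phi}_E$ as the precise obstruction to the triangle commuting, whereas the paper's version is shorter but leaves implicit the standard fact that global left exactness plus fiberwise terminal preservation yields fiberwise left exactness. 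Finally, the naturality worry you flag at the end dissolves on its own: $\widetilde{\phi}$ is already a natural transformation, and a pointwise invertible natural transformation is automatically a natural isomorphism, so the pairs $(\widetilde{\phi}_E, 1_E)$ assemble into the required isomorphism of geometric morphisms.
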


\begin{proof}
Recall that the inverse images of $\rho_{C_p}$ and $\rho_{C_{p'}}$ are the functors sending an object of the base topos into the terminal of the respective fibers over this object. But \ref{etaextensionproperties} says exactly that $A$ induces a relative geometric morphism if and only if $\widetilde{A}^*$ preserves finite limits fiberwise; and this is the case if and only if it preserves the terminal objects fiberwise, since it already preserves finite limits globally.

The commutation of the square is immediate, since the inverse images of $j_{C_p}$ and $j_{C_{p'}}$ are the projections, and $\widetilde{A}^*$ acts just as $\Sh(A)$ on the first component.
\end{proof}

\section{From classical to local fibrations}\label{section3}

As recalled in \ref{section:notprel}, fibrations between categories equipped with compatible topologies naturally act as comorphisms of sites, and thus provide a suitable framework for presenting relative toposes. However, while fibrations can be equipped with various topologies, the definition of a fibration itself does not involve any topological data: the lifting conditions only refer to isomorphisms, they make no reference to the topologies.

Building on the fact that fibrations, viewed as comorphisms, provide a natural way to present relative toposes, we are led to consider arbitrary comorphisms \( p : (\mathcal{D}, K) \to (\mathcal{C}, J) \) as a more general means of inducing relative toposes. Between the structured case of fibrations equipped with topologies (relative sites) and the more general case of comorphisms, it is natural to consider a “localized” notion of fibration that takes into account the topologies of the sites involved. We will see that while the classical notion of fibration is well-suited to the case of toposes equipped with their canonical topologies, it is not the case in general: the presence of topologies allows for the definition of a more refined and general notion of fibration.

\subsection{Locally cartesian arrows}

While an arbitrary comorphism of sites is not, in general, a fibration, it always admits a canonical functor relating it to one: the canonical stack of the relative topos it induces. This observation allows us to define a notion of locally cartesian arrow:

\begin{defn}
Let $p : ({\cal D},K) \to ({\cal C}, J)$ be a comorphism of sites. We define an arrow $f$ in $\cal D$ to be \emph{locally cartesian} (with respect to $K$) if $\eta_{\cal D}(f)$ is a cartesian arrow of its canonical relative site.
\end{defn}

\begin{remark}
An immediate computation as in \ref{etamorphfib} shows that usual cartesian arrows are locally cartesian ones, for any topology.
\end{remark}

Although this definition is abstract, it can be reformulated more concretely: an arrow is locally cartesian if, locally, it satisfies the usual lifting properties of cartesian arrows. This is the following proposition:

\begin{prop}
An arrow $f:d' \to d$ in $({\cal D},K)$ is locally cartesian if it satisfies the two conditions:

\begin{enumerate}[(i)]
    \item for any arrow $h:p(d'')\to p(d')$ in $\cal C$ such that $p(f)\circ h=p(g)$ for some arrow $g: d'' \to d$ in $\cal D$, there exist a $K$-covering family $(v_i:d''_i \to d'')_i$ and for each $i$ an arrow $h_{i}: d''_i \to d'$ such that $h\circ p(v_i)=p(h_i)$ and $f\circ h_i = g\circ v_i$. 
    \item for any two arrows $h,h' : d'' \to d'$ such that $f\circ h = f \circ h'$ and $p(h)=p(h')$, we have a $K$-covering $(v_i : d''_i \to d'')_i$ such that $h\circ v_i = h'\circ v_i$.
\end{enumerate}
\end{prop}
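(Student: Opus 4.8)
The plan is to unwind the definition. By the formula for $\eta_{\cal D}$ and the characterization of cartesian arrows in the canonical stack (point (i) of Proposition \ref{enumloccart}), the arrow $f$ is locally cartesian precisely when the square
\[\begin{tikzcd}
	{l_K(d')} & {l_K(d)} \\
	{C_p^*l_J(p(d'))} & {C_p^*l_J(p(d))}
	\arrow["{l_K(f)}", from=1-1, to=1-2]
	\arrow["{\alpha_{d'}}"', from=1-1, to=2-1]
	\arrow["{\alpha_d}", from=1-2, to=2-2]
	\arrow["{C_p^*l_J(p(f))}"', from=2-1, to=2-2]
\end{tikzcd}\]
is a pullback in $\widehat{\cal D}_K$, where $\alpha_d=a_K(ev_{1_{p(d)}})$ sends a (local) arrow $g$ into $d$ to the (local) arrow $p(g)$ into $p(d)$. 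So I would form the comparison map $u:l_K(d')\to P$ into the genuine pullback $P$ of $\alpha_d$ and $C_p^*l_J(p(f))$, and show that conditions (i) and (ii) force $u$ to be an isomorphism. Since $\widehat{\cal D}_K$ is a topos, it suffices to prove that $u$ is at once a monomorphism and an epimorphism.

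First I would set up the dictionary for generalized elements. As $l_K=a_K\circ y_{\cal D}$ and $a_K$ is left adjoint to the inclusion of sheaves, maps $l_K(d'')\to X$ are the same as elements of the sheaf $X$ at $d''$; using the description $C_p^*l_J(c)\cong a_K(\Hom_{\cal C}(p(-),c))$ from Proposition \ref{propdualcomma}, the value $P(d'')$ consists of pairs $(\gamma,\beta)$ with $\gamma\in l_K(d)(d'')$, $\beta\in C_p^*l_J(p(d'))(d'')$ and $\alpha_d(\gamma)=C_p^*l_J(p(f))(\beta)$, i.e. (locally) an arrow $g:d''\to d$ and an arrow $h:p(d'')\to p(d')$ satisfying $p(g)=p(f)\circ h$ locally. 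The key general fact I will invoke is that in a Grothendieck topos an epimorphism is exactly a local surjection on the generating family $\{l_K(d'')\}$, and a monomorphism is exactly objectwise injectivity on this family.

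For the epimorphism part (yielding condition (i)), I would take an arbitrary element of $P(d'')$ and refine it along a $K$-covering until $\gamma,\beta$ are represented by honest arrows $g,h$ and the compatibility $p(g)=p(f)\circ h$ holds strictly; this is legitimate because an equality of local sections of $a_K(\Hom_{\cal C}(p(-),-))$ means agreement after restriction along some $K$-cover. At that point the hypothesis of (i) is met verbatim, and the arrows $h_i:d''_i\to d'$ it produces satisfy $u([h_i])=(\gamma,\beta)|_{v_i}$, so they assemble into a local section of $l_K(d')$ lifting the given element — precisely local surjectivity of $u$. For the monomorphism part (yielding condition (ii)), I would verify injectivity of $u$ objectwise: two elements of $l_K(d')(d'')$ with the same image under $u$ can, after restriction to a common $K$-cover, be represented by honest arrows $h,h':d''\to d'$ with $f\circ h=f\circ h'$ and $p(h)=p(h')$ (again turning equalities in the sheafifications into strict equalities on a cover), whence (ii) supplies a further $K$-cover on which $h$ and $h'$ agree, so the two local sections were already equal in $l_K(d')(d'')$.

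The main obstacle I anticipate is the bookkeeping of these refinements: passing back and forth between elements of the sheaves $l_K(-)$, $C_p^*l_J(-)$ and honest arrows of $\cal D$ and $\cal C$, and repeatedly composing $K$-coverings so that the a priori only \emph{local} statements become the \emph{strict} equalities to which the hypotheses of (i) and (ii) literally apply — in particular extracting the honest compatibility $p(g)=p(f)\circ h$ in (i) and the honest equalities $f\circ h=f\circ h'$, $p(h)=p(h')$ in (ii). None of these steps is deep, but matching the exact form of the two conditions is where care is required. I also note that each implication above is reversible, so that conditions (i) and (ii) are in fact equivalent to $f$ being locally cartesian.
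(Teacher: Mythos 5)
Your proposal is correct and is essentially the paper's own proof: both reduce, via Proposition \ref{enumloccart}(i), to showing that the canonical comparison map into the pullback of the square representing $\eta_{\cal D}(f)$ becomes an isomorphism, and both identify condition (i) with local surjectivity and condition (ii) with local injectivity of that comparison. The only cosmetic difference is that the paper forms the pullback at the presheaf level, where its elements are honest pairs $(g,h)$ with $p(g)=p(f)\circ h$, so the cover-refinement bookkeeping you anticipate is absorbed into the single standard criterion that a morphism of presheaves sheafifies to an isomorphism if and only if it is locally epi and locally mono.
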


\begin{proof}
Cartesian arrows of the canonical relative site are those represented by a pullback square in $\widehat{\cal D}_K$ (Proposition \ref{enumloccart}(i)), so we can look at the presheaf-level when an arrow $f$ in $\cal D$ is sent by $\eta$ to an arrow giving a cartesian square. This is summed up in the following diagram:

% https://q.uiver.app/#q=WzAsNSxbMSwxLCJcXGJ1bGxldCJdLFsyLDEsIntcXGNhbCBDfShwLSxwZCkiXSxbMSwyLCJcXHlvX3tcXGNhbCBEfShwZCcpIl0sWzIsMiwie1xcY2FsIEN9KHAtLHBkJykiXSxbMCwwLCJcXHlvX3tcXGNhbCBEfShwZCkiXSxbNCwwLCIiLDAseyJzdHlsZSI6eyJib2R5Ijp7Im5hbWUiOiJkYXNoZWQifX19XSxbMCwxXSxbMiwzLCJwLSIsMl0sWzAsMl0sWzEsMywicChmKVxcY2lyYy0iXSxbNCwyLCJmIFxcY2lyYy0iLDJdLFs0LDEsInAtIl0sWzAsMywiIiwxLHsic3R5bGUiOnsibmFtZSI6ImNvcm5lciJ9fV1d
\[\begin{tikzcd}
	{y_{\cal D}(d)} \\
	& \bullet & {{\cal C}(p-,pd)} \\
	& {y_{\cal D}(d')} & {{\cal C}(p-,pd')}
	\arrow[dashed, from=1-1, to=2-2]
	\arrow[from=2-2, to=2-3]
	\arrow["{p-}"', from=3-2, to=3-3]
	\arrow[from=2-2, to=3-2]
	\arrow["{p(f)\circ-}", from=2-3, to=3-3]
	\arrow["{f \circ-}"', from=1-1, to=3-2]
	\arrow["{p-}", from=1-1, to=2-3]
	\arrow["\lrcorner"{anchor=center, pos=0.125}, draw=none, from=2-2, to=3-3]
\end{tikzcd}\]

We want to characterize when the canonical arrow from $y_{\cal D}(d)$ to the pullback is sheafified into an isomorphism. The pullback being computed pointwise, we have that at some $\overline{d}$ it is given by $\{(g:\overline{d} \to d', h:p(\overline{d}) \to p(d)) \mid p(f) \circ h = p(g) \}$. We also have that the canonical arrow from $y_{\cal D}(d)$ to the pullback is given, at $\overline{d}$, by: $g \mapsto (f \circ g,p(g))$. Hence, the first point of the proposition is the local surjectivity condition, and the second one is the local injectivity condition.
\end{proof}

We see that this definition mainly depends on the topology $K$; we only need $J$ to be such that $p$ is a comorphism.

We may be interested in seeing which arrows are the locally cartesian ones in a fibration. The following proposition characterize them:

\begin{prop}\label{kcartesianforfib}
Let $p : (\mathcal{G}(\mathbb D),K) \to ({\cal C},J)$ be a relative site. An arrow $(f,u): (x',c') \to (x,c)$ in $\mathcal{G}(\mathbb D)$ is locally cartesian if and only if $l_K(1,u)$ is an iso, that is, the two following explicit conditions are satisfied:

\begin{enumerate}[(i)]
    \item The arrow $(1,u)$ is $K$-covering.
    \item For every arrow $g: c'' \to c'$ in $\cal C$, and vertical arrows $v,v' :   x'' \to \mathbb D (g)(x')$ such that $\mathbb D(g) (u)v = \mathbb D(g) (u)v'$, there exists a covering $(g_i,u_i)_i$ fpr $K$ of $(x'',c'')$ such that $\mathbb D(g_i)(v)u_i = \mathbb D(g_i)(v')u_i$.
\end{enumerate}
\end{prop}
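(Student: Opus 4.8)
My plan is to split the statement into its two claimed equivalences: first that $(f,u)$ is locally cartesian if and only if $l_K(1,u)$ is an isomorphism, and then that this invertibility unpacks into the explicit conditions (i) and (ii).

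For the first equivalence I would use the vertical--cartesian factorization
\[
(f,u) = (f,1)\circ(1,u),
\]
where $(f,1):(\mathbb D(f)(x),c')\to(x,c)$ is the canonical cartesian lift of $f$ and $(1,u):(x',c')\to(\mathbb D(f)(x),c')$ is vertical, projecting to $1_{c'}$. By Proposition \ref{etamorphfib} the functor $\eta_{\mathbb D}$ is a morphism of fibrations, so it carries $(f,1)$ to a cartesian arrow of $(\widehat{\cal G(\mathbb D)}_K/C_p^*)$; since this canonical relative site is a fibration over the cartesian base $\widehat{\cal C}_J$, the cancellation property of cartesian arrows (Proposition \ref{enumloccart}(iii)) applied to $\eta_{\mathbb D}(f,u)=\eta_{\mathbb D}(f,1)\circ\eta_{\mathbb D}(1,u)$ shows that $\eta_{\mathbb D}(f,u)$ is cartesian precisely when $\eta_{\mathbb D}(1,u)$ is. Finally, because $(1,u)$ projects to $1_{c'}$, the pullback criterion (Proposition \ref{enumloccart}(i)) describes $\eta_{\mathbb D}(1,u)$ as cartesian exactly when the defining square --- whose bottom edge is $C_p^*l_J(1_{c'})=\mathrm{id}$ --- is a pullback; a pullback along an identity forces the opposite edge $l_K(1,u)$ to be an isomorphism, and conversely. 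This yields that $(f,u)$ is locally cartesian if and only if $l_K(1,u)$ is an isomorphism.

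For the second equivalence I would unwind $l_K = a_K\circ y_{\cal G(\mathbb D)}$ together with the standard fact that sheafification sends a morphism of presheaves to an isomorphism exactly when that morphism is a $K$-local isomorphism, i.e. both $K$-locally surjective and $K$-locally injective. Applied to the morphism $y_{\cal G(\mathbb D)}(1,u)$ of representables (post-composition with $(1,u)$), local surjectivity tested at the universal element $1_{(\mathbb D(f)(x),c')}$ and propagated by stability of covers says exactly that the sieve generated by $(1,u)$ is $K$-covering, which is condition (i). Local injectivity says that any pair $\gamma,\gamma':(x'',c'')\to(x',c')$ with $(1,u)\circ\gamma=(1,u)\circ\gamma'$ is equalized after restriction along a $K$-covering of $(x'',c'')$; writing $\gamma=(g,v)$ and $\gamma'=(g',v')$ and invoking the composition law $(f,u)\circ(g,v)=(f\circ g,\mathbb D(g)(u)\circ v)$, the hypothesis becomes $g=g'$ together with $\mathbb D(g)(u)v=\mathbb D(g)(u)v'$, and the conclusion becomes the existence of a $K$-covering $(g_i,u_i)_i$ with $\mathbb D(g_i)(v)u_i=\mathbb D(g_i)(v')u_i$ --- precisely condition (ii).

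The only genuinely delicate point, which I regard as the main obstacle, is the indexed bookkeeping in this last translation: one must check that $(1,u)\circ\gamma=(1,u)\circ\gamma'$ indeed forces the equality of base components (legitimizing the common $g$ in condition (ii)) and that composing $\gamma,\gamma'$ with the covering $(g_i,u_i)_i$ produces exactly the fiberwise equations displayed there. Everything else reduces to applying the composition formula of the Grothendieck construction and the elementary observation that a square with an identity edge is a pullback if and only if the opposite edge is invertible.
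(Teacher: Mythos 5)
Your proposal is correct and follows essentially the same route as the paper: factor $(f,u)=(f,1)\circ(1,u)$, use that $\eta_{\mathbb D}$ is a morphism of fibrations (Proposition \ref{etamorphfib}) together with the cancellation property of cartesian arrows (Proposition \ref{enumloccart}(iii)) to reduce to $\eta_{\mathbb D}(1,u)$, observe that this vertical arrow is cartesian precisely when $l_K(1,u)$ is invertible, and then unpack invertibility after sheafification as the locally epi and locally mono conditions, which are exactly (i) and (ii). The only cosmetic difference is that the paper invokes the general fact that a vertical arrow in a fibration is cartesian iff it is an isomorphism, whereas you rederive this from the pullback description of Proposition \ref{enumloccart}(i); your treatment of the local-epi/local-mono translation is also more detailed than the paper's one-line remark, but the content is the same.
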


\begin{proof}
Recall that $\eta_{\mathbb D}$ is a morphism of fibrations (Proposition 3.14 \cite{bartolicaramello}), so it preserves cartesian and vertical part of an arrow: we have $\eta_{\mathbb D}(f,u) =  \eta_{\mathbb D}(f,1) \circ \eta_{\mathbb D}(1,u)$, with $\eta_{\mathbb D}(f,1)$ a cartesian arrow. Then, we have that  $\eta_{\mathbb D}(f,u)$ is cartesian (that is, $(f,u)$ is locally cartesian) if and only if $\eta_{\mathbb D}(1,u)$ is. But, $\eta_{\mathbb D}(1,u)$ is a vertical arrow, hence it is a cartesian arrow if and only if it is an isomorphism, that is $l_K(1,u)$ is an isomorphism. 

The two conditions enumerated are just the locally epi and locally mono characterizations.
\end{proof}

Since we define locally cartesian arrows in terms of the cartesian arrows of the canonical relative site, it is necessary to study the latter. The following proposition shows, in particular, that they coincide with the usual cartesian arrows

\begin{prop}\label{canonicalcartesian}
 Let $f:{\cal F}\to {\cal E}$ be a geometric morphism. Then

\begin{enumerate}[(i)]
    \item An arrow for the canonical relative site $\pi_f : (({\cal F}/f^*),J_f) \to \cal E$ is locally cartesian if and only if it is cartesian.

    \item For a stack $\mathbb D$ on a site $({\cal C},J)$, an arrow $(f,u)$ in $\mathcal{G}(\mathbb D)$ is cartesian if and only if for every covering $(g_i)_i$ for the topology $J$, we have that $(f,u)\circ(g_i,1)$ is cartesian.
    
    \item If $(e_i)_i$ is an epimorphic family in $\cal E$, and if $(e,g) : (E,F,\alpha) \to (E',F',\alpha ')$ is an arrow of the canonical relative site of $f$ for which $(e,g) \circ (\alpha^*(f^*e_i),e_i)$ is cartesian for every $i$, then $(e,g)$ is cartesian. 
\end{enumerate}
\end{prop}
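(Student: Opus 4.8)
\emph{Part (i).} The implication cartesian $\Rightarrow$ locally cartesian is the general remark that cartesian arrows are always locally cartesian (the computation being as in \ref{etamorphfib}). For the converse I would use that the canonical relative site $\pi_f:((\cal F/f^*),J_f)\to(\cal E,J_{\cal E}^{can})$ is itself a relative site, so that Proposition \ref{kcartesianforfib} applies directly: an arrow is locally cartesian if and only if the canonical functor $l_{J_f}$ sends its vertical part $(1,v)$ to an isomorphism. The key point is then to identify $l_{J_f}$ concretely. Under the equivalence $\widehat{\cal F/f^*}_{J_f}\simeq\cal F$ of Theorem 8.2.5 \cite{CaramelloZanfa}, the canonical functor $l_{J_f}$ corresponds to the first projection $\pi_{\cal F}$: indeed $\pi_{\cal F}$ is a morphism of sites, so $\Sh(\pi_{\cal F})^*\circ l_{J_f}\simeq l_{J_{\cal F}^{can}}\circ\pi_{\cal F}\simeq\pi_{\cal F}$, and $\Sh(\pi_{\cal F})^*$ is an equivalence. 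Since a vertical arrow $(1,v):(F,E,\alpha)\to(F',E,\alpha')$ is sent by $\pi_{\cal F}$ to $v:F\to F'$, which is invertible in $\cal F$ exactly when $(1,v)$ is invertible in the comma category, being locally cartesian forces the vertical part to be an isomorphism, hence the arrow to be cartesian.

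\emph{Part (ii).} I would reduce to vertical parts through the composition law of \ref{enumloccart}(iii). For $(f,u)$ and the cartesian lift $(g_i,1)$ of a base arrow $g_i$ one has $(f,u)\circ(g_i,1)=(f\circ g_i,\mathbb D(g_i)(u))$, whose cartesianness is equivalent to $\mathbb D(g_i)(u)$ being an isomorphism, while $(f,u)$ is cartesian iff $u$ is an isomorphism. The forward direction is then immediate, since $(g_i,1)$ is cartesian and a composite of cartesian arrows is cartesian (\ref{enumloccart}(iii)). The content is the converse: if $\mathbb D(g_i)(u)$ is invertible for all $i$ in a $J$-covering, then $u$ is invertible. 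Here I would invoke the \emph{stack} hypothesis, which makes the restriction functors $\mathbb D(g_i)$ along a covering family jointly conservative: the fiberwise inverses of the $\mathbb D(g_i)(u)$ form a descent-compatible family (they agree on overlaps by faithfulness of the underlying separated prestack), hence glue by the stack condition to a morphism $w$; faithfulness then upgrades the two restricted composites $wu$ and $uw$ to genuine identities, so $u$ is an isomorphism. Taking the maximal sieve yields the literal ``for every covering'' converse for free, but this argument proves the sharper statement that a single covering already suffices, which is what is used in practice. The substantive step is thus the stack descent.

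\emph{Part (iii).} Conceptually this is exactly Part (ii) read inside the canonical stack: $(\cal F/f^*)$ is a stack for the canonical topology of $\cal E$, the epimorphic family $(e_i)$ is one of its covering families, and $(\alpha^*(f^*e_i),e_i)$ is precisely the cartesian lift of $e_i$. I would nonetheless spell out the underlying descent argument with pullback squares. By \ref{enumloccart}(i), $(e,g)$ is cartesian iff the comparison map $c:F\to F'\times_{f^*E'}f^*E$ induced by its commuting square is an isomorphism. Writing $F_i$ for the domain of the cartesian lift, pullback pasting identifies $\bigl(F'\times_{f^*E'}f^*E\bigr)\times_{f^*E}f^*E_i$ with $F'\times_{f^*E'}f^*E_i$, which by the hypothesis that $(e,g)\circ(\alpha^*(f^*e_i),e_i)$ is cartesian is in turn identified with $F_i$; under these identifications the base change of $c$ along $f^*e_i$ becomes an identity, hence an isomorphism. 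Since $f^*$ preserves epimorphic families, the $f^*e_i$ form an epimorphic family in the topos $\cal F$, and the conclusion follows from the locality of isomorphisms in a topos: a morphism invertible after base change along an epimorphic family is invertible. The main obstacle is keeping the pullback pasting and the identification of the base-changed comparison map with the identity straight; everything else is the standard locality of isomorphisms.
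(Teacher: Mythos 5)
Your proposal is correct, and parts (ii)--(iii) essentially coincide with the paper's own proofs, while part (i) takes a genuinely different route. For (ii) the paper argues exactly as you do, compressing your descent-of-inverses step into the phrase ``by the prestack property of $\mathbb D$'' (as you note, fully faithfulness into descent data already suffices to glue the inverses, so prestack is enough); your remark that a single covering suffices is precisely what the paper relies on, since its whole proof of (iii) is your opening observation: the $(e_i)_i$ are covering for the canonical topology, $(\alpha^*(f^*e_i),e_i)$ is the cartesian lift, so (ii) applies. Your explicit pullback-pasting argument for (iii), via the comparison map becoming an isomorphism after base change along the epimorphic family $(f^*e_i)_i$, is a worked-out version of the remark the paper places right after the proposition (the ``partial converse to the pullback lemma'' valid in a topos); it is a nice self-contained alternative but not what the paper does. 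Where you genuinely diverge is (i): the paper proves it directly from the definition of locally cartesian, gluing the local liftings (compatible by local uniqueness) along the jointly \emph{effective}-epimorphic first components of a $J_f$-cover and getting uniqueness from epimorphy; you instead invoke \ref{kcartesianforfib} (legitimately --- it precedes this proposition and $\pi_f$ is a relative site) to reduce to $l_{J_f}$ inverting the vertical part, and then identify $l_{J_f}$ with $\pi_{\cal F}$ through the equivalence $\Sh(\pi_{\cal F})^*$ of Theorem 8.2.5 of \cite{CaramelloZanfa}, so that ``locally invertible at the topos level'' collapses to ``invertible''. Your reduction is shorter and exploits machinery already available; the paper's gluing argument buys something else: it is independent of the fibration structure behind \ref{kcartesianforfib}, and it is exactly the template the paper reuses in \ref{prop_cartesiancanonicaltopology} to prove the same statement for an \emph{arbitrary} comorphism out of a category equipped with its canonical topology, a case your argument does not reach.
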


\begin{proof}

(i) Let $(E,F,\alpha : E \to f^*F)$, $(E',F',\alpha' : E' \to f^*F')$ be two objects of the comma category, and $(e,g): (E,F,\alpha : E \to f^*F) \to (E',F',\alpha' : E' \to f^*F')$ a locally cartesian arrow. Let $(E'',F'',\alpha : E'' \to f^*F'')$ be another object, together with an arrow $(e',g') : (E'',F'',\alpha : E'' \to f^*F'') \to (E',F',\alpha' : E' \to f^*F')$ and an arrow $h : F'' \to F $ in $\cal F$. Since $(e,g)$ is locally cartesian, we have a covering $(e_i'',g_i'')_i$ of $(E'',F'',\alpha : E'' \to f^*F'')$ such that we can locally lift $h$ as in the diagram below:

% https://q.uiver.app/#q=WzAsOCxbMSwwLCJmXipGIl0sWzAsMCwiRSJdLFswLDEsIkUnIl0sWzEsMSwiZl4qRiciXSxbMCwyLCJFJyciXSxbMSwyLCJmXipGJyciXSxbMCwzLCJFJydfaSJdLFsxLDMsImZeKkYnJ19pIl0sWzEsMiwiZSIsMl0sWzAsMywiZl4qZyIsMl0sWzQsMiwiZSciXSxbNSwzLCJmXipnJyJdLFsxLDAsIlxcYWxwaGEiXSxbMiwzLCJcXGFscGhhJyJdLFs0LDUsIlxcYWxwaGEnJyJdLFs2LDQsImUnJ19pIl0sWzcsNSwiZl4qZycnX2kiLDJdLFs2LDcsIlxcYWxwaGEnJ19pIl0sWzUsMCwiZl4qaCIsMix7ImN1cnZlIjozfV0sWzYsMSwiZV9pIiwwLHsiY3VydmUiOi0zfV1d
\[\begin{tikzcd}
	E & {f^*F} \\
	{E'} & {f^*F'} \\
	{E''} & {f^*F''} \\
	{E''_i} & {f^*F''_i}
	\arrow["e"', from=1-1, to=2-1]
	\arrow["{f^*g}"', from=1-2, to=2-2]
	\arrow["{e'}", from=3-1, to=2-1]
	\arrow["{f^*g'}", from=3-2, to=2-2]
	\arrow["\alpha", from=1-1, to=1-2]
	\arrow["{\alpha'}", from=2-1, to=2-2]
	\arrow["{\alpha''}", from=3-1, to=3-2]
	\arrow["{e''_i}", from=4-1, to=3-1]
	\arrow["{f^*g''_i}"', from=4-2, to=3-2]
	\arrow["{\alpha''_i}", from=4-1, to=4-2]
	\arrow["{f^*h}"', bend left=-35, from=3-2, to=1-2]
	\arrow["{e_i}", bend right=-35, from=4-1, to=1-1]
\end{tikzcd}\]

Now we note that, for a local fibration in general, such a local lifting of an arrow $h$ gives us a locally compatible family:

% https://q.uiver.app/#q=WzAsNCxbMSwwLCJcXGJ1bGxldCJdLFsxLDEsIlxcYnVsbGV0Il0sWzAsMiwiXFxidWxsZXQiXSxbMiwyLCJcXGJ1bGxldCJdLFsyLDMsInAodV97aWp9KSIsMl0sWzIsMSwicCh2X2kpIl0sWzEsMCwiaCIsMl0sWzMsMSwicCh2X2opIiwyXSxbMywwLCJwKGhfaikiLDIseyJjdXJ2ZSI6Mn1dLFsyLDAsInAoaF9pKSIsMCx7ImN1cnZlIjotMn1dXQ==
\[\begin{tikzcd}
	& \bullet \\
	& \bullet \\
	\bullet && \bullet
	\arrow["{p(u_{ij})}"', from=3-1, to=3-3]
	\arrow["{p(v_i)}", from=3-1, to=2-2]
	\arrow["h"', from=2-2, to=1-2]
	\arrow["{p(v_j)}"', from=3-3, to=2-2]
	\arrow["{p(h_j)}"', bend right=45, from=3-3, to=1-2]
	\arrow["{p(h_i)}", bend left=45, from=3-1, to=1-2]
\end{tikzcd}\]

Indeed, if $u_{ij}$ is such that $v_j \circ u_{ij} = v_i $, then $h_i \circ u_{ij} =^K h_j$, by the local uniqueness of such a lifting in the definition of a locally cartesian arrow. 

But recall that, in our case, saying that the $(e_i'',g_i'')$ are covering is exactly saying that the $e_i''$ are jointly effective epimorphic. Hence, the $e_i$ form a locally compatible family for the sieve $(e_i'')_i$, but since the coverings are epimorphic ones this local compatibility is just a compatibility. As the covering are \emph{effective} epimorphic families, we can glue the $e_i$ together, giving us some $e'' : E'' \to E'$. This arrow complete $h$ into a morphism $(e,h) :(E'',F'',\alpha'' : E'' \to f^*F'') \to (E,F,\alpha : E \to f^*F)$ (which makes the square commute, as the precompositions of the two different paths with the epimorphic familiy  $(e_i'')_i$ coincide). The uniqueness of such a $e$ is immediate in virtue of the $(e_i)_i$ being jointly epimorphic.

(ii) If an arrow is cartesian, then its precomposition with any cartesian arrow is still cartesian. In the other direction, we know that $(f,u)(g_i,1) = (fg_i,(\mathbb D(g_i)(u))$ so that $(f,u)(g_i,1)$ is cartesian for every $i$, this means that the $\mathbb D(g_i)(u)$ are isomorphisms, i.e. $u$ is locally an isomorphism, that is (by the prestack property of $\mathbb D$), $u$ is an isomorphism.

(iii) Since we assume the $(e_i)_i$ to be covering, our thesis follows from (ii).
\end{proof}

\begin{remark}

Recall that, in general, if we have two consecutive commutative squares such that the lower one is a pullback:

% https://q.uiver.app/#q=WzAsNixbMCwxLCJcXGJ1bGxldCJdLFswLDIsIlxcYnVsbGV0Il0sWzEsMSwiXFxidWxsZXQiXSxbMSwyLCJcXGJ1bGxldCJdLFswLDAsIlxcYnVsbGV0Il0sWzEsMCwiXFxidWxsZXQiXSxbMCwyXSxbMSwzXSxbMCwxXSxbMiwzXSxbMCwzLCIiLDEseyJzdHlsZSI6eyJuYW1lIjoiY29ybmVyIn19XSxbNCwwXSxbNSwyXSxbNCw1XV0=
\[\begin{tikzcd}
	\bullet & \bullet \\
	\bullet & \bullet \\
	\bullet & \bullet
	\arrow[from=2-1, to=2-2]
	\arrow[from=3-1, to=3-2]
	\arrow[from=2-1, to=3-1]
	\arrow[from=2-2, to=3-2]
	\arrow["\lrcorner"{anchor=center, pos=0.125}, draw=none, from=2-1, to=3-2]
	\arrow[from=1-1, to=2-1]
	\arrow[from=1-2, to=2-2]
	\arrow[from=1-1, to=1-2]
\end{tikzcd}\]
then the upper square is a pullback if and only if the outer rectangle also is. Note that, in the previous proposition, taking the geometric morphism $f$ to be the identity, point (iii) corresponds to: if we have a family of upper squares that are pullbacks and such that their two vertical arrows are jointly epimorphics, then the lower square is a pullback if and only if the outer rectangles are also pullbacks.

This constitutes some converse to the pullback lemma, with further hypothesis that we have some epimorphic family. This partial converse to the pullback lemma is true in a topos but not in general. 
\end{remark}

Point (i) of the precedent result being largely due to the canonicity of the topology on $\cal F$, the same proof method is applicable when we work with the canonical topology on any category:

\begin{prop}\label{prop_cartesiancanonicaltopology}
Let $({\cal D},J_{can}^{\cal D}) \to ({\cal C},J)$ be a comorphism of sites. The $J_{can}^{\cal D}$-cartesian arrows are exactly the cartesian ones.
\end{prop}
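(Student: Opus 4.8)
The plan is to prove the two inclusions separately, re-using the concrete characterization of locally cartesian arrows (conditions (i) and (ii) of the reformulation proposition) together with the defining features of the canonical topology: the representables $y_{\cal D}(d)$ are $J_{can}^{\cal D}$-sheaves, and every $J_{can}^{\cal D}$-covering family is a (universally) effective epimorphic family, in particular jointly epimorphic. One inclusion is immediate: by the Remark following the definition of locally cartesian arrow, every cartesian arrow is locally cartesian for \emph{any} topology. Equivalently, by \ref{enumloccart}(i) the presheaf square ${\cal D}(-,d')\to{\cal D}(-,d)$, ${\cal C}(p-,pd')\to{\cal C}(p-,pd)$ witnessing cartesianness is a pullback in $\widehat{\cal D}$, and since $a_{J_{can}^{\cal D}}$ is left exact its image is again a pullback — but that image is exactly the square witnessing local cartesianness. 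So the content is the converse.

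For the converse I would mimic the proof of \ref{canonicalcartesian}(i). Fix test data: an arrow $g:d''\to d$ in $\cal D$ and an arrow $s:p(d'')\to p(d')$ in $\cal C$ with $p(f)\circ s=p(g)$, and produce a unique $h':d''\to d'$ with $f\circ h'=g$ and $p(h')=s$. Condition (i) applied to $s$ yields a $J_{can}^{\cal D}$-covering $(v_i:d''_i\to d'')_i$ and arrows $h_i:d''_i\to d'$ with $p(h_i)=s\circ p(v_i)$ and $f\circ h_i=g\circ v_i$. These $h_i$ form a matching family for the sheaf $y_{\cal D}(d')$: whenever $v_i a=v_j b$, the composites $h_i a$ and $h_j b$ agree after applying $f$ (both equal $g\circ v_i a$) and after applying $p$ (both equal $s\circ p(v_i a)$), so condition (ii) supplies a covering on which they coincide, and joint epimorphy of that covering forces $h_i a=h_j b$. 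As $y_{\cal D}(d')$ is a $J_{can}^{\cal D}$-sheaf, the family glues to a unique $h':d''\to d'$ with $h'\circ v_i=h_i$; the identity $f\circ h'=g$ then follows from $f\circ h'\circ v_i=g\circ v_i$ and joint epimorphy of $(v_i)_i$ in $\cal D$, which also yields the uniqueness of $h'$.

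The hard part will be the step that has \emph{no} counterpart in \ref{canonicalcartesian}(i) — there the base component of the lift was part of the given data, whereas here it must be reconstructed — namely verifying the base compatibility $p(h')=s$. The gluing only delivers $p(h')\circ p(v_i)=p(h_i)=s\circ p(v_i)$ for every $i$, so one must descend this equality along the family $(p(v_i))_i$ in $\cal C$. This is precisely where the canonicity of $J_{can}^{\cal D}$ must be exploited: the effective-epimorphic nature of the covering $(v_i)_i$ is what one needs in order to conclude that $(p(v_i))_i$ is jointly epimorphic, equivalently that the presheaves ${\cal C}(p(-),c)$ are separated for $J_{can}^{\cal D}$. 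Sheaf-theoretically, the whole converse is the assertion that the comparison $y_{\cal D}(d')\to{\cal D}(-,d)\times_{{\cal C}(p-,pd)}{\cal C}(p-,pd')$ — already a monomorphism since its source is a sheaf and the map is a $J_{can}^{\cal D}$-local isomorphism by hypothesis — is an isomorphism, the separatedness of the target pullback upgrading "local iso" to "iso". I expect establishing this separatedness to be the delicate point, and it is exactly the feature of the canonical topology that fails for a general $K$, which is why the statement is special to $J_{can}^{\cal D}$.
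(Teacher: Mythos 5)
Your proposal follows the paper's proof of this proposition essentially step for step: the easy direction is handled identically, and for the converse both you and the paper take the test datum $(g,s)$, produce local lifts $h_i$ via condition (i), upgrade them to a matching family for $y_{\cal D}(d')$ using condition (ii) together with joint epimorphy of canonical covers, glue by the sheaf property of the representable, and recover $f h' = g$ and the uniqueness of $h'$ again from joint epimorphy. Everything you actually carry out is correct and coincides with the paper's argument.

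The problem is the step you yourself single out, $p(h')=s$, and the mechanism you propose for it. You claim that ``the effective-epimorphic nature of the covering $(v_i)_i$ is what one needs in order to conclude that $(p(v_i))_i$ is jointly epimorphic.'' That inference is not valid: effective epimorphy of $(v_i)_i$ is a statement about colimits in $\cal D$, and nothing in the hypotheses transports it along $p$ --- a comorphism of sites only \emph{lifts} covers, it does not preserve them, and it need not preserve any colimits. The separatedness of the presheaves ${\cal C}(p(-),c)$ for $J_{can}^{\cal D}$ is therefore a genuine extra assumption, not a feature of canonicity, and it can fail under the stated hypotheses. Concretely: let $\cal D$ be the poset $a_1,a_2 < b < d' < d$ (with $a_1,a_2$ incomparable), and write $v_i\colon a_i\to b$, $w\colon b\to d'$, $f\colon d'\to d$ for its arrows; the sieve on $b$ generated by $v_1,v_2$ is universally effective-epimorphic (all hom-sets involved are singletons or empty, and matching families for this sieve are just pairs with no compatibility constraint), hence a $J_{can}^{\cal D}$-cover. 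Let $\cal C$ be $\cal D$ with one extra arrow $\alpha\colon b\to d'$ subject only to $\alpha v_i=wv_i$ and $f\alpha=fw$, let $p$ be the inclusion and $J$ the trivial topology, so that $p$ is a comorphism of sites. Then $f$ satisfies (i) and (ii) --- the only nontrivial test datum is $(fw,\alpha)$, which is lifted on the cover $(v_1,v_2)$ by the arrows $wv_i$, and (ii) is vacuous since $p$ is faithful --- so $f$ is locally cartesian; yet $f$ is not cartesian, because $\alpha$ is the image of no arrow of $\cal D$. So the gap cannot be closed in the stated generality: one needs a further hypothesis forcing images of $J_{can}^{\cal D}$-covers to be epimorphic families in $\cal C$, for instance that $p$ preserve colimits, which does hold in every use the paper makes of this proposition (the functors ${C_p}_!$, the projections $\pi_{\cal F}$, colimit-preserving comorphisms between toposes). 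To your credit, you put your finger exactly on the weak point --- the paper's own proof is silent here too, asserting $p(h')=h$ after the gluing without justification --- but your expectation that canonicity alone supplies the missing descent is mistaken.
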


\begin{proof}
A cartesian arrow is in particular a locally cartesian arrow for any topology, so one side is obvious. For the other direction, consider such a commutative triangle with $f$ a locally cartesian arrow:

% https://q.uiver.app/#q=WzAsMyxbMCwwLCJwKGQnKSJdLFswLDEsInAoZCcnKSJdLFsxLDAsInAoZCkiXSxbMSwwLCJoIl0sWzAsMiwicChmKSJdLFsxLDIsInAoZykiLDJdXQ==
\[\begin{tikzcd}
	{p(d')} & {p(d)} \\
	{p(d'')}
	\arrow["h", from=2-1, to=1-1]
	\arrow["{p(f)}", from=1-1, to=1-2]
	\arrow["{p(g)}"', from=2-1, to=1-2]
\end{tikzcd}\]

If we have two liftings $h',h'' : d'' \to d'$ for $h$, that is $p(h') = p(h'') = h$ and $fh' = fh'' =g$, we have, in virtue of the local unicity of such a lifting for $f$, that $h'v_i = hv_i$ for some covering $(v_i)_i$ of $J^{\cal D}_{can}$. But these $v_i$ are jointly epimorphic by definition of the canonical topology, so $h = h'$. 

In order to prove the existence of a lifting for $h$, we will use the existence of a local lifting coming for the fact that $f$ is locally cartesian, and we will be able to glue all of them, because the covering sieves are \emph{effective} epimorphic ones. 

So, since $f$ is locally cartesian, we can locally lift $h$ as $h_i : d'_i \to d'$ on a sieve $S$ covering for $J_{can}^{\cal D}$, that is: $hp(v_i) = p(h_i)$ and $fh_i = gv_i$, for every $v_i \in S$. We see that the $h_i$ constitute a compatible family for the sheaf $y_{\cal D}(d')$ on the sieve $S$ covering $d''$: if we have $u_{ij}$ such that $v_iu_{ij}  =v_j$, then $h_iu_{ij}=h_j$. Indeed, we have that $fh_j = gv_j$, and hence $fh_j = gv_iu_{ij} = fh_iu_{ij}$, also $p(h_iu_{ij}) = hp(v_iu_{ij})$, and then $p(h_iu_{ij}) = hp(v_j) = p(h_j)$. By the uniqueness of a lifting applied to $h_iu_{ij}$ and $h_j$, we obtain the compatibility. Finally, since the sieve $S$ is an effective epimorphic one, by the sheaf property of $y_{\cal D}(d')$ we can glue all the $h_i$ into one $h'$ satisfying $p(h') = h$ and $fh' = g$. 
\end{proof}

In the light of \ref{canonicalcartesian}, we obtain the following important property:

\begin{prop}\label{etareflectsloccart}
Let $p:({\cal D},K) \to ({\cal C},J)$ be a comorphism of sites. Then the functor $\eta_{\cal D}$ reflects locally cartesian arrows.
\end{prop}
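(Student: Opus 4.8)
The plan is to deduce this statement almost immediately from Proposition \ref{canonicalcartesian}(i), which carries all the genuine content. First I would disentangle the two distinct notions of locally cartesian arrow that are in play. On the one hand, by definition an arrow $f$ of $\cal D$ is locally cartesian (with respect to the comorphism $p : (\cal D,K)\to(\cal C,J)$) exactly when its image $\eta_{\cal D}(f)$ is a \emph{cartesian} arrow of the canonical relative site $(\widehat{\cal D}_K/C_p^*)$, this being the canonical relative site of the relative topos $C_p : \widehat{\cal D}_K \to \widehat{\cal C}_J$. On the other hand, $(\widehat{\cal D}_K/C_p^*)$ is itself a comorphism of sites towards $(\widehat{\cal C}_J, J^{can}_{\widehat{\cal C}_J})$, via its projection $\pi_{C_p}$ equipped with the topology $J_{C_p}$; hence its own arrows carry a notion of locally cartesian arrow. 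With these two notions identified, the assertion that $\eta_{\cal D}$ \emph{reflects} locally cartesian arrows unwinds to: whenever $\eta_{\cal D}(f)$ is locally cartesian in $(\widehat{\cal D}_K/C_p^*)$, the arrow $f$ is locally cartesian in $\cal D$.

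The key step is then to invoke Proposition \ref{canonicalcartesian}(i), applied to the geometric morphism $f = C_p$ (so $\cal F = \widehat{\cal D}_K$ and $\cal E = \widehat{\cal C}_J$). That proposition asserts precisely that in the canonical relative site $(\widehat{\cal D}_K/C_p^*)$ the locally cartesian arrows coincide with the cartesian ones. Consequently, if $\eta_{\cal D}(f)$ is locally cartesian in $(\widehat{\cal D}_K/C_p^*)$, then it is in fact cartesian there.

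I would close the argument by reading off the definition once more: "$\eta_{\cal D}(f)$ is cartesian in $(\widehat{\cal D}_K/C_p^*)$" is exactly the defining condition for "$f$ is locally cartesian in $\cal D$", so $f$ is locally cartesian and the reflection follows. (In fact, since cartesian arrows are always locally cartesian, as noted in the remark following the definition and in the computation of \ref{etamorphfib}, this reflection is really an equivalence, but only the stated direction is needed.) There is no substantial obstacle here beyond bookkeeping: the whole proof rests on \ref{canonicalcartesian}(i), and the only care required is to keep the two meanings of "locally cartesian" apart — one relative to $p$, the other relative to $\pi_{C_p}$ — and to check that the hypotheses of \ref{canonicalcartesian}(i) indeed apply with $f = C_p$, which they do since $C_p$ is a geometric morphism.
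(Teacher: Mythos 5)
Your proposal is correct and coincides with the paper's own proof: both reduce the statement to Proposition \ref{canonicalcartesian}(i) applied to the geometric morphism $C_p$, which identifies the locally cartesian ($J_{C_p}$-cartesian) arrows of the canonical relative site with the cartesian ones, and then unwind the definition of locally cartesian arrows in $\cal D$. Your care in separating the two meanings of \emph{locally cartesian} (relative to $p$ versus relative to $\pi_{C_p}$) is exactly the bookkeeping the paper's argument implicitly performs.
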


\begin{proof}
A locally cartesian arrow is by definition an arrow sent on a cartesian arrow by $\eta_{\cal D}$, which is in particular a $J_{C_p}$ cartesian arrow. And conversely, because of \ref{canonicalcartesian} the only $J_{C_p}$ cartesian arrows are the cartesian ones, which forces an arrow sent to a $J_{C_p}$-cartesian one to already being a locally cartesian one in $\cal D$.
\end{proof}

\subsection{Local fibrations}

As cartesian arrows give rise to the notion of fibration, locally cartesian arrows give rise to a localization of the notion of fibration:

\begin{defn}
A comorphism of sites $p:({\cal D},K) \to ({\cal C},J)$ is said to be a \emph{local fibration} if for any arrow $f : c \to p(d)$ there is a $J$-covering family $f_i:p(d_i) \to c$ such that $f \circ f_i = p(\hat{f_i})$ with $(\hat{f_i} : d_i \to d)_i$ a family of locally cartesian arrows. 
\end{defn}

\begin{remarks}
\begin{enumerate}[(a)]
    \item In a fibration, cartesian arrows are locally cartesian arrows since we can take for cover the maximal ones, and it is immediate that any relative site $p : (\mathcal{G}(\mathbb D),K) \to ({\cal C},J)$ is a local fibration.
    \item We will also see that a continuous functor between two toposes endowed with their canonical topologies is a local fibration if and only if it is a fibration.
    \item These examples show that fibrations are very particular cases of local fibrations.
\end{enumerate}
\end{remarks}

Recall that in a classical fibration we have an orthogonal factorization system:

\begin{prop}
Let $p :\cal G(\mathbb D) \to \cal C$ be a fibration. Any arrow $(f,u)$ of  $\cal G(\mathbb D)$ can be universally factorized as a cartesian arrow pre-composed with a vertical one: $(f,u)=(f,1)(1,u)$.
\end{prop}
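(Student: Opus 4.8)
The plan is to produce the factorization explicitly in the indexed formulation and then verify that it constitutes an orthogonal factorization system, which is what \emph{universal} refers to here. Recall from the proof of Proposition~\ref{enumloccart} that composition in $\mathcal{G}(\mathbb{D})$ is given by $(f,u)\circ(g,v)=(f\circ g,\mathbb{D}(g)(u)\circ v)$, and that an arrow is cartesian exactly when its vertical component is an isomorphism. Writing a general arrow as $(f,u):(x',c')\to(x,c)$, with $f:c'\to c$ in $\cal C$ and $u:x'\to\mathbb{D}(f)(x)$ a vertical arrow of $\mathbb{D}(c')$, I would introduce the intermediate object $(\mathbb{D}(f)(x),c')$ together with the two arrows $(1,u):(x',c')\to(\mathbb{D}(f)(x),c')$ and $(f,1):(\mathbb{D}(f)(x),c')\to(x,c)$. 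A direct application of the composition formula, using that $\mathbb{D}(1_{c'})$ is the identity functor, then yields $(f,1)\circ(1,u)=(f,u)$.

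Next I would identify the two factors with the prescribed classes. The arrow $(f,1)$ has identity vertical part, hence is cartesian by the characterization recalled above; and $(1,u)$ projects to $1_{c'}$ in the base, hence is vertical. This establishes that every arrow of $\mathcal{G}(\mathbb{D})$ factors as a cartesian arrow precomposed with a vertical one, as asserted.

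It remains to show that this factorization is universal, i.e. that the pair (vertical, cartesian) forms an orthogonal factorization system; equivalently, that vertical arrows are left-orthogonal to cartesian arrows. I would deduce this directly from the defining universal property of cartesian arrows: given a commutative square whose left edge is a vertical arrow and whose right edge is a cartesian arrow, one transports the top arrow across the isomorphism that the vertical arrow induces in the base, applies the universal property to obtain a unique lift over the resulting base arrow, and checks—again via the uniqueness clause of that property—that this lift is the required diagonal filler. From this orthogonality the standard argument shows that any two vertical-then-cartesian factorizations of $(f,u)$ are related by a unique vertical isomorphism, which is precisely the universality claimed.

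The main obstacle is this orthogonality step: one must keep careful track of the base components when invoking the universal property of cartesian arrows—particularly in the Street-fibration setting, where verticality is only up to isomorphism rather than strict identity—and confirm that the unique lift it produces is simultaneously compatible on the base and on the fibers. By contrast, the explicit factorization and the identification of its two factors with the cartesian and vertical classes are immediate computations from the composition formula.
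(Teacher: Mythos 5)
Your proposal is correct: the explicit factorization $(f,u)=(f,1)\circ(1,u)$ via the composition formula, the identification of $(f,1)$ as cartesian and $(1,u)$ as vertical, and the orthogonality argument (lifting against cartesian arrows via their universal property, with uniqueness giving both the diagonal filler and the uniqueness of the factorization up to unique vertical isomorphism) are all sound. Note that the paper itself states this proposition as a recall of classical fibration theory and gives no proof at all, so there is nothing to compare against; your argument is precisely the standard one that the paper implicitly invokes.
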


As inspired by the horizontal-vertical factorization in an usual fibration, we can locally factorize every arrow with the help of locally cartesian ones:

\begin{prop}\label{locallyfacthorizontal}
Let $f : d' \to d$ be an arrow in a local fibration $p : ({\cal D},K) \to ({\cal C},J)$, there is a $K$-covering family $(v_i : p(d'_i) \to p(d'))_i$, a family $(\widehat{f_i} : d_i \to d)_i$ of locally cartesian arrows, arrows $a_i : d'_i \to d_i$, and arrows $s_i: p(d_i) \to p(d')$ such that $fv_i = \widehat{f_i}a_i$ and $p(f)s_i=p(\widehat{f_i})$.
\end{prop}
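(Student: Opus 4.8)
The plan is to mimic the classical horizontal--vertical factorization, replacing the cartesian lift of $p(f)$ (which need not exist here) by the locally cartesian lifts supplied by the local fibration hypothesis, and then repairing the resulting factorization by means of a covering produced from the local lifting property of locally cartesian arrows.

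First I would apply the definition of local fibration to the arrow $p(f) : p(d') \to p(d)$ of $\cal C$ (taking the target object to be $d$ and $c = p(d')$). This yields a $J$-covering family $(g_i : p(d_i) \to p(d'))_i$ together with locally cartesian arrows $(\widehat{f_i} : d_i \to d)_i$ such that $p(f) \circ g_i = p(\widehat{f_i})$. Setting $s_i := g_i$ already secures the second required identity $p(f) s_i = p(\widehat{f_i})$; it then remains only to manufacture the covering $(v_i)$ of $d'$ and the arrows $a_i$ realizing $f v_i = \widehat{f_i} a_i$.

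Next, since $p$ is a comorphism of sites, I would lift the $J$-covering sieve on $p(d')$ generated by the $(g_i)$ to a $K$-covering on $d'$: there is a $K$-covering family of arrows $v : \tilde d \to d'$ whose images under $p$ factor through the $g_i$, say $p(v) = g_{i(v)} \circ t_v$ for some index $i(v)$ and some $t_v : p(\tilde d) \to p(d_{i(v)})$. The crucial observation is that for each such $v$ the hypothesis of the local lifting property of the locally cartesian arrow $\widehat{f_{i(v)}}$ holds automatically, with $g := f v$ and $h := t_v$, since
\[
p(\widehat{f_{i(v)}}) \circ t_v = p(f) \circ g_{i(v)} \circ t_v = p(f) \circ p(v) = p(f v).
\]
Applying condition (i) of the reformulation of locally cartesian arrows then produces a $K$-covering $(w_j : \tilde d_j \to \tilde d)_j$ and arrows $h_j : \tilde d_j \to d_{i(v)}$ with $t_v \circ p(w_j) = p(h_j)$ and $\widehat{f_{i(v)}} \circ h_j = f v \circ w_j$.

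Finally I would assemble the data: by transitivity of $K$, the composites $v \circ w_j$, ranging over all $v$ from the comorphism step and all $j$ from the lifting step, again form a $K$-covering family of $d'$; re-indexing it as $(v_i : d'_i \to d')_i$ and setting $a_i := h_j$, $\widehat{f_i} := \widehat{f_{i(v)}}$ and $s_i := g_{i(v)}$ gives exactly $f v_i = \widehat{f_i} a_i$ together with $p(f) s_i = p(\widehat{f_i})$. The one genuinely delicate point is the direction of the base arrows: the locally cartesian lifts live over the sources $p(d_i)$ of a covering of $p(d')$ rather than over $p(d')$ itself, so $f$ cannot be factored through any single $\widehat{f_i}$ directly; the entire role of the comorphism step is to convert this covering of the base into a covering of $d'$ along which the factorization $p(v) = g_{i(v)} t_v$ forces the lifting hypothesis to hold. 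The remaining verifications—that the composite families are $K$-covering and that the two displayed identities hold on the nose—are routine bookkeeping.
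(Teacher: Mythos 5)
Your proposal is correct and follows essentially the same argument as the paper: apply the local fibration property to $p(f)$, use the comorphism property to lift the resulting $J$-covering of $p(d')$ to a $K$-covering of $d'$, invoke the local lifting property of the locally cartesian arrows $\widehat{f_i}$, and compose coverings. The paper's written proof dresses these same steps in diagrams transported into the canonical relative site via $\eta$, but the combinatorial content is identical to yours.
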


\begin{proof}

Let $f : d' \to d$ be any arrow in $\cal D$. We can send it in the canonical relative site with the help of the $\eta$ functor, where it is represented as:

% https://q.uiver.app/#q=WzAsNCxbMCwwLCJsX0soZCcpIl0sWzEsMCwibF9LKGQpIl0sWzAsMSwiQ19wXipsX0pwKGQnKSJdLFsxLDEsIkNfcF4qbF9KcChkKSJdLFsxLDMsIlxcZXRhKGQpIl0sWzAsMiwiXFxldGEoZCcpIiwyXSxbMiwzLCJDX3BeKmxfSnAoZikiLDJdLFswLDEsImxfSyhmKSJdXQ==
\[\begin{tikzcd}
	{l_K(d')} & {l_K(d)} \\
	{C_p^*l_Jp(d')} & {C_p^*l_Jp(d)}
	\arrow["{l_K(f)}", from=1-1, to=1-2]
	\arrow["{\eta(d')}"', from=1-1, to=2-1]
	\arrow["{\eta(d)}", from=1-2, to=2-2]
	\arrow["{C_p^*l_Jp(f)}"', from=2-1, to=2-2]
\end{tikzcd}\]

Since $p(f)$ has for codomain an object coming from $p$, we can locally factor it as locally cartesian arrows: $p(f) \circ v_i = p(\hat{f_i})$ for $(v_i : p(d'_i) \to p(d'))_i$ a $K$-covering. We represent this situation in

% https://q.uiver.app/#q=WzAsNixbMSwwLCJsX0soZCcpIl0sWzIsMCwibF9LKGQpIl0sWzEsMSwiQ19wXipsX0pwKGQnKSJdLFsyLDEsIkNfcF4qbF9KcChkKSJdLFswLDEsIkNfcF4qbF9KcChkJ19pKSJdLFswLDAsImxfSyhkJ19pKSJdLFsxLDMsIlxcZXRhKGQpIl0sWzAsMiwiXFxldGEoZCcpIiwyXSxbMiwzLCJDX3BeKmxfSnAoZikiLDJdLFswLDEsImxfSyhmKSJdLFs0LDIsIkNfcF4qbF9KKHZfaSkiLDJdLFs1LDQsIlxcZXRhKGQnX2kpIiwyXSxbNSwwLCJsX0sodl9pKSJdLFs1LDEsImxfSyhcXHdpZGVoYXR7Zl9pfSkiLDEseyJjdXJ2ZSI6LTR9XSxbNCwzLCJDX3BeKmxfSnAoXFx3aWRlaGF0e2ZfaX0pIiwxLHsiY3VydmUiOjR9XV0=
\[\begin{tikzcd}
	{l_K(d'_i)} & {l_K(d')} & {l_K(d)} \\
	{C_p^*l_Jp(d'_i)} & {C_p^*l_Jp(d')} & {C_p^*l_Jp(d)}
	\arrow["{l_K(v_i)}", from=1-1, to=1-2]
	\arrow["{l_K(\widehat{f_i})}"{description}, bend left =30, from=1-1, to=1-3]
	\arrow["{\eta(d'_i)}"', from=1-1, to=2-1]
	\arrow["{l_K(f)}", from=1-2, to=1-3]
	\arrow["{\eta(d')}"', from=1-2, to=2-2]
	\arrow["{\eta(d)}", from=1-3, to=2-3]
	\arrow["{C_p^*l_J(v_i)}"', from=2-1, to=2-2]
	\arrow["{C_p^*l_Jp(\widehat{f_i})}"{description}, bend right = 30, from=2-1, to=2-3]
	\arrow["{C_p^*l_Jp(f)}"', from=2-2, to=2-3]
\end{tikzcd}\]

Then, since $(v_i)_i$ is a covering for $J$ on an object coming from $p$ which is a comorphism of sites, we can lift it into a covering $(v'_j)_j$ for $K$, as pictured in the following diagram:

% https://q.uiver.app/#q=WzAsNyxbMiwwLCJsX0soZCcpIl0sWzMsMCwibF9LKGQpIl0sWzIsMSwiQ19wXipsX0pwKGQnKSJdLFszLDEsIkNfcF4qbF9KcChkKSJdLFsxLDEsIkNfcF4qbF9KcChkJ19pKSJdLFsxLDAsImxfSyhkJ19pKSJdLFswLDEsIkNfcF4qbF9KcChkJ19qKSJdLFsxLDMsIlxcZXRhKGQpIl0sWzAsMiwiXFxldGEoZCcpIl0sWzIsMywiQ19wXipsX0pwKGYpIiwyXSxbMCwxLCJsX0soZikiXSxbNCwyLCJDX3BeKmxfSih2X2kpIl0sWzUsNCwiXFxldGEoZCdfaSkiLDJdLFs1LDAsImxfSyh2X2kpIl0sWzUsMSwibF9LKFxcd2lkZWhhdHtmX2l9KSIsMSx7ImN1cnZlIjotNH1dLFs0LDMsIkNfcF4qbF9KcChcXHdpZGVoYXR7Zl9pfSkiLDEseyJjdXJ2ZSI6NH1dLFs2LDQsIkNfcF4qbF9KKGFfaikiXSxbNiwyLCJDX3BeKmxfSih2J19qKSIsMix7ImN1cnZlIjoyfV1d
\[\begin{tikzcd}
	& {l_K(d'_i)} & {l_K(d')} & {l_K(d)} \\
	{C_p^*l_Jp(d'_j)} & {C_p^*l_Jp(d'_i)} & {C_p^*l_Jp(d')} & {C_p^*l_Jp(d)}
	\arrow["{l_K(v_i)}", from=1-2, to=1-3]
	\arrow["{l_K(\widehat{f_i})}"{description}, bend left = 30, from=1-2, to=1-4]
	\arrow["{\eta(d'_i)}"', from=1-2, to=2-2]
	\arrow["{l_K(f)}", from=1-3, to=1-4]
	\arrow["{\eta(d')}", from=1-3, to=2-3]
	\arrow["{\eta(d)}", from=1-4, to=2-4]
	\arrow["{C_p^*l_J(a_j)}", from=2-1, to=2-2]
	\arrow["{C_p^*l_J(v'_j)}"', bend right = 18, from=2-1, to=2-3]
	\arrow["{C_p^*l_J(v_i)}", from=2-2, to=2-3]
	\arrow["{C_p^*l_Jp(\widehat{f_i})}"{description}, bend right = 30, from=2-2, to=2-4]
	\arrow["{C_p^*l_Jp(f)}"', from=2-3, to=2-4]
\end{tikzcd}\]

Since the $\widehat{f_i}$ are locally cartesian and we are in a situation where $p(\widehat{f_i})a_j=p(fv_j')$, we can locally lift the $a_j$ to $\cal D$:

% https://q.uiver.app/#q=WzAsOCxbMywwLCJsX0soZCcpIl0sWzQsMCwibF9LKGQpIl0sWzMsMSwiQ19wXipsX0pwKGQnKSJdLFs0LDEsIkNfcF4qbF9KcChkKSJdLFsyLDEsIkNfcF4qbF9KcChkJ19pKSJdLFsyLDAsImxfSyhkJ19pKSJdLFsxLDEsIkNfcF4qbF9KcChkJ19qKSJdLFswLDEsIkNfcF4qbF9KcChkJ197amt9KSJdLFsxLDMsIlxcZXRhKGQpIl0sWzAsMiwiXFxldGEoZCcpIl0sWzIsMywiQ19wXipsX0pwKGYpIiwyXSxbMCwxLCJsX0soZikiXSxbNCwyLCJDX3BeKmxfSih2X2kpIl0sWzUsNCwiXFxldGEoZCdfaSkiLDJdLFs1LDAsImxfSyh2X2kpIl0sWzUsMSwibF9LKFxcd2lkZWhhdHtmX2l9KSIsMSx7ImN1cnZlIjotNH1dLFs0LDMsIkNfcF4qbF9KcChcXHdpZGVoYXR7Zl9pfSkiLDEseyJjdXJ2ZSI6NH1dLFs2LDQsIkNfcF4qbF9KKGFfaikiXSxbNiwyLCJDX3BeKmxfSih2J19qKSIsMix7ImN1cnZlIjoyfV0sWzcsNiwiQ19wXipsX0pwKHYnX3tqa30pIiwyXSxbNyw0LCJDX3BeKmxfSnAoYV97amt9KSIsMix7ImN1cnZlIjotNH1dXQ==
\[\begin{tikzcd}
	&& {l_K(d'_i)} & {l_K(d')} & {l_K(d)} \\
	{C_p^*l_Jp(d'_{jk})} & {C_p^*l_Jp(d'_j)} & {C_p^*l_Jp(d'_i)} & {C_p^*l_Jp(d')} & {C_p^*l_Jp(d)}
	\arrow["{l_K(v_i)}", from=1-3, to=1-4]
	\arrow["{l_K(\widehat{f_i})}"{description}, bend left = 30, from=1-3, to=1-5]
	\arrow["{\eta(d'_i)}"', from=1-3, to=2-3]
	\arrow["{l_K(f)}", from=1-4, to=1-5]
	\arrow["{\eta(d')}", from=1-4, to=2-4]
	\arrow["{\eta(d)}", from=1-5, to=2-5]
	\arrow["{C_p^*l_Jp(v'_{jk})}"', from=2-1, to=2-2]
	\arrow["{C_p^*l_Jp(a_{jk})}"', bend left = 30, from=2-1, to=2-3]
	\arrow["{C_p^*l_J(a_j)}", from=2-2, to=2-3]
	\arrow["{C_p^*l_J(v'_j)}"', bend right = 18, from=2-2, to=2-4]
	\arrow["{C_p^*l_J(v_i)}", from=2-3, to=2-4]
	\arrow["{C_p^*l_Jp(\widehat{f_i})}"{description}, bend right = 30, from=2-3, to=2-5]
	\arrow["{C_p^*l_Jp(f)}"', from=2-4, to=2-5]
\end{tikzcd}\]

\noindent where $(v'_{ij})_{ij}$ is a covering for $K$, $p(a_{jk})=p(v'_{jk})a_j$ and $\widehat{f_i}a_{jk} = fv_j'v_{jk}'$.

Finally, the covering arrows $v'_jv'_{jk}$ together with the $a_{jk}$ and the mentioned  $\widehat{f_i}$ and $v_i$ satisfy the proposition conditions.
\end{proof}
\begin{comment}
    
\begin{proof}
Let $f : d' \to d$ be any arrow in $\cal D$. We can locally factor $p(f)$ as locally cartesian arrows: $p(f) \circ v_i = p(\hat{f_i})$ for $(v_i : p(d'_i) \to p(d'))_i$ a $K$-covering. Since $p$ is a comorphism, we can lift this covering in $({\cal D},K)$, we obtain:

% https://q.uiver.app/#q=WzAsNCxbMSwwLCJwKGQnKSJdLFsyLDAsInAoZCkiXSxbMSwxLCJwKGQnX2kpIl0sWzAsMiwicCcoZCdfaikiXSxbMCwxLCJwKGYpIl0sWzIsMSwicChcXHdpZGVoYXR7Zl9pfSkiLDJdLFsyLDAsInZfaSIsMl0sWzMsMiwiYV9qIiwyXSxbMywwLCJwKHYnX2opIl1d
\[\begin{tikzcd}
	& {p(d')} & {p(d)} \\
	& {p(d'_i)} \\
	{p'(d'_j)}
	\arrow["{p(f)}", from=1-2, to=1-3]
	\arrow["{v_i}"', from=2-2, to=1-2]
	\arrow["{p(\widehat{f_i})}"', from=2-2, to=1-3]
	\arrow["{p(v'_j)}", from=3-1, to=1-2]
	\arrow["{a_j}"', from=3-1, to=2-2]
\end{tikzcd}\]

Since the $\widehat{f_i}$ are locally cartesian, we can locally lift the $a_j$ to $\cal D$:

% https://q.uiver.app/#q=WzAsNSxbMSwwLCJwKGQnKSJdLFsyLDAsInAoZCkiXSxbMSwxLCJwKGQnX2kpIl0sWzAsMiwicCcoZCdfaikiXSxbMCwzLCJwKGQnX3tqa30pIl0sWzAsMSwicChmKSJdLFsyLDEsInAoXFx3aWRlaGF0e2ZfaX0pIiwyXSxbMiwwLCJ2X2kiLDJdLFszLDIsImFfaiIsMl0sWzMsMCwicCh2J19qKSJdLFs0LDMsInAodidfe2prfSkiXSxbNCwyLCJwKGFfe2prfSkiLDIseyJjdXJ2ZSI6MX1dXQ==
\[\begin{tikzcd}
	& {p(d')} & {p(d)} \\
	& {p(d'_i)} \\
	{p'(d'_j)} \\
	{p(d'_{jk})}
	\arrow["{p(f)}", from=1-2, to=1-3]
	\arrow["{v_i}"', from=2-2, to=1-2]
	\arrow["{p(\widehat{f_i})}"', from=2-2, to=1-3]
	\arrow["{p(v'_j)}", from=3-1, to=1-2]
	\arrow["{a_j}"', from=3-1, to=2-2]
	\arrow["{p(a_{jk})}"', bend right = 12, from=4-1, to=2-2]
	\arrow["{p(v'_{jk})}", from=4-1, to=3-1]
\end{tikzcd}\]

The covering arrows $v'_jv'_{jk}$ together with the $a_{jk}$ and the mentioned  $\widehat{f_i}$ and $v_i$ satisfy the proposition conditions.
\end{proof}
\end{comment}

Finally, local fibrations naturally come with their own notion of morphisms:

\begin{defn}
Let $p : ({\cal D},K) \to ({\cal C},J)$ and $p' : ({\cal D'},K') \to ({\cal C},J)$ be two local fibrations. We call a functor $A : ({\cal D},K) \to ({\cal D'},K')$ such that $p'A \simeq p$ a morphism of local fibrations when it sends locally cartesian arrows for $p$ to locally cartesian ones for $p'$.
\end{defn}

\section{Local fibrations and morphisms of relative toposes}\label{section4}

In this section we will study how to induce relative geometric morphisms between relative toposes presented through this notion of local fibration. In \cite{bartolicaramello} we saw a concrete characterization (Proposition 3.11.) of those lax morphisms of sites over a base site $(\cal C,J)$ inducing relative geometric morphisms. More precisely, we gave a concrete description of those morphisms of sites $A$ with natural transformation $\phi : p'A \Rightarrow p$, as pictured:

% https://q.uiver.app/#q=WzAsMyxbMCwwLCIoXFxjYWwgRCxLKSJdLFsyLDAsIihcXGNhbCBEJyxLJykiXSxbMSwxLCIoXFxjYWwgQyxKKSJdLFswLDIsInAiLDJdLFsxLDIsInAnIl0sWzAsMSwiQSJdLFsxLDMsIlxccGhpIiwxLHsic2hvcnRlbiI6eyJzb3VyY2UiOjIwLCJ0YXJnZXQiOjIwfX1dXQ==
\[\begin{tikzcd}
	{(\cal D,K)} && {(\cal D',K')} \\
	& {(\cal C,J)}
	\arrow["A", from=1-1, to=1-3]
	\arrow[""{name=0, anchor=center, inner sep=0}, "p"', from=1-1, to=2-2]
	\arrow["{p'}", from=1-3, to=2-2]
	\arrow["\phi"{description}, shorten <=9pt, shorten >=9pt, Rightarrow, from=1-3, to=0]
\end{tikzcd}\]

\noindent such that, at topos-level, the geometric morphism $\Sh(A)$ together with the natural transformation $\widetilde{\phi}$ make the following triangle of geometric morphisms commutative:

% https://q.uiver.app/#q=WzAsMyxbMCwwLCJcXHdpZGVoYXR7XFxjYWwgRH1fSyJdLFsyLDAsIlxcd2lkZWhhdHtcXGNhbCBEJ31fe0snfSJdLFsxLDEsIlxcd2lkZWhhdHtcXGNhbCBDfV9KIl0sWzAsMiwiQ19wIiwyXSxbMSwyLCJDX3twJ30iXSxbMSwwLCJcXFNoKEEpIiwyXSxbMSwzLCJcXHNpbWVxIiwxLHsibGFiZWxfcG9zaXRpb24iOjYwLCJvZmZzZXQiOi0yLCJzaG9ydGVuIjp7InNvdXJjZSI6MjAsInRhcmdldCI6MjB9LCJzdHlsZSI6eyJib2R5Ijp7Im5hbWUiOiJub25lIn0sImhlYWQiOnsibmFtZSI6Im5vbmUifX19XSxbMSwzLCJcXHdpZGV0aWxkZXtcXHBoaX0iLDEseyJsYWJlbF9wb3NpdGlvbiI6NjAsIm9mZnNldCI6Miwic2hvcnRlbiI6eyJ0YXJnZXQiOjIwfSwic3R5bGUiOnsiYm9keSI6eyJuYW1lIjoibm9uZSJ9LCJoZWFkIjp7Im5hbWUiOiJub25lIn19fV1d
\[\begin{tikzcd}
	{\widehat{\cal D}_K} && {\widehat{\cal D'}_{K'}} \\
	& {\widehat{\cal C}_J}
	\arrow[""{name=0, anchor=center, inner sep=0}, "{C_p}"', from=1-1, to=2-2]
	\arrow["{\Sh(A)}"', from=1-3, to=1-1]
	\arrow["{C_{p'}}", from=1-3, to=2-2]
	\arrow["\simeq"{description, pos=0.6}, shift left=2, draw=none, from=1-3, to=0]
	\arrow["{\widetilde{\phi}}"{description, pos=0.6}, shift right=2, draw=none, from=1-3, to=0]
\end{tikzcd}\]

With the help of this concrete characterization, we showed that in the more structured case, where the relative toposes are induced by relative sites, a sufficient condition holds: when a morphism of sites $A: (\cal G(\mathbb D),K) \to (\cal G(\mathbb D'),K')$ is also a morphism of fibrations, it induces a relative geometric morphism (Proposition 3.12 \cite{bartolicaramello}).

The aim of this section is to show that an analogous result holds in the case of local fibrations, and that, in this case, it is not only a sufficient condition but also a necessary one. Moreover, the proof we obtain here is more conceptual than the proof of Proposition 3.12 \cite{bartolicaramello}, which was more computational in nature.

Recall from \ref{etaextsubsection} that a morphism of sites $A$ between two relative sites induces a morphism of relative toposes if and only if its $\eta$-extension is itself a morphism of fibrations. The aim of our proof is to conceptually explain why the property of $A$ being a morphism of fibrations is transmitted to its $\eta$-extension $\widetilde{A}^*$.
 
Important properties of the $\eta$-extension are recalled in the subsection. In particular, we have that for a morphism of sites $A : (\mathcal{G}(\mathbb D),K) \to (\mathcal{G}(\mathbb D'),K')$ between two relative sites over $\cal C$ such that $p'A \simeq p$, we have a commutative square:

% https://q.uiver.app/#q=WzAsNCxbMCwxLCIoMV97XFxTaCAoXFxtYXRoY2Fse0d9KFxcbWF0aGJiIEQpLEspfSBcXGRvd25hcnJvdyBDX3BeKikiXSxbMSwxLCIoMV97XFxTaCAoXFxtYXRoY2Fse0d9KFxcbWF0aGJiIEQnKSxLJyl9IFxcZG93bmFycm93IENfe3AnfV4qKSJdLFswLDAsIlxcbWF0aGNhbHtHfShcXG1hdGhiYiBEKSJdLFsxLDAsIlxcbWF0aGNhbHtHfShcXG1hdGhiYiBEJykiXSxbMCwxLCJcXHRpbGRle0F9IiwyXSxbMiwwLCJcXGV0YV97XFxtYXRoYmIgRH0iXSxbMywxLCJcXGV0YV97XFxtYXRoYmIgRCd9IiwyXSxbMiwzLCJBIl1d
\[\begin{tikzcd}
	{\mathcal{G}(\mathbb D)} & {\mathcal{G}(\mathbb D')} \\
	{(\widehat{\mathcal{G}(\mathbb D)}_K/C_p^*)} & {(\widehat{\mathcal{G}(\mathbb D')}_{K'}/C_{p'}^*)}
	\arrow["{\widetilde{A}^*}"', from=2-1, to=2-2]
	\arrow["{\eta_{\mathbb D}}", from=1-1, to=2-1]
	\arrow["{\eta_{\mathbb D'}}"', from=1-2, to=2-2]
	\arrow["A", from=1-1, to=1-2]
\end{tikzcd}\]

In this light, for our case of relative sites, we may expect to be able to deduce that $\widetilde{A}^*$ preserves cartesian arrows from the fact that $A$ already does. To achieve this, a first idea would be to locally express every cartesian arrow of the canonical relative site $((\widehat{\mathcal{G}(\mathbb D)}_K/C_p^*),J_{C_p})$ using cartesian arrows coming from $\mathcal{G}(\mathbb D)$, and then to exploit a possible reflectivity of cartesian arrows through $\eta_{\mathbb D}$, together with the commutativity of the above square and the fact that both $A$ and $\eta_{\mathbb D}$ preserve cartesian arrows, to conclude. However, when examining the behavior of $\eta_{\mathbb D}$ with respect to cartesian arrows, we see that if it already preserves them, there is no need for it to reflect them. Nevertheless, as we saw in \ref{etareflectsloccart}, $\eta$ reflects \emph{locally} cartesian arrows; this suggests that the general framework of local fibrations is more appropriate for carrying out this conceptual proof.

The following two subsections present the two main steps of the proof, corresponding to two different types of localizations of cartesian arrows in the canonical stack. Our goal is not only to localize a cartesian arrow, but also to ensure that the image of such a cartesian arrow under $A$ remains cartesian, by relying on the cartesianness of the localizations. The first step is to show that an arrow is locally cartesian if and only if its pullbacks along a covering family are locally cartesian. The second step is to show that an arrow of a certain type is locally cartesian if and only if, in the associated canonical stack, it can be represented as a colimit of cartesian arrows coming from the site.

In the third subsection, we use these characterizations in order to deduce our main theorems: a morphism of sites between two local fibrations induces a morphism of relative toposes if and only if it is a morphism of local fibrations; and a relative Diaconescu's theorem for local fibrations.

In the final subsection, we apply our results to the specific case of fibrations, which are examples of local fibrations. In particular, we obtain a new proof of Diaconescu's theorem for relative sites, along with a general characterization of those morphisms of sites between relative sites that induce morphisms of relative toposes.

\subsection{Pullbacks of locally cartesian arrows}

Let $p : (\mathcal{D}, K) \to (\mathcal{C}, J)$ be a local fibration. We are going to show that every cartesian arrow in the canonical relative site of $C_p$ can, in a certain sense, be locally expressed as cartesian arrows coming from $\eta_{\cal D}$. This will allow us to deduce the following: if $A : (\mathcal{D}, K) \to (\mathcal{D}', K')$ is both a morphism of local fibrations and a morphism of sites, then its $\eta$-extension $\widetilde{A}^*$ is a morphism of fibrations. Indeed, since we have the commutation $\eta A \simeq \widetilde{A}^* \eta$, our goal is to deduce that a cartesian arrow $f$ in $(\widehat{\cal D}_K/C_p^*)$ is sent to a cartesian arrow $\widetilde{A}^*(f)$. We will show that this follows from the fact that its \textquotedblleft localizations\textquotedblright\ $\widetilde{A}^*(\eta(f_i)) \simeq \eta A(f_i)$ are cartesian, given that $A$ preserves locally cartesian arrows.

In order to obtain this local representation, we recall that $(\widehat{\cal D}_K/C_p^*)$ is a stack, of which we write the associated indexed category $S_{C_p}$. Let's consider an arrow $ \alpha : F \to C_p^*E$ and an arrow $f : E' \to E$ in $\widehat{\cal C}_J$, this gives us a cartesian arrow in $(\widehat{\cal D}_K/C_p^*)$, as pictured in the square :

% https://q.uiver.app/#q=WzAsNCxbMCwxLCJGIl0sWzEsMSwiQ19wXipFIl0sWzAsMCwiU197Q19wfShmKVtcXGFscGhhXSJdLFsxLDAsIkNfcF4qRSciXSxbMCwxLCJcXGFscGhhIiwyXSxbMiwzLCJcXGFscGhhJyJdLFszLDEsIkNfcF4qKGYpIl0sWzIsMF0sWzIsMSwiIiwxLHsic3R5bGUiOnsibmFtZSI6ImNvcm5lciJ9fV1d
\[\begin{tikzcd}
	{S_{C_p}(f)[\alpha]} & {C_p^*E'} \\
	F & {C_p^*E}
	\arrow["{\alpha'}", from=1-1, to=1-2]
	\arrow[from=1-1, to=2-1]
	\arrow["\lrcorner"{anchor=center, pos=0.125}, draw=none, from=1-1, to=2-2]
	\arrow["{C_p^*(f)}", from=1-2, to=2-2]
	\arrow["\alpha"', from=2-1, to=2-2]
\end{tikzcd}\]

As $\eta_{\cal D}$ is dense (see for example Proposition 2.8 \cite{bartolicaramello}), we are able to cover $[\alpha]$, seen as an object of $((\widehat{\cal D}_K/C_p^*),J_{C_p})$, with objects coming from $\cal D$: 
% https://q.uiver.app/#q=WzAsNixbMCwxLCJGIl0sWzEsMSwiQ19wXipFIl0sWzAsMCwiU197Q19wfShmKVtcXGFscGhhXSJdLFsxLDAsIkNfcF4qRSciXSxbMCwyLCJsX0soZF9pKSJdLFsxLDIsIkNfcF4qbF9KKHBkX2kpIl0sWzAsMSwiXFxhbHBoYSJdLFsyLDMsIlxcYWxwaGEnIl0sWzMsMSwiQ19wXiooZikiXSxbMiwwXSxbMiwxLCIiLDEseyJzdHlsZSI6eyJuYW1lIjoiY29ybmVyIn19XSxbNCwwLCJnX2knIl0sWzUsMSwiQ19wXiooZ19pKSIsMl0sWzQsNSwiXFxldGFfe1xcY2FsIER9KGRfaSkiLDJdXQ==
\[\begin{tikzcd}
	{S_{C_p}(f)[\alpha]} & {C_p^*E'} \\
	F & {C_p^*E} \\
	{l_K(d_i)} & {C_p^*l_J(pd_i)}
	\arrow["{\alpha'}", from=1-1, to=1-2]
	\arrow[from=1-1, to=2-1]
	\arrow["\lrcorner"{anchor=center, pos=0.125}, draw=none, from=1-1, to=2-2]
	\arrow["{C_p^*(f)}", from=1-2, to=2-2]
	\arrow["\alpha", from=2-1, to=2-2]
	\arrow["{g_i'}", from=3-1, to=2-1]
	\arrow["{\eta_{\cal D}(d_i)}"', from=3-1, to=3-2]
	\arrow["{C_p^*(g_i)}"', from=3-2, to=2-2]
\end{tikzcd}\]

Since  $(\widehat{\cal D}_K/C_p^*)$ is a cartesian fibration, we can take the pullbacks of our cartesian arrow along these covering arrows in the comma category, which give us again cartesian arrows (\ref{enumloccart} (ii)). This proves the following result:

\begin{prop}\label{pullbackcart}
Let $p : ({\cal D},K) \to ({\cal C},J)$ be a comorphism of sites. In the stack $(\widehat{\cal D}_K/C_p^*) \to \widehat{\cal C}_J$, for each cartesian arrow (here we use the notations for $(\widehat{\cal D}_K/C_p^*)$  seen as a Grothendieck construction, not as a comma category) $(f,1): [S_{\cal D}(\alpha)] \to [\alpha]$ we have a covering (for $J_{C_p}$) constituted of objects coming from $\cal D$ : $(\eta_{\cal D}(d_i) \to [\alpha])_i$ with the pullbacks of $(f,1)$ being cartesian arrows:

% https://q.uiver.app/#q=WzAsNCxbMCwxLCJbU197Q19wfShcXGFscGhhKV0iXSxbMSwxLCJbXFxhbHBoYV0iXSxbMSwwLCJcXGV0YV97XFxjYWwgRH0oZF9pKSJdLFswLDAsIlBfaSJdLFszLDBdLFswLDEsIigxLGYpIiwyXSxbMiwxXSxbMywyLCIoMSxmX2kpIl0sWzMsMSwiIiwxLHsic3R5bGUiOnsibmFtZSI6ImNvcm5lciJ9fV1d
\[\begin{tikzcd}
	{P_i} & {\eta_{\cal D}(d_i)} \\
	{[S_{C_p}(\alpha)]} & {[\alpha]}
	\arrow["{(f_i,1)}", from=1-1, to=1-2]
	\arrow[from=1-1, to=2-1]
	\arrow["\lrcorner"{anchor=center, pos=0.125}, draw=none, from=1-1, to=2-2]
	\arrow[from=1-2, to=2-2]
	\arrow["{(f,1)}"', from=2-1, to=2-2]
\end{tikzcd}\]
\end{prop}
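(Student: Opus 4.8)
The plan is to assemble into a single statement the two observations made just above. First I would exploit the density of $\eta_{\cal D}$. Since $\eta_{\cal D} : (\cal D, K) \to ((\widehat{\cal D}_K/C_p^*), J_{C_p})$ is a dense comorphism and dense morphism of sites, every object of the canonical relative site admits a $J_{C_p}$-covering family by objects in the image of $\eta_{\cal D}$; applied to the codomain $[\alpha]$ of the given cartesian arrow, this produces the covering $(\eta_{\cal D}(d_i) \to [\alpha])_i$ by objects coming from $\cal D$ drawn in the diagram above.

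Second, I would record that the projection $\pi_{\widehat{\cal C}_J} : (\widehat{\cal D}_K/C_p^*) \to \widehat{\cal C}_J$ is a cartesian fibration over a cartesian base. Indeed, $(\widehat{\cal D}_K/C_p^*)$ is a topos by Proposition \ref{xilocal}, hence finitely complete; the base $\widehat{\cal C}_J$ is a topos; and $\pi_{\widehat{\cal C}_J}$, being the inverse image of the geometric morphism $i_{C_p}$, preserves finite limits. Consequently Proposition \ref{enumloccart}(ii) applies verbatim: the pullback of a cartesian arrow along any arrow exists in this fibration and is again cartesian.

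Combining the two, I would form, for each index $i$, the pullback of the cartesian arrow $(f,1) : [S_{C_p}(\alpha)] \to [\alpha]$ along the covering map $\eta_{\cal D}(d_i) \to [\alpha]$. By the second step each such pullback $P_i$ exists and the projection $(f_i,1) : P_i \to \eta_{\cal D}(d_i)$ is cartesian, so the square displayed in the statement is a pullback of cartesian arrows. Since the family $(\eta_{\cal D}(d_i) \to [\alpha])_i$ is a $J_{C_p}$-covering, this is precisely the asserted covering by objects coming from $\cal D$ together with the cartesian pullbacks of $(f,1)$.

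I do not expect a genuine obstacle, since both ingredients are already established; the only point deserving a line of care is the bookkeeping between the comma-category presentation (in which $P_i$ is literally a pullback of triplets, computed componentwise) and the Grothendieck-construction presentation used in the statement, where $P_i$ is the reindexing of $\eta_{\cal D}(d_i)$ along the base arrow underlying $\eta_{\cal D}(d_i) \to [\alpha]$. This identification is exactly the content of Proposition \ref{enumloccart}(i)--(ii), so it requires no extra argument.
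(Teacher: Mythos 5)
Your proposal is correct and follows essentially the same route as the paper: cover $[\alpha]$ by objects $\eta_{\cal D}(d_i)$ using the density of $\eta_{\cal D}$, then invoke the fact that $(\widehat{\cal D}_K/C_p^*)$ is a cartesian fibration over $\widehat{\cal C}_J$ so that Proposition \ref{enumloccart}(ii) yields the pullbacks $(f_i,1)$ as cartesian arrows. Your slightly more explicit justification of the cartesian-fibration property (via Proposition \ref{xilocal} and $\pi_{\widehat{\cal C}_J}$ being an inverse image) is a harmless elaboration of what the paper takes as already established.
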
\qed

In order to use the precedent localization, we need to introduce the following definition:

\begin{defn}
We say that $p:({\cal D},K) \to ({\cal C},J)$ is a \emph{cartesian} local fibration when $\cal D$ has finite limits and $p$ preserves them. In such a setting, we say that an arrow $f : d' \to d$ in $\cal D$ is $K$-cartesian if there exists a covering family $(u_i : d_i \to d)_i$ such that the pullbacks of $f$ along every $u_i$ is a locally cartesian arrow.
\end{defn}

The following proposition gives us a characterization of locally cartesian arrows with the help of its pullbacks along a covering: 

\begin{prop}\label{kloccartesian}
Let $p:({\cal D},K) \to ({\cal C},J)$ be a cartesian local fibration. An arrow is locally cartesian if and only if it is $K$-cartesian. 
\end{prop}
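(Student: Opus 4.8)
The plan is to establish the two implications separately, the forward one being immediate and the converse carrying all the content. For \emph{locally cartesian} $\Rightarrow$ $K$-\emph{cartesian}, I would take the maximal covering family $(1_d\colon d\to d)$: the pullback of $f$ along $1_d$ is (isomorphic to) $f$ itself, which is locally cartesian by hypothesis, so $f$ is $K$-cartesian. (In fact the argument below shows that locally cartesian arrows are stable under pullback, so any covering would witness $K$-cartesianness, but the maximal one already suffices.)

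For the converse, suppose $(u_i\colon d_i\to d)_i$ is a $K$-covering family such that each pullback $f_i:=u_i^\ast f\colon d'\times_d d_i\to d_i$ is locally cartesian; I must show $\eta_{\cal D}(f)$ is cartesian in $(\widehat{\cal D}_K/C_p^\ast)$. The crucial point---and the only place the cartesianness of $p$ is used---is that $\eta_{\cal D}$ preserves pullbacks: its first component $l_K=a_K y_{\cal D}$ and its second component $l_J\circ p$ both preserve finite limits (Yoneda and sheafification do, and $p$ does by the cartesian hypothesis), while finite limits in the comma category are computed componentwise because $C_p^\ast$ preserves them. Hence $\eta_{\cal D}$ sends the pullback square defining $d'\times_d d_i$ to a pullback square in the comma, so that $\eta_{\cal D}(f_i)$ is precisely the pullback of $\eta_{\cal D}(f)$ along $\eta_{\cal D}(u_i)$, and it is cartesian since $f_i$ is locally cartesian.

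It then remains to descend cartesianness along the cover. By \ref{enumloccart}(i), $\eta_{\cal D}(f)$ is cartesian if and only if its defining square is a pullback in the topos $\widehat{\cal D}_K$, that is, the canonical comparison map $\kappa$ from $l_K(d')$ to the relevant fibre product is an isomorphism. Since $\eta_{\cal D}$ is cover-preserving and $J_{C_p}$-coverings are epimorphic on the $\widehat{\cal D}_K$-component, the family $(l_K(u_i))_i$ is jointly epimorphic in $\widehat{\cal D}_K$; and by the previous paragraph together with pullback pasting, the pullback of $\kappa$ along $l_K(u_i)$ is exactly the comparison map attached to $f_i$, which is an isomorphism. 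As base change along a jointly epimorphic family is conservative in a topos, $\kappa$ is itself an isomorphism, whence $\eta_{\cal D}(f)$ is cartesian and $f$ is locally cartesian.

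I expect the main obstacle to be the verification that $\eta_{\cal D}$ preserves pullbacks, together with the attendant pullback-pasting identification of the restricted comparison map with the one of $f_i$; once this bookkeeping is settled, the final descent is a routine application of the conservativity of jointly epimorphic base change in $\widehat{\cal D}_K$. It is worth stressing that the hypothesis that $p$ be cartesian is indispensable precisely here: without $p$ preserving finite limits, $\eta_{\cal D}$ would not preserve the pullbacks $d'\times_d d_i$, and the localization of $\eta_{\cal D}(f)$ by the cartesian arrows $\eta_{\cal D}(f_i)$ would break down.
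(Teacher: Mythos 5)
Your proof is correct, and the forward direction coincides with the paper's (the identity covering). For the converse, however, you take a genuinely different route. The paper stays entirely at the site level: it verifies the explicit local-lifting characterization of locally cartesian arrows, pulling the test arrow back along the cover $(u_i)$ inside $\cal D$, using that $p$ preserves these pullbacks to obtain a comparison arrow in $\cal C$, and then lifting that arrow locally via the locally cartesian $f_i$ (with local uniqueness handled by intersecting coverings). You instead argue at the topos level: the cartesian hypothesis enters through the preservation of pullbacks by $\eta_{\cal D}$ (both components $l_K$ and $l_J\circ p$ preserve finite limits, and finite limits in the comma category are computed componentwise since $C_p^*$ preserves them), cartesianness of $\eta_{\cal D}(f)$ is reduced via \ref{enumloccart}(i) to invertibility of a comparison map $\kappa$ in $\widehat{\cal D}_K$, and you conclude by descent: the base changes of $\kappa$ along the jointly epimorphic family induced by the cover are the comparison maps $\kappa_i$ of the $f_i$, and base change along a jointly epimorphic family is conservative in a topos. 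Note that your descent step is exactly the ``partial converse to the pullback lemma'' recorded in the remark following \ref{canonicalcartesian}, and the same mechanism the paper itself uses in \ref{caractcartconverse}. The trade-off: the paper's argument is elementary and self-contained, producing explicit covering witnesses without invoking topos-level descent; yours is more compact and conceptual, and makes the role of the cartesianness of $p$ completely transparent, at the price of the pullback-pasting bookkeeping you flag --- strictly one base-changes $\kappa$ along $P\times_{l_K(d)}l_K(d_i)\to P$ rather than literally along $l_K(u_i)$ --- which does go through, since $l_K$, $l_J\circ p$ and $C_p^*$ all preserve pullbacks.
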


\begin{proof}
A locally cartesian arrow is obviously $K$-cartesian for the identity as a covering. Now assume that $f$ is $K$-cartesian, and let $g$ and $h$ be such $p(g) = p(f)\circ h$, the situation is as follows:

% https://q.uiver.app/#q=WzAsNSxbMSwxLCJwKGQnKSJdLFsyLDEsInAoZCkiXSxbMSwwLCJwKGRfaScpIl0sWzIsMCwicChkX2kpIl0sWzAsMywicChkJycpIl0sWzAsMSwicChmKSIsMl0sWzMsMSwicCh1X2kpIl0sWzIsMywicChmX2kpIl0sWzIsMCwicCh1X2knKSIsMl0sWzIsMSwiIiwxLHsic3R5bGUiOnsibmFtZSI6ImNvcm5lciJ9fV0sWzQsMSwicChnKSIsMix7ImN1cnZlIjoyfV0sWzQsMCwiaCJdXQ==
\[\begin{tikzcd}
	& {p(d_i')} & {p(d_i)} \\
	& {p(d')} & {p(d)} \\
	\\
	{p(d'')}
	\arrow["{p(f_i)}", from=1-2, to=1-3]
	\arrow["{p(u_i')}"', from=1-2, to=2-2]
	\arrow["\lrcorner"{anchor=center, pos=0.000125}, draw=none, from=1-2, to=2-3]
	\arrow["{p(u_i)}", from=1-3, to=2-3]
	\arrow["{p(f)}"', from=2-2, to=2-3]
	\arrow["h", from=4-1, to=2-2]
	\arrow["{p(g)}"', bend right = 20, from=4-1, to=2-3]
\end{tikzcd}\]

we pullback $g$ along the $u_i$, and since $p$ preserves finite limits, we now have:
% https://q.uiver.app/#q=WzAsNixbMSwyLCJwKGQnKSJdLFsyLDIsInAoZCkiXSxbMSwxLCJwKGRfaScpIl0sWzIsMSwicChkX2kpIl0sWzAsMywicChkJycpIl0sWzAsMCwicChkJydfaSkiXSxbMCwxLCJwKGYpIl0sWzMsMSwicCh1X2kpIl0sWzIsMywicChmX2kpIl0sWzIsMCwicCh1X2knKSIsMl0sWzIsMSwiIiwxLHsic3R5bGUiOnsibmFtZSI6ImNvcm5lciJ9fV0sWzQsMSwicChnKSIsMix7ImN1cnZlIjoyfV0sWzQsMCwiaCIsMl0sWzUsNCwicCh1X2knJykiLDJdLFs1LDMsInAoZ19pKSIsMCx7ImN1cnZlIjotMn1dLFs1LDEsIiIsMix7InN0eWxlIjp7Im5hbWUiOiJjb3JuZXIifX1dLFs1LDIsIiIsMix7InNob3J0ZW4iOnsic291cmNlIjozMH0sInN0eWxlIjp7ImJvZHkiOnsibmFtZSI6ImRhc2hlZCJ9fX1dXQ==
\[\begin{tikzcd}
	{p(d''_i)} \\
	& {p(d_i')} & {p(d_i)} \\
	& {p(d')} & {p(d)} \\
	{p(d'')}
	\arrow[shorten <=6pt, dashed, from=1-1, to=2-2]
	\arrow["{p(g_i)}", bend left = 20, from=1-1, to=2-3]
	\arrow["\lrcorner"{anchor=center, pos=0.000125}, draw=none, from=1-1, to=3-3]
	\arrow["{p(u_i'')}"', from=1-1, to=4-1]
	\arrow["{p(f_i)}", from=2-2, to=2-3]
	\arrow["{p(u_i')}"', from=2-2, to=3-2]
	\arrow["\lrcorner"{anchor=center, pos=0.000125}, draw=none, from=2-2, to=3-3]
	\arrow["{p(u_i)}", from=2-3, to=3-3]
	\arrow["{p(f)}", from=3-2, to=3-3]
	\arrow["h"', from=4-1, to=3-2]
	\arrow["{p(g)}"', bend right = 20, from=4-1, to=3-3]
\end{tikzcd}\]
With the dashed arrow existing and making the diagram commutative because $p(g) = p(f) \circ h$. Now this dashed arrow can be locally lifted by the fact that $f_i$ are locally cartesian:

% https://q.uiver.app/#q=WzAsNyxbMSwzLCJwKGQnKSJdLFsyLDMsInAoZCkiXSxbMSwyLCJwKGQnX2kpIl0sWzIsMiwicChkX2kpIl0sWzAsNCwicChkJycpIl0sWzAsMSwicChkJydfaSkiXSxbMCwwLCJwKGQnJ197aWp9KSJdLFswLDEsInAoZikiXSxbMywxLCJwKHVfaSkiXSxbMiwzLCJwKGZfaSkiXSxbMiwwLCJwKHVfaScpIiwyXSxbMiwxLCIiLDEseyJzdHlsZSI6eyJuYW1lIjoiY29ybmVyIn19XSxbNCwxLCJwKGcpIiwyLHsiY3VydmUiOjJ9XSxbNCwwLCJoIiwyXSxbNSw0LCJwKHVfaScnKSIsMl0sWzUsMywicChnX2kpIiwwLHsibGFiZWxfcG9zaXRpb24iOjcwLCJjdXJ2ZSI6LTJ9XSxbNSwxLCIiLDIseyJzdHlsZSI6eyJuYW1lIjoiY29ybmVyIn19XSxbNSwyLCIiLDIseyJzaG9ydGVuIjp7InNvdXJjZSI6MzB9LCJzdHlsZSI6eyJib2R5Ijp7Im5hbWUiOiJkYXNoZWQifX19XSxbNiw1LCJwKHVfe2lqfScnKSIsMl0sWzYsMiwicChoX3tpan0pIiwwLHsibGFiZWxfcG9zaXRpb24iOjMwLCJjdXJ2ZSI6LTF9XV0=
\[\begin{tikzcd}
	{p(d''_{ij})} \\
	{p(d''_i)} \\
	& {p(d'_i)} & {p(d_i)} \\
	& {p(d')} & {p(d)} \\
	{p(d'')}
	\arrow["{p(u_{ij}'')}"', from=1-1, to=2-1]
	\arrow["{p(h_{ij})}"{pos=0.3}, bend left = 12, from=1-1, to=3-2]
	\arrow[shorten <=6pt, dashed, from=2-1, to=3-2]
	\arrow["{p(g_i)}"{pos=0.7}, bend left = 18, from=2-1, to=3-3]
	\arrow["\lrcorner"{anchor=center, pos=0.000125}, draw=none, from=2-1, to=4-3]
	\arrow["{p(u_i'')}"', from=2-1, to=5-1]
	\arrow["{p(f_i)}", from=3-2, to=3-3]
	\arrow["{p(u_i')}"', from=3-2, to=4-2]
	\arrow["\lrcorner"{anchor=center, pos=0.000125}, draw=none, from=3-2, to=4-3]
	\arrow["{p(u_i)}", from=3-3, to=4-3]
	\arrow["{p(f)}", from=4-2, to=4-3]
	\arrow["h"', from=5-1, to=4-2]
	\arrow["{p(g)}"', bend right = 20, from=5-1, to=4-3]
\end{tikzcd}\]

The $u_i''$ are covering as pullbacks of the $u_i$ which are covering, hence the multicomposition of $u_{ij}''$ and $u_i''$ gives a covering, and the $u_i' \circ h_{ij}$ constitute local liftings of $h$ for this covering.

The local unicity goes as follows: if we have $h$ and $h'$ such that $p(h) = p(h')$ and $fh = fh'$, we pull them back along the $u_i$, giving two coverings $(v_i)_i$ and $(v_i')_i$, we take the intersection of these coverings, and, modulo a multicomposition, the local unicity of a litfting for the $f_i$ gives that $h$ and $h'$ are locally equals. 
\end{proof}

We are particularly interested in such arrows in the canonical stack of a relative topos, as we want to be able to locally express every cartesian arrow as locally cartesian arrows coming from its site. Since the canonical relative sites are cartesian fibrations, we can apply the previous result to obtain:

\begin{prop}\label{Jfcartimpliescart}
Let $f : {\cal F}\to {\cal E}$ be a relative topos. Any $J_f$-cartesian arrow in $(({\cal F}/f^*),J_f)$ is cartesian.
\end{prop}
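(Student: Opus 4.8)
The plan is to combine the two immediately preceding results, Proposition~\ref{kloccartesian} and Proposition~\ref{canonicalcartesian}(i), which between them completely pin down the $J_f$-cartesian arrows. The guiding observation is that \emph{both} relevant notions here — being $J_f$-cartesian and being cartesian — are each equivalent to the single intermediate notion of being \emph{locally cartesian} for the comorphism $\pi_f$. Once this is recognized, the statement follows by simply chaining the two equivalences, provided the hypotheses of the two cited propositions are verified to apply to $\pi_f$.

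First I would check that $\pi_f : (({\cal F}/f^*),J_f) \to ({\cal E},J_{\cal E}^{can})$ is a \emph{cartesian} local fibration, which is precisely the hypothesis needed to invoke Proposition~\ref{kloccartesian}. Since ${\cal E}$ is a topos, and in particular cartesian, the canonical stack $\pi_f$ is a cartesian fibration: the category $({\cal F}/f^*)$ has finite limits and $\pi_f$ preserves them. This can also be read off directly from the fact that $({\cal F}/f^*)$ is itself a topos (Proposition~\ref{xilocal}), hence finitely complete, with $\pi_f$ preserving finite limits. Thus $\pi_f$ satisfies the cartesian-local-fibration hypothesis.

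Applying Proposition~\ref{kloccartesian} with $p = \pi_f$ and $K = J_f$, I obtain that an arrow of $({\cal F}/f^*)$ is $J_f$-cartesian if and only if it is locally cartesian (with respect to $J_f$). On the other hand, Proposition~\ref{canonicalcartesian}(i) asserts that, for the canonical relative site $\pi_f$, an arrow is locally cartesian if and only if it is cartesian. Chaining these two equivalences yields that being $J_f$-cartesian is equivalent to being cartesian; in particular, every $J_f$-cartesian arrow is cartesian, which is the claim (and in fact the two notions coincide).

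I expect essentially no obstacle in this argument, as it is a direct consequence of the two cited propositions. The only point requiring any care is confirming that $\pi_f$ meets the cartesian-local-fibration hypothesis of Proposition~\ref{kloccartesian}, and this follows at once from the topos (hence cartesian) structure of both ${\cal E}$ and $({\cal F}/f^*)$ together with the fact that $\pi_f$ preserves finite limits.
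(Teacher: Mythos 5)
Your proposal is correct and is essentially the paper's own proof: the paper likewise deduces the claim by applying Proposition~\ref{kloccartesian} to the canonical relative site (noting, as you do, that it is a cartesian local fibration) and then invoking Proposition~\ref{canonicalcartesian}(i) to pass from locally cartesian to cartesian. Your extra remark that the two notions in fact coincide is a valid strengthening, but the argument is the same chaining of the two cited results.
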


\begin{proof}
Because of \ref{kloccartesian} we have that a $J_f$-cartesian arrow is locally cartesian, and because of \ref{canonicalcartesian} (i) we have that any locally cartesian arrow is cartesian.
\end{proof}

\subsection{Colimits of cartesian arrows}

In this subsection we present the second kind of localization we will use to locally characterize cartesian arrows in the canonical relative site. Indeed, the arrows $(f_i,1) : P_i \to \eta_{\cal D}(d_i)$ obtained in the previous subsection are not entirely coming from the site: only their codomains do. The following proposition shows how to localize them so that they arise from locally cartesian arrows of the site via $\eta_{\mathcal{D}}$:

\begin{prop}\label{localsurjcart}
Let $p : ({\cal D},K) \to ({\cal C},J)$ be a local fibration and a cartesian arrow $(f,g) : [\alpha] \to \eta_{\cal D}(d)$ in $(\widehat{\cal D}_K/C_p^*)$. There exist a covering for the (absolute) canonical topology on $(\widehat{\cal D}_K/C_p^*)$ constituted of cartesian arrows $((v_i,u_i) : \eta_{\cal D}(d_i) \to [\alpha])_i$, and locally cartesian arrows $(f'_i : d_i \to d)_i$ such that $(f,g) \circ (u_i,v_i) = \eta_{\cal D} (f'_i)$. 
\end{prop}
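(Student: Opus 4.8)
The plan is to exploit the universal property of the cartesian arrow $(f,g)$ in order to pull locally cartesian arrows of the site back from $\eta_{\cal D}(d)$ to $[\alpha]$, after first covering the base $E$ and reducing the base maps to honest arrows of $\cal C$. Since $(f,g)$ is cartesian and its codomain is $\eta_{\cal D}(d)$, Proposition \ref{enumloccart}(i) identifies $[\alpha]=(F,E,\alpha)$ with the reindexing $S_{C_p}(g)(\eta_{\cal D}(d))$; concretely $F\cong l_K(d)\times_{C_p^*l_J(p(d))}C_p^*E$, with $\alpha$ the projection onto $C_p^*E$ and $f$ the projection onto $l_K(d)$. This is the structure I would use throughout, since it reduces everything to understanding the pullback $F$.

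First I would handle the base. Using the density of $l_J$, I choose a $J$-covering $(w_i:l_J(c_i)\to E)_i$; each $g\circ w_i$ is an element of the sheaf $l_J(p(d))(c_i)$, hence \emph{locally representable}, so after refining the $w_i$ I may assume $g\circ w_i=l_J(\delta_i)$ for honest arrows $\delta_i:c_i\to p(d)$ in $\cal C$. Because $p$ is a local fibration, each $\delta_i$ admits a $J$-covering $(\beta_{ij}:p(d_{ij})\to c_i)_j$ together with locally cartesian arrows $\hat\gamma_{ij}:d_{ij}\to d$ with $\delta_i\circ\beta_{ij}=p(\hat\gamma_{ij})$; since $\hat\gamma_{ij}$ is locally cartesian, $\eta_{\cal D}(\hat\gamma_{ij})$ is cartesian by definition. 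Setting $\theta_{ij}:=w_i\circ l_J(\beta_{ij})$ one computes $g\circ\theta_{ij}=l_J(p(\hat\gamma_{ij}))$, which is exactly the base of $\eta_{\cal D}(\hat\gamma_{ij})$; thus the base of $\eta_{\cal D}(\hat\gamma_{ij})$ factors through $g$, and the universal property of the cartesian arrow $(f,g)$ produces a unique $(v_{ij},\theta_{ij}):\eta_{\cal D}(d_{ij})\to[\alpha]$ over $\theta_{ij}$ with $(f,g)\circ(v_{ij},\theta_{ij})=\eta_{\cal D}(\hat\gamma_{ij})$. These $\hat\gamma_{ij}$ are the locally cartesian arrows demanded by the statement, and since $(f,g)$ and the composite $\eta_{\cal D}(\hat\gamma_{ij})$ are both cartesian, Proposition \ref{enumloccart}(iii) forces $(v_{ij},\theta_{ij})$ to be cartesian as well.

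The remaining work — and the step I expect to be the real obstacle — is to verify that $((v_{ij},\theta_{ij}))_{ij}$ is a covering for the canonical topology, i.e., by Proposition \ref{epimorphismsincanonicalstack}, jointly epimorphic on both components. The second component is routine: $(\theta_{ij})_{ij}$ is jointly epimorphic on $E$ by multicomposition of the coverings $(w_i)_i$ and $(\beta_{ij})_j$. The first component is the crux. I would apply $C_p^*$ (which preserves colimits) to the covering $(\theta_{ij})$ and pull the resulting jointly epimorphic family $(C_p^*\theta_{ij})$ back along $\alpha$ — legitimate because colimits are universal in the topos $\widehat{\cal C}_J$, so epimorphic families are pullback-stable — obtaining a jointly epimorphic family $\tilde F_{ij}\to F$. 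Pasting pullbacks identifies $\tilde F_{ij}$ with $l_K(d)\times_{C_p^*l_J(p(d))}C_p^*l_J(p(d_{ij}))$ formed along $\eta(d)$ and $C_p^*l_J(p(\hat\gamma_{ij}))$. But $\hat\gamma_{ij}$ is locally cartesian, so by Proposition \ref{enumloccart}(i) this very square, with apex $l_K(d_{ij})$ and comparison map $v_{ij}$, is already a pullback; hence $v_{ij}:l_K(d_{ij})\to\tilde F_{ij}$ is an isomorphism. Therefore $(v_{ij})_{ij}$ is jointly epimorphic on $F$, both components are covering, and the family is a canonical covering as required. This second localization complements the pullback localization of Proposition \ref{pullbackcart}: together they express every cartesian arrow of the canonical stack, locally, through locally cartesian arrows coming from the site.
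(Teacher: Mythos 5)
Your proof is correct and follows essentially the same route as the paper's: cover the base component by representables carrying honest arrows $\delta_i : c_i \to p(d)$ (via density and local representability), apply the local fibration property to produce locally cartesian lifts $\hat\gamma_{ij}$, and use the pullback characterization of locally cartesian arrows from \ref{enumloccart}(i) together with pullback pasting to identify $l_K(d_{ij})$ with the relevant fibered product over $[\alpha]$. The differences are purely organizational — you build the lifting arrows abstractly via the universal property of the cartesian arrow $(f,g)$ and then verify joint epimorphicity, where the paper constructs them directly as successive pullbacks — and your explicit check of the canonical-covering condition (left implicit in the paper) is sound, modulo the small slip that the pullback along $\alpha$ takes place in $\widehat{\cal D}_K$, not $\widehat{\cal C}_J$.
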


\begin{proof}
We picture our cartesian arrow $(f,g)$ in the following diagram:

% https://q.uiver.app/#q=WzAsNCxbMCwwLCJFIl0sWzEsMCwibChkKSJdLFswLDEsIkNfcF4qRiJdLFsxLDEsIkNfcF4qbHAoZCkiXSxbMSwzLCJcXGV0YV97XFxjYWwgRH0iXSxbMCwyLCJcXGFscGhhIiwyXSxbMiwzLCJDX3BeKihnKSIsMl0sWzAsMSwiZiJdLFswLDMsIiIsMSx7InN0eWxlIjp7Im5hbWUiOiJjb3JuZXIifX1dXQ==
\[\begin{tikzcd}
	E & {l(d)} \\
	{C_p^*F} & {C_p^*lp(d)}
	\arrow["f", from=1-1, to=1-2]
	\arrow["\alpha"', from=1-1, to=2-1]
	\arrow["\lrcorner"{anchor=center, pos=0.125}, draw=none, from=1-1, to=2-2]
	\arrow["{\eta_{\cal D}}", from=1-2, to=2-2]
	\arrow["{C_p^*(g)}"', from=2-1, to=2-2]
\end{tikzcd}\]

Then, we have a covering of $F$ in $\widehat{\cal C}_J$ for $J_{\widehat{\cal C}_J}^{can}$ by arrows $(u_i : l(c_i) \to F)_i$ such that $gu_i = l(g_i)$ with $g_i : c_i \to p(d)$. We pull back the covering $(C_p^*(u_i))_i$ along $\alpha$, which gives a covering $(v_i)_i$ of $E$:

% https://q.uiver.app/#q=WzAsNixbMSwwLCJFIl0sWzIsMCwibChkKSJdLFsxLDEsIkNfcF4qRiJdLFsyLDEsIkNfcF4qbHAoZCkiXSxbMCwxLCJDX3BeKmwoY19pKSJdLFswLDAsIkVfaSJdLFsxLDMsIlxcZXRhX3tcXGNhbCBEfSJdLFswLDIsIlxcYWxwaGEiLDJdLFsyLDMsIkNfcF4qKGcpIiwyXSxbMCwxLCJmIl0sWzAsMywiIiwxLHsic3R5bGUiOnsibmFtZSI6ImNvcm5lciJ9fV0sWzQsMiwiQ19wXioodV9pKSIsMl0sWzUsMCwidl9pIl0sWzUsNCwiXFxhbHBoYV9pIiwyXSxbNSwyLCIiLDIseyJzdHlsZSI6eyJuYW1lIjoiY29ybmVyIn19XSxbNCwzLCJDX3BeKmwoZ19pKSIsMix7ImN1cnZlIjo0fV1d
\[\begin{tikzcd}
	{E_i} & E & {l(d)} \\
	{C_p^*l(c_i)} & {C_p^*F} & {C_p^*lp(d)}
	\arrow["{v_i}", from=1-1, to=1-2]
	\arrow["{\alpha_i}"', from=1-1, to=2-1]
	\arrow["\lrcorner"{anchor=center, pos=0.125}, draw=none, from=1-1, to=2-2]
	\arrow["f", from=1-2, to=1-3]
	\arrow["\alpha"', from=1-2, to=2-2]
	\arrow["\lrcorner"{anchor=center, pos=0.125}, draw=none, from=1-2, to=2-3]
	\arrow["{\eta_{\cal D}}", from=1-3, to=2-3]
	\arrow["{C_p^*(u_i)}"', from=2-1, to=2-2]
	\arrow["{C_p^*l(g_i)}"', bend right = 30, from=2-1, to=2-3]
	\arrow["{C_p^*(g)}"', from=2-2, to=2-3]
\end{tikzcd}\]

Now, by the fact that $p$ is a local fibration, we can locally lift the $g_i$ as locally cartesian arrows $\widehat{g_{ij}}$. This gives us the following commutative diagram:

% https://q.uiver.app/#q=WzAsOCxbMiwwLCJFIl0sWzMsMCwibChkKSJdLFsyLDEsIkNfcF4qRiJdLFszLDEsIkNfcF4qbHAoZCkiXSxbMSwxLCJDX3BeKmwoY19pKSJdLFsxLDAsIkVfaSJdLFswLDEsIkNfcF4qbHAoZF97aWp9KSJdLFswLDAsImwoZF97aWp9KSJdLFsxLDMsIlxcZXRhX3tcXGNhbCBEfShkKSJdLFswLDIsIlxcYWxwaGEiLDJdLFsyLDMsIkNfcF4qKGcpIiwyXSxbMCwxLCJmIl0sWzAsMywiIiwxLHsic3R5bGUiOnsibmFtZSI6ImNvcm5lciJ9fV0sWzQsMiwiQ19wXioodV9pKSIsMl0sWzUsMCwidl9pIl0sWzUsNCwiXFxhbHBoYV9pIiwyXSxbNSwyLCIiLDIseyJzdHlsZSI6eyJuYW1lIjoiY29ybmVyIn19XSxbNCwzLCJDX3BeKmwoZ19pKSIsMSx7ImN1cnZlIjozfV0sWzcsNiwiXFxldGFfe1xcY2FsIER9KGRfe2lqfSkiLDJdLFs2LDQsIkNfcF4qbCh1X3tpan0pIiwyXSxbNiwzLCJDX3BeKmxwKFxcd2lkZWhhdHtnX3tpan19KSIsMix7ImN1cnZlIjo1fV0sWzcsNSwidl97aWp9Il0sWzcsNCwiIiwyLHsic3R5bGUiOnsibmFtZSI6ImNvcm5lciJ9fV0sWzcsMSwibChcXHdpZGVoYXR7Z197aWp9fSkiLDAseyJjdXJ2ZSI6LTR9XV0=
\[\begin{tikzcd}
	{l(d_{ij})} & {E_i} & E & {l(d)} \\
	{C_p^*lp(d_{ij})} & {C_p^*l(c_i)} & {C_p^*F} & {C_p^*lp(d)}
	\arrow["{v_{ij}}", from=1-1, to=1-2]
	\arrow["{l(\widehat{g_{ij}})}", bend left = 30, from=1-1, to=1-4]
	\arrow["{\eta_{\cal D}(d_{ij})}"', from=1-1, to=2-1]
	\arrow["\lrcorner"{anchor=center, pos=0.125}, draw=none, from=1-1, to=2-2]
	\arrow["{v_i}", from=1-2, to=1-3]
	\arrow["{\alpha_i}"', from=1-2, to=2-2]
	\arrow["\lrcorner"{anchor=center, pos=0.125}, draw=none, from=1-2, to=2-3]
	\arrow["f", from=1-3, to=1-4]
	\arrow["\alpha"', from=1-3, to=2-3]
	\arrow["\lrcorner"{anchor=center, pos=0.125}, draw=none, from=1-3, to=2-4]
	\arrow["{\eta_{\cal D}(d)}", from=1-4, to=2-4]
	\arrow["{C_p^*l(u_{ij})}"', from=2-1, to=2-2]
	\arrow["{C_p^*lp(\widehat{g_{ij}})}"', bend right = 37, from=2-1, to=2-4]
	\arrow["{C_p^*(u_i)}"', from=2-2, to=2-3]
	\arrow["{C_p^*l(g_i)}"{description}, bend right = 28, from=2-2, to=2-4]
	\arrow["{C_p^*(g)}"', from=2-3, to=2-4]
\end{tikzcd}\]

\noindent where the pullback of the $\alpha_i$ along the $C_p^*l(u_{ij})$ are the $\eta_{\cal D}(d_{ij})$ because: the outer rectangle is a pullback, being the concatenation of three pullbacks, and hence the arrow on the left should also be the pullback of $\eta_{\cal D}(d)$ along $C_p^*lp(\widehat{g_{ij}})$, which is indeed $\eta_{\cal D}(d_{ij})$ by the very definition of $\widehat{g_{ij}}$ being locally cartesian.
\end{proof}

Since we are interested in recovering whether an arrow is cartesian using this kind of localization, we are naturally led to consider a sort of converse to the previous result: we would like to deduce that $(f,g)$ is cartesian from the fact that its precompositions with a covering, namely the $(g,f)(u_i,v_i) = \eta_{\mathcal{D}}(f_i')$, are cartesian. A first, weak version of this is:

\begin{prop}\label{precompkcart}
Let $p:({\cal D},K) \to ({\cal C},J)$ be a local fibration, $f : d \to d'$ and $(f_i : d_i \to d)_i$ arrows in $\cal D$ such that $p(f_i)_i$ is covering and $f\circ f_i$ is locally cartesian for every $i$. Then $f$ satisfies the local existence of liftings: let $h : p'(d'') \to p(d)$ in $\cal C$ and $g : d'' \to d'$ in $\cal D$ such that $p(f)h=p(g)$, then there exists a $K$-covering $(v_i : d''_i \to d'')$ and arrows $h_i : d''_i \to d$ such that $p(h_i)=hp(v_i)$ and $fh_i=gv_i$.
\end{prop}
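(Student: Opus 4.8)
The plan is to obtain the local liftings of $h$ in two movements: first transporting the $J$-covering $(p(f_i))_i$ down to a $K$-covering of $d''$ via the comorphism property of $p$, and then feeding the resulting pieces into the local lifting property enjoyed by each locally cartesian composite $f \circ f_i$.

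First I would note that, since $(p(f_i) : p(d_i) \to p(d))_i$ is $J$-covering, the sieve it generates is covering, so its pullback along $h : p(d'') \to p(d)$ is a $J$-covering sieve on $p(d'')$ by stability. As $p$ is a comorphism of sites, there is a $K$-covering family $(v_j : d''_j \to d'')_j$ whose $p$-image is contained in this pullback sieve; concretely, for each $j$ this provides an index $i(j)$ and an arrow $\widetilde{h}_j : p(d''_j) \to p(d_{i(j)})$ in $\cal C$ satisfying $p(f_{i(j)}) \circ \widetilde{h}_j = h \circ p(v_j)$.

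Next I would verify that the pair $(\widetilde{h}_j, \, g \circ v_j)$ meets the hypotheses of condition (i) in the definition of locally cartesian arrow, applied to the locally cartesian composite $f \circ f_{i(j)} : d_{i(j)} \to d'$. Indeed, precomposing the identity $p(f) \circ h = p(g)$ with $p(v_j)$ and substituting the relation above gives $p(f \circ f_{i(j)}) \circ \widetilde{h}_j = p(f) \circ h \circ p(v_j) = p(g \circ v_j)$. Since $f \circ f_{i(j)}$ is locally cartesian, this yields, for each $j$, a $K$-covering $(s_{jk} : d''_{jk} \to d''_j)_k$ together with arrows $k_{jk} : d''_{jk} \to d_{i(j)}$ such that $p(k_{jk}) = \widetilde{h}_j \circ p(s_{jk})$ and $(f \circ f_{i(j)}) \circ k_{jk} = (g \circ v_j) \circ s_{jk}$.

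Finally I would set $v_{jk} := v_j \circ s_{jk}$, which forms a $K$-covering of $d''$ by multicomposition of covering families, and $h_{jk} := f_{i(j)} \circ k_{jk} : d''_{jk} \to d$. A direct computation then gives $p(h_{jk}) = p(f_{i(j)}) \circ \widetilde{h}_j \circ p(s_{jk}) = h \circ p(v_{jk})$ and $f \circ h_{jk} = (f \circ f_{i(j)}) \circ k_{jk} = g \circ v_{jk}$, which is exactly the asserted local existence of liftings. The one genuinely delicate point is the first step: one must use the comorphism property to arrange that each $p(v_j)$ factors through a \emph{single} $p(f_{i(j)})$, so that the corresponding piece can be handled by the lifting property of the one locally cartesian arrow $f \circ f_{i(j)}$; once this factorization through a single index is secured, the remainder is routine bookkeeping with composites of covering families.
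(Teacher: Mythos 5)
Your proof is correct, and it follows the same overall skeleton as the paper's: pull the covering $(p(f_i))_i$ back along $h$, transfer the resulting $J$-covering sieve on $p(d'')$ to a $K$-covering of $d''$ via the comorphism property, apply condition (i) of the locally cartesian composites $f\circ f_{i(j)}$, and conclude by postcomposing with $f_{i(j)}$ and multicomposing coverings. The one genuine difference is that the paper inserts an extra intermediate step: before invoking the comorphism property, it refines the pulled-back covering $(f'_{ij})$ using the \emph{local fibration} property of $p$, replacing it by arrows of the form $p(f_{ijk})$ with $f_{ijk}$ locally cartesian, and only then lifts to a $K$-covering $(f_l)_l$. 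You apply the comorphism property directly to the pullback sieve, which is legitimate (stability of $J$ makes it a covering sieve on $p(d'')$, and membership in the sieve generated by the $p(f_i)$ is exactly a factorization through a single $p(f_{i(j)})$, as you note). The refinement step in the paper's proof is never exploited afterwards — the arrows $f_{ijk}$ are only used through their projections — so your shortcut loses nothing. What it buys is twofold: the bookkeeping involves one fewer layer of indices, and, more interestingly, your argument never uses the hypothesis that $p$ itself is a local fibration — only that it is a comorphism of sites — so it establishes the statement in that slightly greater generality.
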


\begin{proof}
We consider $g : d'' \to d'$ and $h : p(d'') \to p(d)$ such that $p(g) = p(f) \circ h$. Since the $p(f_i)$ are covering, we can pullback these arrows along $h$ and get another covering on $p(d'')$:

% https://q.uiver.app/#q=WzAsNSxbMSwxLCJwKGQpIl0sWzIsMSwicChkJykiXSxbMSwwLCJwKGQnJykiXSxbMCwxLCJwKGRfaSkiXSxbMCwwLCJjX3tpan0iXSxbMiwwLCJoIiwyXSxbMiwxLCJwKGcpIl0sWzAsMSwicChmKSIsMl0sWzMsMCwicChmX2kpIiwyXSxbNCwyLCJmJ197aWp9Il0sWzQsMywiaF97aWp9IiwyXV0=
\[\begin{tikzcd}
	{c_{ij}} & {p(d'')} \\
	{p(d_i)} & {p(d)} & {p(d')}
	\arrow["{f'_{ij}}", from=1-1, to=1-2]
	\arrow["{h_{ij}}"', from=1-1, to=2-1]
	\arrow["h"', from=1-2, to=2-2]
	\arrow["{p(g)}", from=1-2, to=2-3]
	\arrow["{p(f_i)}"', from=2-1, to=2-2]
	\arrow["{p(f)}"', from=2-2, to=2-3]
\end{tikzcd}\]
Then, we locally lift the arrows $f_{ij}'$ into locally cartesian arrows $f_{ijk}$, and, as the $f_{ij}$ are covering, the multicompisition with their localizations is also a covering, which we can lift into a covering $(f_l)_l$ in $\cal D$, since $p$ is a comorphism:

% https://q.uiver.app/#q=WzAsNyxbMywzLCJwKGQpIl0sWzQsMywicChkJykiXSxbMywyLCJwKGQnJykiXSxbMiwzLCJwKGRfaSkiXSxbMiwyLCJjX3tpan0iXSxbMSwxLCJwKGRfe2lqa30pIl0sWzAsMCwicChkX2wpIl0sWzIsMCwiaCIsMl0sWzIsMSwicChnKSJdLFswLDEsInAoZikiLDJdLFszLDAsInAoZl9pKSIsMl0sWzQsMiwiZidfe2lqfSJdLFs0LDMsImhfe2lqfSIsMl0sWzUsNCwiZidfe2lqa30iLDJdLFs1LDIsInAoZl97aWprfSkiLDEseyJjdXJ2ZSI6LTJ9XSxbNiw1LCJhX2wiLDJdLFs2LDIsInAoZl9sKSIsMSx7ImN1cnZlIjotNX1dXQ==
\[\begin{tikzcd}
	{p(d_l)} \\
	& {p(d_{ijk})} \\
	&& {c_{ij}} & {p(d'')} \\
	&& {p(d_i)} & {p(d)} & {p(d')}
	\arrow["{a_l}"', from=1-1, to=2-2]
	\arrow["{p(f_l)}"{description}, bend left = 40, from=1-1, to=3-4]
	\arrow["{f'_{ijk}}"', from=2-2, to=3-3]
	\arrow["{p(f_{ijk})}"{description}, bend left =18, from=2-2, to=3-4]
	\arrow["{f'_{ij}}", from=3-3, to=3-4]
	\arrow["{h_{ij}}"', from=3-3, to=4-3]
	\arrow["h"', from=3-4, to=4-4]
	\arrow["{p(g)}", from=3-4, to=4-5]
	\arrow["{p(f_i)}"', from=4-3, to=4-4]
	\arrow["{p(f)}"', from=4-4, to=4-5]
\end{tikzcd}\]

Now we can apply the locally cartesian property of $p(f \circ f_i)$ to lift $h_{ij} \circ f'_{ijk} \circ a_l$, which gives us, by postcompositions with the $f_i$, local liftings of $h$.
\end{proof}

\begin{remark}
Notice that in general, we do not have the local uniqueness. For example, in the canonical stack of a topos $\widehat{\cal C}_J$ the arrow $(\nabla_c,1_c) : (c \coprod c, c, \nabla_c) \to (c,c,1_c)$ is not cartesian but it becomes cartesian when precomposed with the covering  $(i_1,1_c) : (c,c,1_c) \to (c \coprod c, c, \nabla_c)$, $(i_2,1_c) : (c,c,1_c) \to (c \coprod c, c, \nabla_c)$ (which give two identity morphisms). But we want to be able to recover locally cartesian arrows; not only arrows satisfying the local existence of a lifting, but also the local uniqueness. This hints that we need something stronger than an arrow that is locally cartesian on a covering: we need an arrow that is a colimit of locally cartesian ones.
\end{remark}

In order to have this colimit characterization, we need the following definition:

\begin{defn}\label{Dffacteta}
Let $p : ({\cal D},K) \to ({\cal C},J)$ be a local fibration, $d$ an object of $\cal D$, and a cartesian arrow $(f,g) : [\alpha] \to \eta_{\cal D}(d)$ in $(\widehat{\cal D}_K/C_p^*)$. We define the category ${\cal D}^{\mathbf{\eta-fact}}_{(f,g),d}$ as having for objects the triplets $(d',v',f')$ with $d'$ an object of $\cal D$, $(u',v') : \eta_{\cal D}(d') \to [\alpha]$ and $f' : d' \to d$ a locally cartesian arrow such that $(f,g)(u',v') = \eta_{\cal D}(f')$. We define as morphisms between two triplets $(d',(u',v'),f')$ and $(d'',(u'',v''),f'')$ the arrows $(x,y) : \eta_{\cal D}(d') \to \eta_{\cal D}(d'')  $ such that $(u'',v'')(x,y)=(u',v')$ and $\eta_{\cal D}(f'')(x,y)=\eta_{\cal D}(f')$. It comes with the obvious projection functors:

% https://q.uiver.app/#q=WzAsMyxbMCwwLCIoXFxTaCh7XFxjYWwgRH0sSykgXFxkb3duYXJyb3cgQ19wXiopL1tcXGFscGhhXSJdLFsxLDAsIihcXFNoKHtcXGNhbCBEfSxLKSBcXGRvd25hcnJvdyBDX3BeKikiXSxbMCwxLCJ7XFxjYWwgRH1feyhmLGcpLGR9XntcXG1hdGhiZntcXGV0YS1mYWN0fX0iXSxbMCwxLCJcXHBpX3tcXGFscGhhfSJdLFsyLDAsInBfe1xcYWxwaGF9XntcXGV0YX0iXSxbMiwxLCJwX3soZixnKSxkfV57XFxtYXRoYmZ7XFxldGEtZmFjdH19IiwyXV0=
\[\begin{tikzcd}
	{(\widehat{\cal D}_K/C_p^*)/[\alpha]} & {(\widehat{\cal D}_K/C_p^*)} \\
	{{\cal D}_{(f,g),d}^{\mathbf{\eta-fact}}}
	\arrow["{\pi_{\alpha}}", from=1-1, to=1-2]
	\arrow["{p_{\alpha}^{\eta}}", from=2-1, to=1-1]
	\arrow["{p_{(f,g),d}^{\mathbf{\eta-fact}}}"', from=2-1, to=1-2]
\end{tikzcd}\]
\end{defn}

The following proposition shows us that every cartesian arrow $(f,g)$ in the canonical stack $(\widehat{\cal D}_K/C_p^*)$ is a colimit of cartesian arrows coming from the site:

\begin{prop}\label{caractcart} 
Let $p : ({\cal D},K) \to ({\cal C},J)$ be a local fibration, $d$ an object of $\cal D$, and a cartesian arrow $(f,g) : [\alpha] \to \eta_{\cal D}(d)$ in $(\widehat{\cal D}_K/C_p^*)$, as in the previous definition. The functor $p_{\alpha}^{\eta}$ is $J^{can}_{(\widehat{\cal D}_K/C_p^*)}$-cofinal, that is, $\colim p_{(f,g),d}^{\mathbf{\eta-fact}} \simeq [\alpha]$:

% https://q.uiver.app/#q=WzAsMyxbMCwwLCJcXGNvbGltIHBfeyhmLGcpLGR9XntcXG1hdGhiZntcXGV0YS1mYWN0fX0gIFxcc2ltZXEgW1xcYWxwaGFdIl0sWzEsMCwiXFxldGFfe1xcY2FsIER9KGQpIl0sWzAsMSwicF97KGYsZyksZH1ee1xcbWF0aGJme1xcZXRhLWZhY3R9fSAodicsZicsZCcpID1cXGV0YV97XFxjYWwgRH0oZCcpIl0sWzAsMSwiKGYsZykiXSxbMiwwLCJ2JyJdLFsyLDEsIlxcZXRhX3tcXGNhbCBEfShmJykiLDJdXQ==
\[\begin{tikzcd}
	{\colim p_{(f,g),d}^{\mathbf{\eta-fact}}  \simeq [\alpha]} & {\eta_{\cal D}(d)} \\
	{p_{(f,g),d}^{\mathbf{\eta-fact}} (v',f',d') =\eta_{\cal D}(d')}
	\arrow["{(f,g)}", from=1-1, to=1-2]
	\arrow["{v'}", from=2-1, to=1-1]
	\arrow["{\eta_{\cal D}(f')}"', from=2-1, to=1-2]
\end{tikzcd}\]
\end{prop}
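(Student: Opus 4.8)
The plan is to show that the canonical cocone $\bigl(w':=(u',v'):\eta_{\cal D}(d')\to[\alpha]\bigr)_{(d',(u',v'),f')}$ indexed by ${\cal D}^{\mathbf{\eta-fact}}_{(f,g),d}$ exhibits $[\alpha]$ as the colimit of $p_{(f,g),d}^{\mathbf{\eta-fact}}$; equivalently, since $(\widehat{\cal D}_K/C_p^*)$ is a (hence balanced) topos, that the comparison map $q:\colim p_{(f,g),d}^{\mathbf{\eta-fact}}\to[\alpha]$ is both an epimorphism and a monomorphism. I will first record two structural facts about ${\cal D}^{\mathbf{\eta-fact}}_{(f,g),d}$. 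First, every structure map $w'$ is itself \emph{cartesian}: indeed $(f,g)$ is cartesian and $(f,g)\circ w'=\eta_{\cal D}(f')$ is cartesian (as $f'$ is locally cartesian), so the cancellation part of \ref{enumloccart}(iii) forces $w'$ to be cartesian. Second, a composite of locally cartesian arrows is locally cartesian: if $k,k'$ are locally cartesian then $\eta_{\cal D}(kk')=\eta_{\cal D}(k)\,\eta_{\cal D}(k')$ is a composite of cartesian arrows, hence cartesian by \ref{enumloccart}(iii), so $kk'$ is locally cartesian.

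The epimorphism part is immediate from \ref{localsurjcart}: the canonical-topology covering of $[\alpha]$ by cartesian arrows $(u_i,v_i):\eta_{\cal D}(d_i)\to[\alpha]$ with locally cartesian $f'_i$ and $(f,g)(u_i,v_i)=\eta_{\cal D}(f'_i)$ consists precisely of objects $(d_i,(u_i,v_i),f'_i)$ of ${\cal D}^{\mathbf{\eta-fact}}_{(f,g),d}$. Thus the family of cocone legs $w'$ contains a jointly epimorphic family, and therefore $q$ is an epimorphism.

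For the monomorphism part, I will show that the cocone is closed under pairwise overlaps \emph{through the diagram}. Given two objects $(d_a,w_a,f_a)$ and $(d_b,w_b,f_b)$, form the fibre product $P:=\eta_{\cal D}(d_a)\times_{[\alpha]}\eta_{\cal D}(d_b)$ in the slice over $[\alpha]$. As $w_b$ is cartesian, its pullback $P\to\eta_{\cal D}(d_a)$ along $w_a$ is again cartesian by \ref{enumloccart}(ii) (the base $\widehat{\cal C}_J$ being cartesian), so $P\to\eta_{\cal D}(d_a)$ is a cartesian arrow whose codomain is of the form $\eta_{\cal D}(d_a)$. Applying \ref{localsurjcart} to \emph{this} cartesian arrow produces a covering of $P$ by cartesian arrows $\eta_{\cal D}(d''_k)\to P$ together with locally cartesian arrows $f''_k:d''_k\to d_a$ satisfying $(P\to\eta_{\cal D}(d_a))\circ(\eta_{\cal D}(d''_k)\to P)=\eta_{\cal D}(f''_k)$. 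I then verify that each $\eta_{\cal D}(d''_k)$, with its induced structure map to $[\alpha]$, is an object of ${\cal D}^{\mathbf{\eta-fact}}_{(f,g),d}$ whose associated locally cartesian arrow is $f_a\circ f''_k$: this is locally cartesian by the composition fact above, and $(f,g)\circ(\text{structure map})=\eta_{\cal D}(f_a)\,\eta_{\cal D}(f''_k)=\eta_{\cal D}(f_a f''_k)$. Moreover its two projections to $\eta_{\cal D}(d_a)$ and $\eta_{\cal D}(d_b)$ are morphisms of ${\cal D}^{\mathbf{\eta-fact}}_{(f,g),d}$ into $(d_a,w_a,f_a)$ and $(d_b,w_b,f_b)$ respectively (compatibility over $[\alpha]$ being exactly the defining property of $P$). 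Hence every overlap of two cocone legs is covered by objects of the diagram mapping compatibly into both legs.

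Combining the two halves, the cocone is jointly epimorphic and all its pairwise overlaps are covered by objects of the diagram mapping into both members; since covering families in a topos are effective-epimorphic, this means the kernel pair of $\coprod_{\text{obj}}\eta_{\cal D}(d')\to[\alpha]$ agrees with that of $\coprod_{\text{obj}}\eta_{\cal D}(d')\to\colim p_{(f,g),d}^{\mathbf{\eta-fact}}$, so the colimit relations are complete and $q$ is a monomorphism; being balanced, the topos forces $q$ to be an isomorphism, which is precisely the asserted $J^{can}_{(\widehat{\cal D}_K/C_p^*)}$-cofinality of $p_{\alpha}^{\eta}$. I expect the overlap step to be the genuine obstacle, and it is exactly where the local fibration hypothesis is used: producing, covering-locally, objects of ${\cal D}^{\mathbf{\eta-fact}}_{(f,g),d}$ lying over the fibre products is possible only through the interplay of \ref{localsurjcart}, the stability of cartesian arrows under pullback and composition in \ref{enumloccart}, and the observation that the structure maps of ${\cal D}^{\mathbf{\eta-fact}}_{(f,g),d}$ are themselves cartesian.
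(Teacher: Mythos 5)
Your proof is correct and follows essentially the same route as the paper's: both halves rest on \ref{localsurjcart} — once to cover $[\alpha]$ by objects of the diagram (the epimorphism half), and once more, after forming the pullback of two cocone legs and using the two-out-of-three and pullback-stability facts of \ref{enumloccart} to see that its projections are cartesian, to cover that pullback by diagram objects mapping compatibly into both legs (the overlap half). The only differences are cosmetic: the paper concludes by invoking the cofinality criterion (zig-zag connectedness through the comma categories), where you unfold the same two facts into a direct kernel-pair/balancedness argument, and the paper applies \ref{localsurjcart} to the composite $[\beta] \to \eta_{\cal D}(d)$, where you apply it to the projection $P \to \eta_{\cal D}(d_a)$ and then compose with $f_a$ — harmless, since locally cartesian arrows are closed under composition, as you note.
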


\begin{proof}
In this proof, we will denote by $v'$ any arrow of the form $(u',v') : \eta_{\mathcal{D}}(d') \to [\alpha]$ that appears as the second component of an object in ${\mathcal{D}}_{(f,g),d}^{\mathbf{\eta\text{-}fact}}$, in order to simplify the notation.

By \ref{localsurjcart} we know that the first condition of the cofinality conditions is satisfied: we have a covering constituted of objects coming from $p^{\eta}_{\alpha}$.

For the second condition, we take two objects $(d',v',f')$ and $(d'',v'',f'')$ of ${\mathcal{D}}_{(f,g),d}^{\mathbf{\eta\text{-}fact}}$, and since we have pullbacks in $(\widehat{\cal D}_K /C_p^*)$ we can restrict to check the cofinality condition for $[\beta] = (E,F,\beta : E \to C_p^*(F))$, with the two arrows $x' :[\beta] \to \eta_{\cal D}(d')$ and $x'':[\beta] \to \eta_{\cal D}(d'')$ giving the pullback:

% https://q.uiver.app/#q=WzAsNCxbMSwwLCJcXGV0YV97XFxjYWwgRH0oZCcpIl0sWzEsMiwiXFxldGFfe1xcY2FsIER9KGQnJykiXSxbMiwxLCJbXFxhbHBoYV0iXSxbMCwxLCJbXFxiZXRhXSJdLFswLDIsInYnIl0sWzEsMiwidicnIiwyXSxbMywwLCJ4JyJdLFszLDEsIngnJyIsMl0sWzMsMiwiIiwxLHsic3R5bGUiOnsibmFtZSI6ImNvcm5lciJ9fV1d
\[\begin{tikzcd}
	& {\eta_{\cal D}(d')} \\
	{[\beta]} && {[\alpha]} \\
	& {\eta_{\cal D}(d'')}
	\arrow["{v'}", from=1-2, to=2-3]
	\arrow["{x'}", from=2-1, to=1-2]
	\arrow["\lrcorner"{anchor=center, pos=0.125, rotate=45}, draw=none, from=2-1, to=2-3]
	\arrow["{x''}"', from=2-1, to=3-2]
	\arrow["{v''}"', from=3-2, to=2-3]
\end{tikzcd}\]

We know that $\eta_{\cal D}(f')$ is cartesian since $f'$ is locally cartesian. Moreover $(f,g)$ also is cartesian by hypothesis, so that by the two out of three property of cartesian arrows, we have that $v'$ (verifying $(f,g)v'=\eta_{\cal D}(f')$) also is \ref{enumloccart}. Since $x'$ is the pullback of $v'$, it also is cartesian, and similarly for $x''$. The situation is depicted in the following diagram:

% https://q.uiver.app/#q=WzAsNSxbMSwxLCJbXFxhbHBoYV0iXSxbMiwxLCJcXGV0YV97XFxjYWwgRH0oZCkiXSxbMSwwLCJcXGV0YV97XFxjYWwgRH0oZCcpIl0sWzEsMiwiXFxldGFfe1xcY2FsIER9KGQnJykiXSxbMCwxLCJbXFxiZXRhXSJdLFswLDEsIihmLGcpIiwyXSxbMywxLCJcXGV0YV97XFxjYWwgRH0oZicnKSIsMl0sWzIsMSwiXFxldGFfe1xcY2FsIER9KGYnKSJdLFsyLDAsInYnIiwyXSxbMywwLCJ2JyciXSxbNCwyLCJ4JyJdLFs0LDMsIngnJyIsMl0sWzQsMCwiIiwwLHsic3R5bGUiOnsibmFtZSI6ImNvcm5lciJ9fV1d
\[\begin{tikzcd}
	& {\eta_{\cal D}(d')} \\
	{[\beta]} & {[\alpha]} & {\eta_{\cal D}(d)} \\
	& {\eta_{\cal D}(d'')}
	\arrow["{v'}"', from=1-2, to=2-2]
	\arrow["{\eta_{\cal D}(f')}", from=1-2, to=2-3]
	\arrow["{x'}", from=2-1, to=1-2]
	\arrow["\lrcorner"{anchor=center, pos=0.125, rotate=45}, draw=none, from=2-1, to=2-2]
	\arrow["{x''}"', from=2-1, to=3-2]
	\arrow["{(f,g)}"', from=2-2, to=2-3]
	\arrow["{v''}", from=3-2, to=2-2]
	\arrow["{\eta_{\cal D}(f'')}"', from=3-2, to=2-3]
\end{tikzcd}\]

Now, since $x'$ and $\eta_{\cal D}(f')$ are cartesian, their composition also is. The codomain of their compisition being $\eta_{\cal D}(d)$, we can use the proposition \ref{localsurjcart} to locally lift $\eta_{\cal D}(f')v'x'$ as cartesian arrows $\eta_{\cal D}(\widehat{f_i})$ coming from $\cal D$:

% https://q.uiver.app/#q=WzAsNixbMiwxLCJbXFxhbHBoYV0iXSxbMywxLCJcXGV0YV97XFxjYWwgRH0oZCkiXSxbMiwwLCJcXGV0YV97XFxjYWwgRH0oZCcpIl0sWzIsMiwiXFxldGFfe1xcY2FsIER9KGQnJykiXSxbMSwxLCJbXFxiZXRhXSJdLFswLDEsIlxcZXRhX3tcXGNhbCBEfShkX2kpIl0sWzAsMSwiKGYsZykiLDJdLFszLDEsIlxcZXRhX3tcXGNhbCBEfShmJycpIiwyXSxbMiwxLCJcXGV0YV97XFxjYWwgRH0oZicpIl0sWzIsMCwidiciLDJdLFszLDAsInYnJyJdLFs0LDIsIngnIl0sWzQsMywieCcnIiwyXSxbNCwwLCIiLDAseyJzdHlsZSI6eyJuYW1lIjoiY29ybmVyIn19XSxbNSw0LCJ3X2kiLDJdLFs1LDEsIlxcZXRhX3tcXGNhbCBEfShmX2kpIiwxLHsibGFiZWxfcG9zaXRpb24iOjIwLCJjdXJ2ZSI6LTJ9XV0=
\[\begin{tikzcd}
	&& {\eta_{\cal D}(d')} \\
	{\eta_{\cal D}(d_i)} & {[\beta]} & {[\alpha]} & {\eta_{\cal D}(d)} \\
	&& {\eta_{\cal D}(d'')}
	\arrow["{v'}"', from=1-3, to=2-3]
	\arrow["{\eta_{\cal D}(f')}", from=1-3, to=2-4]
	\arrow["{w_i}"', from=2-1, to=2-2]
	\arrow["{\eta_{\cal D}(\widehat{f_i})}"{description, pos=0.2}, bend left = 24, from=2-1, to=2-4]
	\arrow["{x'}", from=2-2, to=1-3]
	\arrow["\lrcorner"{anchor=center, pos=0.125, rotate=45}, draw=none, from=2-2, to=2-3]
	\arrow["{x''}"', from=2-2, to=3-3]
	\arrow["{(f,g)}"', from=2-3, to=2-4]
	\arrow["{v''}", from=3-3, to=2-3]
	\arrow["{\eta_{\cal D}(f'')}"', from=3-3, to=2-4]
\end{tikzcd}\]

Repeating this argument for the lower path of the diagram, we obtain, at the cost of one more localization, intermediate objects $(d_i,v_i,\widehat{f_i})$ connecting $(d',v',f')$ and $(d'',v'',f'')$: this completes the cofinality condition.
\end{proof}

Note that in the definition of ${\cal D}^{\mathbf{\eta-fact}}_{(f,g),d}$, one does not require  $(f,g) : [\alpha] \to \eta_{\cal D}(d)$ to be a cartesian arrow. We showed that $(f,g)$ being cartesian implies the cofinality condition. Conversely, the following proposition says that, for an arbitrary arrow $(f,g)$ in $(\widehat{\cal D}_K /C_p^*)$, the cofinality condition implies that it is cartesian:

\begin{prop}\label{caractcartconverse}
Let $p : ({\cal D},K) \to ({\cal C},J)$ be a local fibration, $d$ an object of $\cal D$, and an arrow $(f,g) : [\alpha] \to \eta_{\cal D}(d)$ in $(\widehat{\cal D}_K/C_p^*)$. If the functor $p_{\alpha}^{\eta}$ is $J^{can}_{(\widehat{\cal D}_K/C_p^*)}$-cofinal, then $(f,g)$ is cartesian.
\end{prop}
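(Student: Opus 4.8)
The plan is to reduce cartesianness of $(f,g)$ to the pullback criterion of Proposition \ref{enumloccart}(i) and to verify that criterion by realizing the two sides of a comparison map as colimits of one and the same diagram indexed by ${\cal D}^{\mathbf{\eta-fact}}_{(f,g),d}$. Write $[\alpha] = (E,F,\alpha : E \to C_p^*F)$ with $E \in \widehat{\cal D}_K$ and $F \in \widehat{\cal C}_J$, and $\eta_{\cal D}(d) = (l(d),l(pd),\eta_d)$, so that the components of $(f,g)$ are $f : E \to l(d)$ and $g : F \to l(pd)$. By Proposition \ref{enumloccart}(i), $(f,g)$ is cartesian exactly when the square with sides $f$, $\alpha$, $C_p^*g$, $\eta_d$ is a pullback in $\widehat{\cal D}_K$. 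Thus I would form the genuine pullback $P$ of $C_p^*g$ along $\eta_d$, let $\theta : E \to P$ be the comparison induced by $f$ and $\alpha$, and aim to prove $\theta$ invertible.

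First I would use the hypothesis: cofinality of $p^{\eta}_{\alpha}$ gives $[\alpha] \simeq \colim p_{(f,g),d}^{\mathbf{\eta-fact}}$, that is, $[\alpha]$ is the colimit of $(d',v',f') \mapsto \eta_{\cal D}(d')$ with cocone legs $v'$. Since $C_p^*$ preserves colimits, colimits in the comma category $(\widehat{\cal D}_K/C_p^*)$ are computed componentwise (as recalled in the proof of \ref{xilocal}), so $E = \colim l(d')$ with legs the first components $v'_1$, while $F = \colim l(pd')$ with legs $v'_2$, and hence $C_p^*F = \colim C_p^*l(pd')$. Reading $(f,g)v' = \eta_{\cal D}(f')$ componentwise yields $f\circ v'_1 = l(f')$ and $g \circ v'_2 = l(pf')$; in particular $C_p^*g$ is exhibited as the colimit, in the slice $\widehat{\cal D}_K/C_p^*l(pd)$, of the maps $C_p^*l(pf') : C_p^*l(pd') \to C_p^*l(pd)$.

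Next I would invoke the universality of colimits in the topos $\widehat{\cal D}_K$: base change along $\eta_d$ preserves colimits, so $P \simeq \colim_{(d',v',f')} P_{d'}$, where $P_{d'}$ is the pullback of $C_p^*l(pf')$ along $\eta_d$. But $f'$ is locally cartesian, so $\eta_{\cal D}(f')$ is cartesian, and Proposition \ref{enumloccart}(i) identifies $P_{d'}$ with $l(d')$, with structure maps $l(f') : l(d') \to l(d)$ and $\eta_{d'} : l(d') \to C_p^*l(pd')$. A short computation with the two defining conditions of a morphism of ${\cal D}^{\mathbf{\eta-fact}}_{(f,g),d}$ (compatibility with the legs into $[\alpha]$ and with the arrows $\eta_{\cal D}(f')$ into $\eta_{\cal D}(d)$) shows that the transition maps of the diagram $d' \mapsto P_{d'}$ coincide with those of $d' \mapsto l(d')$, so that $P$ and $E$ are colimits of the very same diagram. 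Tracing cocone legs—using $f v'_1 = l(f')$ on the $l(d)$-side and the comma-morphism identity $\alpha v'_1 = C_p^*(v'_2)\,\eta_{d'}$ on the $C_p^*F$-side—shows that $\theta$ is precisely the induced comparison of these two colimit presentations, hence an isomorphism. Therefore the square is a pullback and $(f,g)$ is cartesian.

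The crux, and the step I expect to be the main obstacle, is this last identification: one must check that the colimit presenting $E$ obtained from the cofinality hypothesis and the colimit presenting $P$ obtained from universality of colimits are presentations of literally the same ${\cal D}^{\mathbf{\eta-fact}}_{(f,g),d}$-indexed diagram, with their cocones matched by $\theta$. This is careful bookkeeping with the morphism conditions of ${\cal D}^{\mathbf{\eta-fact}}_{(f,g),d}$ and the componentwise description of arrows in the comma category; once it is in place, the statement follows from nothing more than the stability of colimits under pullback in a topos together with the pullback description of cartesian arrows in Proposition \ref{enumloccart}(i).
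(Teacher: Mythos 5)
Your proposal is correct and follows essentially the same route as the paper: both reduce cartesianness to the pullback criterion of Proposition \ref{enumloccart}(i), use the cofinality hypothesis to write $[\alpha]$ componentwise as a colimit of the $\eta_{\cal D}(d')$, exploit that each $\eta_{\cal D}(f')$ is cartesian (hence a pullback square), and conclude by stability of colimits under pullback in a topos. The only difference is presentational: you make explicit the comparison map $\theta$ and the matching of transition maps and cocone legs, which the paper's shorter proof subsumes under the single invocation of universality of colimits.
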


\begin{proof} 
The functor $p_{\alpha}^{\eta}$ being $J^{can}_{(\widehat{\cal D}_K/C_p^*)}$-cofinal gives us that $[\alpha]$ is colimit of the diagram:

% https://q.uiver.app/#q=WzAsMyxbMCwxLCJbXFxhbHBoYV0iXSxbMSwxLCJcXGV0YV97XFxjYWwgRH0oZCkiXSxbMCwwLCJcXGV0YV97XFxjYWwgRH0oZCcpIl0sWzAsMSwiKGYsZykiLDJdLFsyLDEsIlxcZXRhX3tcXGNhbCBEfShmJykiXSxbMiwwLCJ2JyIsMl1d
\[\begin{tikzcd}
	{\eta_{\cal D}(d_i)} \\
	{[\alpha]} & {\eta_{\cal D}(d)}
	\arrow["{v_i}"', from=1-1, to=2-1]
	\arrow["{\eta_{\cal D}(\widehat{f_i})}", from=1-1, to=2-2]
	\arrow["{(f,g)}"', from=2-1, to=2-2]
\end{tikzcd}\]

\noindent where we take all the $(d_i,v_i,\widehat{f_i})$, with $\widehat{f_i}$ locally cartesian. Hence we are in a situation of this form:

% https://q.uiver.app/#q=WzAsNixbMCwxLCJcXGNvbGltIEFfaSJdLFsyLDEsIlxcY29saW0gQl9pIl0sWzAsMiwiQSJdLFsyLDIsIkIiXSxbMSwwLCJBX2kiXSxbMywwLCJCX2kiXSxbNCwwXSxbNSwxXSxbMCwyXSxbMSwzXSxbNCwyXSxbNSwzLCJcXGV0YV97XFxjYWwgRH0oXFx3aWRlaGF0e2ZfaX0pIl0sWzIsMywiXFxldGFfe1xcY2FsIER9KGQpIiwyXSxbNCw1LCJcXGV0YV97XFxjYWwgRH0oZCcpIl0sWzAsMSwiXFxhbHBoYSIsMV0sWzQsMywiIiwxLHsic3R5bGUiOnsibmFtZSI6ImNvcm5lciJ9fV1d
\[\begin{tikzcd}
	& {A_i} && {B_i} \\
	{\colim A_i} && {\colim B_i} \\
	A && B
	\arrow["{\eta_{\cal D}(d')}", from=1-2, to=1-4]
	\arrow[from=1-2, to=2-1]
	\arrow[from=1-2, to=3-1]
	\arrow["\lrcorner"{anchor=center, pos=0.125}, draw=none, from=1-2, to=3-3]
	\arrow[from=1-4, to=2-3]
	\arrow["{\eta_{\cal D}(\widehat{f_i})}", from=1-4, to=3-3]
	\arrow["\alpha"{description}, from=2-1, to=2-3]
	\arrow[from=2-1, to=3-1]
	\arrow[from=2-3, to=3-3]
	\arrow["{\eta_{\cal D}(d)}"', from=3-1, to=3-3]
\end{tikzcd}\]

The arrows $\eta_{\cal D}(\widehat{f_i})$ being cartesian, the $A_i$ are the pullbacks of the $B_i$ along $\eta_{\cal D}(d)$. So, by stability of colimits in a topos, we have that $\colim A_i$ is the pullback of $\colim B_i$ along $\eta_{\cal D}(d)$. This gives us that the front square is a pullback, that is, $(f,g)$ is cartesian.
\end{proof}

Even if we are not working at the topos-level, we are still able to define a category analogue to ${\cal D}^{\mathbf{\eta-fact}}_{(f,g),d}$ at the level of sites:

\begin{defn}
Let $p : ({\cal D},K) \to ({\cal C},J)$ be a local fibration, and $f : d' \to d$ an arrow in $\cal D$. We define ${\cal D}^{\mathbf{cart-fact}}_f$ to be the category having for objects the $(d'',v'',f'')$ where $d''$ is an object of $\cal D$, $f'' : d'' \to d$ is a locally cartesian arrow and $v'' : d'' \to d'$ is such that $fv''=f''$. The arrows between two triplets $(d'',v'',f'')$ and $(\overline{d''},\overline{v''},\overline{f''})$ are the arrows $u : d'' \to \overline{d''}$ such that $v'' = \overline{v''}u$ and $f'' = \overline{f''}u$. This category comes with the obvious projection functors depicted here:

% https://q.uiver.app/#q=WzAsNCxbMiwwLCJ7XFxjYWwgQ30iXSxbMCwwLCJ7XFxjYWwgQ30vcChkKSJdLFswLDEsIntcXGNhbCBEfV57XFxtYXRoYmZ7Y2FydC1mYWN0fX1fZiJdLFsxLDEsIlxcY2FsIEQiXSxbMiwzLCJcXHBpX2Zee1xcbWF0aGJme2NhcnQtZmFjdH19IiwyXSxbMywwLCJwIiwyXSxbMiwxLCJcXHBpX2Zee1xcY2FsIEN9Il0sWzEsMCwiXFxwaV97cChkKX0iXV0=
\[\begin{tikzcd}
	{{\cal C}/p(d)} && {{\cal C}} \\
	{{\cal D}^{\mathbf{cart-fact}}_f} & {\cal D}
	\arrow["{\pi_{p(d)}}", from=1-1, to=1-3]
	\arrow["{\pi_f^{\cal C}}", from=2-1, to=1-1]
	\arrow["{\pi_f^{\mathbf{cart-fact}}}"', from=2-1, to=2-2]
	\arrow["p"', from=2-2, to=1-3]
\end{tikzcd}\]
\end{defn}

In the light of this definition, the previous result \ref{caractcart} about the colimit characterization of cartesian arrow holding for canonical stacks have a counterpart at the site-level, for local fibrations:

\begin{prop}\label{cartcarasite}
Let $p : ({\cal D},K) \to ({\cal C},J)$ be a local fibration, and $f : d' \to d$ an arrow in $\cal D$. We have the equivalence:

\begin{enumerate}[(i)]
    \item The arrow $f$ is locally cartesian.
    \item The functor $\pi_f^{\cal C}$ is $J$-cofinal.
    \item There exists a category ${{\cal D}^{\mathbf{cart-fact}}_f}'$ with a functor $\chi_{f} : {{\cal D}^{\mathbf{cart-fact}}_f}' \to {\cal D}^{\mathbf{cart-fact}}_f$:

    % https://q.uiver.app/#q=WzAsNCxbMCwyLCJ7e1xcY2FsIER9XntcXG1hdGhiZntjYXJ0LWZhY3R9fV9mfSciXSxbMCwxLCJ7XFxjYWwgRH1ee1xcbWF0aGJme2NhcnQtZmFjdH19X2YiXSxbMCwwLCJ7XFxjYWwgQ30vcChkKSJdLFsxLDEsIntcXGNhbCBDfSJdLFswLDEsIlxcY2hpX2YiXSxbMSwyLCJcXHBpX2Zee1xcY2FsIEN9Il0sWzIsMywiXFxwaV97cChkKX0iXSxbMCwzLCJcXHBpX3twKGQpfVxccGlfZl57XFxjYWwgQ31cXGNoaV9mIiwyXSxbMSwzXV0=
    \[\begin{tikzcd}
	{{\cal C}/p(d)} \\
	{{\cal D}^{\mathbf{cart-fact}}_f} & {{\cal C}} \\
	{{{\cal D}^{\mathbf{cart-fact}}_f}'}
	\arrow["{\pi_{p(d)}}", from=1-1, to=2-2]
	\arrow["{\pi_f^{\cal C}}", from=2-1, to=1-1]
	\arrow[from=2-1, to=2-2]
	\arrow["{\chi_f}", from=3-1, to=2-1]
	\arrow["{\pi_{p(d)}\pi_f^{\cal C}\chi_f}"', from=3-1, to=2-2]
    \end{tikzcd}\]
    
    such that $\pi^{\cal C}_f \chi_f$ is $J$-cofinal.

    \item We have a colimit representation at the topos-level:
    
    $\colim_{{\cal D}^{\mathbf{cart-fact}}_f}lp\pi^{\mathbf{cart-fact}}_f \simeq lp(d)$.
\end{enumerate}
\end{prop}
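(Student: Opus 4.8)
The plan is to establish the ring (i) $\Rightarrow$ (ii) $\Rightarrow$ (iii) $\Rightarrow$ (i) together with (i) $\Leftrightarrow$ (iv), by transporting the topos-level results \ref{caractcart} and \ref{caractcartconverse} down to the site along $\eta_{\cal D}$. The organizing observation is that, by definition, (i) holds if and only if $\eta_{\cal D}(f)$ is a cartesian arrow of $(\widehat{\cal D}_K/C_p^*)$, with domain $\eta_{\cal D}(d')$ and codomain $\eta_{\cal D}(d)$; and that applying $\eta_{\cal D}$ to a triplet $(d'',v'',f'')$ produces a well-defined comparison functor $\Phi_f : {\cal D}^{\mathbf{cart-fact}}_f \to {\cal D}^{\mathbf{\eta-fact}}_{\eta_{\cal D}(f),d}$, sending it to $(d'',\eta_{\cal D}(v''),f'')$. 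This functor will be the bridge: the whole argument reduces to passing cofinality back and forth across $\Phi_f$ and across the colimit-preserving projection $\pi_{\widehat{\cal C}_J}$ (the inverse image of $i_{C_p}$ from \ref{rhodef}).

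For (i) $\Rightarrow$ (ii): if $f$ is locally cartesian then $\eta_{\cal D}(f)$ is cartesian, so by \ref{caractcart} the functor $p^{\eta}_{\alpha}$ (for $[\alpha]=\eta_{\cal D}(d')$) is $J^{can}_{(\widehat{\cal D}_K/C_p^*)}$-cofinal and $\eta_{\cal D}(d')\simeq\colim\eta_{\cal D}(d'')$. I would first show that $\Phi_f$ is itself cofinal — this is where the density of $\eta_{\cal D}$ enters, since every arrow $\eta_{\cal D}(d'')\to\eta_{\cal D}(d')$ is, up to $K$-covering, of the form $\eta_{\cal D}(v'')$ — so that the same colimit is computed over ${\cal D}^{\mathbf{cart-fact}}_f$. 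Applying $\pi_{\widehat{\cal C}_J}$ then yields the colimit identification in $\widehat{\cal C}_J$, and unwinding it through the density of $l$ on the relevant slice is precisely the statement that $\pi_f^{\cal C}$ is $J$-cofinal. This last reformulation is exactly the content of (ii) $\Leftrightarrow$ (iv): by density of $l$, the $J$-cofinality of $\pi_f^{\cal C}$ is equivalent to the colimit representation of (iv).

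For (ii) $\Rightarrow$ (iii) it suffices to take ${{\cal D}^{\mathbf{cart-fact}}_f}'={\cal D}^{\mathbf{cart-fact}}_f$ and $\chi_f=\id$. The implication (iii) $\Rightarrow$ (i) is where the converse \ref{caractcartconverse} does the work, which is what lets me avoid any right-cancellation for cofinal functors: from a $J$-cofinal $\pi_f^{\cal C}\chi_f$ one reads off, via the density of $l$ and then of $\eta_{\cal D}$, a presentation of $\eta_{\cal D}(d')$ as a colimit in $(\widehat{\cal D}_K/C_p^*)$ of objects $\eta_{\cal D}(d'')$ whose structure arrows towards $\eta_{\cal D}(d)$ are the cartesian arrows $\eta_{\cal D}(f'')$; this is exactly the cofinality hypothesis of \ref{caractcartconverse}, which forces $\eta_{\cal D}(f)$ to be cartesian, i.e. $f$ to be locally cartesian. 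The same argument, run for $\chi_f=\id$, also gives (iv) $\Rightarrow$ (i) through (ii), closing every equivalence.

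The hard part will be the cofinality of the comparison functor $\Phi_f$, i.e. the faithful descent of cofinality between the stack level and the site level. The first, local-surjectivity cofinality condition should come for free from \ref{localsurjcart} and the local factorization of \ref{locallyfacthorizontal}, which produce, up to $J$-covering, enough locally cartesian arrows coming from the site. The delicate point is the second, connectedness-up-to-covering condition: here I do not expect to be able to dispense with the \emph{local uniqueness} clause in the definition of a locally cartesian arrow, precisely as in the connecting step of \ref{caractcart} where the intermediate objects $(d_i,v_i,\widehat{f_i})$ were obtained \enquote{at the cost of one more localization}. Checking that this local uniqueness survives the passage through $\eta_{\cal D}$ and $\pi_{\widehat{\cal C}_J}$, so that the zig-zags of \ref{caractcart} can be realized by genuine arrows of ${\cal D}^{\mathbf{cart-fact}}_f$ modulo $K$-covering, is the technical heart of the proof.
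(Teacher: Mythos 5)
Your overall decomposition agrees with the paper in three of the four steps: (ii) $\Leftrightarrow$ (iv) by the definition of $J$-cofinality, (ii) $\Rightarrow$ (iii) by taking $\chi_f=\mathrm{id}$, and (iii) $\Rightarrow$ (i) by running the stability-of-colimits argument of \ref{caractcartconverse} on $\eta_{\cal D}(f)$ — this last point is exactly what the paper does. The genuine gap is in your (i) $\Rightarrow$ (ii). You route it through the topos level: \ref{caractcart} applied to $\eta_{\cal D}(f)$, followed by descent along a comparison functor $\Phi_f : {\cal D}^{\mathbf{cart-fact}}_f \to {\cal D}^{\mathbf{\eta-fact}}_{\eta_{\cal D}(f),d}$ whose cofinality you defer, calling it the technical heart. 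As it stands that direction is simply unproven, and the deferred lemma is not a routine density check: the categories ${\cal D}^{\mathbf{cart-fact}}_f$ and ${\cal D}^{\mathbf{\eta-fact}}_{\eta_{\cal D}(f),d}$ are \emph{not} stable under precomposition with covering arrows, because the third component of an object must remain locally cartesian, and a composite of a locally cartesian arrow with a covering arrow need not be locally cartesian (covering arrows in a general $K\supseteq J_{\mathbb D}$, or in $J_f$, need not be cartesian). So the fact that every arrow $\eta_{\cal D}(d'')\to\eta_{\cal D}(d')$ is locally of the form $\eta_{\cal D}(v'')$ does not let you localize objects of these factorization categories; to stay inside them you need coverings by \emph{cartesian} arrows whose composites come from locally cartesian arrows of the site, i.e., the machinery of \ref{localsurjcart} — in effect a rerun of the proof of \ref{caractcart}, not a corollary of it.

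The missing idea — and the paper's entire proof of (i) $\Rightarrow$ (ii) — is that no topos-level argument is needed: once $f$ is locally cartesian, $(d',1_{d'},f)$ is an object of ${\cal D}^{\mathbf{cart-fact}}_f$, and it is \emph{terminal} there, since a morphism $(d'',v'',f'')\to(d',1_{d'},f)$ is an arrow $u$ with $1_{d'}u=v''$ and $fu=f''$, forcing $u=v''$, which works because $fv''=f''$ holds by definition of the objects. Its image under $\pi_f^{\cal C}$ is the terminal object of the slice, and a functor carrying a terminal object to a terminal object is cofinal, hence $J$-cofinal for every $J$. (Incidentally, this pins down how the projection must be read — $(d'',v'',f'')\mapsto p(v'')$, landing in the slice over $p(d')$; note that the colimit you yourself produce by applying $\pi_{\widehat{\cal C}_J}$ is $l_Jp(d')$ with structure maps $l_Jp(v'')$, not $lp(d)$ as item (iv) literally reads, a discrepancy your proposal never reconciles.) So you have misplaced the difficulty: the direction you call the technical heart is a one-liner, while the only real content of the proposition is in (iii)/(ii) $\Rightarrow$ (i), which you, like the paper, delegate to the argument of \ref{caractcartconverse}.
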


\begin{proof}
The points (iv) and (ii) are equivalent in the light of the definition of cofinality, and the fact that $\colim_{{\cal C}/p(d)}l\pi_{p(d)} \simeq lp(d)$.

The fact that (i) $\Rightarrow$ (ii) is immediate to check, since the object $(d',1_{d'},f)$ is in the category ${\cal D}^{\mathbf{cart-fact}}_f$.

For the fact that (iii) or (ii) imply (i), with the same argument as in \ref{caractcartconverse}, taking $\eta_{\cal D}(f)$ as $(f,g) : [\alpha] \to \eta_{\cal D}(d)$, we obtain that $\eta_{\cal D}(f)$ is cartesian, that is, $f$ is locally cartesian.
\end{proof}

\subsection{Relative Diaconescu's theorem for local fibrations}

With these two successive localizations of cartesian arrows in a canonical stack, we are now able to prove the following important theorem:

\begin{thm}\label{loccartesianmorphsites}
Let $p : ({\cal D},K) \to ({\cal C},J)$ be a local fibration, $p' : ({\cal D'},K') \to ({\cal C},J)$ be a $(J,K')$-fibration, and $A: ({\cal D}, K) \to {(D', K')}$ a morphism of sites such that $p' A \simeq p$. Then, the following are equivalent:

\begin{enumerate}[(i)]
    \item The geometric morphism induced by $A$ is a morphism of relative toposes: $\Sh(A) : [C_p'] \to [C_p]$.
    \item The $\eta$-extension $\widetilde{A}^*$ is a morphism of fibrations. 
    \item The functor $A$ is a morphism of local fibrations. 
\end{enumerate}
\end{thm}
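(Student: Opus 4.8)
The plan is to treat the two equivalences separately, since (i)$\Leftrightarrow$(ii) is essentially already recorded. Indeed, the last point of Proposition~\ref{etaextensionproperties} states that, when $A$ is a morphism of sites, $(A,\phi)$ induces a morphism of relative toposes if and only if $\widetilde{A}^*$ is a morphism of fibrations; this gives (i)$\Leftrightarrow$(ii) directly. So the real work is (ii)$\Leftrightarrow$(iii), and of these two implications (ii)$\Rightarrow$(iii) is immediate: if $\widetilde{A}^*$ preserves cartesian arrows and $f$ is locally cartesian in $\mathcal{D}$, then $\eta_{\mathcal{D}}(f)$ is cartesian, hence so is $\widetilde{A}^*(\eta_{\mathcal{D}}(f))$; but the commutative square of Proposition~\ref{etaextensionproperties}(5) gives $\widetilde{A}^*(\eta_{\mathcal{D}}(f))\simeq\eta_{\mathcal{D}'}(A(f))$, so $\eta_{\mathcal{D}'}(A(f))$ is cartesian, which is exactly the statement that $A(f)$ is locally cartesian. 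Thus $A$ is a morphism of local fibrations. I note that $p'$ being a fibration is in particular a local fibration, so that the hypothesis (iii) makes sense.

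The substance of the proof is therefore (iii)$\Rightarrow$(ii), and this is where the two localization results of the previous subsections enter. First I would fix an arbitrary cartesian arrow in $(\widehat{\mathcal{D}}_K/C_p^*)$ and apply the pullback localization of Proposition~\ref{pullbackcart}: along a $J_{C_p}$-covering $(\eta_{\mathcal{D}}(d_i)\to[\alpha])_i$ whose members come from the site, the pullbacks $(f_i,1)\colon P_i\to\eta_{\mathcal{D}}(d_i)$ of our cartesian arrow are again cartesian and now have codomain of the form $\eta_{\mathcal{D}}(d_i)$. This puts each $(f_i,1)$ in exactly the shape required by the colimit characterization: by Proposition~\ref{caractcart}, $(f_i,1)$ is the canonical map induced on the colimit $\colim p^{\mathbf{\eta\text{-}fact}}_{(f_i,1),d_i}\simeq P_i$, i.e.\ $(f_i,1)$ is presented as a colimit of cartesian arrows $\eta_{\mathcal{D}}(\widehat{f_{ij}})$ with each $\widehat{f_{ij}}$ \emph{locally cartesian} in $\mathcal{D}$.

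Next I would transport this presentation through $\widetilde{A}^*$. The functor $\widetilde{A}^*$ preserves colimits (Proposition~\ref{etaextensionproperties}(4)) and satisfies $\widetilde{A}^*\eta_{\mathcal{D}}\simeq\eta_{\mathcal{D}'}A$ (Proposition~\ref{etaextensionproperties}(5)), so $\widetilde{A}^*(f_i,1)$ is presented as a colimit of the arrows $\widetilde{A}^*(\eta_{\mathcal{D}}(\widehat{f_{ij}}))\simeq\eta_{\mathcal{D}'}(A(\widehat{f_{ij}}))$. Here the hypothesis (iii) is used decisively: since $A$ sends locally cartesian arrows to locally cartesian ones, each $A(\widehat{f_{ij}})$ is locally cartesian, so each $\eta_{\mathcal{D}'}(A(\widehat{f_{ij}}))$ is cartesian. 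Thus $\widetilde{A}^*(f_i,1)$ is a colimit of cartesian arrows coming from the site, and by the stability of colimits in a topos — exactly the argument used in the proof of Proposition~\ref{caractcartconverse} — it is itself cartesian. Finally I would glue back: because $\widetilde{A}^*$ is continuous (Proposition~\ref{etaextensionproperties}(3)), the images $(\widetilde{A}^*(\eta_{\mathcal{D}}(d_i)\to[\alpha]))_i$ form a $J_{C_{p'}}$-covering of $\widetilde{A}^*([\alpha])$; because $\widetilde{A}^*$ preserves finite limits, the pullbacks of $\widetilde{A}^*(f,g)$ along this covering are the $\widetilde{A}^*(f_i,1)$, which we have just shown are cartesian. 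This exhibits $\widetilde{A}^*(f,g)$ as a $J_{C_{p'}}$-cartesian arrow, so Proposition~\ref{Jfcartimpliescart}, applied to the canonical relative site of the relative topos $C_{p'}$, forces it to be cartesian. As $(f,g)$ was arbitrary, $\widetilde{A}^*$ is a morphism of fibrations, establishing (iii)$\Rightarrow$(ii).

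I expect the main obstacle to be the bookkeeping at the interfaces between the two localizations and their transport under $\widetilde{A}^*$: one must check that the pullback reduction of Proposition~\ref{pullbackcart} really lands on arrows of the exact form to which the colimit criterion of Proposition~\ref{caractcart} applies, and, more delicately, that $\widetilde{A}^*$ simultaneously respects all three structures needed — colimits (to push the second localization forward and invoke descent as in Proposition~\ref{caractcartconverse}), finite limits (to identify the images of the pullback squares), and covering families (to re-glue via Proposition~\ref{Jfcartimpliescart}). The conceptual point, and the reason the framework of \emph{local} fibrations is the right one, is that the property actually transmitted from $A$ to $\widetilde{A}^*$ is the preservation of \emph{locally} cartesian arrows, which is all that survives the colimit presentation; it is only after reassembling the local data in the ambient topos that genuine cartesianness reappears.
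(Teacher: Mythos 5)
Your proposal is correct and follows essentially the same route as the paper's own proof: (i)$\Leftrightarrow$(ii) via Proposition~\ref{etaextensionproperties}, (ii)$\Rightarrow$(iii) via the square $\widetilde{A}^*\eta_{\cal D}\simeq\eta_{\cal D'}A$ and the definition of locally cartesian arrows, and (iii)$\Rightarrow$(ii) by combining the pullback localization of \ref{pullbackcart}, the colimit presentation of \ref{caractcart} transported along the colimit-preserving $\widetilde{A}^*$, the stability-of-colimits argument of \ref{caractcartconverse}, and the re-gluing via cover preservation and \ref{Jfcartimpliescart}. The only cosmetic difference is the order of the two reductions (you establish cartesianness of the localized arrows before gluing, while the paper states the gluing reduction first), which does not affect the substance.
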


\begin{proof}
Recall that, as stated in Proposition 3.4. (iii) \cite{bartolicaramello}, $A$ induces a morphism of relative toposes if and only if $\widetilde{A}^*$ is a morphism of fibrations. 

Suppose that $\widetilde{A}^*$ is a morphism of fibrations, since $p'A \simeq p$, we have $\widetilde{A}^*\eta_{\cal D} \simeq \eta_{\cal D'} A$ (Proposition 3.4. (iv) \cite{bartolicaramello}). Let $f : d \to d'$ be a locally cartesian arrow, by definition $\eta_{\cal D} (f) $ is cartesian, and thus $\widetilde{A}^*\eta_{\cal D}(f) \simeq \eta_{\cal D'} A(f)$ also is: hence, by definition, $A(f)$ is a locally cartesian arrow, that is $A$ is a morphism of local fibrations.

Conversely, let us assume that $A$ is a morphism of local fibrations. We will show that a cartesian arrow $(f,g) : [\alpha] \to [\alpha']$ in $(\widehat{\cal D}_K / C_p^*l_J)$ is sent to a cartesian one by $\widetilde{A}^*$. For this, we will use our \textquotedblleft local \textquotedblright representation of the arrow $(f,g)$ in terms of cartesian arrows coming from $\eta_{\cal D}$, which we know to be preserved by $\widetilde{A}^*$. We recall how to proceed: first, by density of $\eta_{\cal D}$, we can cover $[\alpha']$ with objects $\eta_{\cal D}(d_i)$ coming from $\cal D$, and pull back the cartesian arrow $(f,g)$ along this covering, giving us cartesian arrows $(f_i,g_i)$ (cf. \ref{pullbackcart}):

% https://q.uiver.app/#q=WzAsNCxbMCwwLCJbXFxhbHBoYV0iXSxbMSwwLCJbXFxhbHBoYV0nIl0sWzAsMSwiUF9pIl0sWzEsMSwiXFxldGFfe1xcY2FsIER9KGRfaSkiXSxbMiwzLCIoZl9pLGdfaSkiLDJdLFswLDEsIihmLGcpIl0sWzIsMCwidV9pJyJdLFszLDEsInVfaSIsMl0sWzIsMSwiIiwwLHsic3R5bGUiOnsibmFtZSI6ImNvcm5lciJ9fV1d
\[\begin{tikzcd}
	{[\alpha]} & {[\alpha]'} \\
	{P_i} & {\eta_{\cal D}(d_i)}
	\arrow["{(f,g)}", from=1-1, to=1-2]
	\arrow["{u_i'}", from=2-1, to=1-1]
	\arrow["\lrcorner"{anchor=center, pos=0.125, rotate=90}, draw=none, from=2-1, to=1-2]
	\arrow["{(f_i,g_i)}"', from=2-1, to=2-2]
	\arrow["{u_i}"', from=2-2, to=1-2]
\end{tikzcd}\]

Let us assume that the $\widetilde{A}^*(f_i,g_i)$ are cartesian: since $\widetilde{A}^*$ is a morphism of sites it commutes with pullbacks and, moreover, it preserves coverings. Combining these two facts we have that $\widetilde{A}^*(f,g)$ is $J_{C_p}$-cartesian; but $J_{C_p}$-cartesian arrows are cartesian ones by \ref{Jfcartimpliescart}, so that $\widetilde{A}^*(f,g)$ is cartesian. Hence, we are reduced to show that $\widetilde{A}^*$ preserves these cartesian arrows $(f_i,g_i)$ having for codomain an object of the form $\eta_{\cal D}(d)$.

So let $(f',g') : P \to \eta_{\cal D}(d)$ be a cartesian arrow. Using \ref{caractcart} we know that we have a presentation of the domain $P$ of such an arrow as a colimit:

% https://q.uiver.app/#q=WzAsNCxbMSwwLCJQIl0sWzIsMCwiXFxldGFfe1xcY2FsIER9KGQpIl0sWzEsMSwiXFxldGFfe1xcY2FsIER9KGRfe2l9KSJdLFswLDAsIlxcY29saW0gXFxldGFfe1xcY2FsIER9KGRfe2l9KSJdLFsyLDAsInZfe2l9Il0sWzAsMSwiKGYnLGcnKSJdLFsyLDEsIlxcZXRhX3tcXGNhbCBEfShcXHdpZGVoYXR7Zl97aX19KSIsMl0sWzMsMCwiXFxzaW1lcSIsMyx7InN0eWxlIjp7ImJvZHkiOnsibmFtZSI6Im5vbmUifSwiaGVhZCI6eyJuYW1lIjoibm9uZSJ9fX1dXQ==
\[\begin{tikzcd}
	{\colim \eta_{\cal D}(d_{i})} & P & {\eta_{\cal D}(d)} \\
	& {\eta_{\cal D}(d_{i})}
	\arrow["\simeq"{marking, allow upside down}, draw=none, from=1-1, to=1-2]
	\arrow["{(f',g')}", from=1-2, to=1-3]
	\arrow["{v_{i}}", from=2-2, to=1-2]
	\arrow["{\eta_{\cal D}(\widehat{f_{i}})}"', from=2-2, to=1-3]
\end{tikzcd}\]

\noindent indexed by the category ${\cal D}_{(f',g'),d'}^{\mathbf{\eta-fact}}$ having for objects the $(d_{i},v_{i},\widehat{f_{i}})$ with $\widehat{f_{i}}$ being locally cartesian arrows. 

Now, since $\widetilde{A}^*$ preserves colimits and $\widetilde{A}^*\eta_{\cal D} \simeq \eta_{\cal D'}A$, we have the following diagram:

% https://q.uiver.app/#q=WzAsNCxbMSwwLCJcXHdpZGV0aWxkZXtBfShQKSJdLFsyLDAsIlxcZXRhX3tcXGNhbCBEJ30oQShkKSkiXSxbMSwxLCJcXGV0YV97XFxjYWwgRCd9KEEoZF97aX0pKSJdLFswLDAsIlxcY29saW0gXFxldGFfe1xcY2FsIEQnfShBKGRfe2l9KSkiXSxbMiwwLCJcXHdpZGV0aWxkZXtBfSh2X3tpfSkiXSxbMCwxLCJcXHdpZGV0aWxkZXtBfShmJyxnJykiXSxbMiwxLCJcXGV0YV97XFxjYWwgRCd9KEEoXFx3aWRlaGF0e2Zfe2l9fSkpIiwyXSxbMywwLCJcXHNpbWVxIiwzLHsic3R5bGUiOnsiYm9keSI6eyJuYW1lIjoibm9uZSJ9LCJoZWFkIjp7Im5hbWUiOiJub25lIn19fV1d
\[\begin{tikzcd}
	{\colim \eta_{\cal D'}(A(d_{i}))} & {\widetilde{A}^*(P)} & {\eta_{\cal D'}(A(d))} \\
	& {\eta_{\cal D'}(A(d_{i}))}
	\arrow["\simeq"{marking, allow upside down}, draw=none, from=1-1, to=1-2]
	\arrow["{\widetilde{A}^*(f',g')}", from=1-2, to=1-3]
	\arrow["{\widetilde{A}^*(v_{i})}", from=2-2, to=1-2]
	\arrow["{\eta_{\cal D'}(A(\widehat{f_{i}}))}"', from=2-2, to=1-3]
\end{tikzcd}\]

The $\eta_{\cal D'}A(\widehat{f_i})$ are cartesian arrows since $A$ preserves locally cartesian ones. Thus, we are in the situation of (iii) \ref{cartcarasite}. This gives us that $(f',g')$ is $J_{C_p}$-cartesian, i.e. it is cartesian. 

Hence, $\widetilde{A}^*$ preserves cartesian arrows and $A$ induces a morphism of relative toposes.
\end{proof}

This very general result naturally leads to a relative Diaconescu's theorem for local fibrations. Here is the definition of the good notion of flat functors for local fibrations:

\begin{defn}
Let $p : ({\cal D},K) \to ({\cal C},J)$ be a local fibration and $f : {\cal E} \to \widehat{\cal C}_J$ a relative topos. The category $\mathbf{Flat}_{\widehat{\cal C}_J}^{K,J}(({\cal D},K),({\cal E}/f^*l_J))$ is defined to be the category of continuous morphisms of local fibrations $A : ({\cal D},K) \to ({\cal E} / f^*l)$ such that $\widetilde{A}^*$ is a morphism of fibrations preserving finite limits fiberwise. They are called the  \emph{K-flat relative to} $\widehat{\cal C}_J$ functors.
\end{defn}

And here is the analogue of Diaconescu's theorem for local fibrations:

\begin{thm}\label{diaclocfib}
Let $p : ({\cal D},K) \to ({\cal C},J)$ be a local fibration and $f : {\cal E} \to \widehat{\cal C}_J$ a relative topos. The category $\mathbf{Flat}_{\widehat{\cal C}_J}^{K,J}(({\cal D},K),({\cal E}/f^*l_J))$ is equivalent to the category of morphisms of local fibrations which also are morphisms of sites. Thus, we have an equivalence:
\begin{center}
    $\mathbf{Flat}_{\widehat{\cal C}_J}^{K,J}(({\cal D},K),({\cal E} / f^*l_J)) \simeq \mathbf{Geom}_{\widehat{\cal C}_J}([f],[C_p])$
\end{center}

\noindent between those morphisms of relative toposes $a : [f] \to [C_p]$ and the morphisms of local fibrations which also are morphisms of sites $A : ({\cal D},K) \to ({\cal E} / f^*l_J)$.
\end{thm}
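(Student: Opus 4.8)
The plan is to prove the two asserted equivalences separately, reducing everything to the already-established Theorem \ref{loccartesianmorphsites}. First I would record the set-up: the target $({\cal E}/f^*l_J)$, equipped with $J_f$, is exactly the canonical relative site of the relative topos $f$ presented over the base site $(\cal C,J)$, so that $\pi_f:({\cal E}/f^*l_J)\to(\cal C,J)$ is a fibration and $\widehat{({\cal E}/f^*l_J)}_{J_f}\simeq\cal E$ recovers $f$ (Theorem 8.2.5 \cite{CaramelloZanfa}). Consequently the hypotheses of \ref{loccartesianmorphsites} are satisfied for every functor $A:({\cal D},K)\to({\cal E}/f^*l_J)$ with $\pi_f A\simeq p$, with $p'=\pi_f$ playing the role of the fibration; in particular, a morphism of sites $A$ of this shape induces a relative geometric morphism if and only if it is a morphism of local fibrations.

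The first equivalence, that $\mathbf{Flat}_{\widehat{\cal C}_J}^{K,J}$ coincides with the category of morphisms of local fibrations that are also morphisms of sites, I would prove by showing the two classes of objects agree (the morphisms being natural transformations in both cases). For the inclusion $\supseteq$, if $A$ is a morphism of sites which is a morphism of local fibrations, then \ref{loccartesianmorphsites} gives that $\widetilde{A}^*$ is a morphism of fibrations, and \ref{etaextensionproperties}(6) upgrades this to preservation of finite limits fiberwise; since morphisms of sites are continuous, $A$ lies in $\mathbf{Flat}$. For the converse inclusion $\subseteq$, an object of $\mathbf{Flat}$ is already a continuous morphism of local fibrations, so it only remains to verify that it is a morphism of sites, i.e.\ that $\Sh(A)^*$ is left exact. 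Here I would use that finite limits in the gluing $(\widehat{\cal D}_K/C_p^*)$ are computed componentwise (as in the proof of \ref{xilocal}) and that, by \ref{etaextensionproperties}(2), $\widetilde{A}^*$ acts as $\Sh(A)^*$ on the first component and as the identity on the second: being a morphism of fibrations over the identity of $\widehat{\cal C}_J$ which preserves fiberwise finite limits, $\widetilde{A}^*$ preserves the terminal object and all reindexed fiberwise pullbacks, hence preserves global finite limits; since its first component is $\Sh(A)^*$, the latter is left exact and $A$ is a morphism of sites.

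For the second equivalence I would invoke the relative Diaconescu correspondence attached to the canonical relative site. Given a morphism of local fibrations $A$ that is a morphism of sites, $\Sh(A)$ is a geometric morphism $\cal E\simeq\widehat{({\cal E}/f^*l_J)}_{J_f}\to\widehat{\cal D}_K$, which by \ref{loccartesianmorphsites} is relative over $\widehat{\cal C}_J$ precisely because $A$ is a morphism of local fibrations, yielding an object of $\mathbf{Geom}_{\widehat{\cal C}_J}([f],[C_p])$. Conversely, from a relative geometric morphism $a:[f]\to[C_p]$ I would build $A$ by composing $a^*$ with $\eta_{\cal D}$ and recording the canonical structure map into $f^*l_J(p(-))$, obtaining a morphism of sites $A:({\cal D},K)\to({\cal E}/f^*l_J)$ with $\pi_f A\simeq p$, which is again a morphism of local fibrations by \ref{loccartesianmorphsites}. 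These two assignments are mutually quasi-inverse and $2$-natural, exactly as in the absolute and relative Diaconescu theorems recalled in \cite{bartolicaramello}; combining this with the first equivalence gives the stated chain of equivalences.

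The genuinely load-bearing input is Theorem \ref{loccartesianmorphsites}, which is already available, so the remainder is essentially packaging. The two points I expect to require the most care in writing are: in the first equivalence, the inclusion $\subseteq$, where one must confirm that fiberwise left-exactness of $\widetilde{A}^*$ together with its being a morphism of fibrations really forces global left-exactness of $\Sh(A)^*$, so that passing from $\mathbf{Flat}$ to morphisms of sites loses no cover-flatness data; and, in the second equivalence, checking that the construction $a\mapsto A$ lands in the canonical relative site with the correct fibred structure map, so that the two constructions are mutually inverse compatibly with the $\widehat{\cal C}_J$-indexing.
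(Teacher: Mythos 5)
Your proposal is correct and takes essentially the same approach as the paper: both arguments rest on Theorem \ref{loccartesianmorphsites}, construct the correspondence via $a \mapsto \widetilde{a^*}^*\eta_{\cal D}$ (your description of composing $a^*$ with $\eta_{\cal D}$ and recording the structure map into $f^*l_J(p(-))$ is exactly this functor) and $A \mapsto \Sh(A)$, and defer the pseudoinverse verification to the density argument already detailed for fibrations in \cite{bartolicaramello}. The one point where you go beyond the paper is the explicit proof of the inclusion of $\mathbf{Flat}_{\widehat{\cal C}_J}^{K,J}$ into the morphisms of sites (deducing global left exactness of $\Sh(A)^*$ from $\widetilde{A}^*$ being a morphism of fibrations that is fiberwise left exact, using that finite limits in the comma category are computed componentwise), a step the paper leaves implicit; your argument for it is sound.
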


\begin{proof}
If $a : [f] \to [C_p]$ is a morphism of relative toposes, then $\widetilde{a^*}^*$ is a morphism of sites, but also of local fibrations, as it is a morphism of fibrations and that the local cartesian arrows of a canonical relative sites are the cartesian ones \ref{canonicalcartesian}. But $\eta_{\cal D}$ also is a morphism of local fibrations and a morphism of sites, hence the composite $\widetilde{a^*}^*\eta_{\cal D}$ is a K-flat relative to $\widehat{\cal C}_J$ morphisms. 

In the other direction, if $A : ({\cal D},K) \to (({\cal E} / f^*l_J),J_f)$ is a morphism of local fibrations and a morphism of sites, as a particular case of \ref{loccartesianmorphsites} we see that it induces a relative geometric morphism.

The fact that these two constructions are pseudoinverse is easily deducible from the density of the eta functors and the projections $\pi_{\cal D} : (\widehat{\cal D}_K/C_p^*) \to \widehat{\cal D}_K$, as fully detailed in the proof for fibrations (Theorem 3.15. \cite{bartolicaramello}). 
\end{proof}

\subsection{Fibrations seen as local fibrations}

As fibrations are special cases of local fibrations, it is natural to consider how results concerning local fibrations apply in this setting. We have seen in \ref{loccartesianmorphsites} that a morphism of sites between two local fibrations induces a morphism between their associated relative toposes if and only if it is a morphism of local fibrations. It is therefore useful to dispose of criteria for the preservation of locally cartesian arrows by morphisms of sites $A : (\mathcal{G}(\mathbb D),K) \to (\mathcal{G}(\mathbb D'),K')$ between two relative sites over some base site $({\cal C},J)$ such that $p'A \simeq p$. 

The following proposition shows that, given a functor $A$ between two fibrations, there are canonical comparison vertical arrows which measure how far $A$ is from being a morphism of fibrations:

\begin{prop}\label{defectmorphfib}
Let $p:\mathcal{G}(\mathbb D) \to {\cal C}$ and $p':\mathcal{G}({\mathbb D}') \to {\cal C}$ two fibrations over the same base category $\cal C$, and $A: \mathcal{G}(\mathbb D) \to \mathcal{G}(\mathbb D')$ a functor between them such that $p'A \simeq p$. 

For any arrow $f : c' \to c$ in $\cal C$, we have a canonical natural transformation 
\[
v_f : A_{c'}\mathbb D(f) \Rightarrow \mathbb D'(f)A_c
\]
as pictured in the diagram

% https://q.uiver.app/#q=WzAsNCxbMCwwLCJcXG1hdGhiYiBEKGMpIl0sWzEsMCwiXFxtYXRoYmIgRChjJykiXSxbMCwxLCJcXG1hdGhiYiBEJyhjKSJdLFsxLDEsIlxcbWF0aGJiIEQnKGMnKSJdLFsyLDMsIlxcbWF0aGJiIEQnKGYpIiwyXSxbMCwxLCJcXG1hdGhiYiBEKGYpIl0sWzAsMiwiQV9jIiwyXSxbMSwzLCJBX3tjJ30iXSxbMSwyLCJ2X2YiLDEseyJsZXZlbCI6Mn1dXQ==
\[\begin{tikzcd}
	{\mathbb D(c)} & {\mathbb D(c')} \\
	{\mathbb D'(c)} & {\mathbb D'(c')}
	\arrow["{\mathbb D(f)}", from=1-1, to=1-2]
	\arrow["{A_c}"', from=1-1, to=2-1]
	\arrow["{v_f}"{description}, Rightarrow, from=1-2, to=2-1]
	\arrow["{A_{c'}}", from=1-2, to=2-2]
	\arrow["{\mathbb D'(f)}"', from=2-1, to=2-2]
\end{tikzcd}\]

such that, for any object $x$ of ${\mathbb D}(c)$, the arrow
\[
v_f^x : A_{c'}(D(f)(x)) \Rightarrow \mathbb D'(f)(A_c(x))
\]
is the vertical part of $A(f,1_{{\mathbb D}(f)(x)}) : (A_{c'}(\mathbb D(f)(x)),c') \to (A_c(x),c) $.
\end{prop}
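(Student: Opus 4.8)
The plan is to define $v_f$ fibrewise, by applying $A$ to the canonical cartesian lifts and extracting the vertical part, and then to deduce naturality purely formally from the functoriality of $A$ together with the composition law in the Grothendieck construction.

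First I would make the component $v_f^x$ precise. Since $p'A \simeq p$, the functor $A$ restricts fibre by fibre to the functors $A_c : \mathbb D(c) \to \mathbb D'(c)$; in particular it carries a vertical arrow over $c$ to a vertical arrow over $c$, acting there as $A_c$. For an object $x$ of $\mathbb D(c)$, consider the canonical cartesian lift $(f,1_{\mathbb D(f)(x)}) : (\mathbb D(f)(x),c') \to (x,c)$ of $f$ at $x$. Its image $A(f,1_{\mathbb D(f)(x)})$ is an arrow of $\mathcal{G}(\mathbb D')$ lying over $f$, with source $(A_{c'}(\mathbb D(f)(x)),c')$ and target $(A_c(x),c)$. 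By the horizontal--vertical factorization in the fibration $p'$ (the decomposition $(f,u)=(f,1)(1,u)$), such an arrow is written uniquely in Grothendieck form as $(f, v_f^x)$, where the vertical component
\[
v_f^x : A_{c'}(\mathbb D(f)(x)) \to \mathbb D'(f)(A_c(x))
\]
is exactly the arrow obtained by factoring $A(f,1_{\mathbb D(f)(x)})$ through the canonical cartesian lift of $f$ at $A_c(x)$. This is the asserted definition, and its well-definedness is precisely the content of the factorization recalled above.

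Next I would prove naturality of $v_f$ in the fibre variable. Fix a vertical arrow $\phi : x \to y$ in $\mathbb D(c)$. The key observation is the identity
\[
(1_c,\phi)\circ(f,1_{\mathbb D(f)(x)}) = (f,1_{\mathbb D(f)(y)})\circ(1_{c'},\mathbb D(f)(\phi))
\]
in $\mathcal{G}(\mathbb D)$, which is a one-line check using the composition law $(f,u)\circ(g,v)=(f\circ g,\mathbb D(g)(u)\circ v)$: both sides equal $(f,\mathbb D(f)(\phi))$. Applying the functor $A$ and substituting $A(f,1_{\mathbb D(f)(x)})=(f,v_f^x)$, $A(f,1_{\mathbb D(f)(y)})=(f,v_f^y)$, $A(1_c,\phi)=(1_c,A_c(\phi))$ and $A(1_{c'},\mathbb D(f)(\phi))=(1_{c'},A_{c'}(\mathbb D(f)(\phi)))$, I would expand both composites with the analogous law in $\mathcal{G}(\mathbb D')$. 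Equating the vertical (second) components then yields
\[
\mathbb D'(f)(A_c(\phi))\circ v_f^x = v_f^y\circ A_{c'}(\mathbb D(f)(\phi)),
\]
which is exactly the naturality square for $v_f : A_{c'}\mathbb D(f)\Rightarrow \mathbb D'(f)A_c$.

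The argument is essentially formal, so I expect no deep obstacle; the only points requiring care are bookkeeping ones. One must respect the composition convention of the Grothendieck construction throughout (in particular the appearance of the reindexing functors in the second component), and, since $p'A \simeq p$ is a natural isomorphism rather than a strict equality, one should either strictify the fibrations or carry the canonical coherence isomorphisms along. These isomorphisms enter the definition of $v_f^x$ and of the $A_c$ but do not affect the commutation used for naturality, so I expect the verification to go through unchanged. Finally, the \emph{canonicity} claimed in the statement is automatic: $v_f^x$ arises from the universal property of cartesian lifts, hence is uniquely determined.
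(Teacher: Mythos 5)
Your proposal is correct and follows essentially the same route as the paper: you define $v_f^x$ as the vertical part of the vertical--cartesian factorization of $A(f,1_{\mathbb D(f)(x)})$, and you derive naturality by applying $A$ to the interchange identity $(1_c,\phi)\circ(f,1_{\mathbb D(f)(x)}) = (f,1_{\mathbb D(f)(y)})\circ(1_{c'},\mathbb D(f)(\phi))$, which is exactly the identity the paper exploits. The only cosmetic difference is that you conclude by equating second components of equal pairs in the Grothendieck construction, whereas the paper reaches the same equality of vertical arrows by postcomposing with the cartesian arrow $(f,1_{\mathbb D'(f)A_c(x')})$ and invoking its uniqueness property --- two formulations of the same step.
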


\begin{proof}
For every $x$ in $\mathbb D(c)$, we can factorize $A(f,1_{{\mathbb D}(f)(x)}) : (A_{c'}(\mathbb D(f)(x)),c') \to (A_c(x),c)$ as a vertical and a cartesian arrow; and, since  $p'A \simeq p$, we have that $p'A(f,1_{\mathbb D(f)(x)}) \simeq p(f,1_{\mathbb D(f)(x)})$ so that the cartesian part is given by $f$. We represent this factorization in the following diagram:

% https://q.uiver.app/#q=WzAsMyxbMCwwLCIoQV97Yyd9XFxtYXRoYmIgRChmKSh4KSxjJykiXSxbMSwwLCIoQV9jKHgpLGMpIl0sWzAsMSwiKFxcbWF0aGJiIEQnKGYpQV97Y30oeCksYycpIl0sWzAsMSwiQShmLDFfe1xcbWF0aGJiIEQoZikoeCl9KSIsMCx7Im9mZnNldCI6LTF9XSxbMiwxLCIoZiwxX3tcXG1hdGhiYiBEJyhmKUFfYyh4KX0pIiwyLHsibGFiZWxfcG9zaXRpb24iOjgwfV0sWzAsMiwiKDFfe2MnfSx2X2YoeCkpIiwyXV0=
\[\begin{tikzcd}
	{(A_{c'}\mathbb D(f)(x),c')} & {(A_c(x),c)} \\
	{(\mathbb D'(f)A_{c}(x),c')}
	\arrow["{A(f,1_{\mathbb D(f)(x)})}", shift left, from=1-1, to=1-2]
	\arrow["{(1_{c'},v_f^x)}"', from=1-1, to=2-1]
	\arrow["{(f,1_{\mathbb D'(f)A_c(x)})}"'{pos=0.8}, from=2-1, to=1-2]
\end{tikzcd}\]

This vertical part gives us a vertical arrow for each object $x$ of $\mathbb D(c)$, as in the proposition $v_f^x : A_{c'}(\mathbb D(f)(x)) \to \mathbb D'(f)A_c(x)$. 

For the naturality in $x$, let $u : x \to x'$ be an arrow in $\mathbb D(c)$. We have the diagram bellow: % % https://q.uiver.app/#q=WzAsNixbMCwxLCIoQV97Yyd9XFxtYXRoYmIgRChmKSh4JyksYycpIl0sWzEsMCwiKEFfYyh4JyksYykiXSxbMiwxLCIoXFxtYXRoYmIgRCcoZilBX3tjfSh4JyksYycpIl0sWzAsMiwiKEFfe2MnfVxcbWF0aGJiIEQoZikoeCksYycpIl0sWzIsMiwiKFxcbWF0aGJiIEQnKGYpQV97Y30oeCksYycpIl0sWzEsMywiKEFfYyh4KSxjKSJdLFswLDEsIkEoZiwxX3tcXG1hdGhiYiBEKGYpKHgnKX0pIl0sWzMsMCwiKDFfe2MnfSxBX3tjJ31cXG1hdGhiYiBEKGYpKHUpKSJdLFswLDIsInZfZl57eCd9IiwwLHsibGFiZWxfcG9zaXRpb24iOjMwfV0sWzQsMiwiKDFfe2MnfSxcXG1hdGhiYiBEJyhmKUFfe2N9KHUpKSIsMl0sWzIsMSwiKGYsMV97XFxtYXRoYmIgRCcoZilBX2MoeCcpfSkiLDJdLFszLDUsIkEoZiwxX3tcXG1hdGhiYiBEKGYpKHgpfSkiLDJdLFs0LDUsIihmLDFfe1xcbWF0aGJiIEQnKGYpQV9jKHgpfSkiXSxbMyw0LCJ2X2ZeeCIsMix7ImxhYmVsX3Bvc2l0aW9uIjozMH1dLFs1LDEsIigxX3tjfSxBX2ModSkpIiwxXV0=
\[\begin{tikzcd}
	& {(A_c(x'),c)} \\
	{(A_{c'}\mathbb D(f)(x'),c')} && {(\mathbb D'(f)A_{c}(x'),c')} \\
	{(A_{c'}\mathbb D(f)(x),c')} && {(\mathbb D'(f)A_{c}(x),c')} \\
	& {(A_c(x),c)}
	\arrow["{A(f,1_{\mathbb D(f)(x')})}", from=2-1, to=1-2]
	\arrow["{v_f^{x'}}"{pos=0.3}, from=2-1, to=2-3]
	\arrow["{(f,1_{\mathbb D'(f)A_c(x')})}"', from=2-3, to=1-2]
	\arrow["{(1_{c'},A_{c'}\mathbb D(f)(u))}", from=3-1, to=2-1]
	\arrow["{v_f^x}"'{pos=0.3}, from=3-1, to=3-3]
	\arrow["{A(f,1_{\mathbb D(f)(x)})}"', from=3-1, to=4-2]
	\arrow["{(1_{c'},\mathbb D'(f)A_{c}(u))}"', from=3-3, to=2-3]
	\arrow["{(f,1_{\mathbb D'(f)A_c(x)})}", from=3-3, to=4-2]
	\arrow["{(1_{c},A_c(u))}"{description}, from=4-2, to=1-2]
\end{tikzcd}\]

In order to check the naturality, we want to see if $v_f^{x'} \circ (1_{c'},A_{c'}\mathbb D(f)(u)) = (1_{c'},\mathbb D'(f)A_{c}(u))\circ v_f^x$, and, since $(f,1_{\mathbb D'(f)A_c(x')})$ is cartesian, we are reduced to show that these two arrows coincide when postcomposed by it, as well as their projections in $\cal C$. Their projections in $\cal C$ are equal since they only have a vertical non trivial part. For the postcomposition with $(f,1_{\mathbb D'(f)A_c(x')})$ we have: $ (f,1_{\mathbb D'(f)A_c(x')}) \circ v_f^{x'} \circ (1_{c'},A_{c'}\mathbb D(f)(u)) = A(f,1_{\mathbb D(f)(x')}) \circ (1_{c'},A_{c'}\mathbb D(f)(u))$ because of the commutation of the upper triangle by definition of $v_f^{x'}$. Then, $A(f,1_{\mathbb D(f)(x')}) \circ (1_{c'},A_{c'}\mathbb D(f)(u)) = (1_{c},A_c(u)) \circ A(f,1_{\mathbb D(f)(x)})$ because of the functoriality of $A$ together with the identity $(1,u)\circ (f,1) = (f,1)\circ(1,\mathbb D(f)(u))$ holding in any fibration. Then, we have $(1_c,A_c(u)) \circ A(f,1_{\mathbb D(f)(x)}) = (1_c,A_c(u)) \circ (f,1_{\mathbb D'(f)A_c(x)}) \circ v_f^x $ because of the commutation of the lower triangle (definition of $v_f^x$). Finally, since $(1_{c},A_c(u)) \circ (f,1_{\mathbb D'(f)A_c(x)}) =    (f,1_{\mathbb D'(f)A_c(x')}) \circ (1_{c'},\mathbb D'(f)A_{c}(u)) $, we conclude $(1_{c},A_c(u)) \circ (f,1_{\mathbb D'(f)A_c(x')}) \circ v_f^x = (f,1_{\mathbb D'(f)A_c(x')}) \circ (1_{c'},\mathbb D'(f)A_{c}(u)) \circ v_f^x$, so that the two postcompositions with $(f,1_{\mathbb D'(f)A_c(x')})$ are equal.
\end{proof}

It is interesting to notice that these canonical comparison vertical arrows, defined for different arrows in the base category, come with some compatibility conditions:

\begin{prop}\label{canonicalvertical}
In the same setting as the previous proposition, for two arrows $g: c' \to c$ and $f : c'' \to c'$ in the base category $\cal C$, we have the following compatibility property: $v_{gf}^x = \mathbb D'(f)(v^x_g) \circ v^f_{\mathbb D(g)(x)}$
\end{prop}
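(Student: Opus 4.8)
The plan is to derive the identity from the functoriality of $A$ applied to the canonical factorization of the cartesian lift of $gf$, together with the uniqueness of the vertical--cartesian factorization in $\mathcal{G}(\mathbb D')$. Throughout I will use the indexed composition rule $(f_1,u_1)\circ(g_1,v_1)=(f_1g_1,\mathbb D(g_1)(u_1)\circ v_1)$ recorded in the proof of Proposition \ref{enumloccart}, together with the two elementary consequences it yields in any fibration: the cartesian lift of a composite is the composite of cartesian lifts, i.e.\ $(g,1_{\mathbb D(g)(x)})\circ(f,1_{\mathbb D(gf)(x)})=(gf,1_{\mathbb D(gf)(x)})$ (and likewise in $\mathbb D'$), and the swap identity $(1_{c'},w)\circ(f,1_{\mathbb D'(f)(y)})=(f,1_{\mathbb D'(f)(y')})\circ(1_{c''},\mathbb D'(f)(w))$ valid for any vertical $w:y\to y'$ over $c'$, both of which are one-line checks from the composition rule.

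First I would decompose the cartesian lift of $gf$ at $x$ and push it through $A$. Since $(gf,1_{\mathbb D(gf)(x)})=(g,1_{\mathbb D(g)(x)})\circ(f,1_{\mathbb D(gf)(x)})$ and $A$ is a functor, $A(gf,1_{\mathbb D(gf)(x)})=A(g,1_{\mathbb D(g)(x)})\circ A(f,1_{\mathbb D(gf)(x)})$. By Proposition \ref{defectmorphfib} each factor splits as the relevant comparison vertical arrow followed by a cartesian lift: $A(g,1_{\mathbb D(g)(x)})=(g,1_{\mathbb D'(g)A_c(x)})\circ(1_{c'},v_g^x)$ and, using $\mathbb D(gf)(x)=\mathbb D(f)(\mathbb D(g)(x))$, $A(f,1_{\mathbb D(gf)(x)})=(f,1_{\mathbb D'(f)A_{c'}\mathbb D(g)(x)})\circ(1_{c''},v_f^{\mathbb D(g)(x)})$. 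Substituting yields a four-term composite of the shape cartesian$\,\circ\,$vertical$\,\circ\,$cartesian$\,\circ\,$vertical.

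The crux is to rewrite this four-term composite in vertical--cartesian form so that its vertical part can be read off. Applying the swap identity to the middle pair $(1_{c'},v_g^x)\circ(f,1_{\mathbb D'(f)A_{c'}\mathbb D(g)(x)})$ moves the cartesian lift of $f$ to the left, producing $(f,1_{\mathbb D'(f)\mathbb D'(g)A_c(x)})\circ(1_{c''},\mathbb D'(f)(v_g^x))$. The two cartesian lifts $(g,1_{\mathbb D'(g)A_c(x)})$ and $(f,1_{\mathbb D'(f)\mathbb D'(g)A_c(x)})$ then compose to the cartesian lift $(gf,1_{\mathbb D'(gf)A_c(x)})$, while the two vertical arrows over $c''$ compose to $(1_{c''},\mathbb D'(f)(v_g^x)\circ v_f^{\mathbb D(g)(x)})$. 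Altogether $A(gf,1_{\mathbb D(gf)(x)})=(gf,1_{\mathbb D'(gf)A_c(x)})\circ(1_{c''},\mathbb D'(f)(v_g^x)\circ v_f^{\mathbb D(g)(x)})$, exhibiting a factorization of $A(gf,1_{\mathbb D(gf)(x)})$ as a cartesian lift of $gf$ precomposed with a vertical arrow. Since $v_{gf}^x$ is by definition exactly the vertical part of $A(gf,1_{\mathbb D(gf)(x)})$, uniqueness of this factorization forces $v_{gf}^x=\mathbb D'(f)(v_g^x)\circ v_f^{\mathbb D(g)(x)}$, which is the claimed compatibility.

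The one delicate point, which I expect to be the main (though routine) obstacle, is coherence. The equalities $\mathbb D(gf)=\mathbb D(f)\mathbb D(g)$ and $\mathbb D'(gf)=\mathbb D'(f)\mathbb D'(g)$, the identification of the cartesian parts with lifts of $g$, $f$ and $gf$ via the isomorphism $p'A\simeq p$, and the composition-of-lifts step all hold a priori only up to the canonical pseudofunctorial isomorphisms of $\mathbb D$ and $\mathbb D'$. One must check that these structural isomorphisms are inserted compatibly on both sides, so that the swap identity and the two cartesian compositions genuinely take place over the correct objects. Assuming the fibrations split, or simply absorbing the coherence data exactly as is already done in the proof of Proposition \ref{defectmorphfib}, makes this bookkeeping disappear and leaves the computation above verbatim.
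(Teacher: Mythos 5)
Your proof is correct and is essentially the paper's own argument: both rest on the indexed composition law, the defining vertical--cartesian factorizations of $A(g,1)$, $A(f,1)$, $A(gf,1)$ from Proposition \ref{defectmorphfib}, functoriality of $A$, and the cancellation of the cartesian arrow $(gf,1)$ against two vertical arrows. The only difference is cosmetic — you factor $A(gf,1_{\mathbb D(gf)(x)})$ forward into vertical--cartesian form, whereas the paper verifies that the candidate composite $(gf,1)\circ(1,\mathbb D'(f)(v^x_g))\circ(1,v_f^{\mathbb D(g)(x)})$ equals $A(gf,1_{\mathbb D(gf)(x)})$ — and your closing remark on pseudofunctorial coherence matches the paper's implicit convention of absorbing that data.
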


\begin{proof}
Making a slight abuse of notation, we identify $v_x^f$ with its vertical part. We have:

$(gf,1_{\mathbb D'(gf)(x)}) \circ (1_{c''},\mathbb{D}'(f)(v^x_g)) \circ (1_{c''},v_f^{\mathbb{D}(g)(x)})$

$ =(g,1_{\mathbb D(g)(x)}) \circ (1_{c'},v^x_g) \circ (f,1_{\mathbb D(gf)(x)}) \circ (1_{c''},v_f^{\mathbb{D}(g)(x)})$

$= A(gf,1_{\mathbb D(gf)(x)})$

\noindent where the first step is because of the composition law in a fibration, and the second one because of the definition of $v^x_g$ and $v_f^{\mathbb{D}(g)(x)}$ together with the functoriality of $A$.

Hence, these two vertical arrows are equal, by virtue of being vertical and having equal postcompositions with $(gf,1_{\mathbb D'(gf)(x)})$, a cartesian arrow.
\end{proof}

Now, using the explicit characterization of locally cartesian arrows in a relative site given in \ref{kcartesianforfib}, we can provide necessary and sufficient conditions for a functor to preserve them in the light of these canonical vertical arrows; this is the first point of the following proposition. This characterization is then simplified in the case where $A$ is continuous: in this case, $A$ preserves locally cartesian arrows if and only if its canonical comparison arrows (as in Proposition \ref{defectmorphfib}) are sent to arrows which are, locally, isomorphisms. In other words, the difference between being a morphism of fibrations or only a morphism of local fibrations is invisible at the topos level.

\begin{prop}
Let $A: (\mathcal{G}(\mathbb D),K) \to (\mathcal{G}(\mathbb D'),K')$ be a functor between two relative sites such that $p'A \simeq p$. It is a morphism of local fibrations if and only if the two following conditions are satisfied: 

\begin{enumerate}
    \item For every vertical arrow $(1,u)$ in $\mathcal{G}(\mathbb D)$ such that $l_K(1,u)$ is an iso, we have that $l_{K'}(A(1,u))$ is an iso.
    \item For all arrows $f : c' \to c$ in the basis, all the components $v^f_x$ of the canonical natural transformation $A_{c'}\mathbb D(f) \to \mathbb D'(f)A_c$ are such that $l_{K'}(v^f_x)$ is iso.
\end{enumerate}

The site-description of the condition for such vertical arrows to be isomorphisms at the topos-level is explicited in \ref{kcartesianforfib}.

If moreover $A$ is continuous, the first condition is immediately satisfied, thus $A$ being a morphism of local fibrations reduces to check the second condition.
\end{prop}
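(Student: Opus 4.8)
The plan is to reduce everything to the explicit description of locally cartesian arrows in a relative site provided by \ref{kcartesianforfib}, namely that an arrow $(f,u)$ of $\mathcal{G}(\mathbb D)$ is locally cartesian if and only if $l_K(1,u)$ is an isomorphism, where $u$ denotes its vertical component. The heart of the argument is a single computation: the identification of the vertical part of $A(f,u)$. Writing the cartesian--vertical factorization $(f,u) = (f,1_{\mathbb D(f)(x)})\circ(1_{c'},u)$ and applying $A$, functoriality gives $A(f,u) = A(f,1_{\mathbb D(f)(x)})\circ(1_{c'},A_{c'}(u))$, since $A$ commutes with the projections and hence sends the vertical arrow $(1_{c'},u)$ to the vertical arrow $(1_{c'},A_{c'}(u))$. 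Now \ref{defectmorphfib} tells us precisely that $A(f,1_{\mathbb D(f)(x)}) = (f,1)\circ(1_{c'},v_f^x)$, its vertical part being the canonical comparison arrow $v_f^x$. Using the composition law $(f,a)\circ(g,b) = (f\circ g,\mathbb D(g)(a)\circ b)$ to collapse the two consecutive vertical arrows, we obtain the factorization
\[
A(f,u) = (f,1)\circ(1_{c'},\,v_f^x\circ A_{c'}(u)),
\]
so that the vertical part of $A(f,u)$ is exactly $v_f^x\circ A_{c'}(u)$.

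With this in hand, both implications become a matter of functoriality of $l_{K'}$ and the criterion \ref{kcartesianforfib}. By that criterion $A(f,u)$ is locally cartesian if and only if $l_{K'}(1_{c'},v_f^x\circ A_{c'}(u))$ is an isomorphism, and since $l_{K'}$ is a functor this arrow factors as $l_{K'}(1_{c'},v_f^x)\circ l_{K'}(1_{c'},A_{c'}(u))$. For the backward direction I would take a locally cartesian $(f,u)$, so that $l_K(1,u)$ is an isomorphism; condition~2 makes $l_{K'}(1_{c'},v_f^x)$ an isomorphism, while condition~1 applied to the vertical arrow $(1_{c'},u)$ (whose image under $l_K$ is an isomorphism by hypothesis) makes $l_{K'}(A(1_{c'},u)) = l_{K'}(1_{c'},A_{c'}(u))$ an isomorphism, so the composite is an isomorphism and $A(f,u)$ is locally cartesian. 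For the forward direction I would test the two conditions against the extreme cases of the factorization: applying the hypothesis that $A$ preserves locally cartesian arrows to a vertical arrow $(1_{c'},u)$ with $l_K(1,u)$ an isomorphism yields condition~1 directly, and applying it to the genuinely cartesian arrow $(f,1_{\mathbb D(f)(x)})$ (cartesian arrows being locally cartesian for any topology) yields, via the identification of its vertical part as $v_f^x$ and \ref{kcartesianforfib}, that $l_{K'}(v_f^x)$ is an isomorphism, which is condition~2.

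For the final assertion, when $A$ is moreover continuous it induces the geometric morphism $\Sh(A)$ whose inverse image $\Sh(A)^*$, being the sheafification of the left Kan extension along $A^{op}$, satisfies $\Sh(A)^*\, l_K \simeq l_{K'}\, A$ on objects and arrows coming from the site (cf. \ref{etaextensionproperties}). Since $\Sh(A)^*$ preserves isomorphisms, any vertical $(1,u)$ with $l_K(1,u)$ an isomorphism is sent to $l_{K'}(A(1,u)) \simeq \Sh(A)^*(l_K(1,u))$, which is therefore an isomorphism; this is exactly condition~1, so it is automatic and only condition~2 remains to be checked.

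I expect the main obstacle to lie in the central computation of the vertical part of $A(f,u)$: one must carefully combine the cartesian--vertical factorization, the functoriality of $A$, the composition law in the Grothendieck construction, and the definition of the comparison arrows $v_f^x$ from \ref{defectmorphfib}, keeping precise track of which arrows are vertical and which are cartesian. Once the identification $v_f^x\circ A_{c'}(u)$ is secured, the separation into the two stated conditions is forced by the multiplicativity of $l_{K'}$ along the factorization, and the remaining verifications are routine.
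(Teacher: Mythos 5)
Your proof is correct and takes essentially the same approach as the paper: the whole argument rests, exactly as in the paper's (much terser) proof, on identifying the vertical part of $A(f,u)$ as $v_f^x \circ A_{c'}(u)$ via \ref{defectmorphfib} and then invoking the criterion of \ref{kcartesianforfib}, with the continuous case handled by $\Sh(A)^* l_K \simeq l_{K'} A$. Your version simply spells out the two implications and the composition-law bookkeeping that the paper compresses into ``our thesis follows.''
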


\begin{proof}
As stated in \ref{kcartesianforfib}, locally cartesian arrows in $(\mathcal{G}(\mathbb D),K)$ are those $(f,u)$ such that $l(1,u)$ is iso. For an arrow $(f,u) : (x',c') \to (x,c)$, the vertical part of $A(f,u)$ being $v_x^fA(1,u)$, our thesis follows.

If $A$ is continuous, $\Sh(A)^*l(1,u) = l(A(1,u))$, hence the first condition is automatic. 
\end{proof}

This characterization gives us that any morphism of fibrations which is continuous also is a morphism of local fibrations:

\begin{prop}\label{morphfibimpkfib}
Let $A: (\mathcal{G}(\mathbb D),K) \to (\mathcal{G}(\mathbb D'),K')$ be a morphism of fibrations and a continuous functor. Then, $A$ is a morphism of local fibrations.
\end{prop}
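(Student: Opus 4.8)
The plan is to reduce the statement to the immediately preceding characterization of morphisms of local fibrations between relative sites. That characterization asserts that $A$ is a morphism of local fibrations precisely when two conditions hold: a local-isomorphism condition on vertical arrows $(1,u)$ with $l_K(1,u)$ invertible, and the condition that for every arrow $f:c'\to c$ in $\cal C$ and every object $x$ the canonical comparison arrow $v_f^x$ satisfies $l_{K'}(v_f^x)$ is an isomorphism. Since $A$ is assumed continuous, that same characterization already tells us the first condition is automatic, so the entire matter collapses to verifying the second condition.

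To establish the second condition I would exploit the remaining hypothesis, namely that $A$ is a morphism of fibrations. Recall from \ref{defectmorphfib} that $v_f^x$ is, by its very construction, the vertical part of the arrow $A(f,1_{\mathbb D(f)(x)})$ in its vertical–cartesian factorization. Now $(f,1_{\mathbb D(f)(x)})$ is a cartesian arrow of $\mathcal{G}(\mathbb D)$, and a morphism of fibrations preserves cartesian arrows; hence $A(f,1_{\mathbb D(f)(x)})$ is cartesian in $\mathcal{G}(\mathbb D')$. But an arrow in a fibration is cartesian exactly when its vertical part is an isomorphism, so $v_f^x$ is itself an isomorphism. Applying the functor $l_{K'}$, which preserves isomorphisms, we get that $l_{K'}(v_f^x)$ is an isomorphism, which is the required condition.

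With both conditions of the characterization in hand, I would conclude that $A$ is a morphism of local fibrations. There is no serious obstacle here; the only conceptual point worth flagging is that for a genuine morphism of fibrations the comparison arrows $v_f^x$ are \emph{actual} isomorphisms at the level of $\mathcal{G}(\mathbb D')$, not merely isomorphisms after passing through $l_{K'}$. This is exactly why being a morphism of fibrations is strictly stronger than being a morphism of local fibrations: the defect measured by \ref{defectmorphfib} already vanishes at the site level, rather than only becoming invisible at the topos level after sheafification, and the continuity hypothesis is what disposes of the auxiliary vertical-arrow condition via \ref{kcartesianforfib}.
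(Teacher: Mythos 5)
Your proposal is correct and follows essentially the same route as the paper's proof: reduce to the preceding two-condition characterization, dispose of the first condition by continuity, and observe that being a morphism of fibrations forces the comparison arrows $v_f^x$ to be genuine isomorphisms, hence trivially sent to isomorphisms by $l_{K'}$. Your unpacking of why the $v_f^x$ are isomorphisms (preservation of the cartesian arrow $(f,1_{\mathbb D(f)(x)})$ plus the fact that cartesianness is equivalent to invertibility of the vertical part) is exactly the content the paper compresses into a single sentence.
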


\begin{proof}
Since $A$ is continuous the previous proposition gives us that there is just a need for it to satisfy the second point. But $A$ is a morphism of fibrations if and only if all the $v^f_x$ are iso, so that they are automatically sent to  isos by $l_{K'}$.
\end{proof}

Then, since any morphism of sites and local fibrations induces a morphism of relative toposes, we have that:

\begin{cor}
Let $A: (\mathcal{G}(\mathbb D),K) \to (\mathcal{G}(\mathbb D'),K')$ be a morphism of relative sites. We have that $\Sh(A)$ is a morphism of relative toposes.
\end{cor}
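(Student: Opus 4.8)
The plan is to deduce this as a direct corollary by chaining together Proposition~\ref{morphfibimpkfib} and Theorem~\ref{loccartesianmorphsites}, so that the work consists entirely in verifying that the hypotheses of those two results are met, rather than in any new construction.

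First I would unpack the data of a morphism of relative sites $A$: by definition such an $A$ is simultaneously a morphism of fibrations and a morphism of sites, with $p'A \simeq p$; in particular, being a morphism of sites, $A$ is continuous. This is exactly the input required by Proposition~\ref{morphfibimpkfib}. I would then invoke that proposition, which states that a continuous morphism of fibrations is automatically a morphism of local fibrations. This is the crucial conversion step: it replaces the (a priori stronger) morphism-of-fibrations structure of $A$ by the weaker morphism-of-local-fibrations condition, which is precisely the hypothesis appearing in the main theorem. The underlying reason, already made explicit in the preceding proposition, is that $A$ being a morphism of fibrations forces all the comparison vertical arrows $v^f_x$ of Proposition~\ref{defectmorphfib} to be isomorphisms, hence they are a fortiori sent to isomorphisms by $l_{K'}$, which is exactly the condition governing the preservation of locally cartesian arrows.

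Finally I would check that the ambient hypotheses of Theorem~\ref{loccartesianmorphsites} are satisfied: the two relative sites $p : (\mathcal{G}(\mathbb D),K) \to (\cal C,J)$ and $p' : (\mathcal{G}(\mathbb D'),K') \to (\cal C,J)$ are fibrations, hence in particular local fibrations (as observed in the remarks following the definition of local fibration), and $A$ is a morphism of sites with $p'A \simeq p$. Applying the equivalence (iii)$\Rightarrow$(i) of that theorem to the morphism-of-local-fibrations condition established in the previous step yields immediately that the induced geometric morphism $\Sh(A)$ is a morphism of relative toposes.

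I do not expect a genuine obstacle here, since the statement is a formal consequence of the two cited results once the hypotheses are matched up. The only points requiring a moment's care are that a morphism of sites is indeed continuous and that relative sites qualify as local fibrations, both of which are immediate from the definitions; all the substantive work has already been carried out in Proposition~\ref{morphfibimpkfib} and Theorem~\ref{loccartesianmorphsites}.
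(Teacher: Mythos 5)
Your proposal is correct and follows exactly the paper's own argument: unpack a morphism of relative sites as a continuous (being a morphism of sites) morphism of fibrations, apply Proposition~\ref{morphfibimpkfib} to conclude it is a morphism of local fibrations, and then invoke the implication (iii)$\Rightarrow$(i) of Theorem~\ref{loccartesianmorphsites}. The only difference is that you spell out the two hypothesis checks (morphisms of sites are continuous; relative sites are local fibrations) that the paper leaves implicit, which is harmless.
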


\begin{proof}
As stated in \ref{morphfibimpkfib}, every morphism of sites which is also a morphism of fibrations is a morphism of local fibrations. Hence, by \ref{loccartesianmorphsites} it indeed induces a morphism of relative toposes.
\end{proof}

\begin{remark}
One can observe that, for a relative site $p : (\cal G(\mathbb D),K) \to (\cal C,J)$ and a relative topos $f : \cal E \to \widehat{\cal C}_J$, a continuous functor $A : (\cal G(\mathbb D),K) \to ((\cal E/f^*l_J),J_f)$ such that $\pi_f A \simeq p$ is a morphism of fibrations if and only if it is a morphism of local fibrations. Indeed, by \ref{morphfibimpkfib}, every morphism of fibrations is in particular a morphism of local fibrations; conversely, if $A$ is a morphism of local fibrations, then it sends cartesian arrows (which are in particular locally cartesian) to locally cartesian arrows of $((\cal E/f^*l_J),J_f)$, but these are cartesian arrows, hence it is a morphism of fibrations. This allows the relative Diaconescu theorem for local fibrations (\ref{diaclocfib}) to specialize to the one for fibrations (\cite{bartolicaramello}, Theorem 3.15).
\end{remark}

\section{Being a fibration at the topos-level}

Up to now, we have only been working under the assumption that $p : ({\cal D},K) \to ({\cal C},J)$ is a comorphism of sites. If, in addition, $p$ is continuous, it admits an extension $\Sh(p)^* : \widehat{\cal D}_K \to \widehat{\cal C}_J$ of $p$ between the corresponding sheaf toposes. In this section, we aim to compare the conditions under which $p$ is a local fibration with those under which $\Sh(p)^*$ is a fibration. To do so, we will first reformulate the notion of cartesian arrow and the existence of cartesian liftings in terms of a left adjoint. Then, this new formulation will allow to draw a link between contiuous comorphisms of sites infucing fibrations at the topos-level, and locally connected morphisms. Finally, we give a general characterization of those continuous comorphisms inducing fibrations at the topos-level.

\subsection{Fibrations and left adjoint projection functors}\label{section5}

Recall that a continuous comorphism of sites $p : (\cal D,K) \to (\cal C,J)$ induces a pair of adjoint functors \( C_p^* \dashv \Sh(p)^* \), where \( C_p^* \) is the inverse image functor and \( \Sh(p)^* \) is its exceptional left adjoint. Since the functor $\Sh(p)^* : \widehat{\cal D}_K \to \widehat{\cal C}_J$ constitutes an extension to the toposes of the functor $p : (\cal D,K) \to (\cal C,J)$, as pictured in the following commutative square:

% https://q.uiver.app/#q=WzAsNCxbMCwxLCIoXFxjYWwgQyxKKSJdLFsxLDEsIihcXGNhbCBELEspIl0sWzAsMCwiXFx3aWRlaGF0e1xcY2FsIEN9X0oiXSxbMSwwLCJcXHdpZGVoYXR7XFxjYWwgRH1fSyJdLFszLDIsIlxcU2gocCleKiIsMl0sWzEsMCwicCJdLFswLDIsImxfSiJdLFsxLDMsImxfSyIsMl0sWzAsMywiXFxzaW1lcSIsMSx7InN0eWxlIjp7ImJvZHkiOnsibmFtZSI6Im5vbmUifSwiaGVhZCI6eyJuYW1lIjoibm9uZSJ9fX1dXQ==
\[\begin{tikzcd}
	{\widehat{\cal C}_J} & {\widehat{\cal D}_K} \\
	{(\cal C,J)} & {(\cal D,K)}
	\arrow["{\Sh(p)^*}"', from=1-2, to=1-1]
	\arrow["{l_J}", from=2-1, to=1-1]
	\arrow["\simeq"{description}, draw=none, from=2-1, to=1-2]
	\arrow["{l_K}"', from=2-2, to=1-2]
	\arrow["p", from=2-2, to=2-1]
\end{tikzcd}\]

\noindent it is very natural to wonder when it will be a fibration. 

Now, if we want to study the fibration-theoretic aspects of \( \Sh(p)^* \), we should first consider the general and abstract case where we just have a pair of adjoint functors $p_* \vdash p^*$. In this subsection, we characterize the cartesian arrows and the existence of cartesian liftings for a projection functor which is a left adjoint.

\begin{prop}\label{cartesianwithunit}
Let $p^* : {\cal D} \to {\cal C}$ be a functor having a right adjoint $p_* \vdash p^*$ with its unit $\eta$. An arrow $f : d' \to d$ in $\cal D$ is cartesian for $p^*$ if and only if the following square is a pullback:

% https://q.uiver.app/#q=WzAsNCxbMCwwLCJkJyJdLFsxLDAsImQiXSxbMCwxLCJwXypwXiooZCcpIl0sWzEsMSwicF8qcF4qKGQpIl0sWzAsMiwiXFxldGFfe2QnfSIsMl0sWzEsMywiXFxldGFfe2R9Il0sWzAsMSwiZiJdLFsyLDMsInBfKnBeKihmKSIsMl1d
\[\begin{tikzcd}
	{d'} & d \\
	{p_*p^*(d')} & {p_*p^*(d)}
	\arrow["{\eta_{d'}}"', from=1-1, to=2-1]
	\arrow["{\eta_{d}}", from=1-2, to=2-2]
	\arrow["f", from=1-1, to=1-2]
	\arrow["{p_*p^*(f)}"', from=2-1, to=2-2]
\end{tikzcd}\]
\end{prop}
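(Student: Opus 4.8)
The plan is to prove both implications at once, by showing that the universal property defining a cartesian arrow for $p^*$ and the universal property of the displayed pullback square are, after transposing across the adjunction $p^* \dashv p_*$, literally the same condition. First I would unwind the two properties explicitly for a test object $d''$ of $\mathcal{D}$. Spelling out the definition of cartesian arrow for the functor $p^*$, the arrow $f$ is cartesian precisely when, for every $g : d'' \to d$ and every $h : p^*(d'') \to p^*(d')$ with $p^*(f)\circ h = p^*(g)$, there is a unique $h' : d'' \to d'$ with $p^*(h') = h$ and $f\circ h' = g$. On the other side, the square is a pullback precisely when, for every $g : d'' \to d$ and every $k : d'' \to p_*p^*(d')$ with $\eta_d\circ g = p_*p^*(f)\circ k$, there is a unique $h' : d'' \to d'$ with $f\circ h' = g$ and $\eta_{d'}\circ h' = k$. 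Note that in both cases the test object $d''$ ranges over all of $\mathcal{D}$, so the two families of cones are indexed the same way.

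The key step is then to set up the natural bijection between the two kinds of input data via the transpose of $p^* \dashv p_*$, namely $\Hom_{\mathcal{C}}(p^*(d''),p^*(d')) \cong \Hom_{\mathcal{D}}(d'', p_*p^*(d'))$, sending $h$ to $k := p_*(h)\circ \eta_{d''}$. I would check that, under this bijection, the side condition $p^*(f)\circ h = p^*(g)$ is equivalent to $\eta_d\circ g = p_*p^*(f)\circ k$: applying $p_*(-)\circ\eta_{d''}$ to $p^*(f)\circ h$ gives $p_*p^*(f)\circ k$, while applying it to $p^*(g)$ gives $p_*p^*(g)\circ\eta_{d''} = \eta_d\circ g$ by naturality of the unit; since the transpose is injective, the two conditions are interchangeable. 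Thus a datum $(g,h)$ admissible for the cartesian property corresponds exactly to a datum $(g,k)$ admissible for the pullback property.

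It remains to match the output equations. The condition $f\circ h' = g$ is shared by both. For the other equation, the crucial identity is that the adjoint transpose of $p^*(h')$ under $p^* \dashv p_*$ equals $\eta_{d'}\circ h'$, again by naturality of $\eta$ (i.e. $p_*p^*(h')\circ\eta_{d''} = \eta_{d'}\circ h'$). Hence, with $k$ the transpose of $h$, the equality $\eta_{d'}\circ h' = k$ holds if and only if $p^*(h') = h$, by injectivity of the transpose. Consequently the two universal properties coincide term by term, including their uniqueness clauses, and $f$ is cartesian for $p^*$ if and only if the square is a pullback. I do not expect a genuine obstacle here: the entire argument is a formal consequence of the triangle identities and the naturality of the unit, and the only point requiring care is keeping the transpose bookkeeping consistent, in particular the identity expressing $\eta_{d'}\circ h'$ as the transpose of $p^*(h')$, which is what lets the base-category lifting data be internalised in $\mathcal{D}$ through $p_*p^*$.
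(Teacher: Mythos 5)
Your proof is correct and takes essentially the same route as the paper: both arguments identify, via transposition along $p^*\dashv p__*$ and naturality of the unit, the data and equations of the pullback universal property with those of the cartesian lifting property. The paper states this in one condensed sentence; your version simply spells out the bookkeeping (in particular the identity $p_*p^*(h')\circ\eta_{d''}=\eta_{d'}\circ h'$) that the paper leaves implicit.
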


\begin{proof}
The square is a pullback exactly when there is a bijection between arrows $h : d'' \to d'$ and pairs of arrows $(g : d'' \to d, h' : d'' \to p_*p^*(d'))$ such that $ \eta_{d} g =  p_*p^*(f) h'$, that is, by transposition along the adjunction: $p^*(g) = p^*(f)^th'$, which is exactly the unique lifting property of a cartesian arrow $f$.
\end{proof}

Because of the definition of the canonical functor $\eta_{\cal D}$ in the case of a continuous comorphism of sites (Remarks 2.3. (d) \cite{bartolicaramello}), the previous proposition gives us a characterization of locally cartesian arrows in this case:

\begin{prop}
Let $p : (\mathcal{C},J) \to (\mathcal{D},K)$ be a continuous comorphism of sites. Then an arrow $f : d \to d'$ is locally cartesian if and only if $l_K(f)$ is cartesian for $\Sh(p)^* : \Sh(\mathcal{D},K) \to \Sh(\mathcal{C},J)$.
\end{prop}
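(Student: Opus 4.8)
The plan is to chain three facts: the definition of a locally cartesian arrow, the pullback description of cartesian arrows in the comma category $(\widehat{\cal D}_K/C_p^*)$ given by \ref{enumloccart}(i), and the unit‑square criterion for cartesian arrows of a left adjoint established in \ref{cartesianwithunit}. The latter applies here because $\Sh(p)^*$ is a functor admitting a right adjoint, namely $C_p^*$ (the exceptional left adjoint situation $\Sh(p)^*\dashv C_p^*$).

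First I would unwind the definition: by construction, $f$ is locally cartesian exactly when $\eta_{\cal D}(f)$ is a cartesian arrow of the canonical relative site $(\widehat{\cal D}_K/C_p^*)$. Recalling that $\eta_{\cal D}(d)=(l_K(d),l_J(p(d)),a_K(ev_{1_{p(d)}}))$, the criterion \ref{enumloccart}(i) states that $\eta_{\cal D}(f)$ is cartesian precisely when the square whose top edge is $l_K(f)$, whose bottom edge is $C_p^*l_J(p(f))$, and whose vertical edges are the structure maps $a_K(ev_{1_{p(d)}})$ and $a_K(ev_{1_{p(d')}})$ is a pullback in $\widehat{\cal D}_K$.

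The crucial step is then to recognise this square as the unit square of \ref{cartesianwithunit} for the left adjoint $\Sh(p)^*$. Since $p$ is continuous, $\Sh(p)^*$ extends $p$ along the canonical functors (the commutative square recalled at the beginning of this subsection), giving canonical isomorphisms $\Sh(p)^*(l_K(d))\simeq l_J(p(d))$; under these, the bottom edge $C_p^*l_J(p(f))$ is identified with $C_p^*\Sh(p)^*(l_K(f))$. Moreover, Remarks 2.3(d) of \cite{bartolicaramello} — which computes $\eta_{\cal D}$ in the continuous case — identifies the structure map $a_K(ev_{1_{p(d)}})$ with the component at $l_K(d)$ of the unit $\Id\Rightarrow C_p^*\Sh(p)^*$ of the adjunction $\Sh(p)^*\dashv C_p^*$. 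Thus the square above is literally the square of \ref{cartesianwithunit} evaluated at the arrow $l_K(f)$, so it is a pullback if and only if $l_K(f)$ is cartesian for $\Sh(p)^*$. Combining the three equivalences yields the statement.

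The main obstacle is precisely this identification: one must check that the structure map $a_K(ev_{1_{p(d)}}):l_K(d)\to C_p^*l_J(p(d))$ coincides, along $\Sh(p)^*(l_K(d))\simeq l_J(p(d))$, with the adjunction unit at $l_K(d)$. Both maps are the transpose under $\Sh(p)^*\dashv C_p^*$ of the comparison $\Sh(p)^*(l_K(d))\to l_J(p(d))$, so this is a coherence verification rather than a genuine difficulty; it is exactly the content of the cited Remarks 2.3(d), and once granted, the remainder of the argument is purely formal.
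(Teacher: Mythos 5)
Your proposal is correct and takes essentially the same approach as the paper's own proof: the paper likewise invokes Remarks 2.3(d) of \cite{bartolicaramello} to identify $\eta_{\cal D}$ in the continuous case with the unit of the adjunction $\Sh(p)^* \dashv C_p^*$ evaluated at $l_K(-)$, and then applies the unit-square criterion of \ref{cartesianwithunit}. Your write-up simply makes explicit the square identifications (via $\Sh(p)^*l_K \simeq l_J p$ and \ref{enumloccart}(i)) that the paper's two-sentence proof leaves implicit.
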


\begin{proof}

Recall that, when $p$ is continuous, $\eta_{\cal D}$ is given at an object $d$ of $\cal D$ by the unit of the weak morphism of toposes (adjunction) $\Sh(p)_* \vdash \Sh(p)^*$ at the object $l_K(d)$. The precedent proposition says that $l_K(f)$ is cartesian exactly when $\eta_{\cal D}(f)$ is represented by a pullback, that is, $f$ is locally cartesian by definition.  
\end{proof}

Inspired by the previous reformulation of cartesian arrows for a left adjoint, we have the same kind of characterization for a left adjoint to be a fibration:

\begin{prop}\label{liftingwithunit}
Let $p^* : {\cal D} \to {\cal C}$ be a functor having a right adjoint $p_* \vdash p^*$ with unit $\eta$. An arrow $f: c \to p^*(d)$ admits a cartesian lifting if and only if the following pullback exists and the arrow $q^t : p^*(F') \to c$ is an isomorphism:

% https://q.uiver.app/#q=WzAsNCxbMCwwLCJGJyJdLFsxLDAsIkYiXSxbMCwxLCJwXyooYykiXSxbMSwxLCJwXipwXyooZCkiXSxbMiwzLCJwXyooZikiLDJdLFsxLDMsIlxcZXRhKEYpIl0sWzAsMSwiXFx3aWRlaGF0e2Z9Il0sWzAsMiwicSIsMl0sWzAsMywiIiwwLHsic3R5bGUiOnsibmFtZSI6ImNvcm5lciJ9fV1d
\[\begin{tikzcd}
	{F'} & F \\
	{p_*(c)} & {p^*p_*(d)}
	\arrow["{\hat{f}}", from=1-1, to=1-2]
	\arrow["q"', from=1-1, to=2-1]
	\arrow["\lrcorner"{anchor=center, pos=0.125}, draw=none, from=1-1, to=2-2]
	\arrow["{\eta(F)}", from=1-2, to=2-2]
	\arrow["{p_*(f)}"', from=2-1, to=2-2]
\end{tikzcd}\]
In particular, $p^*$ is a fibration if and only if this is true for all such arrows $f : c \to p^*(d)$.
\end{prop}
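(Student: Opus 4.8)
The plan is to deduce everything from the unit-square characterization of cartesian arrows in Proposition \ref{cartesianwithunit}, by translating the two defining features of a cartesian lifting of $f$---namely that $\hat f$ be cartesian and that $p^*(\hat f)=f\sigma$ for some isomorphism $\sigma$---into statements about the displayed pullback (the pullback of the unit $\eta_d:d\to p_*p^*(d)$ along $p_*(f):p_*(c)\to p_*p^*(d)$) and about the transpose $q^t$ across the adjunction $p^*\vdash p_*$. Throughout I write $\epsilon$ for the counit and use the triangle identity $\epsilon_{p^*(X)}\circ p^*(\eta_X)=\id$ together with the standard relation $q=p_*(q^t)\circ\eta_{F'}$ between an arrow $q$ into $p_*(-)$ and its transpose $q^t=\epsilon_c\circ p^*(q)$.

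For the direction \emph{pullback exists and $q^t$ iso $\Rightarrow$ cartesian lifting}, I would start from the commutativity $\eta_d\circ\hat f=p_*(f)\circ q$ of the displayed square and transpose it along $p^*\vdash p_*$. Applying the triangle identity to the left-hand side and naturality of $\epsilon$ to the right-hand side, this transposes precisely to $p^*(\hat f)=f\circ q^t$. Hence, once $q^t$ is an isomorphism, setting $\sigma:=q^t:p^*(F')\cong c$ exhibits $\hat f$ as a lift of $f$ with $p^*(\hat f)=f\sigma$. It then remains to check that $\hat f$ is cartesian: using $q=p_*(q^t)\circ\eta_{F'}$ and $p_*(f)\circ p_*(q^t)=p_*(f\circ q^t)=p_*p^*(\hat f)$, I would identify the displayed square with the unit-square of \ref{cartesianwithunit} for $\hat f$, the identification being the isomorphism $p_*(q^t)$ on the lower-left corner and the identity elsewhere. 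Since isomorphic squares are simultaneously pullbacks, the unit-square is a pullback, so $\hat f$ is cartesian by \ref{cartesianwithunit}.

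For the converse, I start from a cartesian lifting $\hat f:d'\to d$ with isomorphism $\sigma:p^*(d')\cong c$ and $p^*(\hat f)=f\sigma$. By \ref{cartesianwithunit} the unit-square for $\hat f$ is a pullback; transporting its lower-left corner $p_*p^*(d')$ to $p_*(c)$ along the isomorphism $p_*(\sigma)$ and setting $q:=p_*(\sigma)\circ\eta_{d'}$ identifies it with the displayed square, the commutativity following from naturality of $\eta$. Thus the displayed pullback exists (realized by $F'=d'$). A final transpose computation, again via naturality of $\epsilon$ and a triangle identity, yields $q^t=\sigma$, which is an isomorphism. The closing \textquotedblleft in particular\textquotedblright\ clause is then immediate, since by definition $p^*$ is a (Street) fibration exactly when every $f:c\to p^*(d)$ admits a cartesian lifting.

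I expect the only delicate point to be the two transpose computations---above all the identity $p^*(\hat f)=f\circ q^t$---where the triangle identities and the naturality of the unit and counit must be applied in the correct order; the Street (up-to-isomorphism) nature of the notion of fibration must also be respected, by carrying the comparison isomorphism $\sigma$ through the argument rather than assuming strict equalities. Everything else reduces to formal manipulation of pullbacks and adjoint transposes.
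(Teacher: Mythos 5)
Your proof is correct and follows essentially the same route as the paper: both directions are reduced to the unit-square characterization of Proposition \ref{cartesianwithunit}, with the displayed pullback identified with the unit square of $\hat{f}$ via the isomorphism $p_*(q^t)$ (resp. $p_*(\sigma)$), and the key transpose identities $p^*(\hat{f}) = f \circ q^t$ and $q^t = \sigma$ obtained by the same triangle-identity/naturality computations. If anything, your treatment of the converse is slightly tidier than the paper's (which introduces a comparison isomorphism $\sigma'$ and concludes $q^t = \sigma\, p^*(\sigma')^{-1}$), since you realize the pullback directly as $d'$ with $q = p_*(\sigma)\circ\eta_{d'}$ and get $q^t=\sigma$ on the nose.
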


\begin{proof}
If the arrow admits a cartesian lifting, then \ref{cartesianwithunit} gives us that the pullback exists and is given by the unit of the adjunction, which is transposed as the identity arrow.

For the converse direction, we saw in \ref{cartesianwithunit} that an arrow $f$ is cartesian if and only if its associated naturality square is cartesian. Thus, if we want to obtain a cartesian lifting of an arrow $f : c \to p^*(d)$, we should look at the pullback square:

% https://q.uiver.app/#q=WzAsNCxbMCwwLCJGJyJdLFsxLDAsIkYiXSxbMCwxLCJwXyooYykiXSxbMSwxLCJwXipwXyooZCkiXSxbMiwzLCJwXyooZikiLDJdLFsxLDMsIlxcZXRhKEYpIl0sWzAsMSwiXFx3aWRlaGF0e2Z9Il0sWzAsMiwicSIsMl0sWzAsMywiIiwwLHsic3R5bGUiOnsibmFtZSI6ImNvcm5lciJ9fV1d
\[\begin{tikzcd}
	{F'} & F \\
	{p_*(c)} & {p^*p_*(d)}
	\arrow["{\hat{f}}", from=1-1, to=1-2]
	\arrow["q"', from=1-1, to=2-1]
	\arrow["\lrcorner"{anchor=center, pos=0.125}, draw=none, from=1-1, to=2-2]
	\arrow["{\eta(F)}", from=1-2, to=2-2]
	\arrow["{p_*(f)}"', from=2-1, to=2-2]
\end{tikzcd}\]

If $q^t$ is an isomorphism, we have:

% https://q.uiver.app/#q=WzAsNixbMSwwLCJkJyJdLFsyLDAsImQiXSxbMSwxLCJwXyooYykiXSxbMiwxLCJwXypwXiooZCkiXSxbMCwxLCJwXypwXiooZCcpIl0sWzAsMCwiZCciXSxbMiwzLCJwXyooZikiLDJdLFsxLDMsIlxcZXRhKGQpIl0sWzAsMSwiXFx3aWRlaGF0e2Z9Il0sWzAsMiwicSIsMl0sWzAsMywiIiwwLHsic3R5bGUiOnsibmFtZSI6ImNvcm5lciJ9fV0sWzQsMiwicF8qKHFedCkiLDJdLFs1LDQsIlxcZXRhKGQnKSIsMl0sWzUsMCwiMV97ZCd9Il0sWzQsMywicF8qcF4qKFxcd2lkZWhhdHtmfSkiLDIseyJjdXJ2ZSI6M31dLFs1LDIsIiIsMix7InN0eWxlIjp7Im5hbWUiOiJjb3JuZXIifX1dLFs0LDIsIlxcc2ltZXEiXV0=
\[\begin{tikzcd}
	{d'} & {d'} & d \\
	{p_*p^*(d')} & {p_*(c)} & {p_*p^*(d)}
	\arrow["{1_{d'}}", from=1-1, to=1-2]
	\arrow["{\eta(d')}"', from=1-1, to=2-1]
	\arrow["\lrcorner"{anchor=center, pos=0.125}, draw=none, from=1-1, to=2-2]
	\arrow["{\widehat{f}}", from=1-2, to=1-3]
	\arrow["q"', from=1-2, to=2-2]
	\arrow["\lrcorner"{anchor=center, pos=0.125}, draw=none, from=1-2, to=2-3]
	\arrow["{\eta(d)}", from=1-3, to=2-3]
	\arrow["{p_*(q^t)}"', from=2-1, to=2-2]
	\arrow["\simeq", from=2-1, to=2-2]
	\arrow["{p_*p^*(\widehat{f})}"', bend right = 25, from=2-1, to=2-3]
	\arrow["{p_*(f)}"', from=2-2, to=2-3]
\end{tikzcd}\]

so that $f \circ q^t = p^*(\hat{f})$ and the outer rectangle is a pullback, that is, $\widehat{f}$ is a cartesian lifting for $f$.

Conversely, if $p^*$ is a fibration, we have an isomorphism $\sigma$ and a cartesian arrow $\hat{f}$ such that: 

% https://q.uiver.app/#q=WzAsMyxbMCwwLCJwXiooZCcpIl0sWzEsMCwiYyJdLFsyLDAsInBeKihkKSJdLFswLDEsIlxcc2lnbWEiLDAseyJzdHlsZSI6eyJib2R5Ijp7Im5hbWUiOiJub25lIn0sImhlYWQiOnsibmFtZSI6Im5vbmUifX19XSxbMSwyLCJmIl0sWzAsMiwicF4qKFxcd2lkZWhhdHtmfSkiLDIseyJjdXJ2ZSI6M31dLFswLDEsIlxcc2ltZXEiLDJdXQ==
\[\begin{tikzcd}
	{p^*(d')} & c & {p^*(d)}
	\arrow["\sigma", draw=none, from=1-1, to=1-2]
	\arrow["\simeq"', from=1-1, to=1-2]
	\arrow["{p^*(\hat{f})}"', bend right = 25, from=1-1, to=1-3]
	\arrow["f", from=1-2, to=1-3]
\end{tikzcd}\]

Since $\hat{f}$ is cartesian, this gives the following pullback squares:

% https://q.uiver.app/#q=WzAsNixbMSwwLCJcXG92ZXJsaW5le2R9Il0sWzIsMCwiZCJdLFsxLDEsInBfKihjKSJdLFsyLDEsInBfKnBeKihkKSJdLFswLDEsInBfKnBeKihkJykiXSxbMCwwLCJkJyJdLFsyLDMsInBfKihmKSIsMl0sWzEsMywiXFxldGEoZCkiXSxbMCwxLCJmJyJdLFswLDIsInEiLDJdLFswLDMsIiIsMCx7InN0eWxlIjp7Im5hbWUiOiJjb3JuZXIifX1dLFs0LDIsInBfKihcXHNpZ21hKSIsMl0sWzQsMywicF8qcF4qKFxcaGF0e2Z9KSIsMix7ImN1cnZlIjozfV0sWzUsNCwiXFxldGEoZCcpIiwyXSxbNSwwLCJcXHNpZ21hJyJdLFs1LDIsIiIsMCx7InN0eWxlIjp7Im5hbWUiOiJjb3JuZXIifX1dLFs1LDAsIlxcc2ltZXEiLDJdLFs1LDEsIlxcaGF0e2Z9IiwwLHsiY3VydmUiOi0zfV0sWzQsMiwiXFxzaW1lcSIsMCx7InN0eWxlIjp7ImJvZHkiOnsibmFtZSI6Im5vbmUifSwiaGVhZCI6eyJuYW1lIjoibm9uZSJ9fX1dXQ==
\[\begin{tikzcd}
	{d'} & {\overline{d}} & d \\
	{p_*p^*(d')} & {p_*(c)} & {p_*p^*(d)}
	\arrow["{\sigma'}", from=1-1, to=1-2]
	\arrow["\simeq"', from=1-1, to=1-2]
	\arrow["{\hat{f}}", bend left = 28, from=1-1, to=1-3]
	\arrow["{\eta(d')}"', from=1-1, to=2-1]
	\arrow["\lrcorner"{anchor=center, pos=0.125}, draw=none, from=1-1, to=2-2]
	\arrow["{f'}", from=1-2, to=1-3]
	\arrow["q"', from=1-2, to=2-2]
	\arrow["\lrcorner"{anchor=center, pos=0.125}, draw=none, from=1-2, to=2-3]
	\arrow["{\eta(d)}", from=1-3, to=2-3]
	\arrow["{p_*(\sigma)}"', from=2-1, to=2-2]
	\arrow["\simeq", draw=none, from=2-1, to=2-2]
	\arrow["{p_*p^*(\hat{f})}"', bend right = 28, from=2-1, to=2-3]
	\arrow["{p_*(f)}"', from=2-2, to=2-3]
\end{tikzcd}\]

\noindent where $f'$ is the pullback of $p_*(f)$ and $\sigma'$ is the pullback of $p_*(\sigma)$.

Finally we have $(\sigma'q)^t = q^tp_*(\sigma')$, but also $(q\sigma')^t = (p_*(\sigma)\eta(F'))^t = \sigma$, so that $q^t = \sigma p_*(\sigma')^{-1}$ is an iso.
\end{proof}

\subsection{Locally connected morphisms as fibrations}\label{locallyconnectedsection}

In this subsection, we recall the definition of a locally connected morphism. We then show that the previous formulation of the existence of cartesian liftings, when applied to the left adjoint $\Sh(p)^*$, is closely related to local connectedness.

\begin{defn}
We say that an essential geometric morphism $f : {\cal F} \to {\cal E}$ (a geometric morphism of which the inverse image $f^*$ admits a further left adjoint $f_!$) is locally connected when its essential image $f_!$ induces an indexed left adjoint $(f_!) : S_{f} \to S_{\cal E}$ given at an object $E$ by $(f_!)_E : ({\cal F}/f^*(E)) \to ({\cal E}/E)$ sending $[\alpha : F \to f^*(E)]$ to $[\alpha^t: f_!(F) \to E]$. 
\end{defn}

\begin{remark}
For an essential geometric morphism we always have such a family of functors between the fibers, but it is not always the case that it is \emph{indexed}. Asking for $(f_!)$ to be indexed is exactly asking for every arrow $h : E' \to E$ in $\cal E$ to induce a commutative square (up to iso):

% https://q.uiver.app/#q=WzAsNCxbMCwwLCJ7XFxjYWwgRn0vZl4qKEUpIl0sWzEsMCwie1xcY2FsIEZ9L2ZeKihFJykiXSxbMCwxLCJ7XFxjYWwgRX0vRSJdLFsxLDEsIntcXGNhbCBFfS9FJyJdLFswLDIsIihmXyEpX0UiLDJdLFsxLDMsIihmXyEpX3tFJ30iXSxbMCwxLCJwdWxsYmFjayhmXiooaCkpIiwwLHsib2Zmc2V0IjotMX1dLFsyLDMsInB1bGxiYWNrKGgpIiwyXV0=
\[\begin{tikzcd}
	{{\cal F}/f^*(E)} & {{\cal F}/f^*(E')} \\
	{{\cal E}/E} & {{\cal E}/E'}
	\arrow["{S_f(h)}", shift left, from=1-1, to=1-2]
	\arrow["{(f_!)_E}"', from=1-1, to=2-1]
	\arrow["{(f_!)_{E'}}", from=1-2, to=2-2]
	\arrow["{S_{\cal E}(h)}"', from=2-1, to=2-2]
\end{tikzcd}\]
\end{remark}

The following important remark explains how the existence of cartesian liftings is related to the locally connectedness: 

\begin{remarks}\label{frobrempourrelevcart}

\begin{enumerate}
    \item For a continuous comorphism of sites $p : ({\cal D},K) \to ({\cal C},J)$ and an arrow $f : F \to \Sh(p)^*(E)$ in $\widehat{\cal C}_J$, we have a square of functors which does not commute in general, but commutes (in particular) when $C_p$ is locally connected:
% https://q.uiver.app/#q=WzAsNCxbMCwwLCJTX3tDX3B9KFxcU2gocCleKkUpIl0sWzEsMCwiU197Q19wfShGKSJdLFsxLDEsIlNfe1xcd2lkZWhhdHtcXGNhbCBDfV9KfShGKSJdLFswLDEsIlNfe1xcd2lkZWhhdHtcXGNhbCBDfV9KfShcXFNoKHApXipFKSJdLFswLDEsIlNfe0NfcH0oZikiXSxbMSwyLCIoXFxTaChwKV4qKV9FIl0sWzAsMywiKFxcU2gocCleKilfe1xcU2gocCleKkV9IiwyXSxbMywyLCJTX3tcXHdpZGVoYXR7XFxjYWwgQ31fSn0oZikiLDJdXQ==
\[\begin{tikzcd}
	{S_{C_p}(\Sh(p)^*E)} & {S_{C_p}(F)} \\
	{S_{\widehat{\cal C}_J}(\Sh(p)^*E)} & {S_{\widehat{\cal C}_J}(F)}
	\arrow["{S_{C_p}(f)}", from=1-1, to=1-2]
	\arrow["{(\Sh(p)^*)_{\Sh(p)^*E}}"', from=1-1, to=2-1]
	\arrow["{(\Sh(p)^*)_F}", from=1-2, to=2-2]
	\arrow["{S_{\widehat{\cal C}_J}(f)}"', from=2-1, to=2-2]
\end{tikzcd}\]

In the light of \ref{liftingwithunit}, asking the existence of a cartesian lifting for $f$ is exactly asking that the previous square commutes at the object $[\eta_p(E)]$, that is:

$$(\Sh(p)^*)_FS_{C_p}(f)([\eta_p(E)]) \simeq S_{\widehat{\cal C}_J}(f)(\Sh(p)^*)_{\Sh(p)^*E}([\eta_p(E)])$$

Indeed, the lower path of the square, when evaluated at $[\eta_p(E)]$, gives us first the transposition of $\eta_p(E)$ which is the identity at $E$, since $\eta_p$ is the unit of the adjunction $C_p^* \vdash Sh(p)^*$, and then we pull back the identity which gives us again the identity. The upper path of the square, when evaluated at $[\eta_p(E)]$, gives us the transpose of the arrow $q$ in the following pullback:

% https://q.uiver.app/#q=WzAsNCxbMSwxLCJDX3BeKlxcU2gocCleKkUiXSxbMCwxLCJDX3BeKkYiXSxbMSwwLCJFIl0sWzAsMCwiRiciXSxbMiwwLCJcXGV0YV9wKEUpIl0sWzEsMCwiQ19wXiooZikiLDJdLFszLDEsInEiLDJdLFszLDJdLFszLDAsIiIsMSx7InN0eWxlIjp7Im5hbWUiOiJjb3JuZXIifX1dXQ==
\[\begin{tikzcd}
	{F'} & E \\
	{C_p^*F} & {C_p^*\Sh(p)^*E}
	\arrow[from=1-1, to=1-2]
	\arrow["q"', from=1-1, to=2-1]
	\arrow["\lrcorner"{anchor=center, pos=0.125}, draw=none, from=1-1, to=2-2]
	\arrow["{\eta_p(E)}", from=1-2, to=2-2]
	\arrow["{C_p^*(f)}"', from=2-1, to=2-2]
\end{tikzcd}\]
    This is the arrow used in \ref{liftingwithunit}, where we state that $f$ admits a lifting if and only if it is an isomorphism; that is, if and only if the two paths of the square yield isomorphic objects when evaluated at $[\eta_p(E)]$.
    \item The same reflexion holds for \ref{cartesianwithunit}: for a continuous comorphism of sites $p : ({\cal D},K) \to ({\cal C},J)$ and an arrow $f : E' \to E$ in $\widehat{\cal D}_K$, the fact that the following square commutes (up to iso) on the object $[\eta_p(E)]$
    % https://q.uiver.app/#q=WzAsNCxbMCwwLCJTX3tDX3B9KFxcU2gocCleKkUpIl0sWzEsMCwiU197Q19wfShcXFNoKHApXipFJykiXSxbMSwxLCJTX3tcXHdpZGVoYXR7XFxjYWwgQ31fSn0oXFxTaChwKV4qRScpIl0sWzAsMSwiU197XFx3aWRlaGF0e1xcY2FsIEN9X0p9KFxcU2gocCleKkUpIl0sWzAsMSwiU197Q19wfShcXFNoKHApXipmKSIsMCx7Im9mZnNldCI6LTJ9XSxbMSwyLCIoXFxTaChwKV4qKV97XFxTaChwKV4qRSd9Il0sWzAsMywiKFxcU2gocCleKilfe1xcU2gocCleKkV9IiwyXSxbMywyLCJTX3tcXHdpZGVoYXR7XFxjYWwgQ31fSn0oXFxTaChwKV4qZikiLDIseyJvZmZzZXQiOjJ9XV0=
    \[\begin{tikzcd}
	{S_{C_p}(\Sh(p)^*E)} & {S_{C_p}(\Sh(p)^*E')} \\
	{S_{\widehat{\cal C}_J}(\Sh(p)^*E)} & {S_{\widehat{\cal C}_J}(\Sh(p)^*E')}
	\arrow["{S_{C_p}(\Sh(p)^*f)}", shift left=2, from=1-1, to=1-2]
	\arrow["{(\Sh(p)^*)_{\Sh(p)^*E}}"', from=1-1, to=2-1]
	\arrow["{(\Sh(p)^*)_{\Sh(p)^*E'}}", from=1-2, to=2-2]
	\arrow["{S_{\widehat{\cal C}_J}(\Sh(p)^*f)}"', shift right=2, from=2-1, to=2-2]
    \end{tikzcd}\]

\noindent is equivalent to $f$ being a cartesian for the left adjoint $\Sh(p)^*$.
\end{enumerate}
\end{remarks}

The previous remark gives us that the essential image of any locally connected geometric morphism admits cartesian liftings: it forms a fibration.

\begin{prop}\label{locallyconnimpliesfib}
Let $f : {\cal F} \to {\cal E}$ be a locally connected geometric morphism. Its essential image $f_! : {\cal F} \to {\cal E}$ is a fibration.     
\end{prop}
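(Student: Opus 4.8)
The plan is to apply the lifting criterion of \ref{liftingwithunit} to the projection functor $f_! : {\cal F} \to {\cal E}$, which is a left adjoint with right adjoint $f^*$ and unit $\eta : \Id_{\cal F} \Rightarrow f^*f_!$ (the adjunction being $f_! \vdash f^*$). Since $\cal F$ and $\cal E$ are toposes, every pullback at issue exists, so I only need to verify the isomorphism condition of \ref{liftingwithunit}. Fix an arrow $g : E \to f_!(F)$ in $\cal E$ and form the pullback
\[\begin{tikzcd}
	{F'} & F \\
	{f^*(E)} & {f^*f_!(F)}
	\arrow["{\hat g}", from=1-1, to=1-2]
	\arrow["q"', from=1-1, to=2-1]
	\arrow["\lrcorner"{anchor=center, pos=0.125}, draw=none, from=1-1, to=2-2]
	\arrow["{\eta_F}", from=1-2, to=2-2]
	\arrow["{f^*(g)}"', from=2-1, to=2-2]
\end{tikzcd}\]
By \ref{liftingwithunit}, it suffices to show that the transpose $q^t : f_!(F') \to E$ is an isomorphism.

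The key is to read $q$ and $q^t$ through the indexed category $S_f$. Because $S_f(g)$ is by definition pullback along $f^*(g)$, the object $[q : F' \to f^*(E)]$ of the fibre $({\cal F}/f^*(E))$ is exactly $S_f(g)([\eta_F])$, the reindexing along $g$ of the object $[\eta_F : F \to f^*f_!(F)]$ of $({\cal F}/f^*f_!(F))$. Applying the fibre functor $(f_!)_E$, which by the definition of a locally connected morphism sends $[\alpha]$ to $[\alpha^t]$, I obtain $(f_!)_E\big(S_f(g)([\eta_F])\big) = [q^t]$ in $({\cal E}/E)$.

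Now I invoke local connectedness in the form of the indexing square, applied to $g : E \to f_!(F)$: it yields $(f_!)_E \circ S_f(g) \simeq S_{\cal E}(g) \circ (f_!)_{f_!(F)}$. Evaluating both sides at $[\eta_F]$, the right-hand side first computes $(f_!)_{f_!(F)}([\eta_F]) = [\eta_F^t]$; by the triangle identity for $f_! \vdash f^*$ we have $\eta_F^t = \Id_{f_!(F)}$, so this is the terminal object of $({\cal E}/f_!(F))$. Reindexing the terminal object along $g$ gives the terminal object $[\Id_E]$ of $({\cal E}/E)$. Comparing with the left-hand side gives $[q^t] \simeq [\Id_E]$, i.e. $q^t$ is an isomorphism. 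By \ref{liftingwithunit}, every $g : E \to f_!(F)$ then admits a cartesian lifting, so $f_!$ is a fibration.

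The only genuinely delicate points are bookkeeping: identifying the pullback $F'$ of \ref{liftingwithunit} with the reindexing $S_f(g)([\eta_F])$, and thereby $[q^t]$ with $(f_!)_E S_f(g)([\eta_F])$, and recognizing through the triangle identity that $\eta_F^t$ is the identity. Once these identifications are in place, the indexed commutation furnished by local connectedness collapses the upper path to a terminal object and forces $q^t$ to be invertible; this is precisely the abstract counterpart of the computation already foreshadowed in \ref{frobrempourrelevcart}.
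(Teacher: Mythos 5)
Your proof is correct and takes essentially the same route as the paper: the paper deduces this proposition directly from Remarks \ref{frobrempourrelevcart}, which make precisely your identification — by \ref{liftingwithunit}, the cartesian lifting of $g : E \to f_!(F)$ exists iff the indexing square of local connectedness commutes at $[\eta_F]$, the lower path evaluating to the identity (terminal object) and the upper path to $[q^t]$. Your write-up simply spells out the details (the reindexing identification $[q]=S_f(g)([\eta_F])$ and the triangle-identity computation $\eta_F^t = \Id_{f_!(F)}$) that the paper leaves implicit in that remark.
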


\qed

Recall from \cite{CaramelloZanfa}, Definition 2.11.1, that every fibration over a base site can be endowed with its associated Giraud topology: this is the smallest topology making the projection a comorphism of sites. We denote it by \( p : (\mathcal{G}(\mathbb{D}), J_{\mathbb{D}}) \to (\mathcal{C}, J) \). Corollary 4.58 in \cite{denseness} recalls that the projection functor of such a \textquotedblleft trivial \textquotedblright relative site is continuous, and states that the induced essential geometric morphism is in fact locally connected. Combining this result with the previous proposition, we obtain:

\begin{prop}\label{fibimplfibtopos}
Let \( p : (\mathcal{G}(\mathbb{D}), J_{\mathbb{D}}) \to (\mathcal{C}, J) \) be a trivial relative site. The essential image $C_{p!} : \widehat{\mathcal{G}(\mathbb{D})}_{J_{\mathbb{D}}} \to \widehat{\cal C}_J$ of $C_p$ is a fibration.
\end{prop}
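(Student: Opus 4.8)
The plan is to derive the statement as an immediate combination of the two facts recalled just above it, so that no fresh computation is required. The key observation is that the functor $C_{p!}$ appearing in the statement is, by construction, the essential image of the geometric morphism $C_p$ — equivalently the exceptional left adjoint $\Sh(p)^*$ of $C_p^*$ — and that Proposition \ref{locallyconnimpliesfib} already guarantees the essential image of a \emph{locally connected} morphism to be a fibration. It therefore suffices to produce the local connectedness of $C_p$.

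First I would invoke Corollary 4.58 of \cite{denseness}. Applied to the trivial relative site $p : (\mathcal{G}(\mathbb{D}), J_{\mathbb{D}}) \to (\mathcal{C}, J)$, it provides two things: that the projection $p$ is a continuous comorphism of sites — so that $C_p$ is essential and its inverse image $C_p^*$ admits the left adjoint $C_{p!} = \Sh(p)^*$ occurring in the statement — and that the resulting essential geometric morphism $C_p : \widehat{\mathcal{G}(\mathbb{D})}_{J_{\mathbb{D}}} \to \widehat{\cal C}_J$ is moreover locally connected. The second and final step is then to feed $f := C_p$ into Proposition \ref{locallyconnimpliesfib}: since $C_p$ is locally connected, its essential image $f_! = C_{p!}$ is a fibration. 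As this is precisely the assertion, the argument is complete.

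I expect no genuine obstacle at this stage, since the substance has been carried out upstream. Should one wish to make the argument self-contained rather than citing Proposition \ref{locallyconnimpliesfib}, the point to unwind is Remarks \ref{frobrempourrelevcart}: by \ref{liftingwithunit} the existence of a cartesian lifting for $C_{p!}$ of an arrow $f : F \to \Sh(p)^*(E)$ amounts to the transpose $q^t$ of the canonical pullback built from the unit $\eta_p$ of $\Sh(p)^* \dashv C_p^*$ being an isomorphism, and \ref{frobrempourrelevcart} identifies this with the commutativity at $[\eta_p(E)]$ of the comparison square between $S_{C_p}$ and $S_{\widehat{\cal C}_J}$ — commutativity that holds exactly when $(C_{p!})$ is an \emph{indexed} left adjoint, that is, exactly when $C_p$ is locally connected. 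Thus the only inputs that do any real work are the local connectedness supplied by Corollary 4.58 and the general Proposition \ref{locallyconnimpliesfib}.
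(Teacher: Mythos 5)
Your proposal is correct and is exactly the paper's own argument: the paper derives this proposition by combining Corollary 4.58 of the denseness paper (continuity of the projection and local connectedness of the induced essential geometric morphism) with Proposition \ref{locallyconnimpliesfib}, which is precisely your two steps. Your optional unwinding via \ref{liftingwithunit} and \ref{frobrempourrelevcart} also matches how the paper itself justifies \ref{locallyconnimpliesfib}.
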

\qed

Since the projection functor of a trivial relative site is known to induce a locally connected geometric morphism, it is natural to ask whether a similar result holds for local fibrations. Taking a \emph{continuous} local fibration: a local fibration having its projection functor not only being a comorphism but also a continuous functor, we have that the induced geometric morphism is locally connected:

\begin{prop}\label{locconnlocfib}
Let $p : ({\cal D},K) \to ({\cal C},J)$ be continuous local fibration. Then, $C_p$ is a locally connected geometric morphism. 
\end{prop}

\begin{proof}
To see this, we are going to verify that it satisfies the conditions of the point (ii) of the Theorem 4.55. in \cite{denseness}, which gives characterization of those continuous comorphisms inducing locally connected morphisms; we use the same notations.

First condition (point (i) of the point (ii)): let $h : Q \to Q'$ be a morphism of presheaves on $\cal C$, and $(d,x)$ an object $d$ of $\cal D$ with an element $x$ of $Q'(pd)$. Let $(c,z,g)$ be the data of an object $c$ of $\cal C$, an element $z$ of $Q(c)$ and an arrow $g : c \to pd$ such that $Q'(g)(x) = h_{c}(z)$ (an object of the category $\mathcal{B}^h_{(x,d)}$). We are going to show that we can locally reach it with objects in the image of $\xi^h_{(x,d)} : \mathcal{A}^h_{(x,d)} \to \mathcal{B}^h_{(x,d)}$. 

We can locally lift $g$ into locally cartesian arrows: $gv_i = p(\widehat{g_i})$. And then, we just take the objects $(d_i,Q(v_i)(z),\widehat{g_i})$ of $\mathcal{A}^h_{(x,d)}$. Indeed, we have that the following diagram commutes:

% https://q.uiver.app/#q=WzAsNSxbMSwwLCJRKGMpIl0sWzEsMSwiUScoYykiXSxbMCwwLCJRKHBkX2kpIl0sWzAsMSwiUScocGRfaSkiXSxbMiwxLCJRJyhwZCkiXSxbNCwxLCJRJyhnKSIsMl0sWzEsMywiUScodl9pKSIsMl0sWzQsMywiUScoXFx3aWRlaGF0e2dfaX0pIiwwLHsiY3VydmUiOi0zfV0sWzIsMywiaF97cGRfaX0iLDJdLFswLDEsImhfYyIsMl0sWzAsMiwiUSh2X2kpIiwyXV0=
\[\begin{tikzcd}
	{Q(pd_i)} & {Q(c)} \\
	{Q'(pd_i)} & {Q'(c)} & {Q'(pd)}
	\arrow["{Q'(g)}"', from=2-3, to=2-2]
	\arrow["{Q'(v_i)}"', from=2-2, to=2-1]
	\arrow["{Q'(\widehat{g_i})}", bend left = 36, from=2-3, to=2-1]
	\arrow["{h_{pd_i}}"', from=1-1, to=2-1]
	\arrow["{h_c}"', from=1-2, to=2-2]
	\arrow["{Q(v_i)}"', from=1-2, to=1-1]
\end{tikzcd}\]

and hence, $h_{pd_i}Q(v_i)(z) = Q'(v_i)h_c(z)$ (by naturality of $h$), but $h_c(z)  = Q'(g)(x)$, so $h_{pd_i}Q(v_i)(z) = Q'(v_ig)(x) = Q'p(\widehat{g_i})(x)$, which achieve to prove that the

\noindent $(d_i,Q(v_i)(z),\widehat{g_i})$ are objects of $\mathcal{A}^h_{(x,d)})$. Finally, as required, we have locally reached $b_{(x,d)}^h(c,z,g)$:

% https://q.uiver.app/#q=WzAsMyxbMCwxLCJwKGRfaSkiXSxbMSwwLCJjPWJfeyh4LGQpfV5oKGMseixnKSJdLFsxLDEsInAoZF9pKSA9IGJeaF97KHgsZCl9KFxceGlfeyh4LGQpfShkX2ksUSh2X2kpKHopLFxcd2lkZWhhdHtnX2l9KSkiXSxbMCwxLCJ2X2kiXSxbMCwyLCIxX3twZF9pfSIsMl0sWzIsMSwiIGJeaF97KHgsZCl9KHZfaSkiLDJdXQ==
\[\begin{tikzcd}
	& {c=b_{(x,d)}^h(c,z,g)} \\
	{p(d_i)} & {p(d_i) = b^h_{(x,d)}(\xi_{(x,d)}(d_i,Q(v_i)(z),\widehat{g_i}))}
	\arrow["{v_i}", from=2-1, to=1-2]
	\arrow["{1_{pd_i}}"', from=2-1, to=2-2]
	\arrow["{ b^h_{(x,d)}(v_i)}"', from=2-2, to=1-2]
\end{tikzcd}\]

For the second condition ((ii) of point (ii) of the theorem), let's take two objects $(d',y,f)$ and $(d'',y',f')$ of the category $\mathcal{A}^h_{(x,d)})$ and two arrows $\alpha  : c \to b^h_{(x,d)}(\xi_{(x,d)}(d',y,f))$ and $\alpha'  : c \to b^h_{(x,d)}(\xi_{(x,d)}(d'',y',f'))$ such that we have a zig-zag:

% https://q.uiver.app/#q=WzAsNSxbMCwyLCJjIl0sWzEsMCwiYl5oX3soeCxkKX0oXFx4aV97KHgsZCl9KGQnLHksZikpIl0sWzEsNCwiYl5oX3soeCxkKX0oXFx4aV97KHgsZCl9KGQnJyx5JyxmJykpIl0sWzEsMSwiYl5oX3soeCxkKX0oY18xLHpfMSxnXzEpIl0sWzEsMywiYl5oX3soeCxkKX0oY19uLHpfbixnX24pIl0sWzAsMSwiXFxhbHBoYSJdLFswLDIsIlxcYWxwaGEnIiwyXSxbMywxLCJiXmhfeyh4LGQpfSh1XzEpIiwyXSxbNCwyLCJiXmhfeyh4LGQpfSh1X24pIl0sWzQsMywiIiwyLHsic3R5bGUiOnsiYm9keSI6eyJuYW1lIjoic3F1aWdnbHkifSwiaGVhZCI6eyJuYW1lIjoibm9uZSJ9fX1dLFswLDMsIlxcYWxwaGFfMSIsMV0sWzAsNCwiXFxhbHBoYV9uIiwxXSxbMCw5LCIiLDEseyJzaG9ydGVuIjp7InRhcmdldCI6MjB9LCJsZXZlbCI6MSwic3R5bGUiOnsiYm9keSI6eyJuYW1lIjoic3F1aWdnbHkifSwiaGVhZCI6eyJuYW1lIjoibm9uZSJ9fX1dXQ==
\[\begin{tikzcd}
	& {b^h_{(x,d)}(\xi_{(x,d)}(d',y,f))} \\
	& {b^h_{(x,d)}(c_1,z_1,g_1)} \\
	c \\
	& {b^h_{(x,d)}(c_n,z_n,g_n)} \\
	& {b^h_{(x,d)}(\xi_{(x,d)}(d'',y',f'))}
	\arrow["\alpha", from=3-1, to=1-2]
	\arrow["{\alpha'}"', from=3-1, to=5-2]
	\arrow["{b^h_{(x,d)}(u_1)}"', from=2-2, to=1-2]
	\arrow["{b^h_{(x,d)}(u_n)}", from=4-2, to=5-2]
	\arrow[""{name=0, anchor=center, inner sep=0}, squiggly, no head, from=4-2, to=2-2]
	\arrow["{\alpha_1}"{description}, from=3-1, to=2-2]
	\arrow["{\alpha_n}"{description}, from=3-1, to=4-2]
	\arrow[shorten >=10pt, squiggly, no head, from=3-1, to=0]
\end{tikzcd}\]

From the existence of this zig-zag we want to obtain, locally, a zig-zag coming not only from $b^h_{(x,d)}$, but from $b^h_{(x,d)}\xi_{(x,d)}$. Notice that such a zig-zag can always be chosen to be such that the direction of the consecutive arrows are opposite. Now we distinguish two cases: the first one is when the arrow $u_1$ is pointing in the direction indicated in the precedent diagram, and the second one will be when it points to the opposite. We will show that, whatever the direction, we can always (locally) turn $\alpha_1$ as coming from $b^h_{(x,d)}\xi_{(x,d)}$, and step by step this will give us the whole wished zig-zag. So let $u_1$ be as in the zig-zag diagram, we have:

% https://q.uiver.app/#q=WzAsNCxbMCwxLCJjIl0sWzEsMCwicGQnIl0sWzIsMSwicGQiXSxbMSwyLCJjXzEiXSxbMCwzLCJcXGFscGhhXzEiLDJdLFswLDEsIlxcYWxwaGEiXSxbMSwyLCJwKGYpIl0sWzMsMiwiZ18xIiwyXSxbMywxLCJ1XzEiLDFdXQ==
\[\begin{tikzcd}
	& {pd'} \\
	c && pd \\
	& {c_1}
	\arrow["{\alpha_1}"', from=2-1, to=3-2]
	\arrow["\alpha", from=2-1, to=1-2]
	\arrow["{p(f)}", from=1-2, to=2-3]
	\arrow["{g_1}"', from=3-2, to=2-3]
	\arrow["{u_1}"{description}, from=3-2, to=1-2]
\end{tikzcd}\]

We locally lift $u_1$ into locally cartesian arrows: $u_1v_i = p(\widehat{u_1^i})$, and also pullback the covering $(v_i)_i$ along $\alpha_1$:

% https://q.uiver.app/#q=WzAsNixbMCwxLCJjIl0sWzEsMCwicGQnIl0sWzIsMSwicGQiXSxbMSwyLCJjXzEiXSxbMSwzLCJwKGQnX2kpIl0sWzAsMywiY197aWp9Il0sWzAsMywiXFxhbHBoYV8xIiwyXSxbMCwxLCJcXGFscGhhIl0sWzEsMiwicChmKSJdLFszLDIsImdfMSIsMl0sWzMsMSwidV8xIiwyXSxbNCwzLCJ2X2kiLDJdLFs0LDEsInAoXFx3aWRlaGF0e3VfMV5pfSkiLDEseyJsYWJlbF9wb3NpdGlvbiI6NjAsImN1cnZlIjotMn1dLFs1LDAsInZfe2lqfSJdLFs1LDQsIndfe2lqfSIsMl1d
\[\begin{tikzcd}
	& {pd'} \\
	c && pd \\
	& {c_1} \\
	{c_{ij}} & {p(d'_i)}
	\arrow["{\alpha_1}"', from=2-1, to=3-2]
	\arrow["\alpha", from=2-1, to=1-2]
	\arrow["{p(f)}", from=1-2, to=2-3]
	\arrow["{g_1}"', from=3-2, to=2-3]
	\arrow["{u_1}"', from=3-2, to=1-2]
	\arrow["{v_i}"', from=4-2, to=3-2]
	\arrow["{p(\widehat{u_1^i})}"{description, pos=0.6}, bend left = 24, from=4-2, to=1-2]
	\arrow["{v_{ij}}", from=4-1, to=2-1]
	\arrow["{w_{ij}}"', from=4-1, to=4-2]
\end{tikzcd}\]

And the situation is now:

% https://q.uiver.app/#q=WzAsNCxbMCwxLCJjX3tpan0iXSxbMiwwLCJwKGQnKT1iXmhfeyh4LGQpfVxceGlfeyh4LGQpfShkJyx5LGYpIl0sWzIsMSwicChkX2kpID0gYl5oX3soeCxkKX1cXHhpX3soeCxkKX0oZF9pLFEoXFx3aWRlaGF0e3ZfMV5pfSkoeSksZlxcd2lkZWhhdHt1XzFeaX0pIl0sWzIsMiwiY18yID0gYl5oX3soeCxkKX0oY18yLHpfMixnXzIpIl0sWzAsMSwiXFxhbHBoYSB2X3tpan0iXSxbMCwyLCJ3X3tpan0iLDFdLFswLDMsIlxcYWxwaGFfMnZfe2lqfSIsMl0sWzIsMSwicChcXHdpZGVoYXR7dV8xXml9KT1iXmhfeyh4LGQpfVxceGlfeyh4LGQpfShcXHdpZGVoYXR7dV8xXml9KSIsMl0sWzIsMywidV8ydl9pID0gYl5oX3soeCxkKX0odV8ydl9pKSJdXQ==
\[\begin{tikzcd}
	&& {p(d')=b^h_{(x,d)}\xi_{(x,d)}(d',y,f)} \\
	{c_{ij}} && {p(d_i) = b^h_{(x,d)}\xi_{(x,d)}(d_i,Q(\widehat{v_1^i})(y),f\widehat{u_1^i})} \\
	&& {c_2 = b^h_{(x,d)}(c_2,z_2,g_2)}
	\arrow["{\alpha v_{ij}}", from=2-1, to=1-3]
	\arrow["{w_{ij}}"{description}, from=2-1, to=2-3]
	\arrow["{\alpha_2v_{ij}}"', from=2-1, to=3-3]
	\arrow["{p(\widehat{u_1^i})=b^h_{(x,d)}\xi_{(x,d)}(\widehat{u_1^i})}"', from=2-3, to=1-3]
	\arrow["{u_2v_i = b^h_{(x,d)}(u_2v_i)}", from=2-3, to=3-3]
\end{tikzcd}\]

we are reduced to the second case.

For the second case, let's assume that the first arrow $u_1$ is in the opposite direction: $u_1 : pd' \to c_1$.  

% https://q.uiver.app/#q=WzAsNCxbMCwxLCJjIl0sWzEsMCwicGQnIl0sWzIsMSwicGQiXSxbMSwyLCJjXzEiXSxbMCwzLCJcXGFscGhhXzEiLDJdLFswLDEsIlxcYWxwaGEiXSxbMSwyLCJwKGYpIl0sWzMsMiwiZ18xIiwyXSxbMSwzLCJ1XzEiLDFdXQ==
\[\begin{tikzcd}
	& {pd'} \\
	c && pd \\
	& {c_1}
	\arrow["{\alpha_1}"', from=2-1, to=3-2]
	\arrow["\alpha", from=2-1, to=1-2]
	\arrow["{p(f)}", from=1-2, to=2-3]
	\arrow["{g_1}"', from=3-2, to=2-3]
	\arrow["{u_1}"{description}, from=1-2, to=3-2]
\end{tikzcd}\]

This time, we can locally lift $g_1$ as locally cartesian arrows: $g_1v_i = p(\widehat{g_i})$, and then pull back the covering $(v_i)_i$ along $u_1$:

% https://q.uiver.app/#q=WzAsNixbMSwxLCJjIl0sWzIsMCwicChkJykiXSxbMywxLCJwKGQpIl0sWzIsMiwiY18xIl0sWzEsMywicChkX2kpIl0sWzAsMSwiY197aWp9Il0sWzAsMSwiXFxhbHBoYSJdLFswLDMsIlxcYWxwaGFfMSIsMl0sWzEsMiwicChmKSJdLFszLDIsImdfMSIsMl0sWzEsMywidV8xIiwxXSxbNCwzLCJ2X2kiLDJdLFs1LDEsInZfe2lqfSIsMSx7ImN1cnZlIjotMn1dLFs1LDQsInVfe2lqfSIsMV0sWzQsMiwicChcXHdpZGVoYXR7Z19pfSkiLDEseyJjdXJ2ZSI6NH1dXQ==
\[\begin{tikzcd}
	&& {p(d')} \\
	{c_{ij}} & c && {p(d)} \\
	&& {c_1} \\
	& {p(d_i)}
	\arrow["\alpha", from=2-2, to=1-3]
	\arrow["{\alpha_1}"', from=2-2, to=3-3]
	\arrow["{p(f)}", from=1-3, to=2-4]
	\arrow["{g_1}"', from=3-3, to=2-4]
	\arrow["{u_1}"{description}, from=1-3, to=3-3]
	\arrow["{v_i}"', from=4-2, to=3-3]
	\arrow["{v_{ij}}"{description}, bend left = 24, from=2-1, to=1-3]
	\arrow["{u_{ij}}"{description}, from=2-1, to=4-2]
	\arrow["{p(\widehat{g_i})}"{description}, bend right = 45, from=4-2, to=2-4]
\end{tikzcd}\]

Now we can locally lift the $u_{ij}$ as locally cartesian arrows: $u_{ij}v_{ijk} = p(\widehat{u_{ijk}})$. And then, by the locally cartesian property of the $\widehat{g_i}$, since $p(\widehat{g_i})v_{ijk}u_{ij} = p(f\widehat{u_{ijk}})$, we can also locally lift $v_{ijk}u_{ij}$, as in the diagram:

% https://q.uiver.app/#q=WzAsOCxbMywxLCJjIl0sWzQsMCwicChkJykiXSxbNSwxLCJwKGQpIl0sWzQsMiwiY18xIl0sWzMsMywicChkX2kpIl0sWzIsMSwiY197aWp9Il0sWzEsMSwicChkX3tpamt9KSJdLFswLDEsInAoZF97aWprbH0pIl0sWzAsMSwiXFxhbHBoYSJdLFswLDMsIlxcYWxwaGFfMSIsMl0sWzEsMiwicChmKSJdLFszLDIsImdfMSIsMl0sWzEsMywidV8xIiwxXSxbNCwzLCJ2X2kiLDJdLFs1LDEsInZfe2lqfSIsMSx7ImN1cnZlIjotMn1dLFs1LDQsInVfe2lqfSIsMV0sWzQsMiwicChcXHdpZGVoYXR7Z19pfSkiLDEseyJjdXJ2ZSI6NH1dLFs2LDUsInZfe2lqa30iLDJdLFs2LDEsInAoXFx3aWRlaGF0e3Zfe2lqa319KSIsMSx7ImN1cnZlIjotNH1dLFs3LDYsInAodl97aWprbH0pIl0sWzcsNCwicCh1X3tpamtsfSkiLDJdXQ==
\[\begin{tikzcd}
	&&&& {p(d')} \\
	{p(d_{ijkl})} & {p(d_{ijk})} & {c_{ij}} & c && {p(d)} \\
	&&&& {c_1} \\
	&&& {p(d_i)}
	\arrow["\alpha", from=2-4, to=1-5]
	\arrow["{\alpha_1}"', from=2-4, to=3-5]
	\arrow["{p(f)}", from=1-5, to=2-6]
	\arrow["{g_1}"', from=3-5, to=2-6]
	\arrow["{u_1}"{description}, from=1-5, to=3-5]
	\arrow["{v_i}"', from=4-4, to=3-5]
	\arrow["{v_{ij}}"{description}, bend left = 24, from=2-3, to=1-5]
	\arrow["{u_{ij}}"{description}, from=2-3, to=4-4]
	\arrow["{p(\widehat{g_i})}"{description}, bend right =45, from=4-4, to=2-6]
	\arrow["{v_{ijk}}"', from=2-2, to=2-3]
	\arrow["{p(\widehat{v_{ijk}})}"{description}, bend left = 45, from=2-2, to=1-5]
	\arrow["{p(v_{ijkl})}", from=2-1, to=2-2]
	\arrow["{p(u_{ijkl})}"', from=2-1, to=4-4]
\end{tikzcd}\]

Since $p$ is continuous it preserves covering, hence the $p(v_{ijkl})$ are covering arrows, and we can multicompose to obtain a covering $(v_{ij}v_{ijk}p(v_{ijkl}))_{ijkl}$. We pullback this covering along $\alpha$, and the situation is now:

% https://q.uiver.app/#q=WzAsNSxbMCwyLCJjX3tpamtsbX0iXSxbMSwwLCJwKGQnKT1iXmhfeyh4LGQpfVxceGlfeyh4LGQpfShkJyx5LGYpIl0sWzEsNCwicChkX2kpPWJeaF97KHgsZCl9XFx4aV97KHgsZCl9KGQnX2ksUSh2X2kpKHkpLGZcXHdpZGVoYXR7Z19pfSkiXSxbMiwyLCJwKGRfe2lqa2x9KT1iXmhfeyh4LGQpfVxceGlfeyh4LGQpfShkX3tpamtsfSxRKFxcd2lkZWhhdHt2X3tpamt9fXZfe2lqa2x9KSh5KSxmXFx3aWRlaGF0e3Zfe2lqa319dl97aWprbH0pIl0sWzEsMiwiY18xPWJeaF97KHgsZCl9KGNfMSx6XzEsZ18xKSJdLFswLDRdLFswLDFdLFswLDJdLFsxLDQsImJeaF97KHgsZCl9KHVfMSkiXSxbMiw0LCJiXmhfeyh4LGQpfSh2X2kpIiwyXSxbMywxLCJiXmhfeyh4LGQpfVxceGlfeyh4LGQpfShcXHdpZGVoYXR7dl97aWprfX12X3tpamtsfSkiLDJdLFszLDIsImJeaF97KHgsZCl9XFx4aV97KHgsZCl9KHVfe2lqa2x9KSJdLFswLDMsIiIsMix7ImN1cnZlIjozfV1d

% https://q.uiver.app/#q=WzAsNixbMCwyLCJjX3tpamtsbX0iXSxbMSwwLCJwKGQnKT1iXmhfeyh4LGQpfVxceGlfeyh4LGQpfShkJyx5LGYpIl0sWzEsNCwicChkX2kpPWJeaF97KHgsZCl9XFx4aV97KHgsZCl9KGQnX2ksUSh2X2kpKHkpLGZcXHdpZGVoYXR7Z19pfSkiXSxbMiwyLCJiXmhfeyh4LGQpfVxceGlfeyh4LGQpfShkX3tpamtsfSxRKFxcd2lkZWhhdHt2X3tpamt9fXZfe2lqa2x9KSh5KSxmXFx3aWRlaGF0e3Zfe2lqa319dl97aWprbH0pIl0sWzEsMiwiY18xPWJeaF97KHgsZCl9KGNfMSx6XzEsZ18xKSJdLFsyLDEsInAoZF97aWprbH0pIl0sWzAsNF0sWzAsMV0sWzAsMl0sWzEsNCwiYl5oX3soeCxkKX0odV8xKSJdLFsyLDQsImJeaF97KHgsZCl9KHZfaSkiLDJdLFszLDEsImJeaF97KHgsZCl9XFx4aV97KHgsZCl9KFxcd2lkZWhhdHt2X3tpamt9fXZfe2lqa2x9KSIsMl0sWzMsMiwiYl5oX3soeCxkKX1cXHhpX3soeCxkKX0odV97aWprbH0pIl0sWzAsMywiIiwyLHsiY3VydmUiOjN9XSxbNSwzLCIiLDAseyJsZXZlbCI6Miwic3R5bGUiOnsiaGVhZCI6eyJuYW1lIjoibm9uZSJ9fX1dXQ==

\begin{small}
\[\begin{tikzcd}
	& {p(d')=b^h_{(x,d)}\xi_{(x,d)}(d',y,f)} \\
	&& {p(d_{ijkl})} \\
	{c_{ijklm}} & {c_1=b^h_{(x,d)}(c_1,z_1,g_1)} & {b^h_{(x,d)}\xi_{(x,d)}(d_{ijkl},Q(\widehat{v_{ijk}}v_{ijkl})(y),f\widehat{v_{ijk}}v_{ijkl})} \\
	\\
	& {p(d_i)=b^h_{(x,d)}\xi_{(x,d)}(d'_i,Q(v_i)(y),f\widehat{g_i})}
	\arrow["{b^h_{(x,d)}(u_1)}", from=1-2, to=3-2]
	\arrow[equals, from=2-3, to=3-3]
	\arrow[from=3-1, to=1-2]
	\arrow[from=3-1, to=3-2]
	\arrow[bend right = 10, from=3-1, to=3-3]
	\arrow[from=3-1, to=5-2]
	\arrow["{b^h_{(x,d)}\xi_{(x,d)}(\widehat{v_{ijk}}v_{ijkl})}"', from=3-3, to=1-2]
	\arrow["{b^h_{(x,d)}\xi_{(x,d)}(u_{ijkl})}", from=3-3, to=5-2]
	\arrow["{b^h_{(x,d)}(v_i)}"', from=5-2, to=3-2]
\end{tikzcd}\]
\end{small}

in which it is lengthy but straightforward to check the commutativity of all the wished arrows.

But recall that the $v_i$ are covering, and hence it suffices to pull them back along $u_2$ to get back to the precedent case. Applying alternatively these two cases gives us a zig-zag coming from $\xi_{(x,d)}^h$, hence the conditions are satisfied and $C_p$ is locally connected.
\end{proof}

The fact that a contiuous local fibration induces a locally connected morphism allows to deduce that the essential image it induces is a fibration:

\begin{prop}\label{continuouslocfibinducesfib}
Let $p : ({\cal D},K) \to ({\cal C},J)$ be a continuous local fibration. Then, $\Sh(p)^*$ is a fibration.
\end{prop}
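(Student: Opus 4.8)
The plan is to obtain the statement as a direct consequence of the two immediately preceding results, Propositions \ref{locconnlocfib} and \ref{locallyconnimpliesfib}, together with the identification of $\Sh(p)^*$ as the essential image of $C_p$. First I would make this identification explicit: since $p$ is continuous, the inverse image $C_p^*$ of the induced geometric morphism $C_p : \widehat{\cal D}_K \to \widehat{\cal C}_J$ admits a left adjoint, so $C_p$ is essential and its essential image $C_{p!}$ is exactly the extension $\Sh(p)^* : \widehat{\cal D}_K \to \widehat{\cal C}_J$; thus $\Sh(p)^*$ has right adjoint $C_p^*$, and it is this functor whose fibrational character we must establish.

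Next I would invoke Proposition \ref{locconnlocfib}, which applies because $p$ is by hypothesis a \emph{continuous} local fibration: it yields that $C_p$ is a locally connected geometric morphism. This is where all the genuine content sits; the proof of \ref{locconnlocfib} is the hard step, and for the present statement I simply take it as given. Having this, Proposition \ref{locallyconnimpliesfib} applies verbatim to $f = C_p$: the essential image of a locally connected geometric morphism is a fibration. Since that essential image is $C_{p!} = \Sh(p)^*$, we conclude that $\Sh(p)^*$ is a fibration.

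The argument is therefore essentially a corollary, and I expect no real obstacle at this stage; the only point deserving care is the bookkeeping of adjoints, so that Proposition \ref{locallyconnimpliesfib} is applied to the correct functor $C_{p!} = \Sh(p)^*$ rather than to $C_p^*$ or to the direct image. For completeness I would also note a self-contained route avoiding local connectedness: one applies Proposition \ref{liftingwithunit} to the adjunction $\Sh(p)^* \dashv C_p^*$ and checks, for every arrow $f : c \to \Sh(p)^*(d)$, that the relevant transpose $q^t$ is an isomorphism. By Remark \ref{frobrempourrelevcart} this isomorphism condition is precisely the commutativity at $[\eta_p(E)]$ of the indexed square comparing $S_{C_p}$ and $S_{\widehat{\cal C}_J}$, i.e. it reproduces local connectedness object by object; hence this alternative is merely \ref{locconnlocfib} unfolded, and the combination of \ref{locconnlocfib} and \ref{locallyconnimpliesfib} is the cleaner one.
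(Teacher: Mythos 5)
Your proposal is correct and follows essentially the same route as the paper: the paper's own proof also invokes Proposition \ref{locconnlocfib} to get local connectedness and then concludes via the reformulation in Remark \ref{frobrempourrelevcart} and Proposition \ref{liftingwithunit}, which is exactly the content packaged in Proposition \ref{locallyconnimpliesfib} that you cite. Your ``alternative self-contained route'' at the end is in fact the paper's proof verbatim, so the two versions differ only in which intermediate statement is named.
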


\begin{proof}
By \ref{locconnlocfib} we have that $\Sh(p)^*$ is indexed, so that the reformulation condition \ref{frobrempourrelevcart} is satisfied, that is $\Sh(p)^*$ is a fibration by \ref{liftingwithunit}.
\end{proof}

This allows us to obtain a characterization of locally connected geometric morphisms in these terms:

\begin{prop}
Let $f : {\cal F}\to {\cal E}$ be an essential geometric morphism. It is locally connected if and only if its essential image $f_! : {\cal F} \to {\cal E}$ is a fibration.
\end{prop}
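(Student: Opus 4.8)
The forward implication is exactly Proposition \ref{locallyconnimpliesfib}: a locally connected $f$ has its essential image $f_!$ a fibration. So the plan is to prove the converse. Assume $f_!$ is a fibration; I must show that the family $(f_!)_E : (\mathcal{F}/f^*E) \to (\mathcal{E}/E)$, $[\alpha : F \to f^*E]\mapsto[\alpha^t : f_!(F)\to E]$, is indexed, which is the definition of local connectedness. Concretely, I must show that for every $h : E'\to E$ in $\mathcal{E}$ and every $[\alpha : F\to f^*E]$ the canonical comparison $f_!\big(F\times_{f^*E}f^*E'\big)\to E'\times_E f_!(F)$ over $E'$ is an isomorphism.

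First I would apply Proposition \ref{liftingwithunit} to the adjunction $f_!\dashv f^*$ (taking $p^*=f_!$, $p_*=f^*$, and $\eta$ its unit). Since $f_!$ is a fibration, for every object $F$ and every arrow $k : G\to f_!(F)$, pulling back $\eta_F : F\to f^*f_!(F)$ along $f^*(k)$ produces $q : \tilde F\to f^*(G)$ with $q^t : f_!(\tilde F)\to G$ an isomorphism. Re-running the argument of Remark \ref{frobrempourrelevcart}(1) --- which only uses Propositions \ref{cartesianwithunit} and \ref{liftingwithunit} and so applies verbatim to the abstract adjunction $f_!\dashv f^*$ --- this is precisely the statement that the indexing square attached to the arrow $k$ commutes at the \emph{unit object} $[\eta_F]$.

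The main step is then to bootstrap from the unit objects to all objects. Every $[\alpha : F\to f^*E]$ is of the form $\Sigma_{f^*(\alpha^t)}[\eta_F]$, where $\Sigma$ denotes postcomposition and $\alpha=f^*(\alpha^t)\circ\eta_F$; moreover one has the clean identity $(f_!)_E\circ\Sigma_{f^*(g)}\cong\Sigma_g\circ(f_!)_{E_0}$ for $g : E_0\to E$, since transposition turns $f^*(g)\circ(-)$ into $g\circ(-)$. Forming the pullback $P=E'\times_E f_!(F)$ in $\mathcal{E}$, with projection $h' : P\to f_!(F)$, and using that pullback distributes over $\Sigma$ along a pullback square (valid in any topos, and compatible with $f^*$ since $f^*$ preserves pullbacks), I would rewrite both legs of the indexing square for $(h,[\alpha])$ in terms of the indexing square for the arrow $h'$ evaluated at $[\eta_F]$. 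Under this rewriting the object $\tilde F$ obtained by pulling $\eta_F$ back along $f^*(h')$ is identified, over $f^*E'$, with $F\times_{f^*E}f^*E'$, and the sought isomorphism $f_!\big(F\times_{f^*E}f^*E'\big)\cong E'\times_E f_!(F)$ becomes exactly the isomorphism $q^t$ supplied by the fibration condition for $k=h'$. Hence $(f_!)$ is indexed and $f$ is locally connected.

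The delicate point is this final reduction: one has to verify that the iterated pullbacks match up --- that $\tilde F\cong F\times_{f^*E}f^*E'$ over $f^*E'$ and that the transposed structure maps into $E'$ agree under $q^t$. These are routine transpose-and-pullback-lemma computations using that $f^*$ preserves pullbacks, but they carry the real content; the rest is a direct appeal to Propositions \ref{locallyconnimpliesfib} and \ref{liftingwithunit} together with Remark \ref{frobrempourrelevcart}.
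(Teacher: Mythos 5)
Your proposal is correct, but it takes a genuinely different route from the paper. For the converse, the paper does not argue at the level of the adjunction at all: it observes that $f_!$ is a continuous comorphism of sites $({\cal F},J^{can}_{\cal F})\to({\cal E},J^{can}_{\cal E})$ with $C_{f_!}\simeq f$, that a fibration is in particular a local fibration, and then invokes the general theorem that any \emph{continuous local fibration} induces a locally connected geometric morphism (Proposition \ref{locconnlocfib}) --- a result whose proof is the long site-theoretic verification of the criterion of Theorem 4.55 of \cite{denseness}, with coverings and zig-zag arguments. You instead stay entirely inside the adjunction $f_!\dashv f^*$: the fibration hypothesis plus Proposition \ref{liftingwithunit} gives, for each arrow $k:G\to f_!(F)$, the isomorphism $q^t:f_!(\tilde F)\to G$; taking $k$ to be the projection $E'\times_E f_!(F)\to f_!(F)$, the pasting $\tilde F=F\times_{f^*f_!F}f^*(E'\times_E f_!F)\cong F\times_{f^*E}f^*E'$ (using $\alpha=f^*(\alpha^t)\circ\eta_F$ and preservation of pullbacks by $f^*$) identifies $q^t$ with the canonical comparison $f_!(F\times_{f^*E}f^*E')\to E'\times_E f_!(F)$, and one checks by the triangle identities that it is indeed the comparison over $E'$ (e.g. $(\eta_F\circ\mathrm{pr}_F)^t=f_!(\mathrm{pr}_F)$), so $(f_!)$ is indexed. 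Both arguments are sound; the trade-off is that the paper's proof is a two-line corollary of a theorem it needs anyway and which holds over arbitrary base sites, whereas yours is self-contained, purely formal, and avoids all the covering/zig-zag machinery --- at the price of applying only to this topos-level (canonical topology) situation, which is exactly the case at hand.
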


\begin{proof}
The first direction is given by \ref{locallyconnimpliesfib}.
In the other direction, we assume $f_!$ to be a fibration. It is a general fact that, for an essential geometric morphism $f$, its essential image $f_!$ is a continuous comorphism of sites such that $C_{f_!} \simeq f$. It is a fibration, so in particular a local fibration, and it is moreover continuous: we can apply \ref{locconnlocfib}: $C_{f_!} \simeq f$ is locally connected.
\end{proof}

Also we can deduce that, in the case of toposes with their canonical topologies, continuous local fibrations coincide with fibrations:

\begin{prop}\label{prop:canonicaltopologylocalfibration}
A colimit-preserving (i.e. continuous) comorphism of sites $p : ({\cal F},J_{\cal F}^{can}) \to ({\cal E},J_{\cal E}^{can})$ between toposes is a local fibration if and only if it is a fibration.
\end{prop}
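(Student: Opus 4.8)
The plan is to treat the two implications separately. The forward implication is essentially formal and does not use continuity: if $p$ is a fibration, then given any arrow $f : E \to p(F)$ with $E$ an object of $\cal E$ and $F$ an object of $\cal F$, the fibration property produces a cartesian lift $\widehat f : F' \to F$ together with an isomorphism $\sigma : p(F') \simeq E$ satisfying $p(\widehat f) \simeq f\sigma$. Since cartesian arrows are locally cartesian for any topology (as observed right after the definition of locally cartesian arrows), $\widehat f$ is in particular locally cartesian, and the singleton family given by the isomorphism $\sigma$ is $J_{\cal E}^{can}$-covering; this exhibits $p$ as a local fibration. Hence a fibration is always a local fibration here.

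For the converse, I would reduce the statement to \ref{continuouslocfibinducesfib}. Assume $p$ is a local fibration; by hypothesis it is also continuous, so $p$ is a continuous local fibration and \ref{continuouslocfibinducesfib} applies, yielding that the extension $\Sh(p)^* : \widehat{\cal F}_{J_{\cal F}^{can}} \to \widehat{\cal E}_{J_{\cal E}^{can}}$ is a fibration. The key remaining observation is that, for a topos equipped with its canonical topology, the canonical functor is an equivalence: both $l_{J_{\cal F}^{can}} : \cal F \to \widehat{\cal F}_{J_{\cal F}^{can}}$ and $l_{J_{\cal E}^{can}} : \cal E \to \widehat{\cal E}_{J_{\cal E}^{can}}$ are equivalences of categories. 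The commutative square $\Sh(p)^* \circ l_{J_{\cal F}^{can}} \simeq l_{J_{\cal E}^{can}} \circ p$ recalled at the beginning of this section then identifies $p$, up to these equivalences, with $\Sh(p)^*$; since being a fibration is invariant under composition with equivalences compatible with the base, $p$ is itself a fibration.

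The only delicate point — and the step I expect to require the most care, though it remains routine — is this last transport of structure: one must verify that the equivalences $l_{J_{\cal F}^{can}}$ and $l_{J_{\cal E}^{can}}$ carry the fibration structure of $\Sh(p)^*$ back to $p$, that is, that they match up cartesian arrows and commute suitably with the projection functors. This is where \ref{prop_cartesiancanonicaltopology} is reassuring: for the canonical topology the locally cartesian arrows coincide with the genuine cartesian ones, so no cartesianness data is lost in passing between $\cal F$ and its sheaf topos $\widehat{\cal F}_{J_{\cal F}^{can}}$, and the identification $\Sh(p)^* \simeq p$ faithfully respects the fibrational lifting property on both sides.
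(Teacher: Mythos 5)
Your proposal is correct and takes essentially the same approach as the paper: the nontrivial direction is exactly the paper's argument, namely applying \ref{continuouslocfibinducesfib} to conclude that $\Sh(p)^*$ is a fibration and then identifying $\Sh(p)^*$ with $p$ through the equivalences given by the canonical functors (the paper compresses this into ``since $p$ preserves colimits, we have $\Sh(p)^* \simeq p$''). The forward direction you spell out (cartesian arrows are locally cartesian, and an isomorphism generates a covering sieve) is left implicit in the paper, being covered by the remark following the definition of local fibrations.
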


\begin{proof}
By \ref{continuouslocfibinducesfib} we have that $\Sh(p)^*$ is a fibration. But, since $p$ preserves colimits, we have $\Sh(p)^* \simeq p$, from where follows the proposition. 
\end{proof}

As previously mentioned \ref{continuouslocfibinducesfib}, a fibration endowed with its Giraud topology $p : (\cal D,J_{\cal D}) \to (\cal C,J)$—or more generally, a continuous local fibration—induces an essential image $\Sh(p)^* : Gir_J(\cal D) \to \widehat{\cal C}_J$ that is itself a fibration (see \ref{continuouslocfibinducesfib}). In the following proposition, we begin by giving general properties for some functors relating the local fibration, its relative topos and its canonical stack:

\begin{prop}\label{etacontlocfib}
Let $p : ({\cal D},K) \to ({\cal C},J)$ be a continuous local fibration. 

\begin{enumerate}[(i)]
    \item The functor $\eta_{\widehat{\cal D}_K} : \widehat{\cal D}_K \to (\widehat{\cal D}_K/C_p^*)$ is fully faithful and reflects cartesian arrows.
    \item The canonical (absolute) functor $l_K : (\cal D,K) \to \widehat{\cal D}_K $ reflects locally cartesian arrows. In particular, it is a morphism of local fibrations.
    \item If moreover $\cal D$ is a fibration, $l_K$ is a morphism of fibrations. 
    .
    \item There is a canonical natural transformation $\overline{\alpha}$

    % https://q.uiver.app/#q=WzAsMyxbMCwwLCJcXFNoKHtcXGNhbCBEfSxLKSJdLFsyLDAsIihcXFNoKHtcXGNhbCBEfSxLKSBcXGRvd25hcnJvdyBDX3BeKikiXSxbMSwxLCJcXFNoKHtcXGNhbCBDfSxKKSJdLFsxLDAsIlxccGlfe1xcY2FsIER9IiwyXSxbMCwyLCJ7Q19wfV8hIiwyXSxbMSwyLCJcXHBpX3tDX3B9Il0sWzAsNSwiXFxvdmVybGluZXtcXGFscGhhfSIsMix7InNob3J0ZW4iOnsic291cmNlIjoyMCwidGFyZ2V0IjoyMH19XV0=
\[\begin{tikzcd}
	{\widehat{\cal D}_K} && {(\widehat{\cal D}_K/C_p^*)} \\
	& {\widehat{\cal C}_J}
	\arrow["{{C_p}_!}"', from=1-1, to=2-2]
	\arrow["{\pi_{\cal D}}"', from=1-3, to=1-1]
	\arrow[""{name=0, anchor=center, inner sep=0}, "{\pi_{C_p}}", from=1-3, to=2-2]
	\arrow["{\overline{\alpha}}"', shorten <=12pt, shorten >=12pt, Rightarrow, from=1-1, to=0]
\end{tikzcd}\]
    which is given, at an object $(E,F,\alpha : E \to C_p^*F)$, by the transpose of $\alpha$ : $\overline{\alpha} : {C_p}_!E \to F = \pi_{\cal D}\pi_{C_p}(E,F,\alpha)$.
\end{enumerate}
\end{prop}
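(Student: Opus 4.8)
The plan is to dispatch the four statements in turn, reducing each to the unit/counit reformulations of cartesianness established in \ref{cartesianwithunit} and \ref{liftingwithunit}, together with the identification $\Sh(p)^* = {C_p}_!$ and the fact (\ref{continuouslocfibinducesfib}) that this functor is a fibration. Throughout I write $\eta$ for the unit $\mathrm{id}\Rightarrow C_p^*{C_p}_!$ of the adjunction ${C_p}_!\vdash C_p^*$, and I use that, since $p$ is continuous, $\eta_{\widehat{\cal D}_K}$ is the functor $\eta_{C_p}$ sending $F$ to $(F,{C_p}_!F,\eta_F\colon F\to C_p^*{C_p}_!F)$.

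For (i), to prove full faithfulness I would note that a morphism $\eta_{C_p}(F)\to\eta_{C_p}(F')$ is a pair $(g,h)$ with $g\colon F\to F'$ in $\widehat{\cal D}_K$ and $h\colon{C_p}_!F\to{C_p}_!F'$ subject to $C_p^*(h)\circ\eta_F=\eta_{F'}\circ g$; under the adjunction bijection ${C_p}_!\vdash C_p^*$ the arrow $h$ is forced to be the unique transpose of $\eta_{F'}g$, so $(g,h)\mapsto g$ is a bijection. For the reflection of cartesian arrows I would align two descriptions of one and the same square: by \ref{enumloccart}(i) the arrow $\eta_{C_p}(h)=(h,{C_p}_!(h))$ is cartesian in the canonical stack exactly when the naturality square of $\eta$ at $h$ is a pullback, whereas by \ref{cartesianwithunit} applied to the left adjoint ${C_p}_!$ with right adjoint $C_p^*$ this very square being a pullback is precisely the condition that $h$ be cartesian for ${C_p}_!=\Sh(p)^*$. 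Hence $\eta_{C_p}$ not only reflects but preserves cartesian arrows.

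For (ii) and (iii) I would invoke the characterization stated immediately after \ref{cartesianwithunit}, namely that for a continuous comorphism an arrow $f$ of $\cal D$ is locally cartesian iff $l_K(f)$ is cartesian for $\Sh(p)^*$. Since $\Sh(p)^*$ is itself a continuous comorphism of sites between toposes carrying their canonical topologies, \ref{prop_cartesiancanonicaltopology} (equivalently \ref{canonicalcartesian}(i)) identifies its cartesian arrows with its locally cartesian ones; combining the two, $f$ is locally cartesian in $(\cal D,K)$ iff $l_K(f)$ is locally cartesian in $\widehat{\cal D}_K$, so $l_K$ both reflects and preserves locally cartesian arrows and is in particular a morphism of local fibrations, giving (ii). For (iii), when $p$ is a genuine fibration its cartesian arrows are locally cartesian (Remarks (a) after the definition of local fibration), so $l_K$ sends them to locally cartesian arrows of $\widehat{\cal D}_K$, which by \ref{prop_cartesiancanonicaltopology} are cartesian for $\Sh(p)^*$; together with the commutation $\Sh(p)^*l_K\simeq l_Jp$ this makes $l_K$ a morphism of fibrations over the base change $l_J$.

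For (iv), the two functors $(\widehat{\cal D}_K/C_p^*)\to\widehat{\cal C}_J$ to be compared by the $2$-cell are ${C_p}_!\circ\pi_{\cal D}$, sending $(E,F,\alpha)\mapsto{C_p}_!E$, and $\pi_{C_p}$, sending $(E,F,\alpha)\mapsto F$. I would set $\overline{\alpha}_{(E,F,\alpha)}$ to be the transpose $\alpha^t=\varepsilon_F\circ{C_p}_!(\alpha)\colon{C_p}_!E\to F$ of the structure map $\alpha\colon E\to C_p^*F$ under ${C_p}_!\vdash C_p^*$ (with $\varepsilon$ the counit); naturality in $(E,F,\alpha)$ for a comma morphism $(g,h)$ satisfying $\alpha'g=C_p^*(h)\alpha$ is then just the naturality of transposition, since transposing this equality yields $\alpha'^t\circ{C_p}_!(g)=h\circ\alpha^t$, which is the required square. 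I do not expect a genuine obstacle here; the only real care needed — more bookkeeping than difficulty — is to keep straight the three incarnations of "cartesian" (the pullback-square description \ref{enumloccart}(i) in the comma category, the unit-square description \ref{cartesianwithunit} for the left adjoint ${C_p}_!$, and the coincidence of cartesian with locally cartesian arrows under the canonical topology via \ref{prop_cartesiancanonicaltopology}), after which every point collapses to transposition along ${C_p}_!\vdash C_p^*$.
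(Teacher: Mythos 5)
Your proof is correct, and its skeleton coincides with the paper's: the same four-part dispatch, the same identification of $\eta_{\widehat{\cal D}_K}$ with the unit functor $F\mapsto (F,{C_p}_!F,\eta_F)$, the same full-faithfulness argument via the forced equality $h={C_p}_!(g)$, and essentially the same treatment of (iii). The difference lies in how you justify the reflection claims. For (i), the paper argues through locally cartesian arrows: $\eta_{\widehat{\cal D}_K}$ reflects locally cartesian arrows by \ref{etareflectsloccart}, and these coincide with cartesian ones both in $(\widehat{\cal D}_K,J^{can}_{\widehat{\cal D}_K})$ (\ref{prop_cartesiancanonicaltopology}) and in the canonical stack (\ref{canonicalcartesian}). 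You instead match the pullback-square description of cartesian arrows in the comma category (\ref{enumloccart}(i)) directly against the unit-square characterization of cartesian arrows for a left adjoint (\ref{cartesianwithunit}); this is more economical, produces an equivalence rather than a single implication—so preservation of cartesian arrows by $\eta_{\widehat{\cal D}_K}$ comes for free, a fact the paper only records later, in the totally connected section—and bypasses the locally-cartesian detour. Similarly, for (ii) the paper composes $\eta_{\widehat{\cal D}_K}l_K\simeq\eta_{\cal D}$ with part (i), whereas you combine the proposition following \ref{cartesianwithunit} with \ref{prop_cartesiancanonicaltopology}; these are the same computation packaged differently, since that proposition is itself proved from the identification $\eta_{\cal D}\simeq\eta_{\widehat{\cal D}_K}l_K$. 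Finally, your treatment of (iv)—defining $\overline{\alpha}$ at $(E,F,\alpha)$ as $\alpha^t=\varepsilon_F\circ{C_p}_!(\alpha)$ and deducing naturality by transposing the comma-square identity $\alpha'g=C_p^*(h)\alpha$—supplies exactly the details the paper declares immediate. One cosmetic remark: under the paper's convention for the symbol $\vdash$, the adjunction you use should be written $C_p^*\vdash{C_p}_!$ (with ${C_p}_!$ the left adjoint); your manipulations treat the adjoints on the correct sides, only the symbol is reversed.
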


\begin{proof}
For the first point, since an arrow $u : F \to F'$ is sent by $\eta_{\widehat{\cal D}_K}$ to $(u,{C_{p}}_!(u)) : \eta_{\widehat{\cal D}_K}(F) \to \eta_{\widehat{\cal D}_K}(F')$, it is faithful since the first component is uniquely determined by the antecedent arrow $u$. Moreover, if we have an arrow  $(u,v) : \eta_{\widehat{\cal D}_K}(F) \to \eta_{\widehat{\cal D}_K}(F')$ as depicted:

% https://q.uiver.app/#q=WzAsNCxbMCwwLCJGIl0sWzEsMCwiRiciXSxbMCwxLCJDX3BeKntDX3B9XyEoRikiXSxbMSwxLCJDX3BeKntDX3B9XyEoRicpIl0sWzEsMywiXFxldGFfe0YnfSJdLFswLDIsIlxcZXRhX3tGfSIsMl0sWzAsMSwidSJdLFsyLDMsIkNfcF4qKHYpIiwyXV0=
\[\begin{tikzcd}
	F & {F'} \\
	{C_p^*{C_p}_!(F)} & {C_p^*{C_p}_!(F')}
	\arrow["u", from=1-1, to=1-2]
	\arrow["{\eta_{F}}"', from=1-1, to=2-1]
	\arrow["{\eta_{F'}}", from=1-2, to=2-2]
	\arrow["{C_p^*(v)}"', from=2-1, to=2-2]
\end{tikzcd}\]

\noindent it is forced that $v = {C_p}_!(u)$ because, by the naturality of the adjunction $C_p^* \vdash {C_p}_!$, $u = (\eta_{F'}u)^t =  {C_p}_!(u)$.

And $\eta_{\widehat{\cal D}_K}$ reflects locally cartesian arrows because it reflects locally cartesian arrows, but locally cartesian arrows in $(\widehat{\cal D}_K,J_{\widehat{\cal D}_K}^{can})$ are cartesian, since the topology is the canonical one \ref{prop_cartesiancanonicaltopology}; and locally cartesian arrows in $(\widehat{\cal D}_K/C_p^*)$ also are cartesian ones because of \ref{canonicalcartesian}.

For (ii), it is immediate that $\eta_{\widehat{\cal D}_K}l_K \simeq \eta_{\cal D}$. Hence, an arrow $f$ in $\cal D$ is locally cartesian if and only if $\eta_{\cal D}(f)$ is cartesian, that is, if and only if $\eta_{\widehat{\cal D}_K}l_K(f)$ is cartesian. But by (i) we know that $\eta_{\widehat{\cal D}_K}$ reflects cartesian arrows, so that $f$ is cartesian if and only if $l_K(f)$ is.

For (iii), we know that a cartesian arrow is in particular a locally cartesian one; and, since locally cartesian ones in $\widehat{\cal D}_K$ are cartesian because of \ref{canonicalcartesian}, the previous point gives us the preservation of cartesian arrows.

The last point is immediate.
\end{proof}

Since we know that, when $p$ is a fibration endowed with its Giraud topology, ${C_p}_! : Gir_J(\cal D) \to \widehat{\cal C}_J$ is a fibration, we may be interested in studying the glueing property of this fibration:

\begin{prop}
Let $p : (\cal D,J_{\cal D}) \to (\cal C,J)$ be a fibration with its trivial topology. The fibration ${C_p}_! : Gir_J(\cal D) \to \widehat{\cal C}_J$ is a stack.
\end{prop}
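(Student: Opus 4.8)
The plan is to realize the fibration $C_{p!}$ as a full sub-fibration of the canonical stack and to show that this subcategory is closed under descent, so that the stack property is inherited. Recall that $C_{p!}=\Sh(p)^*$ is the essential image of $C_p$, which is locally connected by \ref{locconnlocfib} (since $p$ is continuous for its Giraud topology) and hence a fibration by \ref{continuouslocfibinducesfib}. By \ref{etacontlocfib}(i) the functor $\eta_{\widehat{\cal D}_K}=\eta_{C_p}\colon \widehat{\cal D}_K \to (\widehat{\cal D}_K/C_p^*)$, sending $F$ to the \emph{unit triplet} $(F,C_{p!}F,\eta_F\colon F\to C_p^*C_{p!}F)$, is fully faithful and reflects cartesian arrows; and by \ref{cartesianwithunit} an arrow $u$ is cartesian for $C_{p!}$ exactly when its unit square is a pullback, which by \ref{enumloccart}(i) is precisely the condition that $\eta_{C_p}(u)$ be cartesian in the canonical stack. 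Thus $\eta_{C_p}$ is a fully faithful morphism of fibrations over $\widehat{\cal C}_J$ (it commutes with the projections, $\pi_{C_p}\eta_{C_p}=C_{p!}$) that both preserves and reflects cartesian arrows, and its essential image over each $E$ is the full subcategory of triplets $(F,E,\alpha)$ for which the transpose $\alpha^t\colon C_{p!}F\to E$ is an isomorphism.

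First I would record that this subcategory is stable under reindexing. The cartesian lifting of $g\colon E'\to E$ in the canonical stack is computed as the pullback of $\alpha$ along $C_p^*(g)$, and local connectedness — the Beck--Chevalley/Frobenius identity expressing that $(C_{p!})$ is an \emph{indexed} left adjoint (\ref{frobrempourrelevcart}) — shows that the transpose of the reindexed triplet is the pullback $g^*(\alpha^t)$. Hence reindexing a unit triplet yields a unit triplet, which both recovers the preservation of cartesian arrows and, crucially, shows that the property of $\alpha^t$ being an isomorphism is preserved by restriction along any arrow of the base.

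Next, using that the canonical stack $\pi_{C_p}\colon(\widehat{\cal D}_K/C_p^*)\to\widehat{\cal C}_J$ is a stack for the canonical topology (recalled in the preliminaries), I would deduce both halves of the stack condition for $C_{p!}$ from the embedding $\eta_{C_p}$. Morphism descent is automatic: being a fully faithful morphism of fibrations, $\eta_{C_p}$ is fully faithful fiberwise and compatible with reindexing, so the hom-presheaves of $C_{p!}$ are restrictions of those of the ambient stack and are therefore sheaves, making $C_{p!}$ a prestack. For effective descent, take a canonical-covering (epimorphic) family $(g_i\colon E_i\to E)_i$ and a descent datum for $C_{p!}$; transported through $\eta_{C_p}$ it becomes a descent datum in the canonical stack, which glues to a unique triplet $(F,E,\alpha)$ whose reindexings $g_i^*(F,E,\alpha)$ are isomorphic to the original unit triplets of the datum.

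It remains to check that the glued object is again a unit triplet, i.e. that $\alpha^t$ is an isomorphism; this is the heart of the argument. By the second paragraph, $g_i^*(\alpha^t)$ is the transpose of the reindexed triplet, which is an isomorphism for each $i$ because the datum consists of unit triplets. Since $(g_i)_i$ is jointly epimorphic and being an isomorphism is a local property in the topos $\widehat{\cal C}_J$ (equivalently, $\coprod_i E_i\to E$ is an effective epimorphism, along which isomorphisms are reflected), $\alpha^t$ is itself an isomorphism. Hence $(F,E,\alpha)$ lies in the essential image of $\eta_{C_p}$, so the datum glues inside $C_{p!}$, giving effective descent. The main obstacle is precisely this last step — ensuring that the unit-triplet locus is closed under gluing — and it is resolved exactly by the local connectedness of $C_p$, which supplies the Beck--Chevalley compatibility making ``$\alpha^t$ is an isomorphism'' a local condition on the base; the underlying mechanism is that effective descent computes $F$ as a colimit of the $F_i$ over the \v{C}ech diagram, $C_{p!}$ preserves this colimit, and so $C_{p!}F\cong\colim C_{p!}F_i\cong\colim E_i\cong E$ via $\alpha^t$.
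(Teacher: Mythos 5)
Your proof is correct, and its overall architecture is the same as the paper's: both realize $Gir_J(\cal D)$ inside the canonical stack via the fully faithful morphism of fibrations $\eta$, deduce the prestack condition from full faithfulness, glue a descent datum in the ambient stack $(Gir_J(\cal D)/C_p^*)$, and then show that the glued triplet lies in the essential image of $\eta$. Where you genuinely diverge is in this last, crucial step. The paper argues with colimits: it identifies $E \simeq \colim E_f \simeq {C_p}_!(\colim F_f)$, uses that $C_p^*$ and ${C_p}_!$ preserve colimits together with the universality (pullback-stability) of colimits in a topos to show that the first component of the gluing is $F' = \colim F_f$, and then invokes the universal property of this colimit to force the structural arrow to be $\eta(F')$. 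You instead characterize the essential image of $\eta$ as the triplets $(F,E,\alpha)$ whose transpose $\alpha^t : {C_p}_!F \to E$ is invertible, use the Beck--Chevalley property of the locally connected morphism $C_p$ (i.e.\ the indexedness of ${C_p}_!$) to show that transposition commutes with reindexing, and conclude by conservativity of pullback along effective epimorphisms. Your route is more modular and isolates a reusable statement: for \emph{any} locally connected geometric morphism $f$, the unit-triplet subfibration of $(\cal F/f^*)$ is stable under reindexing and local on the base, hence $f_!$ is a stack; the Giraud-topos case is then a pure corollary of \ref{fibimplfibtopos}. The paper's route buys an explicit identification of the glued object as $(F',{C_p}_!(F'),\eta(F'))$ and relies only on colimit preservation, never explicitly on the indexed adjunction. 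Both arguments ultimately rest on the same exactness features of $\widehat{\cal C}_J$, universality of colimits and descent of isomorphisms along effective epimorphisms being two faces of the same property.
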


\begin{proof}
From point (i) of the previous proposition, we know that $\eta_{\cal D}$ is a fully faithful morphism of fibrations; hence $Gir_J(\cal D)$ is a subobject of a stack: it is a prestack. Indeed, let $E$ be an object of $\widehat{\cal C}_J$, for $S$ a covering sieve on it we consider the category of descent datum on $S$, that is the morphisms of fibrations of the form $m$:

% https://q.uiver.app/#q=WzAsMyxbMCwwLCJcXGludCBTIl0sWzIsMCwiXFxTaCh7XFxjYWwgRH0sSykiXSxbMSwxLCJcXFNoKHtcXGNhbCBDfSxKKSJdLFsxLDIsIntDX3B9XyEiXSxbMCwyLCJcXHBpX1MiLDJdLFswLDEsIm0iXV0=
\[\begin{tikzcd}
	{\int S} && {Gir_J(\cal D)} \\
	& {\widehat{\cal C}_J}
	\arrow["m", from=1-1, to=1-3]
	\arrow["{\pi_S}"', from=1-1, to=2-2]
	\arrow["{{C_p}_!}", from=1-3, to=2-2]
\end{tikzcd}\]

We have the following commutative diagram:
% https://q.uiver.app/#q=WzAsNCxbMCwwLCJcXG1hdGhiZntGaWJ9X3tcXHdpZGVoYXR7XFxjYWwgQ31fSn0oXFxpbnQgUyxHaXJfSih7XFxjYWwgRH0pKSJdLFsxLDAsIlxcbWF0aGJme0ZpYn1fe1xcd2lkZWhhdHtcXGNhbCBDfV9KfShcXGludCBTLChHaXJfSih7XFxjYWwgRH0pL0NfcF4qKSkiXSxbMSwyLCJcXG1hdGhiZntGaWJ9X3tcXHdpZGVoYXR7XFxjYWwgQ31fSn0oKFxcd2lkZWhhdHtcXGNhbCBDfV9KL0UpLChHaXJfSih7XFxjYWwgRH0pL0NfcF4qKSkiXSxbMCwyLCJcXG1hdGhiZntGaWJ9X3tcXHdpZGVoYXR7XFxjYWwgQ31fSn0oKFxcd2lkZWhhdHtcXGNhbCBDfV9KL0UpLEdpcl9KKHtcXGNhbCBEfSkpIl0sWzAsMSwiIiwwLHsic3R5bGUiOnsidGFpbCI6eyJuYW1lIjoiaG9vayIsInNpZGUiOiJ0b3AifX19XSxbMCwxLCJcXGV0YV97R2lyX0ooe1xcY2FsIER9KX0gXFxjaXJjIC0iLDEseyJvZmZzZXQiOi00LCJzdHlsZSI6eyJib2R5Ijp7Im5hbWUiOiJub25lIn0sImhlYWQiOnsibmFtZSI6Im5vbmUifX19XSxbMywwLCItIFxcY2lyYyBtIl0sWzMsMiwiIiwwLHsic3R5bGUiOnsidGFpbCI6eyJuYW1lIjoiaG9vayIsInNpZGUiOiJ0b3AifX19XSxbMywyLCJcXGV0YV97R2lyX0ooe1xcY2FsIER9KX0gXFxjaXJjIC0iLDEseyJvZmZzZXQiOjQsInN0eWxlIjp7ImJvZHkiOnsibmFtZSI6Im5vbmUifSwiaGVhZCI6eyJuYW1lIjoibm9uZSJ9fX1dLFsxLDIsIlxcc2ltZXEiLDEseyJzdHlsZSI6eyJib2R5Ijp7Im5hbWUiOiJub25lIn0sImhlYWQiOnsibmFtZSI6Im5vbmUifX19XV0=
\[\begin{tikzcd}
	{\mathbf{Fib}_{\widehat{\cal C}_J}(\int S,Gir_J({\cal D}))} & {\mathbf{Fib}_{\widehat{\cal C}_J}(\int S,(Gir_J({\cal D})/C_p^*))} \\
	\\
	{\mathbf{Fib}_{\widehat{\cal C}_J}((\widehat{\cal C}_J/E),Gir_J({\cal D}))} & {\mathbf{Fib}_{\widehat{\cal C}_J}((\widehat{\cal C}_J/E),(Gir_J({\cal D})/C_p^*))}
	\arrow[hook, from=1-1, to=1-2]
	\arrow["{\eta_{Gir_J({\cal D})} \circ -}"{description}, shift left=4, draw=none, from=1-1, to=1-2]
	\arrow["{\rotatebox{270}{$\simeq$}}"{description}, draw=none, from=1-2, to=3-2]
	\arrow["{(- \circ m)}", from=3-1, to=1-1]
	\arrow[hook, from=3-1, to=3-2]
	\arrow["{\eta_{Gir_J({\cal D})} \circ -}"{description}, shift right=4, draw=none, from=3-1, to=3-2]
\end{tikzcd}\]

Here, the postcomposition by $\eta_{Gir_J(\cal D)}$ is fully faithful, since $\eta_{Gir_J(\cal D)}$ is a fully faithful morphism of fibrations. The equivalences on the right comes from the very definition of $(Gir_J({\cal D})/C_p^*)$ being a stack. Since all the other functors in this diagram are equivalences or fully faithful functors, this forces $(- \circ m)$ to be fully faithful, that is, ${C_p}_!$ is a prestack.

It remains to show that $(-\circ m)$ is not only fully faithful, but also essentially surjective. To this end, we consider a descent datum for $Gir_J(\cal D)$ on the covering sieve $S$: this consists of a family of objects $F_f$ in $Gir_J(\cal D)$, indexed by the arrows $f : E_f \to E$ in $S$, such that ${C_p}_!(F_f) \simeq E_f$, together with some isomorphisms and compatibility conditions which we do not make explicit here for the sake of conciseness. 

The crucial point is that, since $(Gir_J({\cal D})/C_p^*)$ is a stack, when we take the image through $\eta_{Gir_J(\cal D)}$ of this descent datum, we have a gluing $(F,E, \alpha : F \to C_p^*(E))$ for the $\eta(F_f)$ in $(Gir_J({\cal D})/C_p^*)$. What we want to show is that this gluing is in the essential image of $\eta_{Gir_J(\cal D)}$.

First, we know that ${C_p}_!(F_f) \simeq E_f$ are covering $E$ for the canonical topology: hence, by the colimit-preservation of ${C_p}_!$, we know that $E \simeq \colim F_f$. We denote $\colim F_f$ by $F'$: the gluing of the $\eta(F_f)$ is of the form $(F,{C_p}_!(F'), \alpha : F \to C_p^*{C_p}_!(F'))$. Also, we denote by $u_f : F_f \to F'$ the arrows constituting the cocone of this colimit.

Now, recall that $(F,{C_p}_!(F'), \alpha : F \to C_p^*{C_p}_!(F'))$ being a gluing for the $\eta(F_f)$ exactly means that the squares of the following form are pullbacks:

% https://q.uiver.app/#q=WzAsNCxbMCwwLCJGIl0sWzEsMCwiQ19wXip7Q19wfV8hKEYnKSJdLFswLDEsIkZfZiJdLFsxLDEsIkNfcF4qe0NfcH1fIShGX2YpIl0sWzAsMSwiXFxhbHBoYSJdLFsyLDMsIlxcZXRhKEZfZikiLDJdLFszLDEsIkNfcF4qe0NfcH1fISh1X2YpIiwyXSxbMiwwLCJ1X2YnIl0sWzIsMSwiIiwxLHsic3R5bGUiOnsibmFtZSI6ImNvcm5lciJ9fV1d
\[\begin{tikzcd}
	F & {C_p^*{C_p}_!(F')} \\
	{F_f} & {C_p^*{C_p}_!(F_f)}
	\arrow["\alpha", from=1-1, to=1-2]
	\arrow["{u_f'}", from=2-1, to=1-1]
	\arrow["\lrcorner"{anchor=center, pos=0.125, rotate=90}, draw=none, from=2-1, to=1-2]
	\arrow["{\eta(F_f)}"', from=2-1, to=2-2]
	\arrow["{C_p^*{C_p}_!(u_f)}"', from=2-2, to=1-2]
\end{tikzcd}\]

But, since $F'$ is a colimit of the $F_f$ and the functors $C_p^*$ and ${C_p}_!$ preserve them, the right part of these pullbacks constitute a colimit cocone. Moreover, since colimits are stable under pullbacks, we have that $F$ is isomorphic to $F'$, and the $u_f'$ are (modulo this isomorphism) the $u_f$. Hence, these pullback squares are of the form:

% https://q.uiver.app/#q=WzAsNCxbMCwwLCJGJyJdLFsxLDAsIkNfcF4qe0NfcH1fIShGJykiXSxbMCwxLCJGX2YiXSxbMSwxLCJDX3BeKntDX3B9XyEoRl9mKSJdLFswLDEsIlxcYWxwaGEiXSxbMiwzLCJcXGV0YShGX2YpIiwyXSxbMywxLCJDX3BeKntDX3B9XyEodV9mKSIsMl0sWzIsMCwidV9mIl0sWzIsMSwiIiwxLHsic3R5bGUiOnsibmFtZSI6ImNvcm5lciJ9fV1d
\[\begin{tikzcd}
	{F'} & {C_p^*{C_p}_!(F')} \\
	{F_f} & {C_p^*{C_p}_!(F_f)}
	\arrow["\alpha", from=1-1, to=1-2]
	\arrow["{u_f}", from=2-1, to=1-1]
	\arrow["\lrcorner"{anchor=center, pos=0.125, rotate=90}, draw=none, from=2-1, to=1-2]
	\arrow["{\eta(F_f)}"', from=2-1, to=2-2]
	\arrow["{C_p^*{C_p}_!(u_f)}"', from=2-2, to=1-2]
\end{tikzcd}\]

Finally, we know that when we replace $\alpha$ by $\eta(F')$ this square commutes (naturality of $\eta$ which is the unit of the adjunction $C_p^* \vdash {C_p}_!$). By the universal property of the colimit $F' \simeq \colim F_f$, it is the only arrow making all of these diagram commute: $\alpha = \eta(F')$. This shows that the gluing is of the form $(F',{C_p}_!(F'),\eta(F'))$: it belongs in the essential image of $\eta$, that is, $Gir_J(\cal D)$ is a stack.
\end{proof}

\begin{remark}
Let's consider a topos $(\cal E,J_{\cal E}^{can})$ as a base site together with a presheaf $P$ on it. By taking the Giraud topos of $\int P \to (\cal E,J_{\cal E}^{can})$ we obtain the étale topos $(\cal E/a_{J_{\cal E}^{can}}(P)) \to \cal E$ of which the essential image is the projection on the first component. This fibration is equivalent to $\int a_{J_{\cal E}^{can}}(P)$, and the essential image is just the projection of this fibration: the Giraud topos of a presheaf together with its essential image gives us the sheafification of the presheaf.

Moreover, if we work with a base site $(\cal C,J)$ and a presheaf $P$ on it, we can form the following bipullback:

% https://q.uiver.app/#q=WzAsNCxbMCwxLCIoe1xcY2FsIEN9LEopIl0sWzEsMSwiXFx3aWRlaGF0e1xcY2FsIEN9X0oiXSxbMSwwLCIoXFx3aWRlaGF0e1xcY2FsIEN9X0ovYV9KKFApKSJdLFswLDAsIlxcaW50IGFfSihQKSJdLFsyLDEsIntDX3tcXHBpX1B9fV8hIl0sWzAsMSwibF9KIiwyXSxbMywwLCJcXHBpX3thX0ooUCl9IiwyXSxbMywyXSxbMywxLCIiLDEseyJzdHlsZSI6eyJuYW1lIjoiY29ybmVyIn19XV0=
\[\begin{tikzcd}
	{\int a_J(P)} & {(\widehat{\cal C}_J/a_J(P))} \\
	{({\cal C},J)} & {\widehat{\cal C}_J}
	\arrow[from=1-1, to=1-2]
	\arrow["{\pi_{a_J(P)}}"', from=1-1, to=2-1]
	\arrow["\lrcorner"{anchor=center, pos=0.125}, draw=none, from=1-1, to=2-2]
	\arrow["{{C_{\pi_P}}_!}", from=1-2, to=2-2]
	\arrow["{l_J}"', from=2-1, to=2-2]
\end{tikzcd}\]

\noindent which gives us the sheafification of the presheaf $P$. 
\end{remark}

\subsection{A characterization for inducing a fibration at the topos-level}

The previous subsection established that a continuous local fibration $p : ({\cal D},K) \to ({\cal C},J)$ induces a fibration $\Sh(p)^* : \widehat{\cal D}_K \to \widehat{\cal C}_J$. In this subsection, we are interested in the extent to which a general continuous comorphism of sites $p : ({\cal D},K) \to ({\cal C},J)$ induces a fibration $\Sh(p)^* : \widehat{\cal D}_K \to \widehat{\cal C}_J$.

We begin by observing a property of continuous local fibrations that will guide our analysis. For every arrow $f : c \to p(d)$ in the base category ${\cal C}$ of a local fibration $p : ({\cal D},K) \to ({\cal C},J)$, there exists a covering family $(p(d_i) \to c)_i$ locally factorizing $f$ through arrows of the form $p(\widehat{f_i})$ for arrows $\widehat{f_i}$ in ${\cal D}$. This naturally leads to the question: can $c$ be described not only as being covered by such objects, but also as a colimit of objects coming from ${\cal D}$, at the topos-level ? The following definition will be the starting point in order to obtain results about such colimit representations:

\begin{defn}\label{Dffact}
Let $p : {\cal D} \to {\cal C}$ be a functor. For every object $c$ of ${\cal C}$ together with an object $d$ of ${\cal D}$ and an arrow $f : c \to p(d)$, we can define the category ${\cal D}^{\mathbf{fact}}_{f,d}$ as having for objects the triplets $(d',v' : p(d') \to c,f' :d' \to d)$ such that $fv' = p(f')$, and arrows between two such triplets $(d',v',f')$ and $(d'',v'',f'')$ the arrows $u : d' \to d''$ in ${\cal D}$ such that $v''p(u) = v'$ and $f''u=f'$. 

We have a projection functor: $\pi_d^f : {\cal D}^{\mathbf{fact}}_{f,d} \to {\cal C}/c$ sending a triplet $(d',v',f')$ to $(p(d'),v')$. We denote the following composite $p^f_d$:

% https://q.uiver.app/#q=WzAsMyxbMCwxLCJ7XFxjYWwgRH1ee1xcbWF0aGJme2ZhY3R9fV9mIl0sWzAsMCwiXFxTaCh7XFxjYWwgQ30sSikvRSJdLFsxLDAsIlxcU2goe1xcY2FsIEN9LEopIl0sWzEsMiwiXFxwaV9FIl0sWzAsMSwiXFxwaV9kXmYiXSxbMCwyLCJwXmZfZCIsMl1d
\[\begin{tikzcd}
	{{\cal C}/c} & {{\cal C}} \\
	{{\cal D}^{\mathbf{fact}}_{f,d}}
	\arrow["{\pi_c}", from=1-1, to=1-2]
	\arrow["{\pi_d^f}", from=2-1, to=1-1]
	\arrow["{p^f_d}"', from=2-1, to=1-2]
\end{tikzcd}\]
\end{defn}

\begin{remark}
It will be useful to notice that the category ${\cal D}^{\mathbf{fact}}_{f,d}$ is the category of elements of the following pullback presheaf:

% https://q.uiver.app/#q=WzAsNCxbMSwwLCJ7XFxjYWwgRH0oLSxkKSJdLFsxLDEsIntcXGNhbCBDfSgtLHAoZCkpIl0sWzAsMSwie1xcY2FsIEN9KC0sYykiXSxbMCwwLCJGIl0sWzAsMSwiXFxldGFfcCJdLFszLDJdLFszLDBdLFsyLDEsImZcXGNpcmMgLSIsMl0sWzMsMSwiIiwyLHsic3R5bGUiOnsibmFtZSI6ImNvcm5lciJ9fV1d
\[\begin{tikzcd}
	F & {{\cal D}(-,d)} \\
	{{\cal C}(-,c)} & {{\cal C}(-,p(d))}
	\arrow[from=1-1, to=1-2]
	\arrow[from=1-1, to=2-1]
	\arrow["\lrcorner"{anchor=center, pos=0.125}, draw=none, from=1-1, to=2-2]
	\arrow["{\eta_p}", from=1-2, to=2-2]
	\arrow["{f\circ -}"', from=2-1, to=2-2]
\end{tikzcd}\]
\end{remark}

What we want to do now is: if $p : ({\cal D},K) \to ({\cal C},J)$ is a local fibration, when we have some arrow $f : c \to p(d) $ and we precompose it with covering arrows $(v_i : p(d_i) \to c)_{i \in I}$ to obtain locally cartesian arrows $(\widehat{f_i} : d_i \to d)_{i \in I}$, are we able to represent $l(c)$, in $\widehat{\cal C}_J$, as a colimit coming from $p$.

Following the previous discussion, in order to obtain the object $l(c)$ as a colimit in the topos, we will use the notion of cofinality. Let $c$ be an object of a site $({\cal C},J)$. We know that $l(c) \simeq \colim l\pi_c$ where $\pi_c : {\cal C}/c \to {\cal C}$ is the obvious projection. We may want to represent the object $l(c)$ as the colimit of a diagram factorizing through ${\cal C}/c$, saying that we can fully characterize $c$ by not taking \emph{all} the arrows going into it, but just \emph{some} of those arrows, possibly indexed by another category. This situation is a particular case of the Proposition 2.21. \cite{denseness}, namely:

\begin{prop}
Let $c$ be an object of a site $({\cal C},J)$, and $\cal D$ a category with a functor $p : {\cal D} \to {\cal C}/c$. We have that $l(c) \simeq \colim l\pi_c \simeq \colim l\pi_cp$ exactly when these two conditions are satisfied:

\begin{enumerate}
    \item There is a set of arrows of the form $p(d_i)$ covering $c$.
    \item Whenever we have two arrows $x : c' \to \dom (p(d_i))$ and $x' : c' \to \dom (p(d_j))$ such that $p(d_i)x = p(d_j)x'$, there is a covering $x_i : c'_i \to c'$ such that $xx_i$ and $x'x_i$ are in the same connected component of $(c'_i / \pi_cp)$.
\end{enumerate}
\end{prop}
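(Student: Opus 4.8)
The plan is to recognize the asserted equivalence as a cofinality statement and to reduce it to the general characterization provided by Proposition 2.21 of \cite{denseness}. The first isomorphism, $l(c) \simeq \colim l\pi_c$, holds with no hypothesis whatsoever: it is just the density (co-Yoneda) presentation of a representable. Indeed, $l = a_J \circ y_{\cal C}$ with $a_J$ colimit-preserving, and the presheaf $y_{\cal C}(c) = {\cal C}(-,c)$ is canonically the colimit of the representables indexed by its category of elements, which is exactly ${\cal C}/c$; applying $a_J$ yields $l(c) \simeq \colim_{{\cal C}/c} l\pi_c$ in $\widehat{\cal C}_J$. Hence the entire content of the statement lies in the second isomorphism $\colim l\pi_c \simeq \colim l\pi_c p$.

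Second, I would observe that the canonical comparison $\colim l\pi_c p \to \colim l\pi_c \simeq l(c)$ — induced by the functor $p : {\cal D} \to {\cal C}/c$ and the evident cocone it produces — is an isomorphism in $\widehat{\cal C}_J$ precisely when $p$ is $J$-cofinal, that is, final for the diagram $l\pi_c$ at the level of the topos. This is the sheaf-theoretic analogue of the classical fact that precomposition with a final functor leaves a colimit unchanged, suitably weakened so that the usual requirements that the relevant comma categories be nonempty and connected are replaced by their \emph{up to covering} versions.

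Third, the conclusion then follows by invoking Proposition 2.21 of \cite{denseness}, which characterizes exactly this form of $J$-cofinality by two conditions: a covering condition (the images of the objects of $\cal D$ are jointly $J$-covering) together with a local-connectedness condition on the appropriate comma categories. It remains only to transcribe these general hypotheses into the present situation, where the target is the slice ${\cal C}/c$ and the diagram is $l\pi_c$. One checks that the covering clause specializes to condition~(1), namely that the family of underlying arrows $(p(d_i))_i$ into $c$ is $J$-covering, and that the connectedness clause specializes to condition~(2), namely that any two arrows $x : c' \to \dom(p(d_i))$ and $x' : c' \to \dom(p(d_j))$ with $p(d_i)x = p(d_j)x'$ become connected in $(c'_i / \pi_c p)$ after restriction along some covering $(x_i : c'_i \to c')_i$.

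The main obstacle I anticipate is purely one of bookkeeping rather than of genuine mathematical difficulty: establishing the precise dictionary between the general statement of Proposition 2.21 and the two numbered conditions here. In particular one must verify that ``lying in the same connected component of $(c'_i / \pi_c p)$'' is exactly the specialization of the general connectedness clause, and that in the covering condition the domains and codomains line up correctly (the objects of ${\cal D}$ land in ${\cal C}/c$, so it is their underlying arrows into $c$ that must form the $J$-covering family). No argument beyond this matching is required, since the density presentation of $l(c)$ and the general cofinality criterion do all the work.
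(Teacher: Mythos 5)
Your proposal is correct and takes essentially the same route as the paper: both prove the statement by invoking the general $J$-cofinality criterion of Proposition 2.21 of \cite{denseness} and then specializing its covering and connectedness clauses to conditions (1) and (2). The ``bookkeeping'' you anticipate is precisely the one-line observation the paper's proof records, namely that two objects $v : x \to \pi_c([f : c' \to c])$ and $v' : x \to \pi_c([g : c'' \to c])$ of $(x/\pi_c)$ lie in the same connected component if and only if $fv = gv'$.
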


\begin{proof}
All the simplifications are due to the fact that, in $(x / \pi_c)$, two objects $v : x \to \pi_c([f : c' \to c])$ and $v' : x \to \pi_c([g : c'' \to c])$ are in the same connected component if and only if $gv'=fv$. 
\end{proof}

Here we can apply this result when we have an arrow $f : c \to p(d)$ in a local fibration, in order to present $l(c)$ as a colimit of objects coming from $\cal D$:

\begin{prop}\label{prop:cofinalitycolimit}
Let $p : ({\cal D},K) \to ({\cal C},J)$ be a continuous local fibration. Recall that we have the following commutative triangle:

% https://q.uiver.app/#q=WzAsMyxbMCwwLCJ7XFxjYWwgQ30vYyJdLFswLDEsIntcXGNhbCBEfV97XFxtYXRoYmZ7ZmFjdH19XmYiXSxbMSwwLCJcXGNhbCBDIl0sWzAsMiwiXFxwaV5jIl0sWzEsMiwicF5mIiwyXSxbMSwwLCJcXHBpX2NeZiJdXQ==
\[\begin{tikzcd}
	{{\cal C}/c} & {\cal C} \\
	{{\cal D}^{\mathbf{fact}}_{f,d}}
	\arrow["{\pi_c}", from=1-1, to=1-2]
	\arrow["{p^f_d}"', from=2-1, to=1-2]
	\arrow["{\pi^f_d}", from=2-1, to=1-1]
\end{tikzcd}\]
We have that $\pi^f_d$ is  $J$-cofinal, that is: $l(c) \simeq \colim lp^f_d$.
\end{prop}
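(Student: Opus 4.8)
The plan is to apply the cofinality criterion established in the preceding proposition to the functor $\pi^f_d : {\cal D}^{\mathbf{fact}}_{f,d} \to {\cal C}/c$, verifying its two hypotheses in turn. Recall that $\pi^f_d$ sends a triple $(d', v' : p(d') \to c, f' : d' \to d)$ to the object $(p(d'), v')$ of ${\cal C}/c$, so that $\pi_c\pi^f_d = p^f_d$ is the assignment $(d',v',f') \mapsto p(d')$; the conclusion $l(c) \simeq \colim l\pi_c\pi^f_d = \colim lp^f_d$ is exactly the asserted $J$-cofinality.

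The first condition is immediate from the definition of a local fibration. Applied to the arrow $f : c \to p(d)$, it produces a $J$-covering family $(v_i : p(d_i) \to c)_i$ together with locally cartesian arrows $(\widehat{f_i} : d_i \to d)_i$ satisfying $fv_i = p(\widehat{f_i})$. Each triple $(d_i, v_i, \widehat{f_i})$ is then an object of ${\cal D}^{\mathbf{fact}}_{f,d}$, and $\pi^f_d(d_i,v_i,\widehat{f_i}) = (p(d_i), v_i)$ exhibits the required covering of $c$ by arrows in the image of $\pi^f_d$.

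The substance lies in the second condition. I would start from two objects $(d_1,v_1,\widehat{f_1})$, $(d_2,v_2,\widehat{f_2})$ and arrows $x : c' \to p(d_1)$, $x' : c' \to p(d_2)$ with $v_1x = v_2x' =: w$. Since $p(\widehat{f_1})x = fv_1x = fw = fv_2x' = p(\widehat{f_2})x'$, the composite $fw : c' \to p(d)$ factors, by the local fibration property, over a $J$-covering $(y_m : p(e_m) \to c')_m$ as $fwy_m = p(\widehat{g_m})$ with $\widehat{g_m} : e_m \to d$ locally cartesian. I then lift $xy_m$ through the locally cartesian arrow $\widehat{f_1}$ (the equality $p(\widehat{f_1})(xy_m) = p(\widehat{g_m})$ legitimises this), obtaining a $K$-covering $(u_{ml})_l$ of $e_m$ and arrows $h_{ml} : e_{ml} \to d_1$ with $xy_mp(u_{ml}) = p(h_{ml})$ and $\widehat{f_1}h_{ml} = \widehat{g_m}u_{ml}$; lifting $x'y_mp(u_{ml})$ through $\widehat{f_2}$ yields a further $K$-covering $(s_{mln})_n$ and arrows $h'_{mln} : e_{mln} \to d_2$ with the analogous relations. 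Because $p$ is continuous it preserves coverings, so the composites $x_k := y_mp(u_{ml})p(s_{mln})$ form a $J$-covering of $c'$; this is the covering the criterion demands.

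It then remains to connect $xx_k$ and $x'x_k$ inside $(c'_k / p^f_d)$, where $c'_k = p(e_{mln})$. The mediating object is $O := (e_{mln},\, v_1p(h_{ml}s_{mln}),\, \widehat{g_m}u_{ml}s_{mln})$, which one checks is a well-defined object of ${\cal D}^{\mathbf{fact}}_{f,d}$ using $\widehat{f_1}h_{ml} = \widehat{g_m}u_{ml}$. Both $h_{ml}s_{mln} : O \to (d_1,v_1,\widehat{f_1})$ and $h'_{mln} : O \to (d_2,v_2,\widehat{f_2})$ are morphisms of ${\cal D}^{\mathbf{fact}}_{f,d}$; for the second, the needed equality of structure maps $v_1p(h_{ml}s_{mln}) = wy_mp(u_{ml})p(s_{mln}) = v_2p(h'_{mln})$ is precisely where $v_1x = v_2x'$ is used. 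Under $p^f_d$ these carry the object $(O, 1_{c'_k})$ to $((d_1,v_1,\widehat{f_1}), xx_k)$ and to $((d_2,v_2,\widehat{f_2}), x'x_k)$ respectively, so both objects are connected to $(O,1_{c'_k})$, hence to each other. With both conditions verified, the preceding proposition gives $l(c) \simeq \colim lp^f_d$. The hard part will be exactly this connectedness step: producing a single intermediate triple that simultaneously dominates the two given data forces the two successive locally cartesian liftings to be threaded through a common refinement, and the continuity of $p$ is what converts the $K$-covers generated in ${\cal D}$ into the $J$-cover of $c'$ required by the criterion.
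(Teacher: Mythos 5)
Your verification of the first cofinality condition is correct, and your span construction for the second condition is sound \emph{as far as it goes}, but there is a genuine gap: you only verify the connectedness condition for pairs of objects of ${\cal D}^{\mathbf{fact}}_{f,d}$ whose third components are locally cartesian. The hats in $(d_1,v_1,\widehat{f_1})$, $(d_2,v_2,\widehat{f_2})$ are not innocent notation: your key step \og lift $xy_m$ through the locally cartesian arrow $\widehat{f_1}$\fg{} invokes the local lifting property \emph{of} $\widehat{f_1}$, which is available only when $\widehat{f_1}$ is locally cartesian. But an object of ${\cal D}^{\mathbf{fact}}_{f,d}$ is an arbitrary triple $(d',v',f')$ with $fv'=p(f')$; nothing forces $f'$ to be locally cartesian, and the cofinality criterion must be checked for \emph{all} pairs of objects of the index category --- this is exactly what local injectivity of the comparison map from the colimit presheaf $\colim y_{\cal C}\,\pi_c\pi^f_d$ to $y_{\cal C}(c)$ requires, and it is how the paper's own proof proceeds (it starts from two arbitrary objects $(d',f',v')$ and $(d'',f'',v'')$ with no cartesianness hypothesis). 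Nor can you retreat to the reading that condition (2) of the criterion only concerns the covering family produced in condition (1): that reading makes the criterion false. For instance, take a discrete index category with two objects, both sent to $[1_c : c \to c]$; condition (1) and the restricted condition (2) hold, yet the colimit is $y_{\cal C}(c) \coprod y_{\cal C}(c)$, not $y_{\cal C}(c)$.

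The missing content is precisely what makes the paper's proof longer than yours. To connect an arbitrary $(d',v',f')$ to the \og good\fg{} objects one cannot lift through $f'$; instead, the paper first applies the local fibration property to the arrow $x'u_i : p(\overline{d_i}) \to p(d')$ (this needs only that the \emph{codomain} lies in the image of $p$), producing locally cartesian arrows $\widehat{f'_{ij}} : d'_{ij} \to d'$ over a $J$-covering, and then uses the locally cartesian property of the arrows $\widehat{g_i}$ (the lifts of the common composite $fv'x'$, which \emph{are} known to be locally cartesian) to lift the covering arrows $u'_{ij}$ into $\cal D$. This yields a zig-zag $(d',v',f') \leftarrow (d'_{ijk},\ldots) \to (\overline{d_i},v'x'u_i,\widehat{g_i})$, which, repeated on the double-primed side and after intersecting the resulting coverings, gives the required connection. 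Your roof $O \to (d_1,v_1,\widehat{f_1})$, $O \to (d_2,v_2,\widehat{f_2})$ is a correct --- indeed cleaner --- argument in the special case where both third components are locally cartesian, so it could serve as the final step of a complete proof; but without the rectification step reducing arbitrary objects to that case, the proposition is not proved.
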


\begin{proof}
We have to verify the two conditions recalled in of the previous proposition.

(i): Let $x : \overline{c} \to c' = \pi_c(g : c'  \to c)$. We can cover $c$ by the $(v_i: p(d_i) \to c)_i$ from where the first point of the previous proposition is fulfilled.

(ii): Let $(d',f',v')$ and $(d'',f'',v'')$ be two objects of ${\cal D}^{\mathbf{fact}}_{f,d}$, and two arrows $u' : \overline{c} \to p(d')$, $u'' : \overline{c} \to p(d'')$ such that $v'u' = v''u''$. We can locally lift the arrow $fv'u' = fv''u''$ into locally cartesian arrows, as in the following diagram:

% https://q.uiver.app/#q=WzAsNixbMSwxLCJcXG92ZXJsaW5le2N9Il0sWzIsMCwicChkJykiXSxbMiwyLCJwKGQnJykiXSxbMywxLCJjIl0sWzQsMSwicChkKSJdLFswLDEsInAoZCcnX2kpIl0sWzAsMSwieCciXSxbMCwyLCJ4JyciLDJdLFsxLDMsInYnIl0sWzIsMywidicnIiwyXSxbMyw0LCJmIiwyXSxbNSw0LCJwKFxcd2lkZWhhdHtmJydfaX0pIiwyLHsiY3VydmUiOjF9XSxbNSwwLCJ1X2kiXV0=
\[\begin{tikzcd}
	&& {p(d')} \\
	{p(\overline{d_i})} & {\overline{c}} && c & {p(d)} \\
	&& {p(d'')}
	\arrow["{x'}", from=2-2, to=1-3]
	\arrow["{x''}"', from=2-2, to=3-3]
	\arrow["{v'}", from=1-3, to=2-4]
	\arrow["{v''}"', from=3-3, to=2-4]
	\arrow["f"', from=2-4, to=2-5]
	\arrow["{p(\widehat{g_i})}"', bend right = 12, from=2-1, to=2-5]
	\arrow["{u_i}", from=2-1, to=2-2]
\end{tikzcd}\]

Then, we can do the same for $x'u_i$ and $x''u_i$: we have coverings $(u'_{ij})_j$ and $(u''_{ij'})_{j'}$ such that $u_iu'_{ij} = p(\widehat{f'_{ij}})$ and $u_iu''_{ij'} = p(\widehat{f''_{ij'}})$, with the $\widehat{f'_{ij}}$ and $\widehat{f''_{ij'}}$ locally cartesian arrows. We explicit one of these two situations in the diagram:

% https://q.uiver.app/#q=WzAsNixbMiwxLCJcXG92ZXJsaW5le2N9Il0sWzMsMCwicChkJykiXSxbNCwxLCJjIl0sWzUsMSwicChkKSJdLFsxLDEsInAoZCcnX2kpIl0sWzAsMSwicChkJydfe2lqfSkiXSxbMCwxLCJ4JyJdLFsxLDIsInYnIl0sWzIsMywiZiIsMl0sWzQsMywicChcXHdpZGVoYXR7ZicnX2l9KSIsMix7ImN1cnZlIjoxfV0sWzQsMCwidV9pIl0sWzUsMSwicChcXHdpZGVoYXR7Zidfe2lqfX0pIiwwLHsiY3VydmUiOi0yfV0sWzUsNCwidSdfe2lqfSIsMl0sWzEsMywicChmJykiLDAseyJjdXJ2ZSI6LTJ9XV0=
\[\begin{tikzcd}
	&&& {p(d')} \\
	{p(d'_{ij})} & {p(\overline{d_i})} & {\overline{c}} && c & {p(d)}
	\arrow["{x'}", from=2-3, to=1-4]
	\arrow["{v'}", from=1-4, to=2-5]
	\arrow["f"', from=2-5, to=2-6]
	\arrow["{p(\widehat{g_i})}"', bend right = 12, from=2-2, to=2-6]
	\arrow["{u_i}", from=2-2, to=2-3]
	\arrow["{p(\widehat{f'_{ij}})}", bend left = 15, from=2-1, to=1-4]
	\arrow["{u'_{ij}}"', from=2-1, to=2-2]
	\arrow["{p(f')}", bend left = 15, from=1-4, to=2-6]
\end{tikzcd}\]

Now we use the locally cartesian property of $\widehat{g_i}$ to locally lift the $u'_{ij}$: we have coverings $(u'_{ijk} : d'_{ijk} \to d'_{ij})_k$ and arrows $(x'_{ijk} : d'_{ijk} \to \overline{d_i})_k$ such that $\widehat{g_i}x'_{ijk} = f'\widehat{f'_{ij}}u'_{ijk}$ and $u'_{ij}p(u'_{ijk}) = p(x'_{ijk})$. On the other side, we can do the same for the $u''_{ij'}$:  we have coverings $(u''_{ij'k'} : d'_{ij'k'} \to d''_{ij'})_k$ and arrows $(x''_{ij'k'} : d''_{ij'k'} \to \overline{d_i})_{k'}$ such that $\widehat{g_i}x''_{ij'k'} = f''\widehat{f''_{ij'}}u''_{ij'k'}$ and $u''_{ij'}p(u''_{ij'k'}) = p(x''_{ij'k'})$. Again, one of these two symmetric situations is depicted in the following diagram:

% https://q.uiver.app/#q=WzAsNyxbMiwxLCJcXG92ZXJsaW5le2N9Il0sWzMsMCwicChkJykiXSxbNCwxLCJjIl0sWzUsMSwicChkKSJdLFsxLDEsInAoXFxvdmVybGluZXtkX2l9KSJdLFswLDEsInAoZCdfe2lqfSkiXSxbMCwyLCJwKGQnX3tpamt9KSJdLFswLDEsIngnIl0sWzEsMiwidiciXSxbMiwzLCJmIiwyXSxbNCwzLCJwKFxcd2lkZWhhdHtnX2l9KSIsMix7ImN1cnZlIjoxfV0sWzQsMCwidV9pIl0sWzUsMSwicChcXHdpZGVoYXR7Zidfe2lqfX0pIiwwLHsiY3VydmUiOi0yfV0sWzUsNCwidSdfe2lqfSIsMl0sWzEsMywicChmJykiLDAseyJjdXJ2ZSI6LTJ9XSxbNiw1LCJwKHUnX3tpamt9KSJdLFs2LDQsInAoeCdfe2lqa30pIiwyXV0=
\[\begin{tikzcd}
	&&& {p(d')} \\
	{p(d'_{ij})} & {p(\overline{d_i})} & {\overline{c}} && c & {p(d)} \\
	{p(d'_{ijk})}
	\arrow["{x'}", from=2-3, to=1-4]
	\arrow["{v'}", from=1-4, to=2-5]
	\arrow["f"', from=2-5, to=2-6]
	\arrow["{p(\widehat{g_i})}"',  bend right = 12, from=2-2, to=2-6]
	\arrow["{u_i}", from=2-2, to=2-3]
	\arrow["{p(\widehat{f'_{ij}})}", bend left = 15, from=2-1, to=1-4]
	\arrow["{u'_{ij}}"', from=2-1, to=2-2]
	\arrow["{p(f')}", bend left = 15, from=1-4, to=2-6]
	\arrow["{p(u'_{ijk})}", from=3-1, to=2-1]
	\arrow["{p(x'_{ijk})}"', from=3-1, to=2-2]
\end{tikzcd}\]

Since $p$ is continuous it preserves coverings, and we have that $(p(u'_{ijk}))_k$ and $(p(u''_{ij'k'}))_{k'}$ are covering. At this point, we have two different coverings which allow us to locally connect $(d',f',v')$ to $(\overline{d_i}, \widehat{g_i}, v'x'u_i)$ on one side, and $(d'',f'',v'')$ to $(\overline{d_i}, \widehat{g_i}, v'x'u_i)$ on the other side. Indeed, we have :

% https://q.uiver.app/#q=WzAsNSxbMCwxLCJwKGQnX3tpamt9KSJdLFsxLDAsIlxcb3ZlcmxpbmV7Y30iXSxbMiwwLCJwKGQnKT1wXmYoZCcsZicsdicpIl0sWzIsMiwicChcXG92ZXJsaW5le2RfaX0pPXBeZihkX2ksXFx3aWRlaGF0e2dfaX0sdid4J3VfaSkiXSxbMiwxLCJwKGQnX3tpamt9KT1wXmYoZCdfe2lqa30sXFx3aWRlaGF0e2dfaX14J197aWprfSx2J3gndV9pcCh4J197aWprfSkpIl0sWzAsMSwidV9pdSdfe2lqfXAodSdfe2lqa30pIl0sWzEsMiwieCciXSxbNCwyLCJwKFxcd2lkZWhhdHtmJ197aWp9fXUnX3tpamt9KT1wXmYoXFx3aWRlaGF0e2YnX3tpan19dSdfe2lqa30pIiwyXSxbNCwzLCJwKHgnX3tpamt9KT1wXmYoeCdfe2lqa30pIl0sWzAsMywicCh4J197aWprfSkiLDJdLFswLDQsIjFfe3AoZCdfe2lqa30pfSJdXQ==
\[\begin{tikzcd}
	& {\overline{c}} & {p(d')=p^f_d(d',f',v')} \\
	{p(d'_{ijk})} && {p(d'_{ijk})=p^f_d(d'_{ijk},\widehat{g_i}x'_{ijk},v'x'u_ip(x'_{ijk}))} \\
	&& {p(\overline{d_i})=p^f_d(d_i,\widehat{g_i},v'x'u_i)}
	\arrow["{u_iu'_{ij}p(u'_{ijk})}", from=2-1, to=1-2]
	\arrow["{x'}", from=1-2, to=1-3]
	\arrow["{p(\widehat{f'_{ij}}u'_{ijk})=p^f_d(\widehat{f'_{ij}}u'_{ijk})}"', from=2-3, to=1-3]
	\arrow["{p(x'_{ijk})=p^f_d(x'_{ijk})}", from=2-3, to=3-3]
	\arrow["{p(x'_{ijk})}"', from=2-1, to=3-3]
	\arrow["{1_{p(d'_{ijk})}}", from=2-1, to=2-3]
\end{tikzcd}\]

an similarly for the situation with $x''$.

Finally, we can achieve to locally connect $(d',f',v')$ and $(d'',f'',v'')$ (using the $(\overline{d_i},\widehat{f_i},v_i)$ as intermediaries) by taking the intersection of the two coverings $(u'_{ij}p(u'_{ijk}))_{jk}$ and $(u''_{ij'}p(u''_{ij'k'}))_{j'k'}$.
\end{proof}

Actually, this cofinality condition (satisfied for a continuous local fibration) determines entirely when the essential image of a continuous comorphism of sites $\Sh(p)^*$ is a fibration:

\begin{prop}
Let $p : (\mathcal{C},J) \to (\mathcal{D},K)$ be a continuous comorphism of sites. Then, $\Sh(p)^* \simeq {C_p}_!$ is a fibration if and only if, for every $f : c \to p(d)$, the functor $\pi_d^f :{\cal D}_{f,d}^{\mathbf{fact}} \to {\cal C}/c$ is $J$-cofinal.
\end{prop}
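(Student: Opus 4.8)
Throughout I use the orientation $p : ({\cal D},K) \to ({\cal C},J)$, write ${C_p}_! = \Sh(p)^*$ for the essential image, and let $\eta$ denote the unit of the adjunction ${C_p}_! \vdash C_p^*$; recall that $\Sh(p)^*l_K \simeq l_J p$ and that the structure map of $\eta_{\cal D}(d)$ is precisely the unit $\eta_{l_K(d)} : l_K(d) \to C_p^* l_J(pd)$. The plan is to rephrase the fibration condition through \ref{liftingwithunit}: for an arrow $f : E \to {C_p}_!(F_0)$ in $\widehat{\cal C}_J$, let $\Phi(f)$ be the assertion that the transpose $q^t : {C_p}_!(F') \to E$ of the projection $q$ of the pullback $F' = F_0 \times_{C_p^*{C_p}_! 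F_0} C_p^*(E)$ of $\eta_{F_0}$ along $C_p^*(f)$ is an isomorphism. By \ref{liftingwithunit}, ${C_p}_!$ is a fibration if and only if $\Phi(f)$ holds for every such $f$. I will first identify $\Phi$ on site data with the cofinality condition, and then bootstrap from site data to arbitrary data.

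\emph{First step: the computation for site arrows.} Fix a site arrow $g : c \to p(d)$. By the remark following \ref{Dffact}, the category ${\cal D}^{\mathbf{fact}}_{g,d}$ is the category of elements of the presheaf $F = y_{\cal D}(d) \times_{(y_{\cal C}(pd))\circ p^{op}} (y_{\cal C}(c))\circ p^{op}$ on ${\cal D}$. Since $\Sh(p)^* l_K \simeq l_J p$ and ${C_p}_!$ preserves colimits, the co-Yoneda lemma yields
\[
\colim\nolimits_{{\cal D}^{\mathbf{fact}}_{g,d}} l\,p_d^g \;\simeq\; {C_p}_!\bigl(\colim\nolimits_{{\cal D}^{\mathbf{fact}}_{g,d}} l_K\pi\bigr) \;\simeq\; {C_p}_!(a_K F),
\]
where $\pi$ is the projection to ${\cal D}$. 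As $a_K$ preserves finite limits and $a_K((y_{\cal C}(c))\circ p^{op}) \simeq C_p^*(l_J c)$ (the identification used in \ref{propdualcomma}), the object $a_K F$ is the pullback of $\eta_{l_K(d)}$ along $C_p^*(l_J g)$; that is, it is exactly the object $F'$ of \ref{liftingwithunit} attached to the arrow $l_J(g) : l_J(c) \to {C_p}_!(l_K d)$, and its projection to $C_p^*(l_J c)$ is the arrow $q$. A naturality check shows that the comparison map $\colim l\,p_d^g \to l_J(c)$ of the cofinality criterion coincides with $q^t$ under this identification. Hence, by the cofinality criterion recalled above together with \ref{liftingwithunit}, the functor $\pi_d^g$ is $J$-cofinal if and only if $\Phi(l_J g)$ holds, i.e. if and only if $l_J(g)$ admits a cartesian lifting for ${C_p}_!$. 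This immediately gives the direction ($\Rightarrow$): if ${C_p}_!$ is a fibration then each $l_J(g)$ has a cartesian lifting, so each $\pi_d^g$ is $J$-cofinal.

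\emph{Second step: bootstrapping for ($\Leftarrow$).} Assume every $\pi_d^g$ is $J$-cofinal, so by the first step $\Phi(l_J g)$ holds for every site arrow $g$. The engine is that $C_p^*$ and ${C_p}_!$ preserve colimits and that colimits in a topos are universal, so both the pullback $F'$ and the comparison map ${C_p}_!(F') \to E$ are formed compatibly with colimits in the variables $E$ and $F_0$. Given arbitrary $f : E \to {C_p}_!(F_0)$, I first reduce $F_0$ to a representable: writing $F_0 = \colim_\beta l_K(d_\beta)$, so that ${C_p}_!(F_0) = \colim_\beta l_J(pd_\beta)$, and pulling $f$ back along the jointly epimorphic colimit cocone, a pasting of pullbacks identifies $F'$ with $\colim_\beta F'_\beta$, where $F'_\beta$ is the pullback attached to $f_\beta : E_\beta \to {C_p}_!(l_K d_\beta)$ with $E_\beta = E \times_{{C_p}_! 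F_0} l_J(pd_\beta)$; since $\colim_\beta E_\beta \simeq E$ by universality, $\Phi(f)$ follows from the $\Phi(f_\beta)$, reducing to the case $F_0 = l_K(d)$ representable. Finally, for $f : E \to {C_p}_!(l_K d) = l_J(pd)$ I write $E = \colim_\alpha l_J(c_\alpha)$ and observe that each restriction $f_\alpha : l_J(c_\alpha) \to l_J(pd)$ is, on a $J$-cover of $c_\alpha$, of the form $l_J(g)$ for site arrows $g$ (it is a local section of $l_J(pd)$); one more application of universality reduces $\Phi(f)$ to the already-known $\Phi(l_J g)$. Thus $\Phi(f)$ holds for all $f$, and ${C_p}_!$ is a fibration by \ref{liftingwithunit}.

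The main obstacle is this bootstrapping in ($\Leftarrow$): an arrow into ${C_p}_!(F_0)$ need not factor through any representable summand of $F_0$, so $F_0$ cannot be reduced to a representable naively. This is resolved by pulling $f$ back along the colimit cocone of ${C_p}_!(F_0)$ and invoking the universality of colimits, which makes $F'$ and the comparison map decompose as colimits; the delicate point is the pullback-pasting verification that the resulting pieces $F'_\beta$ are precisely the pullbacks of the representable-codomain arrows $f_\beta$, so that the comparison map is an isomorphism exactly when each piece is.
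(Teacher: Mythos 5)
Your proof is correct, and its first half — identifying, for a site arrow $g : c \to p(d)$, the sheafified pullback presheaf $a_K F$ with the object $F'$ of \ref{liftingwithunit}, and the cofinality comparison map with the transpose $q^t$ — is exactly the computation carried out in the paper's own proof. Where you genuinely diverge is in the bootstrap from site arrows to arbitrary arrows $f : E \to {C_p}_!(F_0)$. The paper handles this step by reformulating ``$q^t$ is an isomorphism'' as the commutation of an indexedness naturality square evaluated at the object $[\eta(F_0)]$ (as in \ref{frobrempourrelevcart}), and then invoking the reduction-to-generators argument of Proposition 4.53 of \cite{denseness}, adapted to the case at hand; this is short but leans on external machinery and is left rather terse. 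Your bootstrap is instead a self-contained double decomposition: first write $F_0$ as a colimit of representables and use universality of colimits, the pasting lemma and the preservation of pullbacks by $C_p^*$ to identify $F'$ with $\colim_\beta F'_\beta$, which reduces the problem to codomains of the form $l_J(p d)$; then present $E$ as a colimit of representables, refined along $J$-covers so that the restricted arrows are of the form $l_J(g)$ for site arrows $g$ (every map $l_J(c') \to l_J(pd)$ is locally of this form), and apply universality once more. What your route buys is transparency: everything reduces to standard exactness properties of a topos together with the adjunction ${C_p}_! \dashv C_p^*$, with no appeal to results outside the paper. The price is the pair of verifications you correctly flag as the delicate points — that the transposes $q_\beta^t$ assemble into a morphism of diagrams whose colimit is $q^t$, and that the refined family of representables still presents $E$ as a colimit (the refined subcategory of $\int E$ is $J$-dense, and the connecting-object argument gives the required local compatibility) — both routine but necessary for the argument to close.
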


\begin{proof}

As an immediate application of \ref{liftingwithunit}, for an arrow $f : c \to p(d)$, we have a cartesian lifting of $l(f)$ if and only if, in the following pullback:
% https://q.uiver.app/#q=WzAsNCxbMSwwLCJsKGQpIl0sWzEsMSwiQ19wXipscChkKSJdLFswLDEsIkNfcF4qbChjKSJdLFswLDAsIkYiXSxbMywwLCJcXHdpZGVoYXR7Zn0iXSxbMCwxLCJcXGV0YShkKSJdLFsyLDEsIkNfcF4qbChmKSIsMl0sWzMsMiwicSIsMl0sWzMsMSwiIiwxLHsic3R5bGUiOnsibmFtZSI6ImNvcm5lciJ9fV1d
\[\begin{tikzcd}
	F & {l(d)} \\
	{C_p^*l(c)} & {C_p^*lp(d)}
	\arrow["{\widehat{f}}", from=1-1, to=1-2]
	\arrow["q"', from=1-1, to=2-1]
	\arrow["\lrcorner"{anchor=center, pos=0.125}, draw=none, from=1-1, to=2-2]
	\arrow["{\eta(d)}", from=1-2, to=2-2]
	\arrow["{C_p^*l(f)}"', from=2-1, to=2-2]
\end{tikzcd}\]

\noindent we have that $q^t$ is an isomorphism. 

We can present $F$ as a colimit of generators, indexed by its category of elements. In order to have a manageable presentation, we can do it at the presheaf-level (before applying the sheafification): evaluated at an object $\overline{d}$, we have: $F(\overline{d}) = \{ (g : \overline{d} \to d,v : p(\overline{d}) \to c) | fv=p(g) \}$, so that $\int F= {\cal D}_{f,d}^{\mathbf{fact}}$. Since $\Sh(p)^*$ preserves colimits, we have $\Sh(p)^*(F) \simeq \colim_{\int F} lp(d)$, and the arrow $q^t : \Sh(p)^*(F) \to l(c)$ is given by the canonical arrow induced by the obvious universal property of the colimit: this arrow being an isomorphism is equivalent to $\pi^f_d$ being $J$-cofinal; so that this cofinality condition is equivalent to $\Sh(p)^*$ admitting cartesian liftings for arrows coming from the site $(\cal D,K)$.

Actually, we can deduce the existence of a cartesian lifting for an arbitrary arrow $f : E \to \Sh(p)^*(F)$ from this to hold on the arrows of the form $l(f) : l(c) \to lp(d)$. Indeed, the transpose $q^t$ of the pullback of $\eta(F)$ along $C_p^*(f)$ being an isomorphism can be reformulated as: the following square commutes when applied on the object $[\eta(F)]$ (cf. \ref{frobrempourrelevcart}):

% https://q.uiver.app/#q=WzAsNCxbMCwwLCJTX3tDX3B9KFxcU2gocCleKkYpIl0sWzAsMSwiU197XFx3aWRlaGF0e1xcY2FsIEN9X0p9KFxcU2gocCleKkYpIl0sWzEsMCwiU197Q19wfShFKSJdLFsxLDEsIlNfe1xcd2lkZWhhdHtcXGNhbCBDfV9KfShFKSJdLFswLDIsIlNfe0NfcH0oZikiXSxbMiwzLCIoXFxTaChwKV4qKV9FIl0sWzAsMSwiKFxcU2gocCleKilfe1xcU2gocCleKihGKX0iLDIseyJvZmZzZXQiOi0yfV0sWzEsMywiU197XFx3aWRlaGF0e1xcY2FsIEN9X0p9KGYpIiwyXV0=
\[\begin{tikzcd}
	{S_{C_p}(\Sh(p)^*F)} & {S_{C_p}(E)} \\
	{S_{\widehat{\cal C}_J}(\Sh(p)^*F)} & {S_{\widehat{\cal C}_J}(E)}
	\arrow["{S_{C_p}(f)}", from=1-1, to=1-2]
	\arrow["{(\Sh(p)^*)_{\Sh(p)^*(F)}}"', shift left=2, from=1-1, to=2-1]
	\arrow["{(\Sh(p)^*)_E}", from=1-2, to=2-2]
	\arrow["{S_{\widehat{\cal C}_J}(f)}"', from=2-1, to=2-2]
\end{tikzcd}\]

It exactly asks that the naturality squares occurring in the definition of $C_p$ being a locally connected morphism commute for each arrow in the basis of the form $f : E \to \Sh(p)^*(F)$ when evaluated on the object $[\eta(F)]$. In \cite{denseness} Proposition 4.53. it is shown that we can reduce to check that naturality squares of this kind commutes on the generators. Recalling that $\eta(F)$ is a colimit of generators, we can use the same arguments, but only using the fact that these naturality squares commute on the required $\eta(l(d))$, to deduce that the naturality square related to $f : E \to \Sh(p)^*(F)$ commutes on $\eta(F)$.
\end{proof}

\subsection{Totally connected morphisms as fibrations}
As shown in subsection \ref{locallyconnectedsection}, locally connected morphisms can be characterized as those essential geometric morphisms whose essential image is a fibration. This provides the following new characterization of totally connected morphisms, derived from our reformulation of the notion of locally connected morphism:

\begin{prop}
A geometric morphism $f : \cal F \to \cal E$ is totally connected if and only if it is essential and its essential image $f_! : \cal F \to \cal E$ is a cartesian fibration.
\end{prop}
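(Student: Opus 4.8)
The plan is to deduce this directly from the characterization of locally connected morphisms established in subsection \ref{locallyconnectedsection} — namely that an essential geometric morphism $f$ is locally connected if and only if its essential image $f_! : \cal F \to \cal E$ is a fibration — together with the definition of total connectedness: a geometric morphism is \emph{totally connected} precisely when it is locally connected and its essential image $f_!$ preserves finite limits. Thus the whole statement amounts to matching the extra clause ``$f_!$ preserves finite limits'' with the extra clause ``the fibration $f_!$ is cartesian''.

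First I would record the elementary bridge between these two clauses. By the definition of a cartesian fibration, the fibration $f_! : \cal F \to \cal E$ is cartesian exactly when $\cal F$ has finite limits and $f_!$ preserves them. Since $\cal F$ is a topos it is automatically finitely complete, so for the functor $f_!$ already known to be a fibration, being a \emph{cartesian} fibration is equivalent to simply preserving finite limits. If one prefers the indexed formulation of cartesianness — fiberwise finite limits preserved by reindexing — one invokes the standard fact, recorded in the definition of cartesian fibration, that for a fibration over a finitely complete base the global and fiberwise formulations coincide.

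With this in hand the two implications are immediate. For the forward direction, if $f$ is totally connected then in particular it is locally connected, so by the cited characterization $f$ is essential and $f_!$ is a fibration; and since total connectedness supplies the preservation of finite limits by $f_!$, the previous paragraph upgrades this fibration to a cartesian one. Conversely, if $f$ is essential and $f_!$ is a cartesian fibration, then $f_!$ is in particular a fibration, whence $f$ is locally connected by the same characterization, while cartesianness gives that $f_!$ preserves finite limits; together these are exactly the two defining conditions of a totally connected morphism.

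I do not expect a genuine obstacle here: the substantive work is already carried by the fibrational characterization of local connectedness proved in subsection \ref{locallyconnectedsection}. The only point demanding care is the compatibility between the ``preserves finite limits'' clause of total connectedness, which concerns the global functor $f_! : \cal F \to \cal E$, and the ``cartesian'' clause, which in its indexed guise is a fiberwise condition; this is precisely why I would isolate the global-versus-fiberwise equivalence as a first step rather than leave it implicit.
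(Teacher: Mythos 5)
Your proof is correct and takes exactly the route the paper intends: the paper states this proposition without proof, regarding it as immediate from the fibrational characterization of locally connected morphisms (essential with $f_!$ a fibration) combined with the definitions of totally connected (locally connected with $f_!$ preserving finite limits) and of cartesian fibration (fibration whose total category has finite limits and whose projection preserves them, with $\cal F$ finitely complete for free since it is a topos). Your explicit isolation of the global-versus-fiberwise compatibility is a reasonable extra precaution but introduces nothing beyond what the paper's definitions already supply.
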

\qed

\begin{remark}
A totally connected topos comes with a canonical point. Indeed, for $t : \cal T \to \cal E$ a totally connected topos, the fact that its essential image $t_!$ is cartesian provides a pair of geometric morphisms: obviously $(t^* \dashv t_*) : \cal T \to \cal E$, but also $(t_! \dashv t^*) : \cal E \to \cal T$ which we will denote by $p_t$. The fact that $t_!$ is a cartesian fibration gives us that $t_!t^* \simeq 1_{\cal E}$: one can easily check that the right adjoint of the projection functor of a cartesian fibration is given by sending an object $E$ of the base category to the terminal object of the fiber over $E$, so that their composition is the identity. Hence, those two geometric morphisms gives an adjoint retraction in the bicategory $\mathbf{Top}$ of toposes: we have $tp_t \simeq 1_{\cal E}$ and $p_t \vdash t$. We have a canonical $\cal E$-point of the relative topos $[t]$, as pictured:

    % https://q.uiver.app/#q=WzAsMyxbMCwwLCJcXGNhbCBFIl0sWzIsMCwiXFxjYWwgVCJdLFsxLDEsIlxcY2FsIEUiXSxbMSwyLCJ0Il0sWzAsMSwicF90Il0sWzAsMiwiMV97XFxjYWwgRX0iLDJdLFs0LDIsIlxcc2ltZXEiLDEseyJzaG9ydGVuIjp7InNvdXJjZSI6MjB9LCJzdHlsZSI6eyJib2R5Ijp7Im5hbWUiOiJub25lIn0sImhlYWQiOnsibmFtZSI6Im5vbmUifX19XV0=
\[\begin{tikzcd}
	{\cal E} && {\cal T} \\
	& {\cal E}
	\arrow[""{name=0, anchor=center, inner sep=0}, "{p_t}", from=1-1, to=1-3]
	\arrow["{1_{\cal E}}"', from=1-1, to=2-2]
	\arrow["t", from=1-3, to=2-2]
	\arrow["\simeq"{description}, draw=none, from=0, to=2-2]
\end{tikzcd}\]
\end{remark}

We have some interesting examples of totally connected morphisms arising from our study:

\begin{prop}
Let $p : \cal G(\mathbb C) \to (\cal C,J)$ be a cartesian fibration over a cartesian site $(\cal C,J)$. Its giraud topos $C_p : Gir_J(\mathbb C) \to \widehat{\cal C}_J$ is a totally connected relative topos. It has for canonical $\widehat{\cal C}_J$-point the geometric morphism $\Sh(p):\widehat{\cal C}_J \to Gir_J(\mathbb C)$.
\end{prop}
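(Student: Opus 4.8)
The plan is to invoke the characterization of totally connected morphisms established just above: a geometric morphism is totally connected precisely when it is essential and its essential image is a cartesian fibration. Applied to $C_p : Gir_J(\mathbb C) \to \widehat{\cal C}_J$, this reduces the statement to three verifications: that $C_p$ is essential, that its essential image ${C_p}_!$ is a fibration, and that ${C_p}_!$ preserves finite limits — its domain $Gir_J(\mathbb C)$ being a topos, hence finitely complete, so that a fibration out of it is \emph{cartesian} exactly when it is left exact.

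First I would observe that $p : (\cal G(\mathbb C),J_{\mathbb C}) \to (\cal C,J)$ is a trivial relative site, whose projection is continuous (Corollary 4.58 of \cite{denseness}). Continuity makes $C_p$ essential, with essential image ${C_p}_! \simeq \Sh(p)^*$, and \ref{fibimplfibtopos} then yields that ${C_p}_!$ is a fibration. This settles two of the three points and leaves only the left exactness of ${C_p}_!$.

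The key step — and the main obstacle — is to show that ${C_p}_! = \Sh(p)^*$ preserves finite limits. Since ${C_p}_!$ is a left adjoint, it is the colimit-preserving extension of $l_J \circ p : \cal G(\mathbb C) \to \widehat{\cal C}_J$ along $l_K$ (the commutative square of Subsection \ref{section5} giving $\Sh(p)^* l_K \simeq l_J p$). By the universal property of the topos of sheaves (Diaconescu), such a colimit-preserving functor is left exact if and only if $l_J \circ p$ is flat. Now $\cal G(\mathbb C)$ has finite limits, because $p$ is a cartesian fibration over the cartesian base $\cal C$; hence flatness of $l_J \circ p$ is equivalent to its left exactness. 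Finally $l_J \circ p$ is left exact: $p$ preserves finite limits by hypothesis, and $l_J = a_J \circ y_{\cal C}$ is left exact since $y_{\cal C}$ preserves limits and the sheafification $a_J$ is left exact. Therefore ${C_p}_!$ is left exact, hence a cartesian fibration, and $C_p$ is totally connected.

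It remains to identify the canonical point. By the preceding remark on totally connected toposes, the canonical $\widehat{\cal C}_J$-point of $[C_p]$ is the geometric morphism $p_{C_p} = ({C_p}_! \dashv C_p^*)$, with inverse image ${C_p}_!$ and direct image $C_p^*$, satisfying $C_p \circ p_{C_p} \simeq 1_{\widehat{\cal C}_J}$. Since ${C_p}_! = \Sh(p)^*$ and $C_p^* = \Sh(p)_*$ by definition of the essential adjunction, this geometric morphism is exactly $\Sh(p) : \widehat{\cal C}_J \to Gir_J(\mathbb C)$, as claimed.
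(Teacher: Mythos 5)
Your proposal is correct and follows essentially the same route as the paper's proof: both rest on \ref{fibimplfibtopos} (via continuity of the projection of a trivial relative site) to get that ${C_p}_!$ is a fibration, both deduce its left exactness from the hypothesis that $p$ preserves finite limits, and both identify the canonical point with $\Sh(p)$ through the adjunction ${C_p}_! \dashv C_p^*$. The only difference is one of detail: where the paper simply observes that $p$, being cover-preserving and left exact, is a morphism of sites (so that $\Sh(p)^*$ is automatically left exact), you unfold the same fact through Diaconescu's theorem — flatness of $l_J \circ p$, which for a finitely complete domain reduces to left exactness — which is a legitimate expansion of the paper's terse assertion rather than a different argument.
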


\begin{proof}
We know from \ref{fibimplfibtopos} that $C_p$ is locally connected (the essential image ${C_p}_!$ is a fibration); but since the fibration is cartesian, the projection functor $p$ is finite-limits preserving, hence ${C_p}_!$ also preserves finite limits: $C_p$ is totally connected. Since $p$ moreover preserves coverings (the topology on $\cal G(\mathbb C)$ is the Giraud one), it is not only a comorphism but also a morphism of sites: it induces two adjoint geometric morphisms $\Sh(p) \vdash C_p$. 
\end{proof}

For another example of totally connected relative topos, we look at the canonical stack of a relative topos: given a relative topos $f : \cal F \to \cal E$, the canonical geometric morphism $\rho_f : (\cal F/f^*) \to \cal E$ of \ref{rhodef} is locally connected since we know its essential image to be the projection $\pi_f : (\cal F/f^*) \to \cal E$, which is a fibration. Moreover this morphism is actually \emph{totally connected}, as an immediate consequence of $\pi_f : (\cal F/f^*) \to \cal E$ preserving finite limits:

\begin{prop}
Let $f : \cal F \to \cal E$ be a geometric morphism. Then the relative topos $\rho_f : (\cal F/f^*) \to \cal E$ defined by its canonical stack is a \emph{totally connected} morphism.
\end{prop}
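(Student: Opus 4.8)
The plan is to invoke the characterization of totally connected morphisms established just above: a geometric morphism is totally connected if and only if it is essential and its essential image is a \emph{cartesian} fibration. So for $\rho_f$ I must establish two things: that it is essential with essential image $\pi_f$, and that $\pi_f : (\cal F/f^*) \to \cal E$ is a cartesian fibration, i.e. that $(\cal F/f^*)$ is finitely complete and $\pi_f$ preserves finite limits.

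The first point is already available. Applying \ref{rhodef} with $\cal E$ in the role of $\widehat{\cal C}_J$ and $(\cal F/f^*)$ in the role of $(\widehat{\cal D}_K/C_p^*)$, the morphism $\rho_f$ is essential and connected, and its essential image $(\rho_f)_!$ is the projection $\pi_f$ onto $\cal E$; moreover $\pi_f$ is a fibration (indeed a stack), as recorded in the definition of the canonical relative site. It therefore remains only to upgrade \enquote{fibration} to \enquote{cartesian fibration}, which is exactly the extra content separating totally connected from merely locally connected.

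For this I would first argue that the comma category $(\cal F/f^*)$ is finitely complete, with its finite limits computed componentwise. Since $\cal F$ and $\cal E$ are toposes they have finite limits, and since $f^*$ is an inverse image functor it preserves them; hence a finite diagram of objects $(F_k,E_k,\alpha_k : F_k \to f^*E_k)$ admits a limit of the form $(\lim F_k,\lim E_k,\alpha)$, where the structural map $\alpha$ is induced from the $\alpha_k$ via the canonical isomorphism $f^*(\lim E_k) \simeq \lim f^*E_k$ coming from the finite-limit preservation of $f^*$. This is the same componentwise computation already used in the proof of \ref{xilocal} to check the Giraud axioms for this comma category. Given this description, the projection $\pi_f$, which sends $(F,E,\alpha)$ to $E$, visibly preserves these finite limits. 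Thus $\pi_f$ is a fibration whose total category is finitely complete and which preserves finite limits: it is a cartesian fibration.

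Combining the two points with the characterization of totally connected morphisms then yields that $\rho_f$ is totally connected. I do not expect a genuine obstacle here; the only step requiring any care is the verification that finite limits in $(\cal F/f^*)$ are computed componentwise and are consequently (strictly) preserved by $\pi_f$, and this rests entirely on the finite-limit preservation of the inverse image functor $f^*$.
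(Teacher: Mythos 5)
Your proposal is correct and follows exactly the route the paper takes (the paper leaves the proof as immediate, citing that the essential image of $\rho_f$ is the projection $\pi_f$, a fibration, and that $\pi_f$ preserves finite limits): you invoke the same characterization of totally connected morphisms via cartesian fibrations and verify the same two facts. Your only added content is spelling out that finite limits in $(\cal F/f^*)$ are computed componentwise thanks to $f^*$ preserving them, which is precisely the componentwise-limit argument the paper already recorded in the proof of its Giraud-axioms proposition for the comma category.
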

\qed

\begin{remark}
In light of the previous proposition, a simple yet informative example of a totally connected topos is the following: given a topological space $X$, there exists a unique geometric morphism $\mathbf{!}_X : \Sh(X) \to \mathbf{Set}$. The canonical stack associated with this geometric morphism is the comma category $(\Sh(X)/\mathbf{!}_X^*)$. One can show that this new topos is equivalent to the topos of sheaves on a topological space $\tilde{X}$ defined as follows: the points of $\tilde{X}$ are those of $X$ together with an additional point $\epsilon$; an open subset of $\tilde{X}$ is either the empty set or a set of the form $V \cup { \epsilon }$, where $V$ is an open subset of $X$. This topological space has a unique dense point: $\epsilon$ belongs to every one of its open sets. One may intuitively think of this point as connecting all points of the space, as it is \og close \fg to all of them in the topological sense.
\end{remark}

We just saw that, to any relative topos $[f : \cal F \to \cal E]$, we can associate a totally connected topos $[\rho_f : (\cal F/f^*) \to \cal E]$. This assignment is moreover functorial: to each morphism of relative toposes $g : [f] \to [f'] $, we saw that $\widetilde{g^*} : [\rho_f] \to [\rho_{f'}]$ also was a morphism of relative toposes; it is immediate to see that this construction respects the identity and compositions, also with natural transformations. 

In \ref{etaextensionproperties}, we showed that for two relative toposes $f : \cal F \to \cal E$ and $f' : \cal F' \to \cal E$, and a geometric morphism $g : \cal F \to \cal F'$, the morphism $g$ is a morphism of \emph{relative} toposes if and only if $\widetilde{g^*}^* : (\cal F'/f'^*) \to (\cal F/f^*)$ is a morphism of \emph{fibrations}. This suggests that the appropriate notion of morphism between totally connected relative toposes should involve a \emph{morphism of fibrations} component.

We may thus define a \og free totally connected relative topos \fg functor
$S_{-} : \mathbf{Top}/\cal E \to \mathbf{TotConn}/\cal E$,
where the target category has as objects the totally connected geometric morphisms $t : \cal T \to \cal E$, and as morphisms the relative geometric morphisms $g : [t] \to [t’]$ such that $g^* : [t'_!] \to [t_!]$ is also a morphism of \emph{fibrations}.

The functor $S_{-}^{\cal E}$ assigns to each relative topos $[f]$ the totally connected topos $[\rho_f]$ given by its canonical relative stack, and sends a relative geometric morphism $g : [f] \to [f']$ to the $\eta$-extension of its inverse image:
$\widetilde{g^*} : (\cal F/f^*) \to (\cal F'/f'^*)$.

In picture, for two relative toposes and a morphism between them: 

% https://q.uiver.app/#q=WzAsMyxbMCwwLCJcXGNhbCBGIl0sWzIsMCwiXFxjYWwgRiciXSxbMSwxLCJcXGNhbCBFIl0sWzEsMiwiZiciXSxbMCwyLCJmIiwyXSxbMCwxLCJnIl0sWzAsMywiXFxzaW1lcSIsMSx7InNob3J0ZW4iOnsidGFyZ2V0IjoyMH0sInN0eWxlIjp7ImJvZHkiOnsibmFtZSI6Im5vbmUifSwiaGVhZCI6eyJuYW1lIjoibm9uZSJ9fX1dXQ==
\[\begin{tikzcd}
	{\cal F} && {\cal F'} \\
	& {\cal E}
	\arrow["g", from=1-1, to=1-3]
	\arrow["f"', from=1-1, to=2-2]
	\arrow[""{name=0, anchor=center, inner sep=0}, "{f'}", from=1-3, to=2-2]
	\arrow["\simeq"{description, pos=0.6}, draw=none, from=1-1, to=0]
\end{tikzcd}\]

\noindent we associate the following two totally connected toposes, together with the induced morphism of totally connected toposes between them:

% https://q.uiver.app/#q=WzAsMyxbMCwwLCIoXFxjYWwgRi9mXiopIl0sWzIsMCwiKFxcY2FsIEYnL2YnXiopIl0sWzEsMSwiXFxjYWwgRSJdLFswLDEsIlxcd2lkZXRpbGRle2deKn0iXSxbMCwyLCJcXHJob19mIiwyXSxbMSwyLCJcXHJob197Zid9Il0sWzAsNSwiXFxzaW1lcSIsMSx7ImxhYmVsX3Bvc2l0aW9uIjo2MCwic2hvcnRlbiI6eyJ0YXJnZXQiOjIwfSwic3R5bGUiOnsiYm9keSI6eyJuYW1lIjoibm9uZSJ9LCJoZWFkIjp7Im5hbWUiOiJub25lIn19fV1d
\[\begin{tikzcd}
	{(\cal F/f^*)} && {(\cal F'/f'^*)} \\
	& {\cal E}
	\arrow["{\widetilde{g^*}}", from=1-1, to=1-3]
	\arrow["{\rho_f}"', from=1-1, to=2-2]
	\arrow[""{name=0, anchor=center, inner sep=0}, "{\rho_{f'}}", from=1-3, to=2-2]
	\arrow["\simeq"{description, pos=0.6}, draw=none, from=1-1, to=0]
\end{tikzcd}\]

\begin{prop}
Let $\cal E$ be a base topos. The functor $S_{-} : {\mathbf{Top}/\cal E} \to {\mathbf{TotConn}/\cal E}$ previously defined is left adjoint of the obvious forgetful functor $U_{\cal E}^{\mathbf{TotConn}} : {\mathbf{TotConn}/\cal E} \to {\mathbf{Top}/\cal E}$, as pictured:

% https://q.uiver.app/#q=WzAsMixbMCwwLCJcXG1hdGhiZntUb3B9L1xcY2FsIEUiXSxbMiwwLCJcXG1hdGhiZntUb3RDb25ufS9cXGNhbCBFIl0sWzAsMSwiU197LX1ee1xcY2FsIEV9IiwwLHsiY3VydmUiOi0yfV0sWzEsMCwiVV97XFxjYWwgRX1ee1xcbWF0aGJme1RvdENvbm59fSIsMCx7ImN1cnZlIjotMn1dLFsyLDMsIiIsMix7ImxldmVsIjoxLCJzdHlsZSI6eyJuYW1lIjoiYWRqdW5jdGlvbiJ9fV1d
\[\begin{tikzcd}
	{\mathbf{Top}/\cal E} && {\mathbf{TotConn}/\cal E}
	\arrow[""{name=0, anchor=center, inner sep=0}, "{S_{-}^{\cal E}}", bend left = 18, from=1-1, to=1-3]
	\arrow[""{name=1, anchor=center, inner sep=0}, "{U_{\cal E}^{\mathbf{TotConn}}}", bend left = 18, from=1-3, to=1-1]
	\arrow["\dashv"{anchor=center, rotate=-90}, draw=none, from=0, to=1]
\end{tikzcd}\]

The unit is given, at a relative topos $[f : \cal F \to \cal E]$, by the geometric embedding $j_f : \cal F \hookrightarrow (\cal F/f^*)$. The counit is given, at a totally connected relative topos $t : \cal T \to \cal E$, by the morphism of totally connected toposes $\epsilon_t : (\cal T/t^*) \twoheadrightarrow \cal T$, the direct and inverse images of which are respectively the obvious projection and the fully faithful morphism of fibrations $\eta_t$ (hence, it is moreover connected).
\end{prop}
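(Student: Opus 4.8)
The plan is to prove the biadjunction by checking the standard package: that the stated unit $\eta_{[f]}=j_f$ and counit $\epsilon_{[t]}=\epsilon_t$ are genuine morphisms in $\mathbf{Top}/\cal E$ and $\mathbf{TotConn}/\cal E$ respectively, that they are pseudonatural, and that they satisfy the two triangle identities. Concretely I would read the adjunction off as a natural equivalence of hom-categories
\[
\mathbf{TotConn}/\cal E([\rho_f],[t]) \;\simeq\; \mathbf{Top}/\cal E([f],[t]),
\]
via the mutually quasi-inverse functors $\Psi(h)=h\circ j_f$ (restriction along the unit) and $\Phi(g)=\epsilon_t\circ\widetilde{g^*}=\epsilon_t\circ S_{-}^{\cal E}(g)$ (extend, then apply the counit). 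Here $\Phi(g)$ automatically lands in $\mathbf{TotConn}/\cal E$: $\widetilde{g^*}$ has a morphism‑of‑fibrations inverse image by \ref{etaextensionproperties}(6) (since $g$ is a relative morphism) and $\epsilon_t$ is itself a $\mathbf{TotConn}/\cal E$‑morphism, so the composite is one too.

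Next I would verify well-definedness. The unit $j_f:\cal F\hookrightarrow(\cal F/f^*)$ is a relative morphism $[f]\to[\rho_f]$ precisely because it is a \emph{relative} embedding, i.e. $\rho_f\circ j_f\simeq f$. For the counit, I first confirm $\epsilon_t$ is an honest geometric morphism: its prescribed inverse image $\eta_t$ is the essential image $j_{t!}$, which is left adjoint to $j_t^*=\pi_{\cal T}$, so $\eta_t\dashv\pi_{\cal T}$; and $\eta_t$ is left exact \emph{exactly} because $t$ is totally connected, since finite limits in $(\cal T/t^*)$ are computed componentwise (as in the proof of \ref{xilocal}) and the second component is $t_!$, which preserves finite limits iff $t$ is totally connected. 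That $\epsilon_t$ is relative, $t\circ\epsilon_t\simeq\rho_t$, reduces on inverse images to $\eta_t\,t^*\simeq\rho_t^*=\tau_{\cal T}$, which follows from $t_!t^*\simeq 1_{\cal E}$ for totally connected $t$; and $\epsilon_t$ belongs to $\mathbf{TotConn}/\cal E$ because its inverse image $\eta_t$ is a (fully faithful) morphism of fibrations $t_!\to\pi_t$, by \ref{etacontlocfib}(i). Pseudonaturality of unit and counit I would obtain from the two commutative squares of \ref{characcanonicalstack} (the relative triangle over $\cal E$, and the square expressing that $\widetilde{g^*}$ restricts to $g$ along the $j$'s), together with the defining morphism‑of‑fibrations condition of $\mathbf{TotConn}/\cal E$, which encodes the compatibility $t_!\circ h^*\simeq t'_!$.

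The heart of the argument is the pair of triangle identities. The first, $\epsilon_t\circ j_t\simeq\mathrm{id}_{\cal T}$, is immediate on inverse images, since $(\epsilon_t\circ j_t)^*=\pi_{\cal T}\circ\eta_t\simeq 1_{\cal T}$: $\eta_t$ places its argument in the first component and $\pi_{\cal T}$ reads it off. Together with unit‑naturality this already yields $\Psi\circ\Phi\simeq\mathrm{id}$, hence the essential surjectivity of $\Psi$. The second identity, $\epsilon_{[\rho_f]}\circ S_{-}^{\cal E}(j_f)\simeq\mathrm{id}_{[\rho_f]}$, is the main obstacle, because $S_{-}^{\cal E}(j_f)=\widetilde{(j_f)^*}$ is an $\eta$‑extension landing in the \emph{double} canonical stack $((\cal F/f^*)/\rho_f^*)$ and cannot be dispatched as simply as the first. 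I would compute its inverse image $(\widetilde{(j_f)^*})^*\circ\eta_{\rho_f}$ using the explicit formula of \ref{etaextensionproperties}(2),(4): the first‑component functor of $(\widetilde{(j_f)^*})^*$ is $\Sh(j_f^*)^*=\pi_{\cal F}$, while $\eta_{\rho_f}$ sends $X\mapsto(X,\rho_{f!}X,\,\text{unit})$ with $\rho_{f!}=\pi_f$ the projection to $\cal E$ by \ref{rhodef}; thus on an object $(F,E,\alpha)$ the composite returns $(\pi_{\cal F}(F,E,\alpha),\,\pi_f(F,E,\alpha),\,\ldots)=(F,E,\alpha)$, collapsing to the identity of $(\cal F/f^*)$. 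The fiddly point, and where I expect most of the care to be needed, is tracking the structure maps through $\widetilde{\phi}$ to see that $\alpha$ is genuinely recovered, and checking the analogous coherence at the level of $2$‑cells so that $\Phi$ and $\Psi$ are full and faithful on natural transformations, not merely bijective on objects. Granting this, counit‑naturality gives $\Phi\circ\Psi\simeq\mathrm{id}$, the hom‑equivalence is established, and the adjunction $S_{-}^{\cal E}\dashv U_{\cal E}^{\mathbf{TotConn}}$ follows with the stated unit $j_f$ and counit $\epsilon_t$.
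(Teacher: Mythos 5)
Your proposal is correct and follows essentially the same route as the paper: the same verification that $\epsilon_t$ is a well-defined morphism of totally connected relative toposes (via $t^*\vdash t_!$, componentwise finite limits, and the unit-square characterization of cartesian arrows), and the same pair of pseudoinverse constructions $\Phi(g)=\epsilon_t\circ\widetilde{g^*}$ and $\Psi(h)=h\circ j_f$. The only cosmetic differences are that you package the conclusion via the two triangle identities (your direct computation of $(\widetilde{j_f^*})^*\circ\eta_{\rho_f}\simeq\mathrm{id}$ is exactly the paper's "lengthy but straightforward" claim $\widetilde{j_f^*}\simeq j_{\rho_f}$), and that the morphism-of-fibrations property of $\eta_t$ should be cited from \ref{cartesianwithunit} rather than \ref{etacontlocfib}(i), which only gives reflection.
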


\begin{proof}
Let $t : \cal T \to \cal E$ be a totally connected relative topos. The fact that  $\eta_t : \cal T \to (\cal T/t^*)$ is left adjoint to the projection is immediate, in the light of the adjunction $t^* \vdash t_!$; also, it preserves finite limits since they are computed componentwise in $(\cal T/t^*)$, and it acts on the first component as the identity and on the second as $t_!$ (which is finite-limit preserving by definition). Moreover, this inverse image $\epsilon_t^* \simeq \eta_t : \cal T \to (\cal T/t^*)$ is indeed a morphism of fibrations, because of \ref{cartesianwithunit} (actually, it not only preserves but also reflect cartesian arrows); so that $\epsilon_t$ is a morphism of totally connected relative toposes. The naturality of this counit is immediate, as the naturality of the unit $j$ (\ref{characcanonicalstack}).

Now let $t : \cal T \to \cal E$ be a totally connected relative topos and $f : \cal F \to \cal E$ a relative topos. We want to build the equivalence:

% https://q.uiver.app/#q=WzAsMixbMCwwLCJcXG1hdGhiZntUb3B9L1xcY2FsIEUoW3RdLFtmXSkiXSxbMiwwLCJcXG1hdGhiZntUb3RDb25ufS9cXGNhbCBFKFt0XSxbXFxyaG9fZl0pIl0sWzAsMSwiXFxzaW1lcSIsMSx7InN0eWxlIjp7ImJvZHkiOnsibmFtZSI6Im5vbmUifSwiaGVhZCI6eyJuYW1lIjoibm9uZSJ9fX1dXQ==
\[\begin{tikzcd}
	{\mathbf{Top}/\cal E([f],[t])} && {\mathbf{TotConn}/\cal E([\rho_f],[t])}
	\arrow["\simeq"{description}, draw=none, from=1-1, to=1-3]
\end{tikzcd}\]

For the first part of it, let $g : [f] \to [t]$ be a morphism of relative toposes, as pictured:

% https://q.uiver.app/#q=WzAsMyxbMCwwLCJcXGNhbCBGIl0sWzIsMCwiXFxjYWwgVCJdLFsxLDEsIlxcY2FsIEUiXSxbMCwxLCJnIl0sWzAsMiwiZiIsMl0sWzEsMiwidCJdLFswLDUsIlxcc2ltZXEiLDEseyJsYWJlbF9wb3NpdGlvbiI6NjAsInNob3J0ZW4iOnsidGFyZ2V0IjoyMH0sInN0eWxlIjp7ImJvZHkiOnsibmFtZSI6Im5vbmUifSwiaGVhZCI6eyJuYW1lIjoibm9uZSJ9fX1dXQ==
\[\begin{tikzcd}
	{\cal F} && {\cal T} \\
	& {\cal E}
	\arrow["g", from=1-1, to=1-3]
	\arrow["f"', from=1-1, to=2-2]
	\arrow[""{name=0, anchor=center, inner sep=0}, "t", from=1-3, to=2-2]
	\arrow["\simeq"{description, pos=0.6}, draw=none, from=1-1, to=0]
\end{tikzcd}\]

We can take its $\eta$-extension: $\widetilde{g^*} : (\cal F/f^*) \to (\cal T/t^*)$ and postcompose it with the counit $\widetilde{g^*}\epsilon_t : (\cal F/f^*) \to \cal T$ as pictured:

% https://q.uiver.app/#q=WzAsNCxbMSwxLCJcXGNhbCBFIl0sWzAsMCwiKFxcY2FsIEYvZl4qKSJdLFsxLDAsIihcXGNhbCBUL3ReKikiXSxbMiwwLCJcXGNhbCBUIl0sWzEsMiwiXFx3aWRldGlsZGV7Z14qfSJdLFsxLDAsIlxccmhvX2YiLDJdLFsyLDAsIlxccmhvX3QiLDAseyJsYWJlbF9wb3NpdGlvbiI6MzB9XSxbMiwzLCJcXGVwc2lsb25fdCJdLFszLDAsInQiXV0=
\[\begin{tikzcd}
	{(\cal F/f^*)} & {(\cal T/t^*)} & {\cal T} \\
	& {\cal E}
	\arrow["{\widetilde{g^*}}", from=1-1, to=1-2]
	\arrow["{\rho_f}"', from=1-1, to=2-2]
	\arrow["{\epsilon_t}", from=1-2, to=1-3]
	\arrow["{\rho_t}"{pos=0.3}, from=1-2, to=2-2]
	\arrow["t", from=1-3, to=2-2]
\end{tikzcd}\]

This composition is a morphism of totally connected toposes as: the diagram commutes because $g$ is a relative geometric morphism if and only if $\widetilde{g^*}$ also is \ref{characcanonicalstack}, and the inverse image of $\epsilon_t\widetilde{g^*}$ is $\widetilde{g^*}^*\eta_t$ which is a morphism of fibrations as $\eta_t$ is and $\widetilde{g^*}^*$ is if and only if $g$ is a morphism of relative toposes \ref{etaextensionproperties}.

For the other part of the equivalence: let $g' : [\rho_f] \to [t]$ be a morphism of totally connected toposes. We just have to precompose it with the unit $j_f$ in order to obtain a morphism of relative toposes:

% https://q.uiver.app/#q=WzAsNCxbMSwxLCJcXGNhbCBFIl0sWzEsMCwiKFxcY2FsIEYvZl4qKSJdLFsyLDAsIlxcY2FsIFQiXSxbMCwwLCJcXGNhbCBGIl0sWzEsMCwiXFxyaG9fZiIsMV0sWzIsMCwidCJdLFsxLDIsImcnIl0sWzMsMSwial9mIiwwLHsic3R5bGUiOnsidGFpbCI6eyJuYW1lIjoiaG9vayIsInNpZGUiOiJ0b3AifX19XSxbMywwLCJmIiwyXV0=
\[\begin{tikzcd}
	{\cal F} & {(\cal F/f^*)} & {\cal T} \\
	& {\cal E}
	\arrow["{j_f}", hook, from=1-1, to=1-2]
	\arrow["f"', from=1-1, to=2-2]
	\arrow["{g'}", from=1-2, to=1-3]
	\arrow["{\rho_f}"{description}, from=1-2, to=2-2]
	\arrow["t", from=1-3, to=2-2]
\end{tikzcd}\]

There remains to prove that these two constructions are pseudoinverse to each other. If we start with a relative geometric morphism $g : [f] \to [t]$, then we apply the first construction to obtain $\epsilon_t\widetilde{g^*}$, and we apply to it the second construction so that we finally have $\epsilon_t\widetilde{g^*}j_f$. Hence, we want to show that the following square of geometric morpshims is commutative:

% https://q.uiver.app/#q=WzAsNCxbMCwwLCIoXFxjYWwgRi9mXiopIl0sWzIsMSwiXFxjYWwgVCJdLFswLDEsIlxcY2FsIEYiXSxbMiwwLCIoXFxjYWwgVC90XiopIl0sWzIsMCwial9mIiwwLHsic3R5bGUiOnsidGFpbCI6eyJuYW1lIjoiaG9vayIsInNpZGUiOiJ0b3AifX19XSxbMywxLCJcXGVwc2lsb25fdCJdLFsyLDEsImciLDFdLFswLDMsIlxcd2lkZXRpbGRle2deKn0iXV0=
\[\begin{tikzcd}
	{(\cal F/f^*)} && {(\cal T/t^*)} \\
	{\cal F} && {\cal T}
	\arrow["{\widetilde{g^*}}", from=1-1, to=1-3]
	\arrow["{\epsilon_t}", from=1-3, to=2-3]
	\arrow["{j_f}", hook, from=2-1, to=1-1]
	\arrow["g"{description}, from=2-1, to=2-3]
\end{tikzcd}\]

When we look at the inverse images, this square of geometric morphisms is given by:

% https://q.uiver.app/#q=WzAsNCxbMCwwLCIoXFxjYWwgRi9mXiopIl0sWzIsMSwiXFxjYWwgVCJdLFswLDEsIlxcY2FsIEYiXSxbMiwwLCIoXFxjYWwgVC90XiopIl0sWzAsMiwiXFxwaV97XFxjYWwgRn0iLDJdLFsxLDMsIlxcZXRhX3QiLDJdLFsxLDIsImdeKiIsMV0sWzMsMCwiXFx3aWRldGlsZGV7Z14qfV4qIiwyXV0=
\[\begin{tikzcd}
	{(\cal F/f^*)} && {(\cal T/t^*)} \\
	{\cal F} && {\cal T}
	\arrow["{\pi_{\cal F}}"', from=1-1, to=2-1]
	\arrow["{\widetilde{g^*}^*}"', from=1-3, to=1-1]
	\arrow["{\eta_t}"', from=2-3, to=1-3]
	\arrow["{g^*}"{description}, from=2-3, to=2-1]
\end{tikzcd}\]

\noindent so that an easy computation gives its commutativity.

In the other direction, let $g' : [\rho_f] \to [t]$ be a morphism of totally connected toposes. We apply the second construction, that is, we precompose it with the unit to obtain a morphism of relative toposes $g'j_f : [f] \to [t]$. Then, we recover a morphism of totally connected relative toposes by applying the second construction: $\epsilon_t\widetilde{g'^*}\widetilde{j_f^*}$, where we used the functoriality of the $\eta$-extension (functoriality of the left adjoint $S_{-}^{\cal E}$). 

At this point, it is easy to notice that $\epsilon_tj_t \simeq 1_{\cal T}$. Moreover, it is lengthy but straightforward to show that $\widetilde{j_f^*} \simeq j_{\rho_f}$. The situation is the following:

% https://q.uiver.app/#q=WzAsNSxbMSwyLCIoKFxcY2FsIEYvZl4qKS9cXHRhdV9mKSJdLFsxLDEsIihcXGNhbCBUL3ReKikiXSxbMCwxLCJcXGNhbCBUIl0sWzAsMiwiKFxcY2FsIEYvZl4qKSJdLFsxLDAsIlxcY2FsIFQiXSxbMywwLCJcXHdpZGV0aWxkZXtqX2ZeKn1cXHNpbWVxIGpfe1xccmhvX2Z9IiwyLHsic3R5bGUiOnsidGFpbCI6eyJuYW1lIjoiaG9vayIsInNpZGUiOiJ0b3AifX19XSxbMywyLCJnJyJdLFsyLDEsImpfdCIsMCx7InN0eWxlIjp7InRhaWwiOnsibmFtZSI6Imhvb2siLCJzaWRlIjoidG9wIn19fV0sWzAsMSwiXFx3aWRldGlsZGV7ZydeKn0iLDJdLFsxLDQsIlxcZXBzaWxvbl90IiwyLHsic3R5bGUiOnsiaGVhZCI6eyJuYW1lIjoiZXBpIn19fV1d
\[\begin{tikzcd}
	& {\cal T} \\
	{\cal T} & {(\cal T/t^*)} \\
	{(\cal F/f^*)} & {((\cal F/f^*)/\tau_f)}
	\arrow["{j_t}", hook, from=2-1, to=2-2]
	\arrow["{\epsilon_t}"', two heads, from=2-2, to=1-2]
	\arrow["{g'}", from=3-1, to=2-1]
	\arrow["{\widetilde{j_f^*}\simeq j_{\rho_f}}"', hook, from=3-1, to=3-2]
	\arrow["{\widetilde{g'^*}}"', from=3-2, to=2-2]
\end{tikzcd}\]

By the fact that $\widetilde{j_f^*} \simeq j_{\rho_f}$ together with the naturality of $j$, we know that the square commutes; so that, when we postcompose with $\epsilon_t$, we have $\epsilon_t\widetilde{g'^*}\widetilde{j_f^*} \simeq \epsilon_tj_tg'$. Since $\epsilon_tj_t \simeq 1_{\cal T}$, we finally obtain $\epsilon_t\widetilde{g'^*}\widetilde{j_f^*} \simeq g'$, so that these two constructions indeed are pseudoinverse parts of an equivalence. The proof of the naturality of these constructions is left to the reader.
\end{proof}

It is well known that every geometric morphism can be factorized as a closed embedding followed by a totally connected morphism: it is the factorization of $f : \cal F \to \cal E$ as $\cal F \xrightarrow{j_f} (\cal F/f^*) \xrightarrow{\rho_f} \cal E$. Here, we notice that this factorization enjoys moreover the exceptional following property:

\begin{prop}
Let $f,f' : \cal F \to \cal E$ be two geometric morpshisms and $\alpha : f \Rightarrow f'$ a geometric transformation between them, as pictured:

% https://q.uiver.app/#q=WzAsMixbMCwwLCJcXGNhbCBGIl0sWzIsMCwiXFxjYWwgRSJdLFswLDEsImYiLDAseyJjdXJ2ZSI6LTJ9XSxbMCwxLCJmJyIsMix7ImN1cnZlIjoyfV0sWzIsMywiXFxhbHBoYSIsMCx7InNob3J0ZW4iOnsic291cmNlIjoyMCwidGFyZ2V0IjoyMH19XV0=
\[\begin{tikzcd}
	{\cal F} && {\cal E}
	\arrow[""{name=0, anchor=center, inner sep=0}, "f", bend left = 22, from=1-1, to=1-3]
	\arrow[""{name=1, anchor=center, inner sep=0}, "{f'}"', bend right = 22, from=1-1, to=1-3]
	\arrow["\alpha", shorten <=3pt, shorten >=3pt, Rightarrow, from=0, to=1]
\end{tikzcd}\]

This situation induces a geometric morphism $\cal F/\alpha : (\cal F/f^*) \to (\cal F/f'^*)$ such that $\cal F/\alpha \circ j_f \simeq j_{f}$, together with a geometric transformation $\overline{\alpha} : \rho_f \circ \cal F/\alpha \Rightarrow \rho_{f'}$, as depicted in:

% https://q.uiver.app/#q=WzAsNCxbMCwxLCJcXGNhbCBGIl0sWzIsMSwiXFxjYWwgRSJdLFsxLDAsIihcXGNhbCBGL2ZeKikiXSxbMSwyLCIoXFxjYWwgRi9mJ14qKSJdLFswLDMsImpfe2YnfSIsMix7InN0eWxlIjp7InRhaWwiOnsibmFtZSI6Imhvb2siLCJzaWRlIjoiYm90dG9tIn19fV0sWzAsMiwial9mIiwwLHsic3R5bGUiOnsidGFpbCI6eyJuYW1lIjoiaG9vayIsInNpZGUiOiJ0b3AifX19XSxbMiwxLCJcXHJob19mIiwwLHsic3R5bGUiOnsiaGVhZCI6eyJuYW1lIjoiZXBpIn19fV0sWzMsMSwiXFxyaG9fe2YnfSIsMix7InN0eWxlIjp7ImhlYWQiOnsibmFtZSI6ImVwaSJ9fX1dLFsyLDMsIlxcY2FsIEYvXFxhbHBoYSIsMSx7ImxhYmVsX3Bvc2l0aW9uIjo0MH1dLFs1LDgsIlxcc2ltZXEiLDIseyJzaG9ydGVuIjp7InNvdXJjZSI6MjAsInRhcmdldCI6MjB9LCJzdHlsZSI6eyJib2R5Ijp7Im5hbWUiOiJub25lIn0sImhlYWQiOnsibmFtZSI6Im5vbmUifX19XSxbMyw2LCJcXG92ZXJsaW5le1xcYWxwaGF9IiwxLHsic2hvcnRlbiI6eyJzb3VyY2UiOjIwLCJ0YXJnZXQiOjIwfX1dXQ==
\[\begin{tikzcd}
	& {(\cal F/f^*)} \\
	{\cal F} && {\cal E} \\
	& {(\cal F/f'^*)}
	\arrow[""{name=0, anchor=center, inner sep=0}, "{\rho_f}", two heads, from=1-2, to=2-3]
	\arrow[""{name=1, anchor=center, inner sep=0}, "{\cal F/\alpha}"{description, pos=0.4}, from=1-2, to=3-2]
	\arrow[""{name=2, anchor=center, inner sep=0}, "{j_f}", hook, from=2-1, to=1-2]
	\arrow["{j_{f'}}"', hook', from=2-1, to=3-2]
	\arrow["{\rho_{f'}}"', two heads, from=3-2, to=2-3]
	\arrow["\simeq"', draw=none, from=2, to=1]
	\arrow["{\overline{\alpha}}"{description}, shorten <=6pt, shorten >=6pt, Rightarrow, from=3-2, to=0]
\end{tikzcd}\]
\end{prop}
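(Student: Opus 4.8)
The plan is to produce $\cal F/\alpha$ by writing down its inverse image functor directly and then checking both compatibilities componentwise. Writing the geometric transformation $\alpha$ as a natural transformation $\alpha^* : f'^* \Rightarrow f^*$ of inverse images (this is the variance convention that makes $\cal F/\alpha$ point in the stated direction), I would define a functor
\[
G := (\cal F/\alpha)^* : (\cal F/f'^*) \to (\cal F/f^*)
\]
by postcomposition with $\alpha^*$: it sends an object $(F,E,\beta : F \to f'^*E)$ to $(F,E,\alpha^*_E\circ\beta : F \to f^*E)$ and keeps both components of a morphism $(g,h)$. That this is well defined on morphisms is exactly the naturality square of $\alpha^*$ applied to $h$, which turns the comma condition $\beta_1\circ g = f'^*(h)\circ\beta$ into $(\alpha^*_{E_1}\beta_1)\circ g = f^*(h)\circ(\alpha^*_E\beta)$.

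Next I would verify that $G$ is an inverse image functor. Both $(\cal F/f^*)$ and $(\cal F/f'^*)$ are toposes (the Artin-gluing description of the canonical stack recalled in the preliminaries), in which finite limits and arbitrary colimits are computed componentwise, the structure maps being the induced arrows. Since $G$ fixes the first two components and only reindexes the structure map through $\alpha^*$, preservation of colimits is immediate; preservation of finite limits follows by comparing the two induced structure maps into $f^*(\lim E_i)\cong\lim f^*(E_i)$, using naturality of $\alpha^*$ together with the left exactness of $f^*$ and $f'^*$, so that the two agree after composing with the jointly monic projections. A finite-limit- and colimit-preserving functor between Grothendieck toposes is the inverse image of a geometric morphism, which I take to be $\cal F/\alpha : (\cal F/f^*) \to (\cal F/f'^*)$.

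The two compatibilities then reduce to one-line computations on inverse images. For the triangle, the inverse image of $\cal F/\alpha\circ j_f$ is $j_f^*\circ G = \pi_{\cal F}\circ G$, and since $\pi_{\cal F}$ records the first component, untouched by $G$, this equals the first projection $\pi_{\cal F}$ on $(\cal F/f'^*)$, which is $j_{f'}^*$; hence $\cal F/\alpha\circ j_f\simeq j_{f'}$ (the commutativity indicated by $\simeq$ in the diagram). For the $2$-cell, I would compute $(\rho_{f'}\circ\cal F/\alpha)^* = G\circ\rho_{f'}^*$, where $\rho_{f'}^*$ sends $E$ to the fibrewise terminal object $(f'^*E,E,1_{f'^*E})$; applying $G$ gives $E\mapsto(f'^*E,E,\alpha^*_E)$, to be compared with $\rho_f^*\colon E\mapsto(f^*E,E,1_{f^*E})$. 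The pair $(\alpha^*_E,1_E)$ is a morphism in $(\cal F/f^*)$ between these two objects, the comma condition being the identity $\alpha^*_E = f^*(1_E)\circ\alpha^*_E$, and it is natural in $E$ by naturality of $\alpha^*$; this natural transformation of inverse images is the desired geometric transformation $\overline\alpha$ relating $\rho_{f'}\circ\cal F/\alpha$ and $\rho_f$.

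The only genuinely delicate point is the bookkeeping of variance: one must fix at the outset the convention under which $\alpha : f\Rightarrow f'$ yields $\alpha^* : f'^*\Rightarrow f^*$, so that postcomposition sends $\cal F/\alpha$ into $(\cal F/f'^*)$ and so that the direction of $\overline\alpha$ comes out as in the diagram. Everything else is the routine check that the comma (Artin-gluing) structure is preserved; and if wanted, the $2$-functoriality of $\alpha\mapsto\cal F/\alpha$, i.e. its compatibility with the assignment $g\mapsto\widetilde{g^*}$ of the preceding proposition, follows from the very same componentwise comparison.
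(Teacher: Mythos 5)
Your proposal is correct and takes essentially the same route as the paper: both construct $(\cal F/\alpha)^*$ as postcomposition with the components of $\alpha$ and verify the triangle and the $2$-cell componentwise on inverse images. The only differences are minor: the paper exhibits the direct image concretely as pullback along $\alpha_E$ where you invoke the adjoint functor theorem to obtain the right adjoint, and your explicit choice of variance (reading $\alpha$ as $\alpha^* : f'^* \Rightarrow f^*$) resolves a direction mismatch between the stated arrow $\cal F/\alpha : (\cal F/f^*) \to (\cal F/f'^*)$ and the paper's own description of the inverse image, as well as the typos $\cal F/\alpha \circ j_f \simeq j_f$ (which should read $j_{f'}$) and $\rho_f \circ \cal F/\alpha \Rightarrow \rho_{f'}$ (which does not typecheck and should relate $\rho_{f'} \circ \cal F/\alpha$ to $\rho_f$, as your construction does).
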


\begin{proof}
Indeed, the geometric morphism $\cal F/\alpha$ has for inverse image the postcomposition by $\alpha_E$: to an object $(F,E,u:F\to f^*(E))$ of $(\cal F/f^*)$ we associate $(F,E,\alpha_Eu : F\to f'^*(E)$; and for direct image the pullback along $\alpha_E$: to an object $(F',E,u':F\to f'^*(E))$ we associate the object $(F,E,u:F\to f^*(E))$ depicted in the following pullback:

% https://q.uiver.app/#q=WzAsNCxbMSwxLCJmJ14qKEUpIl0sWzEsMCwiRiciXSxbMCwxLCJmXiooRSkiXSxbMCwwLCJGIl0sWzEsMCwidSciXSxbMiwwLCJcXGFscGhhX0UiLDJdLFszLDIsInUiLDJdLFszLDFdLFszLDAsIiIsMSx7InN0eWxlIjp7Im5hbWUiOiJjb3JuZXIifX1dXQ==
\[\begin{tikzcd}
	F & {F'} \\
	{f^*(E)} & {f'^*(E)}
	\arrow[from=1-1, to=1-2]
	\arrow["u"', from=1-1, to=2-1]
	\arrow["\lrcorner"{anchor=center, pos=0.125}, draw=none, from=1-1, to=2-2]
	\arrow["{u'}", from=1-2, to=2-2]
	\arrow["{\alpha_E}"', from=2-1, to=2-2]
\end{tikzcd}\]

The commutation of the left triangle of geometric morphisms of the proposition is immediate, also for the existence of the geometric transformation $\overline{\alpha}$ which is directly derived from $\alpha$.
\end{proof}

\section{Indexed weak Diaconescu's theorem}\label{section6}

In this section, we generalize the relative Diaconescu theorem from \cite{bartolicaramello}. In the absolute setting, continuous functors $A : ({\cal C},J) \to {\cal E}$ correspond to weak geometric morphisms $\Sh(A)_* \dashv \Sh(A)^* : \widehat{\cal C}_J \to {\cal E}$ — that is, to mere pairs of adjoint functors. The relative setting, however, brings to light two distinct notions of weak geometric morphism: \emph{relative weak geometric morphisms} and \emph{indexed weak geometric morphisms}. These two definitions call for further clarification, which we do in the first subsection.

Our main focus will be on \emph{indexed weak geometric morphisms}, but it is worth noting that \emph{relative weak geometric morphisms} can also be described as particular morphisms of opfibrations — a description we provide in the second subsection.

In the third subsection, we examine how \emph{indexed weak geometric morphisms} can be induced from local fibrations when used as sites of presentation; these will correspond to continuous morphisms of local fibrations. This construction yields a weak indexed version of Diaconescu’s theorem, first for local fibrations and then for ordinary fibrations. Finally, we apply this theorem to fibrations and deduce that the Giraud topos of a fibration is canonically equivalent to the one of its stackification.

\subsection{Distinction between indexed and relative weak geometric morphisms}

There are two points of view that coincide in the case of geometric morphisms, but differ in the context of weak geometric morphisms: the indexed setting and the relative setting. We begin by presenting them in the case of geometric morphisms. Given two relative toposes $f : {\cal E} \to {\cal F}$ and $f' : {\cal E'} \to {\cal F}$, we can define two associated categories:

\begin{itemize}
    \item The category having for objects the geometric morphisms $g : {\cal E} \to {\cal E'}$ such that $f'g \simeq f$, i.e. \emph{relative geometric morphisms}.
    \item The category having for objects the indexed adjunctions (indexed functors being component-wise adjoint functors) $(g_*) \vdash (g^*) : S_{f'} \to S_{f}$ between the canonical stacks of $f$ and $f'$ such that the fiberwise components $(g^*)_F$ preserve finite limits, i.e. \emph{indexed geometric morphisms}.
\end{itemize}

These two categories are equivalent: if we have an indexed geometric morphism we just take the adjoint pair at the terminal object $((g_*)_{\mathbf{1}_{\cal F}} \vdash (g^*)_{\mathbf{1}_{\cal F}})$ to obtain a relative geometric morphism,

% https://q.uiver.app/#q=WzAsNCxbMCwwLCJTX2YiXSxbMSwwLCJTX3tmJ30iXSxbMywwLCJ7XFxjYWwgRX0vZl4qKFxcbWF0aGJmezF9X3tcXGNhbCBGfSkgXFxzaW1lcSB7XFxjYWwgRX0iXSxbNCwwLCJ7XFxjYWwgRX0nIFxcc2ltZXEge1xcY2FsIEUnfS9mJ14qKFxcbWF0aGJmezF9X3tcXGNhbCBGfSkiXSxbMCwxLCIoZ18qKSIsMCx7Im9mZnNldCI6LTJ9XSxbMSwwLCIoZ14qKSIsMCx7Im9mZnNldCI6LTJ9XSxbMSwyLCIiLDIseyJzdHlsZSI6eyJ0YWlsIjp7Im5hbWUiOiJtYXBzIHRvIn0sImJvZHkiOnsibmFtZSI6InNxdWlnZ2x5In19fV0sWzIsMywiKGdfKilfe1xcbWF0aGJmezF9X3tcXGNhbCBGfX0iLDAseyJvZmZzZXQiOi0yfV0sWzMsMiwiKGdeKilfe1xcbWF0aGJmezF9X3tcXGNhbCBGfX0iLDAseyJvZmZzZXQiOi0yfV0sWzUsNCwiIiwyLHsibGV2ZWwiOjEsInN0eWxlIjp7Im5hbWUiOiJhZGp1bmN0aW9uIn19XSxbOCw3LCIiLDAseyJsZXZlbCI6MSwic3R5bGUiOnsibmFtZSI6ImFkanVuY3Rpb24ifX1dXQ==
\[\begin{tikzcd}
	{S_f} & {S_{f'}} && {{\cal E}/f^*(\mathbf{1}_{\cal F}) \simeq {\cal E}} & {{\cal E}' \simeq {\cal E'}/f'^*(\mathbf{1}_{\cal F})}
	\arrow[""{name=0, anchor=center, inner sep=0}, "{(g_*)}", shift left=2, from=1-1, to=1-2]
	\arrow[""{name=1, anchor=center, inner sep=0}, "{(g^*)}", shift left=2, from=1-2, to=1-1]
	\arrow[squiggly, maps to, from=1-2, to=1-4]
	\arrow[""{name=2, anchor=center, inner sep=0}, "{(g_*)_{\mathbf{1}_{\cal F}}}", shift left=2, from=1-4, to=1-5]
	\arrow[""{name=3, anchor=center, inner sep=0}, "{(g^*)_{\mathbf{1}_{\cal F}}}", shift left=2, from=1-5, to=1-4]
	\arrow["\dashv"{anchor=center, rotate=90}, draw=none, from=1, to=0]
	\arrow["\dashv"{anchor=center, rotate=90}, draw=none, from=3, to=2]
\end{tikzcd}\]

and if we have a relative geometric morphism, its $\eta$-extension will be an indexed left adjoint, giving us:

% https://q.uiver.app/#q=WzAsNixbMCwwLCJcXGNhbCBFIl0sWzIsMCwiXFxjYWwgRSciXSxbMSwxLCJcXGNhbCBGIl0sWzQsMCwie1xcY2FsIEV9L3tcXGNhbCBGfSJdLFs2LDAsIntcXGNhbCBFJ30ve1xcY2FsIEZ9Il0sWzUsMSwiXFxjYWwgRiJdLFswLDEsImciXSxbMCwyLCJmIiwyXSxbMSwyLCJmJyJdLFsxLDMsIiIsMCx7InN0eWxlIjp7InRhaWwiOnsibmFtZSI6Im1hcHMgdG8ifSwiYm9keSI6eyJuYW1lIjoic3F1aWdnbHkifX19XSxbMyw1LCJcXHBpX2YiLDJdLFs0LDUsIlxccGlfe2YnfSJdLFs0LDMsIlxcd2lkZXRpbGRle2deKn0iLDIseyJvZmZzZXQiOjN9XSxbMyw0LCJcXHdpZGV0aWxkZXtnfV8qIiwyLHsib2Zmc2V0IjoyfV0sWzEyLDEzLCIiLDAseyJsZXZlbCI6MSwic3R5bGUiOnsibmFtZSI6ImFkanVuY3Rpb24ifX1dXQ==
\[\begin{tikzcd}
	{\cal E} && {\cal E'} && {{\cal E}/{\cal F}} && {{\cal E'}/{\cal F}} \\
	& {\cal F} &&&& {\cal F}
	\arrow["g", from=1-1, to=1-3]
	\arrow["f"', from=1-1, to=2-2]
	\arrow[squiggly, maps to, from=1-3, to=1-5]
	\arrow["{f'}", from=1-3, to=2-2]
	\arrow[""{name=0, anchor=center, inner sep=0}, "{\widetilde{g^*}_*}"', shift right=2, from=1-5, to=1-7]
	\arrow["{\pi_f}"', from=1-5, to=2-6]
	\arrow[""{name=1, anchor=center, inner sep=0}, "{\widetilde{g^*}^*}"', shift right=3, from=1-7, to=1-5]
	\arrow["{\pi_{f'}}", from=1-7, to=2-6]
	\arrow["\dashv"{anchor=center, rotate=-90}, draw=none, from=1, to=0]
\end{tikzcd}\]

The fact that the inverse image of a geometric morphism preserves finite limits is crucial in the construction of this equivalence between \emph{relative} and \emph{indexed} geometric morphisms: the preservation of finite limits by the inverse image forces its extension to preserve cartesian arrows, as they are represented by cartesian squares. 

Now for the weak geometric morphisms, we recall the absolute definition from Definition 4.1.\cite{denseness}:

\begin{defn}
A weak geometric morphism  $f : {\cal E} \to {\cal E'}$  consists of an adjoint pair $f_* \vdash f^* : {\cal E'} \to {\cal E}$ (the inverse image does not need to preserve finite limits).
\end{defn}

In general, we define the two following different notions of weak geometric morphisms:

\begin{defn}\label{relativeandindexed}
Let $f : \cal F \to \cal E$ and $f' : \cal F' \to \cal E$ be two relative toposes. We call:

\begin{enumerate}
    \item a weak geometric morphism $a : \cal F \to \cal F'$ a \emph{relative weak geometric morphism} if we have a commutative triangle $f'a \simeq f$.
    \item a pair of \emph{indexed adjoint functors} $ (a_*) \vdash (a^*) : S_{f'} \to S_f$ an \emph{indexed weak geometric morphism}. 
\end{enumerate}
\end{defn}

As we will see, this (absolute) definition of weak geometric morphisms is a definition of (absolute) indexed weak geometric morphism; even for toposes over $\mathbf{Set}$, we will see that the notions of indexed and relative weak geometric morphisms already do not coincide. Indeed, even if we do not require that a relative weak geometric morphism commutes with any kind of limits, the preservation of the terminal object is automatic, since $a^*f^*(\mathbf{1}_{\cal F}) = f'^*(\mathbf{1}_{\cal F})$, and $f^*$ and $f'^*$ are preserving it. Any \emph{relative} weak geometric morphism is forced to preserve the finite limits coming from the basis, which is not the case of the absolute weak geometric morphisms as defined previously: they correspond to the indexed weak geometric morphisms (over $\mathbf{Set}$), as any functor between categories is always $\mathbf{Set}$-indexed. So the relevant notion for a weak Diaconescu's theorem over some base is not the notion of relative weak geometric morphism, but the one of indexed weak geometric morphisms.

In order to understand these two different kinds of weak morphisms, we need some comments about the canonical stacks: for a relative topos $f: \cal E \to F$ we recall that its canonical stack is not only a fibration (for the action of pulling back), but also an opfibration: we denote by $\Sigma_g : {\cal E}/f^*F \to {\cal E}/f^*F'$, for an arrow $g : F \to F'$ in $\cal F$, the functor which acts by post-composition with $f^*(g)$. It is the left adjoint to the pullback, and it is hence making the canonical stack an opfibration. For example, in $\mathbf{Set}$ we have that the functor $\Sigma_{!_I} : \mathbf{Set}/I \to \mathbf{Set}$ sends an $I$-indexed set $(S_i)_I \to I$ to its coproduct indexed by $I$, that is $\coprod_IS_i$. This covariant action of arrows from the base-category has to be understood as providing some kind of coproduct shaped after the base category; we do not only have \emph{absolute} coproducts. In particular, being a morphism of opfibrations, that is, commuting with these functors $\Sigma_g$, can be interpreted as a commutation with these coproducts shaped after the basis. This is the case for indexed left adjoints for example: as absolute left adjoints commute with colimits, indexed left adjoints commute with colimits (component-wise) also with those functors $\Sigma_g$, that is, they also are morphisms of opfibrations.

\subsection{The case of relative weak geometric morphisms}

In this subsection, we focus on relative weak geometric morphisms and describe them as a particular kind of morphisms of opfibrations. The two following categories are the general setting in order to understand relative weak geometric morphisms:

\begin{defn}
Let $f : \cal E \to F$ and $f' : \cal E' \to F$ be two relative toposes over $\cal F$. We define:

\begin{enumerate}
    \item the category $\mathbf{LaxWmor}([f],[f'])$ of lax weak geometric morphisms consisting of pairs $(a, \phi)$ where $a:{\cal E}\to {\cal E}'$ is a weak morphism of toposes, and $\phi:a^{\ast}\circ f'^{\ast}\imp f^{\ast}$ a geometric transformation.
    \item the category $\mathbf{OpfibWeak}(S_{f'},S_{f})$ of fiberwise weak geometric morphisms (pairs of adjoint functors) such that the left adjoints yield a morphism of \emph{op}fibrations
\end{enumerate}
\end{defn}

These two categories are equivalent:

\begin{prop}
Let $f : \cal E \to F$ and $f' : \cal E' \to F$ be two relative toposes over $\cal F$. We have an equivalence of categories:

$$\mathbf{OpfibWeak}(S_{f'},S_{f}) \simeq \mathbf{LaxWmor}([f],[f'])$$.   
\end{prop}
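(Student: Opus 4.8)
The plan is to establish the equivalence by writing down a functor in each direction on objects and morphisms and then checking they are pseudo-inverse. Throughout I use the identification $S_f(F)=\cal E/f^*(F)$ of the fibre over an object $F$ of $\cal F$, with terminal object $\tau^f_F=[\,\id_{f^*(F)}\,]$; I recall that the opfibration functor $\Sigma^f_g$ is postcomposition with $f^*(g)$, that $\Sigma^f_{!_F}(\tau^f_F)\cong f^*(F)$, and — crucially — that since $f$ is a genuine geometric morphism one has $f^*(\mathbf 1_{\cal F})\cong\mathbf 1_{\cal E}$, so that the forgetful functor $S_f(F)\to\cal E$ is \emph{exactly} $\Sigma^f_{!_F}$. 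The first observation to fix before anything else is the variance: in the convention of Definition \ref{relativeandindexed} the inverse image $a^*\colon\cal E'\to\cal E$ is the fibrewise \emph{left} adjoint, and it is the collection of these left adjoints that is required to be a morphism of opfibrations.

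In the direction $\mathbf{LaxWmor}([f],[f'])\to\mathbf{OpfibWeak}(S_{f'},S_f)$, given $(a,\phi)$ I define, following the $\eta$-extension formula of Proposition \ref{etaextensionproperties}, the fibrewise functor $(a^*)_F\colon \cal E'/f'^*(F)\to\cal E/f^*(F)$ by $(a^*)_F[\alpha\colon Y\to f'^*(F)]=[\,\phi_F\circ a^*(\alpha)\colon a^*Y\to f^*(F)\,]$. Its fibrewise right adjoint $(a_*)_F$ is produced, exactly as the direct image $\widetilde A_*$ is built by a pullback in the relevant proposition above, by pulling back $a_*(\beta)$ along the transpose $\phi_F^{\,t}\colon f'^*(F)\to a_*f^*(F)$ of $\phi_F$ under $a^*\dashv a_*$; the adjunction bijection then follows formally from $a^*\dashv a_*$ and the universal property of the pullback. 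The essential point is that $\{(a^*)_F\}$ is a morphism of opfibrations: evaluating $(a^*)_{F'}\Sigma^{f'}_g$ and $\Sigma^f_g(a^*)_F$ on $[\alpha]$ yields $[\phi_{F'}\circ a^*f'^*(g)\circ a^*\alpha]$ and $[f^*(g)\circ\phi_F\circ a^*\alpha]$ respectively, and these coincide precisely because of the naturality of $\phi\colon a^*f'^*\Rightarrow f^*$. Thus naturality of $\phi$ is \emph{equivalent} to commutation with the $\Sigma_g$, which is why the opfibration (rather than fibration) structure is the relevant one here: the weak inverse image $a^*$ need not preserve the pullbacks defining reindexing, but it automatically respects the postcomposition functors $\Sigma_g$.

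In the opposite direction, given a fibrewise adjoint family $\{(a^*)_F\dashv(a_*)_F\}$ whose left adjoints form a morphism of opfibrations, I restrict to $F=\mathbf 1_{\cal F}$; since $f^*(\mathbf 1_{\cal F})\cong\mathbf 1$ the fibre is $\cal E$, so $(a^*)_{\mathbf 1}=:a^*$ with right adjoint $(a_*)_{\mathbf 1}=:a_*$ is a weak morphism $a\colon\cal E\to\cal E'$. To recover $\phi$ I evaluate $(a^*)_F$ at $\tau'_F$: because $\{(a^*)_F\}$ commutes with $\Sigma_{!_F}$, which is the forgetful functor, the underlying object of $(a^*)_F(\tau'_F)$ is $\Sigma^f_{!_F}(a^*)_F(\tau'_F)\cong a^*\Sigma^{f'}_{!_F}(\tau'_F)\cong a^*f'^*(F)$, so $(a^*)_F(\tau'_F)$ is an object $[\,\phi_F\colon a^*f'^*(F)\to f^*(F)\,]$ of $\cal E/f^*(F)$, which defines $\phi_F$ in the correct direction; commutation with the general $\Sigma_g$ gives naturality of $\phi$, so $(a,\phi)\in\mathbf{LaxWmor}([f],[f'])$. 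Checking that $\Phi\circ\Psi\simeq\id$ is immediate, as the recovered $a^*$ and $\phi_F$ are literally those we began with. The reconstruction $\Psi\circ\Phi\simeq\id$ is the only step with real content: for an arbitrary $[\alpha\colon Y\to f'^*(F)]$ the canonical map $\alpha\colon[\alpha]\to\tau'_F$ in the fibre has underlying arrow $\alpha$, and since $(a^*)_F$ commutes with the forgetful $\Sigma_{!_F}$ the image $(a^*)_F(\alpha)\colon (a^*)_F[\alpha]\to[\phi_F]$ has underlying arrow $a^*(\alpha)$; reading this over $f^*(F)$ forces $(a^*)_F[\alpha]=[\,\phi_F\circ a^*(\alpha)\,]$, so a morphism of opfibrations over $\id_{\cal F}$ is rigidly determined by its restriction over $\mathbf 1_{\cal F}$ together with $\phi$, and the rebuilt functor agrees with the original.

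The assignment on morphisms then goes through in the expected way: a geometric transformation of lax weak morphisms corresponds to a fibrewise family of natural transformations compatible with the opfibration structure, and one checks functoriality and naturality of the comparison isomorphisms by the same slice-adjunction bookkeeping. I expect the main obstacle to be twofold and conceptual rather than computational: first, pinning down the variance so that $a^*$ (not $a_*$) is the fibrewise left adjoint and the opfibration morphism, so that the transformation produced from the opfibration structure lands in the direction $a^*f'^*\Rightarrow f^*$ demanded by $\mathbf{LaxWmor}$; and second, the reconstruction lemma that commutation with the forgetful functors $\Sigma_{!_F}$ together with naturality of $\phi$ determines the whole family $\{(a^*)_F\}$. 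Once these are in place, everything else reduces to the manipulations of slice adjunctions and pullbacks already carried out for the $\eta$-extension and for $\widetilde A_*$.
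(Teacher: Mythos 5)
Your proposal is correct and follows essentially the same route as the paper's proof: the same formula $[\alpha]\mapsto[\phi_F\circ a^*(\alpha)]$ with fibrewise right adjoint obtained by pulling back along the transpose of $\phi_F$, the same recovery of $a$ at the fibre over $\mathbf 1_{\cal F}$ and of $\phi_F$ as the image of the fibrewise terminal object, and the same rigidity argument (commutation with $\Sigma_{!_F}$, i.e.\ the forgetful functors, applied to the canonical map $[\alpha]\to\tau'_F$) to show the whole family $\{(a^*)_F\}$ is determined by these data. The only difference is presentational: you make explicit the identification of $\Sigma_{!_F}$ with the forgetful functor and the variance bookkeeping, which the paper leaves implicit.
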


\begin{proof}
The equivalence works as follows:

If we have such a pair $(a,\phi)$ we can build the morphism of opfibrations which is fiberwise given by $a_F^{\phi} : {\cal E'}/f'^*(F) \to {\cal E}/f^*(F)$ sending an object $[\alpha' : E' \to f'^*(F)]$ to $[\phi_F a^*(\alpha') : a^*(E) \to a^*f'^*(F) \to f^*(F)]$. The fact that it is indeed a morphism of opfibrations results from the naturality of $\phi$. Its right adjoint is given by sending an object $\alpha : E \to f^*(F)$ to the following pullback living in ${\cal E'}/f'^*(F)$:

% https://q.uiver.app/#q=WzAsNSxbMSwxLCJhXyphXipmJ14qKEYpIl0sWzIsMCwiYV8qKEUpIl0sWzIsMSwiYV8qZl4qKEYpIl0sWzAsMSwiZideKihGKSJdLFswLDAsIkUnIl0sWzEsMiwiYV8qKFxcYWxwaGEpIl0sWzAsMiwiYV8qKFxccGhpX0YpIiwyXSxbMywwLCJcXHpldGFfe2YnXiooRil9XmEiLDJdLFs0LDFdLFs0LDMsIlxcYWxwaGEnIiwyXSxbNCwwLCIiLDEseyJzdHlsZSI6eyJuYW1lIjoiY29ybmVyIn19XV0=
\[\begin{tikzcd}
	{E'} && {a_*(E)} \\
	{f'^*(F)} & {a_*a^*f'^*(F)} & {a_*f^*(F)}
	\arrow[from=1-1, to=1-3]
	\arrow["{\alpha'}"', from=1-1, to=2-1]
	\arrow["\lrcorner"{anchor=center, pos=0.125}, draw=none, from=1-1, to=2-2]
	\arrow["{a_*(\alpha)}", from=1-3, to=2-3]
	\arrow["{\zeta_{f'^*(F)}^a}"', from=2-1, to=2-2]
	\arrow["{a_*(\phi_F)}"', from=2-2, to=2-3]
\end{tikzcd}\]

\noindent where $\zeta_{f'^*(F)}^a$ denotes the unit of $a_* \vdash a^*$ at the object $f'^*(F)$.

On the other hand, if we have a morphism of opfibrations from $S_{f'}$ to $S_f$, since ${\cal E}/ f^*(\mathbf{1}_{\cal F}) \simeq {\cal E}$, we can associate to every morphism of opfibrations $a$ the weak geometric morphism $(a_{\mathbf{1_{\cal F}}})_* \vdash (a_{\mathbf{1_{\cal F}}})^* : \cal E \to \cal E'$. The components of $\phi$ are obtained as the images of the terminal objects of the ${\cal E}/f^*(F)$ through the $(a_F)^*$.

When we take a pair $(a,\phi)$ and apply the two constructions, it is immediate that we recover $(a,\phi)$.

Now starting with a morphism of opfibrations $a$, taking $a_{\mathbf{1}_{\cal F}}$ and then getting back to the associated morphism of opfibrations, by the mere definitions we see that the adjunction induced at the fiber over $\mathbf{1}_{\cal F}$ is the the same as in the beginning. To see that it also coincides on the other fibers, we exploit the commutation of:

% https://q.uiver.app/#q=WzAsNSxbMCwwLCJ7XFxjYWwgRX0vZl4qRiJdLFsyLDAsIntcXGNhbCBFJ30vZideKkYiXSxbMCwxLCJcXGNhbCBFIl0sWzIsMSwiXFxjYWwgRSciXSxbMSw0LCJcXGJ1bGxldCJdLFswLDIsIlxcU2lnbWFfeyFfRn0iLDJdLFsxLDMsIlxcU2lnbWEnX3shX0Z9Il0sWzAsMSwiKChhX3tcXG1hdGhiZnsxfV97XFxjYWwgRn19KV4qKV9GXioiXSxbMiwzLCIoYV97XFxtYXRoYmZ7MX1fe1xcY2FsIEZ9fSleKiIsMl1d
\[\begin{tikzcd}
	{{\cal E}/f^*F} && {{\cal E'}/f'^*F} \\
	{\cal E} && {\cal E'} \\
	\\
	\\
	\arrow["{\Sigma_{!_F}}"', from=1-1, to=2-1]
	\arrow["{\Sigma'_{!_F}}", from=1-3, to=2-3]
	\arrow["{((a_{\mathbf{1}_{\cal F}})^*)_F^*}", from=1-1, to=1-3]
	\arrow["{(a_{\mathbf{1}_{\cal F}})^*}"', from=2-1, to=2-3]
\end{tikzcd}\]

Indeed, we can recover the information by sending 

% https://q.uiver.app/#q=WzAsMyxbMCwwLCJFIl0sWzEsMCwiZl4qRiJdLFswLDEsImZeKkYiXSxbMCwyLCJcXGFscGhhIiwyXSxbMSwyLCIxIl0sWzAsMSwiXFxhbHBoYSJdXQ==
\[\begin{tikzcd}
	E & {f^*F} \\
	{f^*F}
	\arrow["\alpha"', from=1-1, to=2-1]
	\arrow["1", from=1-2, to=2-1]
	\arrow["\alpha", from=1-1, to=1-2]
\end{tikzcd}\]

from the top left to the bottom right of the square: this forces the image of $[\alpha]$ to coincide with the image of $[\alpha]$ by $(a^*)_F$.
\end{proof}

\begin{remark}

\begin{enumerate}[(i)]
    \item Note that in $\mathbf{OpfibWeak}(S_f, S_{f'})$, we do not assume the existence of an \emph{indexed} adjunction: we are merely considering componentwise pairs of adjoint functors. However, the fact that the inverse image is a morphism of opfibrations is equivalent to the direct image being a morphism of fibrations.
    \item The fact that we work with morphisms of opfibrations ensures that the component at the terminal object $\mathbf{1}$ commutes with arbitrary coproducts: we just need to use the commutation with the $\Sigma_{!_I}$ for the arrows $!_I : \coprod_I \mathbf{1} \to \mathbf{1}$.
    \item Recall that the colimits in the fibers ${\cal F}/f^*(F)$ of $S_f$ are computed as if they were in the terminal fiber, that is, $\cal F$; there is no choice of a structural arrow over $f^*(F)$ for the colimit, it is uniquely determined by the fact that it is a colimit.
\end{enumerate}
\end{remark}

We can specify this proposition in order to have a characterization of weak relative geometric morphisms. 

\begin{defn}
Let $f : \cal E \to F$ and $f' : \cal E' \to F$ be two relative toposes over $\cal F$. By $\mathbf{OpfibTerm}(S_{f'},S_{f})$ we denote the category of \emph{terminal-preserving morphisms of opfibrations}, that is fiberwise pairs of adjoint functors such that the left adjoints yield a morphism of \emph{op}fibrations preserving the terminal objects fiberwise. Also, we denote $\mathbf{Wmor}/{\cal F}([f],[f'])$ the category of weak morphisms of toposes between $[f]$ and $[f']$ over $\cal F$. 
\end{defn}

\begin{prop}
In the setting of the previous definition, we have an equivalence of categories: $\mathbf{OpfibTerm}(S_{f'},S_f) \simeq \mathbf{Wmor}/{\cal F}([f],[f'])$.
\end{prop}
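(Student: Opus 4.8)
The plan is to deduce this statement from the equivalence $\mathbf{OpfibWeak}(S_{f'},S_f) \simeq \mathbf{LaxWmor}([f],[f'])$ of the previous proposition, by exhibiting $\mathbf{OpfibTerm}(S_{f'},S_f)$ and $\mathbf{Wmor}/{\cal F}([f],[f'])$ as corresponding full subcategories on the two sides and checking that the equivalence restricts. First I would observe that $\mathbf{Wmor}/{\cal F}([f],[f'])$ is precisely the full subcategory of $\mathbf{LaxWmor}([f],[f'])$ on those pairs $(a,\phi)$ for which the geometric transformation $\phi : a^*f'^* \Rightarrow f^*$ is an \emph{isomorphism}. Indeed, a relative weak geometric morphism is a weak morphism $a$ together with a commutative triangle $f'a \simeq f$; passing to inverse images gives $(f'a)^* = a^*f'^*$, so such a triangle is exactly the datum of an isomorphism $\phi : a^*f'^* \isorightarrow f^*$, and the morphisms on both sides are inherited, so this is genuinely a full subcategory.

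Next I would compute, for a lax pair $(a,\phi)$, when the associated morphism of opfibrations $a^\phi$ preserves terminal objects fiberwise. Recall from the previous proposition that the fiber component $a_F^\phi : {\cal E'}/f'^*(F) \to {\cal E}/f^*(F)$ sends $[\alpha' : E' \to f'^*(F)]$ to $[\phi_F \circ a^*(\alpha')]$. The terminal object of the slice ${\cal E'}/f'^*(F)$ is $[1_{f'^*(F)}]$, and its image under $a_F^\phi$ is $[\phi_F : a^*f'^*(F) \to f^*(F)]$; this coincides with the terminal object $[1_{f^*(F)}]$ of ${\cal E}/f^*(F)$ exactly when $\phi_F$ is an isomorphism. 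Hence $a^\phi$ preserves the terminal object on the fiber over $F$ if and only if $\phi_F$ is invertible, and ranging over all objects $F$ of $\cal F$ yields: $a^\phi$ lies in $\mathbf{OpfibTerm}(S_{f'},S_f)$ if and only if $\phi$ is a natural isomorphism.

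Finally I would conclude by restriction. Since $\mathbf{OpfibTerm}(S_{f'},S_f)$ is by definition the full subcategory of $\mathbf{OpfibWeak}(S_{f'},S_f)$ on the terminal-preserving objects, and since under the equivalence of the previous proposition these objects correspond precisely to the pairs $(a,\phi)$ with $\phi$ invertible, i.e. to the objects of $\mathbf{Wmor}/{\cal F}([f],[f'])$, the equivalence restricts to the two full subcategories and gives $\mathbf{OpfibTerm}(S_{f'},S_f) \simeq \mathbf{Wmor}/{\cal F}([f],[f'])$. No serious obstacle arises beyond bookkeeping: the only points requiring care are matching the \emph{fiberwise} quantifier on the opfibration side with the componentwise invertibility of $\phi$, and checking that preservation of the terminal object in each slice is governed by the single structure arrow $\phi_F$, both of which are immediate from the explicit description of the fiber functors $a_F^\phi$.
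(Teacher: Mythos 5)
Your proposal is correct and follows essentially the same route as the paper: both restrict the equivalence $\mathbf{OpfibWeak}(S_{f'},S_f) \simeq \mathbf{LaxWmor}([f],[f'])$ by observing that the fiberwise terminal object $[1_{f'^*(F)}]$ is sent to $[\phi_F]$, so terminal-preservation on each fiber corresponds exactly to invertibility of each component $\phi_F$, i.e.\ to $(a,\phi)$ being a relative weak geometric morphism. Your write-up merely makes explicit the full-subcategory bookkeeping that the paper leaves implicit.
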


\begin{proof}
This follows directly from the previous equivalence with relative weak morphisms. Given a pair $(a,\phi)$, the associated morphism of opfibrations maps the terminal object $[f'^*(F) \to f'^*(F)]$ to the object $\phi_F : a^*f'^*(F) \to f^*(F)$. It defines a relative weak morphism if and only if each $\phi_F$ is an isomorphism, which holds exactly when terminal objects are preserved fiberwise.
\end{proof}

We can restrict this equivalence even further, recovering the previously mentioned equivalence of Theorem B.3.1.5 in \cite{elephant}.

\begin{prop}
Let $f : \cal E \to F$ and $f' : \cal E' \to F$ be two relative toposes over $\cal F$. We have an equivelance between    $\mathbf{FibOpfibCart}(S_{f'},S_f) $ and $  \mathbf{Geom}/{\cal F}([f],[f'])$, where the $\mathbf{FibOpfibCart}$ stands for the fibrations and opfibrations morpisms which also preserve finite limits.
\end{prop}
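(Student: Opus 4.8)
The plan is to obtain the statement as a further restriction of the equivalence $\mathbf{OpfibTerm}(S_{f'},S_f) \simeq \mathbf{Wmor}/{\cal F}([f],[f'])$ established in the previous proposition, cutting out on each side the appropriate subcategory and checking that they correspond. On the right-hand side, the genuine relative geometric morphisms $\mathbf{Geom}/{\cal F}([f],[f'])$ are exactly the relative weak morphisms whose inverse image preserves finite limits. On the left-hand side, I first observe that a morphism of opfibrations preserving finite limits fiberwise automatically preserves terminal objects, so that $\mathbf{FibOpfibCart}(S_{f'},S_f)$ is a subcategory of $\mathbf{OpfibTerm}(S_{f'},S_f)$; it therefore suffices to prove that the equivalence identifies these two subcategories.

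The core step is thus: for a relative weak geometric morphism $a$ with $a_* \vdash a^*$ and $f'a \simeq f$, the inverse image $a^*$ preserves finite limits if and only if the associated morphism of opfibrations, whose fiberwise components are the functors $a_F^\phi : {\cal E'}/f'^*(F) \to {\cal E}/f^*(F)$ from the previous proof, is moreover a morphism of fibrations and preserves finite limits fiberwise. One implication is immediate: evaluating the fiberwise condition at the terminal object $\mathbf{1}_{\cal F}$, and using ${\cal E'}/f'^*(\mathbf{1}_{\cal F}) \simeq {\cal E'}$ together with $a_{\mathbf{1}_{\cal F}}^\phi \simeq a^*$ (as already exploited to recover $a^*$ in the previous proof), shows that finite-limit preservation in every fiber specializes to finite-limit preservation of $a^*$ itself.

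The substantive implication is the converse, where I must propagate the global condition ``$a^*$ preserves finite limits'' to all fibers and to the morphism-of-fibrations property. For this I note that the family of fiberwise functors $a_F^\phi$ is precisely the $\eta$-extension of $a^*$ over $\cal F$, so that Proposition \ref{etaextensionproperties}(6) together with \ref{characcanonicalstack} applies directly: $a^*$ preserving finite limits is equivalent to this $\eta$-extension being a morphism of fibrations, which is in turn equivalent to its preserving finite limits fiberwise. This is exactly the phenomenon recalled at the start of this section — preservation of finite limits by an inverse image forces its extension to preserve cartesian arrows, since the latter are represented by cartesian squares — and it is the only genuinely non-formal point. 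The combination ``morphism of fibrations plus fiberwise finite-limit preservation'' is then precisely the $\mathbf{FibOpfibCart}$ condition, the opfibration part being already present and its compatibility with the fibration part being the indexedness of the adjunction. I expect this propagation step to be the main obstacle; everything around it is bookkeeping supplied by the $\eta$-extension results of Section \ref{section:notprel}.

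Finally, since the functors realizing $\mathbf{OpfibTerm}(S_{f'},S_f) \simeq \mathbf{Wmor}/{\cal F}([f],[f'])$ are restrictions of those realizing $\mathbf{OpfibWeak}(S_{f'},S_f) \simeq \mathbf{LaxWmor}([f],[f'])$, and since full faithfulness and pseudo-naturality are inherited by any full subcategory, the two constructions restrict to mutually pseudo-inverse functors between $\mathbf{FibOpfibCart}(S_{f'},S_f)$ and $\mathbf{Geom}/{\cal F}([f],[f'])$. This yields the claimed equivalence and recovers Theorem B.3.1.5 of \cite{elephant}.
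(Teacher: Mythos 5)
Your proposal is correct and follows essentially the same route as the paper's own proof: both restrict the previously established equivalence, use the $\eta$-extension results (Proposition \ref{etaextensionproperties}) to show that a relative geometric morphism yields a morphism of fibrations and opfibrations preserving finite limits fiberwise, and conversely evaluate at the terminal object $\mathbf{1}_{\cal F}$ to recover a genuine geometric morphism from a $\mathbf{FibOpfibCart}$ morphism. Your write-up merely makes explicit some bookkeeping the paper leaves implicit (the inclusions $\mathbf{FibOpfibCart} \subseteq \mathbf{OpfibTerm}$ and $\mathbf{Geom}/{\cal F} \subseteq \mathbf{Wmor}/{\cal F}$, and the identification of the fiberwise functors with the $\eta$-extension), which is harmless.
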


\begin{proof}
Indeed, if we restrict the previous equivalence to geometric morphisms over the basis, we obtain that the $\eta$-extension is not only a morphism of opfibrations, but also a morphism of fibrations and finite limits preserving because of \ref{etaextensionproperties}. In the over direction, taking the component of a morphism of fibrations, opfibrations, and finite-limits preserving at $\mathbf{1_{\cal F}}$ ensures us that the adjunction we obtain is a geometric morphism.
\end{proof}

\subsection{Indexed weak Diaconescu's theorem}

Having discussed relative weak geometric morphisms, we now turn to the notion of \emph{indexed} weak geometric morphisms, which will serve as the framework for a weak version of Diaconescu’s theorem.

We begin by providing a site-characterization of continuous functors that induce indexed weak geometric morphisms, in the most general setting:

\begin{prop}\label{sitecharaccontinuousfunctorsimpliesmorphfib}
Let $p : ({\cal D},K) \to ({\cal C},J)$ and $p' : ({\cal D}',K') \to ({\cal C},J)$ be two comorphism of sites, and $A : ({\cal D},K) \to ({\cal D}',K')$ be a continuous functor together with $\phi :p'A \Rightarrow p$ a natural transformation. The pair $(A,\phi)$ induces an indexed weak geometric morphism having $\widetilde{A}^*$ for inverse image if and only if the two following conditions are satisfied:

\begin{enumerate}[(i)]
    \item For every arrow $f : c' \to c$ in $\cal C$, object $d$ of $\cal D$ together with an arrow $u : p(d) \to c$, and every pair $(u' : p'(d') \to c',g' : d' \to A(d))$ such that $fu'=u\phi_{d}p'(g')$, there exist: a covering $(f_i' : d'_i \to d')_i$ for $K'$ together with triplets $(x_i : d_i' \to A(\overline{d_i}),\overline{g_i} : \overline{d_i} \to d,\overline{u_i} : p(\overline{d_i}) \to c')$ such that: $u'p'(f_i') = \overline{u_i}\phi_{\overline{d_i}}p'(x_i)$ and $g'f_i' = A(\overline{g_i})x_i$.
    \item For every two triplets $(x_1 : d' \to A(\overline{d_1}),g_1 : \overline{d_1} \to d,\overline{u_1} : p(\overline{d_1}) \to c')$ and $(x_2 : d' \to A(\overline{d_2}),g_2 : \overline{d_2} \to d,\overline{u_2} : p(\overline{d_2}) \to c')$ such that: $\overline{u_1}\phi_{\overline{d_1}}p'(x_1) = \overline{u_2}\phi_{\overline{d_2}}p'(x_2)$ and $A(\overline{g_1})x_1 =A(\overline{g_2})x_2$, there exist a covering $(f_i' : d_i' \to d')_i$ for $K'$ such that, denoting $P^f_{(d,u)}$ the presheaf which is the pullack:

    % https://q.uiver.app/#q=WzAsNCxbMCwwLCJQX2ZedSJdLFswLDEsIntcXGNhbCBDfShwLSxjKSJdLFsxLDEsIntcXGNhbCBDfShwLSxjJykiXSxbMSwwLCJ7XFxjYWwgRH0oLSxkKSJdLFswLDNdLFszLDIsImV2X3UiXSxbMSwyLCJmIFxcY2lyYyAtIiwyXSxbMCwxXSxbMCwyLCIiLDEseyJzdHlsZSI6eyJuYW1lIjoiY29ybmVyIn19XV0=
\[\begin{tikzcd}
	{P^f_{(d,u)}} & {{\cal D}(-,d)} \\
	{{\cal C}(p-,c)} & {{\cal C}(p-,c')}
	\arrow[from=1-1, to=1-2]
	\arrow[from=1-1, to=2-1]
	\arrow["\lrcorner"{anchor=center, pos=0.125}, draw=none, from=1-1, to=2-2]
	\arrow["{ev_u}", from=1-2, to=2-2]
	\arrow["{f \circ -}"', from=2-1, to=2-2]
\end{tikzcd}\]
    we have that  $(x_1f'_i : d'_i \to A(\overline{d_1}),g_1 : \overline{d_1} \to d,\overline{u_1} : p(\overline{d_1}) \to c')$ and $(x_2f_i' : d'_i \to A(\overline{d_2}),g_2 : \overline{d_2} \to d,\overline{u_2} : p(\overline{d_2}) \to c')$ are in the same connected component of $(d_i' / A\pi^f_{(d,u)})$ with $\pi^f_{(d,u)}$ being the projection functor $\pi^f_{(d,u)} : \int P^f_{(d,u)} \to {\cal D}$.
\end{enumerate}
\end{prop}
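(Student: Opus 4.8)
The plan is to split the argument into two reductions: first from the indexed-weak-geometric-morphism condition to a purely fibrational condition on $\widetilde{A}^*$, and then from that fibrational condition to the site-level conditions (i) and (ii).

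First I would establish that $(A,\phi)$ induces an indexed weak geometric morphism with inverse image $\widetilde{A}^*$ if and only if $\widetilde{A}^*$ is a morphism of fibrations $S_{C_p} \to S_{C_{p'}}$. Since $A$ is continuous, \ref{etaextensionproperties} gives that $\widetilde{A}^*$ preserves colimits and acts as the identity on the $\widehat{\cal C}_J$-component; because the fibers of the canonical stacks are the slices $\widehat{\cal D}_K/C_p^*E$ and $\widehat{\cal D'}_{K'}/C_{p'}^*E$, in which colimits are computed as in the ambient topos, each fiberwise component of $\widetilde{A}^*$ preserves colimits and hence admits a right adjoint by the adjoint functor theorem. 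Thus $\widetilde{A}^*$ always provides fiberwise weak geometric morphisms, and what is really at stake is whether these are \emph{indexed}, i.e. whether $\widetilde{A}^*$ commutes with reindexing. This is exactly the condition that $\widetilde{A}^*$ preserve cartesian arrows, and by mate calculus the invertibility of the comparison squares for the colimit-preserving inverse images forces the fiberwise right adjoints to assemble into an indexed functor; the converse is immediate from the definition.

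Next I would reduce ``$\widetilde{A}^*$ preserves cartesian arrows'' to the preservation of the \emph{basic} cartesian arrows coming from the site. By density of $\eta_{\cal D}$ together with \ref{pullbackcart} and \ref{localsurjcart}, every cartesian arrow can be localized so that its codomain is of the form $\eta_{\cal D}(d)$ and its domain is obtained by pulling $\eta_{\cal D}(d)$ back along a base map $C_p^*l(f)$; the first component of such a domain is the sheafification of the presheaf $P^f_{(d,u)}$ of \ref{Dffact}, whose category of elements is the factorization category $\mathcal{D}^{\mathbf{fact}}$. The delicate point — and the main obstacle — is that, because $A$ is only continuous and not a morphism of sites, $\widetilde{A}^*$ does \emph{not} preserve finite limits, so one cannot localize the cartesian-arrow condition through pullback stability as in \ref{canonicalcartesian}(iii). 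Instead one must argue with the colimit presentation of the basic cartesian arrow and exploit the stability of colimits in the topos $S_{C_{p'}}$, in the spirit of \ref{caractcart} and \ref{caractcartconverse}, while tracking the comparison maps carefully through sheafification.

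Finally, applying $\widetilde{A}^*$ to such a basic cartesian arrow and using $\widetilde{A}^*\eta_{\cal D}\simeq\eta_{\cal D'}A$ together with colimit-preservation, I would observe that its image is cartesian precisely when the canonical comparison map $\colim_{\int P^f_{(d,u)}}\eta_{\cal D'}(A\pi^f_{(d,u)}(-)) \to \widetilde{A}^*[\alpha]$ is an isomorphism, that is — after transporting through the presentation of the canonical stack as a topos and arguing as in \ref{cartcarasite} — when the functor $A\pi^f_{(d,u)}$ satisfies the two cofinality conditions of the criterion recalled above (Proposition 2.21 of \cite{denseness}). Unwinding the local-surjectivity condition yields exactly the local existence of liftings of (i), and unwinding the local-injectivity condition yields exactly the ``same connected component of $(d_i'/A\pi^f_{(d,u)})$'' statement of (ii). Matching these two unwindings with (i) and (ii), in particular lining up the covering families and the indices, is where the bulk of the bookkeeping lies, but it becomes routine once the cofinality reformulation is in place.
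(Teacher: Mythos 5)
Your opening reduction --- that $(A,\phi)$ induces an indexed weak geometric morphism with inverse image $\widetilde{A}^*$ if and only if $\widetilde{A}^*$ is a morphism of fibrations --- is indeed where the paper starts (modulo a slip in the justification: what the mate calculus gives automatically is that the fiberwise right adjoints are indexed, because $\widetilde{A}^*$ is always a morphism of \emph{op}fibrations; the condition genuinely at stake is that $\widetilde{A}^*$ itself be indexed). Your endpoint is also right: for a basic cartesian arrow with codomain a generator, cartesianness of its image under $\widetilde{A}^*$ amounts to a comparison map $a_{K'}(U^f_{(d,u)}) \to a_{K'}(L^f_{(d,u)})$ (in the paper's notation) being an isomorphism, and unwinding local surjectivity and local injectivity of that map gives exactly conditions (i) and (ii).

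The gap is the middle step, namely the sufficiency direction: deducing that $\widetilde{A}^*$ preserves \emph{all} cartesian arrows from its preservation of the basic ones. The tools you invoke for this reduction --- \ref{localsurjcart}, \ref{caractcart}, \ref{cartcarasite} --- are all proved only under the hypothesis that $p$ is a \emph{local fibration}, whereas the present proposition assumes only that $p$ and $p'$ are comorphisms of sites, so none of that machinery is available. Moreover, you correctly name the real obstruction (since $A$ is merely continuous, $\widetilde{A}^*$ does not preserve pullbacks, so the reassembly argument of \ref{loccartesianmorphsites} breaks), but your proposed fix is not an argument: \ref{caractcartconverse} applies to an arrow already exhibited as a colimit of cartesian arrows coming from the site, and producing such a presentation for the image of a general cartesian arrow is precisely what pullback preservation (or local-fibration lifting) would have bought you. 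Note also that the cofinality criterion you cite (Proposition 2.21 of \cite{denseness}) concerns representable targets $l(c)$, whereas the relevant comparison map here lands in the sheafification of $L^f_{(d,u)}$, which is not representable. The paper's proof avoids all of this by never touching individual cartesian arrows: it phrases ``morphism of fibrations'' as invertibility of the canonical natural transformations $\nu_f$ filling the reindexing squares, observes that all four functors in such a square preserve colimits so that invertibility may be tested on the generators $a_K(ev_u)$, and then computes the two composites explicitly at the presheaf level, where (i) and (ii) appear as the locally-epi and locally-mono conditions for $\nu^f_{(d,u)}$. If you want to salvage your plan, abandon the arrow-by-arrow localization and run your colimit analysis at the level of these comparison transformations instead.
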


\begin{proof}

We want to check when $\widetilde{A}^*$ is a morphism of fibrations. That is, for every arrow $f : c' \to c$ in $\cal C$, we want to see if the following canonical natural transformation is an isomorphism, making the square commutative (up to a canonical iso):

% https://q.uiver.app/#q=WzAsNCxbMCwwLCJTX3tDX3B9KGMpIl0sWzAsMSwiU197Q197cCd9fShjKSJdLFsxLDAsIlNfe0NfcH0oYycpIl0sWzEsMSwiU197Q197cCd9fShjJykiXSxbMCwyLCJTX3tDX3B9KGYpIl0sWzAsMSwiXFx3aWRldGlsZGV7QX1fYyIsMl0sWzIsMywiXFx3aWRldGlsZGV7QX1fe2MnfSJdLFsxLDMsIlNfe0Nfe3AnfX0oZikiLDJdLFsyLDEsIlxcbnVfZiIsMSx7ImxldmVsIjoyfV1d
\[\begin{tikzcd}
	{S_{C_p}(c)} & {S_{C_p}(c')} \\
	{S_{C_{p'}}(c)} & {S_{C_{p'}}(c')}
	\arrow["{S_{C_p}(f)}", from=1-1, to=1-2]
	\arrow["{\widetilde{A}^*_c}"', from=1-1, to=2-1]
	\arrow["{\nu_f}"{description}, Rightarrow, from=1-2, to=2-1]
	\arrow["{\widetilde{A}^*_{c'}}", from=1-2, to=2-2]
	\arrow["{S_{C_{p'}}(f)}"', from=2-1, to=2-2]
\end{tikzcd}\]

First, we notice that the functors involved in this square are all preserving colimits, so we can restrict to check that $\nu^f$ is an iso on the generators of $S_{C_p}(c) = \widehat{\cal D}_K/C_p^*l(c)$, namely the: $a_K(ev_u) : a_K({\cal D}(-,d)) \to a_K({\cal C}(p-,c))$ where $ev_u$ is given by the Yoneda lemma, evaluating the identity of $d$ at the arrow $u : p(d) \to c$. For a better understanding, we first study these arrows $\nu^f_{(d,u)}$ at the presheaf-level, and then we will apply the usual criteria to characterize exactly when it is an isomorphism after sheafification. 

So let's take the topologies to be the trivial ones, and compute the two paths of the square, in order to explicit the arrows $\nu^f_{(d,u)}$:

The upper path gives: first, we have the pullback 

% https://q.uiver.app/#q=WzAsNCxbMSwwLCJ7XFxjYWwgRH0oLSxkKSJdLFsxLDEsIntcXGNhbCBDfShwLSxjKSJdLFswLDAsIlBeZl97KGQsdSl9Il0sWzAsMSwie1xcY2FsIEN9KHAtLGMnKSJdLFswLDEsImV2X3UiXSxbMywxLCJmXFxjaXJjIC0iLDJdLFsyLDBdLFsyLDNdLFsyLDEsIiIsMSx7InN0eWxlIjp7Im5hbWUiOiJjb3JuZXIifX1dXQ==
\[\begin{tikzcd}
	{P^f_{(d,u)}} & {{\cal D}(-,d)} \\
	{{\cal C}(p-,c')} & {{\cal C}(p-,c)}
	\arrow[from=1-1, to=1-2]
	\arrow[from=1-1, to=2-1]
	\arrow["\lrcorner"{anchor=center, pos=0.125}, draw=none, from=1-1, to=2-2]
	\arrow["{ev_u}", from=1-2, to=2-2]
	\arrow["{f\circ -}"', from=2-1, to=2-2]
\end{tikzcd}\]

which, evaluated at an object $\overline{d}$ of $\cal D$, is given by $P^f_{(d,u)}(\overline{d}) = \{ (g : \overline{d} \to d, \overline{u}' : p(\overline{d}) \to c') ; up(g) = f \overline{u}' \}$. Then, we want to apply $\widetilde{A}^*_{c'}$. Since it preserves colimits, we can present $P^f_{(d,u)}$ as the colimit of its category of elements, and we obtain, after evaluation at an object $d'$ of $\cal D'$: $\widetilde{A}^*_{c'}(P^f_{(d,u)})(d') = \{ [x : d' \to A(\overline{d}), (g : \overline{d} \to d, \overline{u}' : p(\overline{d}) \to c')]; up(g) = f \overline{u}'\}$. Here, $[x : d' \to A(\overline{d}), (g : \overline{d} \to d, \overline{u}' : p(\overline{d}) \to c')]$ denotes the class of such an arrow in $\colim_{\int P^f_{(d,u)}}{\cal D}(d',A(\pi_{P^f_{(d,u)}}(g,\overline{u}')))$, with $\pi^f_{(d,u)} : \int P^f_{(d,u)} \to \cal D$ the canonical projection functor. For the sake of readability, we denote the presheaf thus obtained $U^f_{(d,u)}$.

The lower path gives: first, in the light of the good behavior of $\widetilde{A}^*_c$ on the generators, we have $\widetilde{A}^*_c(ev_u : {\cal D}(-,d) \to {\cal C}(p-,c)) = ev_{u\phi_d} : {\cal D'}(-,A(d)) \to {\cal C}(p'-,c)$. Then, we pull it back:

% https://q.uiver.app/#q=WzAsNCxbMCwwLCJQJ15mX3soZCx1KX0iXSxbMCwxLCJ7XFxjYWwgQ30ocCctLGMnKSJdLFsxLDEsIntcXGNhbCBDfShwJy0sYykiXSxbMSwwLCJ7XFxjYWwgRCd9KC0sQShkKSkiXSxbMSwyLCJmIFxcY2lyYyAtIiwyXSxbMywyLCJldidfdSJdLFswLDFdLFswLDNdLFswLDIsIiIsMSx7InN0eWxlIjp7Im5hbWUiOiJjb3JuZXIifX1dXQ==
\[\begin{tikzcd}
	{P'^f_{(d,u)}} & {{\cal D'}(-,A(d))} \\
	{{\cal C}(p'-,c')} & {{\cal C}(p'-,c)}
	\arrow[from=1-1, to=1-2]
	\arrow[from=1-1, to=2-1]
	\arrow["\lrcorner"{anchor=center, pos=0.125}, draw=none, from=1-1, to=2-2]
	\arrow["{ev_{u\phi_d}}", from=1-2, to=2-2]
	\arrow["{f \circ -}"', from=2-1, to=2-2]
\end{tikzcd}\]

When evaluated at an object $d'$ of $\cal D'$, we have $P'^f_{(d,u)}(d') = \{ (x : d' \to A(d), u' : p'(d') \to c') ; u\phi_dp'(x) = fu' \}$. For the sake of readability, we denote the presheaf thus obtained $L^f_{(d,u)}$. 

Now, we can exhibit the canonical arrow $\nu^f_{(d,u)} : U^f_{(d,u)} \to L^f_{(d,u)}$: at an object $d'$ of $\cal D'$: it sends an element $[x : d' \to A(\overline{d}), (g : \overline{d} \to d, \overline{u}' : p(\overline{d}) \to c')]$ of $U^f_{(d,u)}(d')$ (so we have $up(g) = f \overline{u}'$) to the element $(A(g)x : d' \to A(d), \overline{u}'\phi_{\overline{d}}p'(x) : p'(d') \to c')$ of $L^f_{(d,u)}(d')$.

This morphism of presheaves being locally epi is the first condition of the proposition, and the second condition is the locally mono one.
\end{proof}

Using this characterization, we can show that, when working with local fibrations as presentations for our relative toposes, a continuous functor between them induces an indexed weak geometric morphism if and only if it is a morphism of local fibrations:

\begin{thm}\label{morphlocfibcont}
Let $p : ({\cal D},K) \to ({\cal C},J)$ and $p' : ({\cal D}',K') \to ({\cal C},J)$ be two local fibrations, and $A : ({\cal D},K) \to ({\cal D}',K')$ be a continuous functor such that $p'A \simeq p$. We have the equvalence:
\begin{enumerate}[(i)]
    \item The $\eta$-extension $\widetilde{A}^*$ is a morphism of fibrations.
    \item The pair previously defined $\widetilde{A}_* \vdash \widetilde{A}^*$ is an \emph{indexed} adjunction, that is $\widetilde{A}^*$ is the inverse image of a weak indexed geometric morphism. 
    \item The continuous functor $A$ is a morphism of local fibrations.
\end{enumerate}
\end{thm}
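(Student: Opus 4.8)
Theorem \ref{morphlocfibcont} asks for the equivalence of three conditions; let me sketch the plan.

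The plan is to prove the equivalence by establishing the cycle of implications, leveraging the machinery already developed in the paper. First I would observe that the equivalence $(i) \Leftrightarrow (iii)$ is essentially already contained in Theorem \ref{loccartesianmorphsites}: there, for $p$ a local fibration, $p'$ a fibration, and $A$ a morphism of sites, we proved that $\widetilde{A}^*$ being a morphism of fibrations is equivalent to $A$ being a morphism of local fibrations. Here the hypotheses differ—both $p$ and $p'$ are only local fibrations, and $A$ is merely continuous rather than a morphism of sites—so I cannot invoke \ref{loccartesianmorphsites} verbatim. However, the proof strategy transfers. The direction $(i) \Rightarrow (iii)$ is the easy one and runs exactly as before: using the commutation $\widetilde{A}^*\eta_{\cal D} \simeq \eta_{\cal D'}A$ (from \ref{etaextensionproperties}(v)) together with the fact that $\eta_{\cal D'}$ reflects locally cartesian arrows (\ref{etareflectsloccart}), one sees that if $f$ is locally cartesian then $\eta_{\cal D}(f)$ is cartesian, hence $\widetilde{A}^*\eta_{\cal D}(f) \simeq \eta_{\cal D'}A(f)$ is cartesian, so $A(f)$ is locally cartesian.

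For $(iii) \Rightarrow (i)$, the plan is to reuse the two-step localization argument from \ref{loccartesianmorphsites}: an arbitrary cartesian arrow $(f,g)$ in $(\widehat{\cal D}_K/C_p^*)$ is first pulled back along a covering by objects $\eta_{\cal D}(d_i)$ coming from the site (via \ref{pullbackcart}), reducing to cartesian arrows with codomain of the form $\eta_{\cal D}(d)$; these are then presented as colimits of cartesian arrows $\eta_{\cal D}(\widehat{f_i})$ coming from locally cartesian arrows of the site (via \ref{caractcart}). Since $A$ is a morphism of local fibrations, each $\widehat{f_i}$ maps to a locally cartesian arrow, so $\eta_{\cal D'}A(\widehat{f_i}) \simeq \widetilde{A}^*\eta_{\cal D}(\widehat{f_i})$ is cartesian; because $\widetilde{A}^*$ preserves colimits and $A$ is continuous, the colimit representation is preserved, and \ref{caractcartconverse} (colimits of cartesian arrows are cartesian) finishes the argument. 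The crucial point making this work even without $A$ being a full morphism of sites is that continuity of $A$ already guarantees $\widetilde{A}^*$ preserves colimits, which is all that the localization arguments require.

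The genuinely new content is the equivalence $(i) \Leftrightarrow (ii)$, and this is where I expect the main work. The key conceptual input is \ref{etaextensionproperties}(vi), which for a \emph{morphism of sites} $A$ equates ``$\widetilde{A}^*$ is a morphism of fibrations'' with ``$\widetilde{A}^*$ preserves finite limits fiberwise.'' For the weak setting the statement is that $\widetilde{A}_* \vdash \widetilde{A}^*$ forms an \emph{indexed} adjunction precisely when $\widetilde{A}^*$ is a morphism of fibrations. The plan here is to recall from the preliminaries that the pair $\widetilde{A}_* \vdash \widetilde{A}^*$ is always a fiberwise adjunction between the canonical stacks; what must be checked is that it is \emph{indexed}, i.e. that $\widetilde{A}^*$ commutes with the reindexing (pullback) functors up to canonical isomorphism. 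But commuting with the pullback functors $S_{C_p}(f)$ is exactly the statement that $\widetilde{A}^*$ preserves cartesian arrows, that is, is a morphism of fibrations; this is the standard dictionary between indexed functors and morphisms of fibrations. The main obstacle will be verifying carefully that the canonical comparison transformation $\nu_f$ between the two composites (made explicit in \ref{sitecharaccontinuousfunctorsimpliesmorphfib}) being an isomorphism is literally the indexedness condition, and that the continuity of $A$—which ensures $\widetilde{A}^*$ preserves colimits and hence that the site-level criteria of \ref{sitecharaccontinuousfunctorsimpliesmorphfib} apply—suffices to run this identification. Thus $(i) \Leftrightarrow (ii)$ reduces to this dictionary, and combined with the cycle above all three conditions are equivalent.
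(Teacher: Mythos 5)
Your directions $(i)\Rightarrow(iii)$ and $(i)\Leftrightarrow(ii)$ are fine and match the paper: the first uses $\widetilde{A}^*\eta_{\cal D} \simeq \eta_{\cal D'}A$ together with the reflection of (locally) cartesian arrows, and the second is the dictionary between indexed functors and morphisms of fibrations (the paper makes one point explicit that you gloss over: since $\widetilde{A}^*$ is always a morphism of \emph{op}fibrations, its right adjoint is automatically indexed, so indexedness of the whole adjunction reduces to indexedness of the left adjoint alone).

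The genuine gap is in $(iii)\Rightarrow(i)$. You claim that the two-step localization of Theorem \ref{loccartesianmorphsites} transfers because ``continuity of $A$ already guarantees $\widetilde{A}^*$ preserves colimits, which is all that the localization arguments require.'' This is false: the first localization step requires strictly more than colimit preservation. In the proof of \ref{loccartesianmorphsites}, after pulling a cartesian arrow $(f,g):[\alpha]\to[\alpha']$ back along a covering by objects $\eta_{\cal D}(d_i)$, one concludes that $\widetilde{A}^*(f,g)$ is $J_{C_{p'}}$-cartesian \emph{because $\widetilde{A}^*$ commutes with pullbacks} (so that the $\widetilde{A}^*(f_i,g_i)$ are still the pullbacks of $\widetilde{A}^*(f,g)$ along the images of the covering arrows), and only then does \ref{Jfcartimpliescart} apply. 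Pullback preservation holds there because $A$ is a morphism of sites, making $\widetilde{A}^*$ one as well. For a merely continuous $A$ this is exactly what fails --- the whole point of the weak setting is that $\widetilde{A}^*$ need not preserve finite limits --- so knowing that the $\widetilde{A}^*(f_i,g_i)$ are cartesian gives you no control over the pullbacks of $\widetilde{A}^*(f,g)$, and the reduction to arrows with codomain $\eta_{\cal D}(d)$ collapses. (Your second step, the colimit presentation via \ref{caractcart} and \ref{caractcartconverse}, does survive, since it genuinely only uses colimit preservation; but it only handles cartesian arrows whose codomain comes from the site, which is not enough to conclude that $\widetilde{A}^*$ is a morphism of fibrations.)

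The paper's actual proof of $(iii)\Rightarrow(i)$ takes a different route precisely to avoid this: it uses the site-level characterization of Proposition \ref{sitecharaccontinuousfunctorsimpliesmorphfib}, where the indexedness comparison $\nu_f$ is reduced to the \emph{generators} of each fiber (legitimate because all four functors in the reindexing square preserve colimits --- note this is a reduction of the comparison transformation, not a localization of cartesian arrows by pullback), and then verifies the resulting locally-epi and locally-mono conditions by an explicit presheaf-level argument. That verification is the bulk of the work: it uses that both $p$ \emph{and} $p'$ are local fibrations, that $p'$ is a comorphism (to lift coverings), that $A$ preserves locally cartesian arrows, and the local horizontal--vertical factorization \ref{locallyfacthorizontal}. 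None of this machinery appears in your sketch, so the hard direction of the theorem is essentially missing.
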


\begin{proof}

The equivalence between the two first points comes from the fact that $\widetilde{A}^*$ is a morphism of opfibrations, hence its right adjoint (which we know to exist) is automatically indexed. So $\widetilde{A}^*$ is an \emph{indexed} left adjoint if and only if it is a morphism of fibrations.

The implication $(i) \Rightarrow (iii)$ comes from the fact that $\widetilde{A}^*\eta_{\cal D} \simeq \eta_{\cal D'}A$. Indeed, if $\widetilde{A}^*$ is a morphism of fibrations and we take a locally cartesian arrow $f$ in $\cal D$, we have by definition that $\eta_{\cal D}(f)$ is cartesian, hence $\widetilde{A}^*\eta_{\cal D}(f)$ is cartesian, from where $\eta_{\cal D'}A(f)$ is cartesian, that is, by definition, $A(f)$ is locally cartesian.

Let's prove $(iii) \Rightarrow (i)$. In order to have lighter notations, we assume the isomorphism $\phi : p'A \simeq p$ to be the equality. There is no substantial difference when the same proof is applied with $\phi$ a non-trivial iso. We will use the precedent characterization, and the notations of its proof:

Let's first show that it is locally epi. We take an element $(x : d' \to A(d), u' : p'(d') \to c')$ of $L^f_{(d,u)}(d')$, so we have the equality $up'(x) = fu'$. We will locally bring this element to elements coming from $U^f_{(d,u)}$. To do so, we send $x$ in the basis and, since $p'A = p$, we have that $p'(x) : p'(d') \to p(d)$ has a codomain coming from $p$. By the fact that $p$ is a local fibration, we can lift this arrow into locally cartesian ones: $p'(x)w_i = p(\widehat{x_i})$ with the $w_i$ being $J$-covering. Let's picture the diagram on which we will be proceeding:

% https://q.uiver.app/#q=WzAsMyxbMCwwLCJwJyh4KSJdLFsxLDAsInAoZCkiXSxbMCwxLCJwKGRfaSkiXSxbMCwxLCJwJyh2KSJdLFsyLDAsIndfaSJdLFsyLDEsInAoXFx3aWRlaGF0e3ZfaX0pIiwyXV0=
\[\begin{tikzcd}
	{p'(d')} & {p(d)} \\
	{p(d_i)}
	\arrow["{p'(x)}", from=1-1, to=1-2]
	\arrow["{w_i}", from=2-1, to=1-1]
	\arrow["{p(\widehat{x_i})}"', from=2-1, to=1-2]
\end{tikzcd}\]

Now, since $p'$ is a comorphism and the $w_i$ are covering $p'(d')$, we can lift them into a covering of $d'$: we have a $K'$-covering $(w'_j : d'_j \to d')_j$ of $d'$, and for each $j$ an arrow $a_j : p'(d'_j) \to p(d_i)$ for some $i$ such that the following diagrams commute:

% https://q.uiver.app/#q=WzAsNCxbMSwwLCJwJyh4KSJdLFsyLDAsInAoZCkiXSxbMSwxLCJwKGRfaSkiXSxbMCwyLCJwJyh4X2opIl0sWzAsMSwicCcodikiXSxbMiwwLCJ3X2kiLDJdLFsyLDEsInAoXFx3aWRlaGF0e3ZfaX0pIiwyXSxbMywyLCJhX2oiLDJdLFszLDAsInAodydfaikiXV0=
\[\begin{tikzcd}
	& {p'(d')} & {p(d)} \\
	& {p(d_i)} \\
	{p'(d'_j)}
	\arrow["{p'(x)}", from=1-2, to=1-3]
	\arrow["{w_i}"', from=2-2, to=1-2]
	\arrow["{p(\widehat{x_i})}"', from=2-2, to=1-3]
	\arrow["{p(w'_j)}", from=3-1, to=1-2]
	\arrow["{a_j}"', from=3-1, to=2-2]
\end{tikzcd}\]

Again, since $p'A = p$ we have $p(\widehat{x_i}) = p'(A(\widehat{x_i}))$. But $A$ is a morphism of local fibrations, so it sends locally cartesian arrows to locally cartesian ones. Thus, we can lift the $a_j$ along the locally cartesian arrows $A(\widehat{x_i})$: we have $K'$-coverings $(w'_{jk} : d'_{jk} \to d'_j)_k$ and arrows $a_{jk}$ such that $p'(w'_{jk})a_j = p'(a_{jk})$ and $A(\widehat{x_i})a_{jk} = xw'_kw'_{jk}$. In picture:

% https://q.uiver.app/#q=WzAsNSxbMSwwLCJwJyh4KSJdLFsyLDAsInAnQShkKSJdLFsxLDEsInAnQShkX2kpIl0sWzAsMiwicCcoeF9qKSJdLFswLDMsInAnKHhfe2prfSkiXSxbMCwxLCJwJyh2KSJdLFsyLDAsIndfaSIsMl0sWzIsMSwicCdBKFxcd2lkZWhhdHt2X2l9KSIsMl0sWzMsMiwiYV9qIiwxXSxbMywwLCJwKHcnX2opIl0sWzQsMywicCcodydfe2prfSkiXSxbNCwyLCJwJyhhX3tqa30pIiwyXV0=
\[\begin{tikzcd}
	& {p'(d')} & {p'A(d)} \\
	& {p'A(d_i)} \\
	{p'(d'_j)} \\
	{p'(d'_{jk})}
	\arrow["{p'(x)}", from=1-2, to=1-3]
	\arrow["{w_i}"', from=2-2, to=1-2]
	\arrow["{p'A(\widehat{x_i})}"', from=2-2, to=1-3]
	\arrow["{p(w'_j)}", from=3-1, to=1-2]
	\arrow["{a_j}"{description}, from=3-1, to=2-2]
	\arrow["{p'(a_{jk})}"', from=4-1, to=2-2]
	\arrow["{p'(w'_{jk})}", from=4-1, to=3-1]
\end{tikzcd}\]

Finally, it is lengthy but straightforward to check that the $[a_{jk} : d'_{jk} \to A(d_i), (\widehat{x_i} : d_i \to d,u'w_i : p(d_i) \to c')]$ are well defined elements of  $U^f_{(d,u)}(d'_{ij})$, and that their images under $\nu^f_{(d,u)}(d'_{ij})$ are the $(xw'_jw'_{jk} : d'_{jk} \to A(d), u'p'(w'_jw'_{jk}) : p'(d'_{jk}) \to c')$, which are the localizations of the element that we wanted to locally reach.

Now we want to check the local mono condition. We will use an intermediate step, namely: let  $[\overline{x} : d' \to A(\overline{d}), (g : \overline{d} \to d, \overline{u}' : p(\overline{d}) \to c)]$ with $ up(g) = f \overline{u}'$ be an element of $U^f_{(d,u)}(d')$: locally, this element is equal to elements having their component in $g$ being locally cartesian. Indeed, by \ref{locallyfacthorizontal} we take such a factorization $gw_i = \widehat{g_i}a_i$ with arrows $s_i : p(d_i) \to p(\overline{d})$ such that $p(g)s_i=p(\widehat{g_i})$. Since $A$ preserves coverings, we can pullback the covering $(A(w_i))_i$ along $\overline{x}$ to obtain a $K'$-covering $(w_{ij})_{ij}$ of $d'$:

% https://q.uiver.app/#q=WzAsNixbMSwwLCJBKFxcb3ZlcmxpbmV7ZH0pIl0sWzIsMCwiQShkKSJdLFsxLDEsIkEoXFxvdmVybGluZXtkX2l9KSJdLFsyLDEsIkEoZF9pKSJdLFswLDAsIngiXSxbMCwxLCJ4X3tpan0iXSxbMCwxLCJBKGcpIl0sWzIsMCwiQSh3X2kpIiwyXSxbMiwzLCJBKGFfaSkiLDJdLFszLDEsIkEoXFx3aWRlaGF0e2dfaX0pIiwyXSxbNCwwLCJcXG92ZXJsaW5le3Z9Il0sWzUsNCwid197aWp9Il0sWzUsMiwidl97aWp9IiwyXV0=
\[\begin{tikzcd}
	d' & {A(\overline{d})} & {A(d)} \\
	{d'_{ij}} & {A(\overline{d_i})} & {A(d_i)}
	\arrow["{\overline{x}}", from=1-1, to=1-2]
	\arrow["{A(g)}", from=1-2, to=1-3]
	\arrow["{w_{ij}}", from=2-1, to=1-1]
	\arrow["{x_{ij}}"', from=2-1, to=2-2]
	\arrow["{A(w_i)}"', from=2-2, to=1-2]
	\arrow["{A(a_i)}"', from=2-2, to=2-3]
	\arrow["{A(\widehat{g_i})}"', from=2-3, to=1-3]
\end{tikzcd}\]

This gives us in particular the following commutative diagram:

% https://q.uiver.app/#q=WzAsNCxbMCwxLCJ4X3tpan0iXSxbMSwwLCJBKFxcb3ZlcmxpbmV7ZH0pIl0sWzEsMSwiQShcXG92ZXJsaW5le2RfaX0pIl0sWzEsMiwiQShkX2kpIl0sWzAsMSwiXFxvdmVybGluZXt2fXdfe2lqfSJdLFswLDIsInZfe2lqfSIsMl0sWzAsMywiQShhX2kpdl97aWp9IiwyXSxbMiwxLCJBKHdfaSkiLDJdLFsyLDMsIkEoYV9pKSJdXQ==
\[\begin{tikzcd}
	& {A(\overline{d})} \\
	{d'_{ij}} & {A(\overline{d_i})} \\
	& {A(d_i)}
	\arrow["{\overline{x}w_{ij}}", from=2-1, to=1-2]
	\arrow["{x_{ij}}"', from=2-1, to=2-2]
	\arrow["{A(a_i)x_{ij}}"', from=2-1, to=3-2]
	\arrow["{A(w_i)}"', from=2-2, to=1-2]
	\arrow["{A(a_i)}", from=2-2, to=3-2]
\end{tikzcd}\]

Moreover, the middle arrow comes with $gw_i$ and $\overline{u}'p(w_i)$, and the bottom arrow comes with $\widehat{g_i}$ and $\overline{u}'s_i$. It is just a matter of checking to see that these give three elements of $U^f_{(d,u)}(d'_{ij})$, which are in fact equals, as shows the connection on the previous diagram. Hence, we have that (locally) every element is equal to elements of $U^f_{(d,u)}$ having locally cartesian arrows for their component in $g$.

So, to check the locally mono condition, we know that we are reduced to verify it with elements having a locally cartesian arrow for their component in $g$: so let $[\overline{x} : d' \to A(\overline{d}), (g:\overline{d} \to d,\overline{u}' : p(\overline{d}) \to c')]$ and $[\overline{x}' : d' \to A(\overline{d}'), (g':\overline{d}' \to d,\overline{u}'' : p(\overline{d}') \to c')]$ be two elements of $U^f_{(d,u)}(d')$, with $g$ and $g'$ being  locally cartesian, such that their images through $\nu^f_{(d,u)}(d') $ are equals, that is the two following squares commute:

% https://q.uiver.app/#q=WzAsNCxbMCwxLCJ4Il0sWzEsMCwiQShcXG92ZXJsaW5le2R9KSJdLFsxLDIsIkEoXFxvdmVybGluZXtkfScpIl0sWzIsMSwiQShkKSJdLFswLDEsIlxcb3ZlcmxpbmV7dn0iXSxbMSwzLCJBKGcpIl0sWzAsMiwiXFxvdmVybGluZXt2J30iLDJdLFsyLDMsIkEoZycpIiwyXV0=
\[\begin{tikzcd}
	& {A(\overline{d})} \\
	d' && {A(d)} \\
	& {A(\overline{d}')}
	\arrow["{A(g)}", from=1-2, to=2-3]
	\arrow["{\overline{x}}", from=2-1, to=1-2]
	\arrow["{\overline{x'}}"', from=2-1, to=3-2]
	\arrow["{A(g')}"', from=3-2, to=2-3]
\end{tikzcd}\]

% https://q.uiver.app/#q=WzAsNCxbMCwxLCJwJyh4KSJdLFsxLDAsInAnQShcXG92ZXJsaW5le2R9KT1wKFxcb3ZlcmxpbmV7ZH0pIl0sWzEsMiwicCdBKFxcb3ZlcmxpbmV7ZH0pPXAoXFxvdmVybGluZXtkfSkiXSxbMiwxLCJjJyJdLFswLDEsInAnKFxcb3ZlcmxpbmV7dn0pIl0sWzAsMiwicCcoXFxvdmVybGluZXt2fScpIiwyXSxbMSwzLCJcXG92ZXJsaW5le3V9JyJdLFsyLDMsIlxcb3ZlcmxpbmV7dX0nJyIsMl1d
\[\begin{tikzcd}
	& {p'A(\overline{d})=p(\overline{d})} \\
	{p'(d')} && {c'} \\
	& {p'A(\overline{d})=p(\overline{d})}
	\arrow["{\overline{u}'}", from=1-2, to=2-3]
	\arrow["{p'(\overline{x})}", from=2-1, to=1-2]
	\arrow["{p'(\overline{x}')}"', from=2-1, to=3-2]
	\arrow["{\overline{u}''}"', from=3-2, to=2-3]
\end{tikzcd}\]

We can project the first square into the basis, and since it commutes and $p'A(d)=p(d)$, we can lift the arrow thus obtained into locally cartesian arrows:

% https://q.uiver.app/#q=WzAsNSxbMSwxLCJwJyh4KSJdLFsyLDAsInAnQShcXG92ZXJsaW5le2R9KSJdLFsyLDIsInAnQShcXG92ZXJsaW5le2R9JykiXSxbMywxLCJwKGQpIl0sWzAsMSwicCh4X2kpIl0sWzAsMSwicCcoXFxvdmVybGluZXt2fSkiXSxbMSwzLCJwJ0EoZykiXSxbMCwyLCJwJyhcXG92ZXJsaW5le3YnfSkiLDJdLFsyLDMsInAnQShnJykiLDJdLFs0LDAsIndfaSJdLFs0LDMsInAoXFx3aWRlaGF0e2tfaX0pIiwwLHsibGFiZWxfcG9zaXRpb24iOjYwLCJjdXJ2ZSI6MX1dXQ==
\[\begin{tikzcd}
	&& {p'A(\overline{d})} \\
	{p(d_i)} & {p'(d')} && {p(d)} \\
	&& {p'A(\overline{d}')}
	\arrow["{p'A(g)}", from=1-3, to=2-4]
	\arrow["{w_i}", from=2-1, to=2-2]
	\arrow["{p(\widehat{k_i})}"{pos=0.6}, bend right = 12, from=2-1, to=2-4]
	\arrow["{p'(\overline{x})}", from=2-2, to=1-3]
	\arrow["{p'(\overline{x'})}"', from=2-2, to=3-3]
	\arrow["{p'A(g')}"', from=3-3, to=2-4]
\end{tikzcd}\]

Since the $w_i$ are $J$-covering and $p'$ is a comorphism, we can lift it to a covering in $\cal D'$:

% https://q.uiver.app/#q=WzAsNixbMiwxLCJwJyh4KSJdLFszLDAsInAnQShcXG92ZXJsaW5le2R9KSJdLFszLDIsInAnQShcXG92ZXJsaW5le2R9JykiXSxbNCwxLCJwKGQpIl0sWzEsMSwicCh4X2kpIl0sWzAsMSwicCcoeF9qKSJdLFswLDEsInAnKFxcb3ZlcmxpbmV7dn0pIl0sWzEsMywicCdBKGcpIl0sWzAsMiwicCcoXFxvdmVybGluZXt2J30pIiwyXSxbMiwzLCJwJ0EoZycpIiwyXSxbNCwwLCJ3X2kiXSxbNCwzLCJwKFxcd2lkZWhhdHtrX2l9KSIsMCx7ImxhYmVsX3Bvc2l0aW9uIjo2MCwiY3VydmUiOjF9XSxbNSw0LCJhX2oiXSxbNSwwLCJwJyh3J19qKSIsMix7ImN1cnZlIjoyfV1d
\[\begin{tikzcd}
	&&& {p'A(\overline{d})} \\
	{p'(d'_j)} & {p(d_i)} & {p'(d')} && {p(d)} \\
	&&& {p'A(\overline{d}')}
	\arrow["{p'A(g)}", from=1-4, to=2-5]
	\arrow["{a_j}", from=2-1, to=2-2]
	\arrow["{p'(w'_j)}"', bend right = 20, from=2-1, to=2-3]
	\arrow["{w_i}", from=2-2, to=2-3]
	\arrow["{p(\widehat{k_i})}"{pos=0.6}, bend right =15, from=2-2, to=2-5]
	\arrow["{p'(\overline{x})}", from=2-3, to=1-4]
	\arrow["{p'(\overline{x'})}"', from=2-3, to=3-4]
	\arrow["{p'A(g')}"', from=3-4, to=2-5]
\end{tikzcd}\]

Since the $\widehat{k_i}$ are locally cartesian and $A$ preserves them, we have that the $A(\widehat{k_i})$ are locally cartesian. But $p'(A(\widehat{k_i}))a_j = p'(A(g)\overline{x}w'_j)$ so we can locally lift the $a_j$ as in the diagram:

% https://q.uiver.app/#q=WzAsNyxbMywxLCJwJyh4KSJdLFs0LDAsInAnQShcXG92ZXJsaW5le2R9KSJdLFs0LDIsInAnQShcXG92ZXJsaW5le2R9JykiXSxbNSwxLCJwKGQpIl0sWzIsMSwicCh4X2kpIl0sWzEsMSwicCcoeF9qKSJdLFswLDEsInAnKHhfe2prfSkiXSxbMCwxLCJwJyhcXG92ZXJsaW5le3Z9KSJdLFsxLDMsInAnQShnKSJdLFswLDIsInAnKFxcb3ZlcmxpbmV7did9KSIsMl0sWzIsMywicCdBKGcnKSIsMl0sWzQsMCwid19pIl0sWzQsMywicChcXHdpZGVoYXR7a19pfSkiLDAseyJsYWJlbF9wb3NpdGlvbiI6NjAsImN1cnZlIjoxfV0sWzUsNCwiYV9qIl0sWzUsMCwicCcodydfaikiLDIseyJjdXJ2ZSI6Mn1dLFs2LDUsInAnKHcnX3tqa30pIiwyXSxbNiw0LCJwJyhhX3tqa30pIiwxLHsiY3VydmUiOi0yfV1d
\[\begin{tikzcd}
	&&&& {p'A(\overline{d})} \\
	{p'(d'_{jk})} & {p'(d'_j)} & {p(d_i)} & {p'(d')} && {p(d)} \\
	&&&& {p'A(\overline{d}')}
	\arrow["{p'A(g)}", from=1-5, to=2-6]
	\arrow["{p'(w'_{jk})}"', from=2-1, to=2-2]
	\arrow["{p'(a_{jk})}"{description}, bend left = 20, from=2-1, to=2-3]
	\arrow["{a_j}", from=2-2, to=2-3]
	\arrow["{p'(w'_j)}"', bend right = 20, from=2-2, to=2-4]
	\arrow["{w_i}", from=2-3, to=2-4]
	\arrow["{p(\widehat{k_i})}"{pos=0.6}, bend right = 15, from=2-3, to=2-6]
	\arrow["{p'(\overline{x})}", from=2-4, to=1-5]
	\arrow["{p'(\overline{x'})}"', from=2-4, to=3-5]
	\arrow["{p'A(g')}"', from=3-5, to=2-6]
\end{tikzcd}\]

Since we have previously reduced to the case where $g$ and $g'$ were locally cartesians we can also lift the $p'(\overline{x})w_i$ and the $p'(\overline{x}')w_i$ along them, since we respectively have $p(g)p'(\overline{x})w_i = p(\widehat{k_i})$ and $p(g')p'(\overline{x}')w_i = p(\widehat{k_i})$. Keeping in mind that $p=p'A$, this gives:

% https://q.uiver.app/#q=WzAsOSxbMywxLCJwJyh4KSJdLFs0LDAsInAnQShcXG92ZXJsaW5le2R9KSJdLFs0LDIsInAnQShcXG92ZXJsaW5le2R9JykiXSxbNSwxLCJwKGQpIl0sWzIsMSwicCh4X2kpIl0sWzEsMSwicCcoeF9qKSJdLFswLDEsInAnKHhfe2prfSkiXSxbMiwwLCJwJ0EoeF9sKSJdLFsyLDIsInAnQSh3X3tsJ30pIl0sWzAsMSwicCcoXFxvdmVybGluZXt2fSkiLDFdLFsxLDMsInAnQShnKSJdLFswLDIsInAnKFxcb3ZlcmxpbmV7did9KSIsMV0sWzIsMywicCdBKGcnKSIsMl0sWzQsMCwid19pIl0sWzQsMywicChcXHdpZGVoYXR7a19pfSkiLDAseyJsYWJlbF9wb3NpdGlvbiI6NjAsImN1cnZlIjoxfV0sWzUsNCwiYV9qIl0sWzUsMCwicCcodydfaikiLDEseyJsYWJlbF9wb3NpdGlvbiI6MzAsImN1cnZlIjoyfV0sWzYsNSwicCcodydfe2prfSkiLDJdLFs2LDQsInAnKGFfe2prfSkiLDEseyJjdXJ2ZSI6LTJ9XSxbNywxLCJwJ0Eodl9sKSJdLFs3LDQsInAnQSh3X2wpIl0sWzgsNCwicCdBKHdfIHtsJ30pIiwyXSxbOCwyLCJwJ0Eodl97bCd9KSIsMl1d
\[\begin{tikzcd}
	&& {p'A(d_l)} && {p'A(\overline{d})} \\
	{p'(d'_{jk})} & {p'(d'_j)} & {p(d_i)} & {p'(d')} && {p(d)} \\
	&& {p'A(x_{l'})} && {p'A(\overline{d}')}
	\arrow["{p'A(x_l)}", from=1-3, to=1-5]
	\arrow["{p'A(w_l)}", from=1-3, to=2-3]
	\arrow["{p'A(g)}", from=1-5, to=2-6]
	\arrow["{p'(w'_{jk})}"', from=2-1, to=2-2]
	\arrow["{p'(a_{jk})}"{description}, bend left = 20, from=2-1, to=2-3]
	\arrow["{a_j}", from=2-2, to=2-3]
	\arrow["{p'(w'_j)}"{description, pos=0.3}, bend right = 20, from=2-2, to=2-4]
	\arrow["{w_i}", from=2-3, to=2-4]
	\arrow["{p(\widehat{k_i})}"{pos=0.6}, bend right = 10, from=2-3, to=2-6]
	\arrow["{p'(\overline{x})}"{description}, from=2-4, to=1-5]
	\arrow["{p'(\overline{x'})}"{description}, from=2-4, to=3-5]
	\arrow["{p'A(w_ {l'})}"', from=3-3, to=2-3]
	\arrow["{p'A(x'_{l'})}"', from=3-3, to=3-5]
	\arrow["{p'A(g')}"', from=3-5, to=2-6]
\end{tikzcd}\]

There just remains to pull back the coverings $(A(w_l))_l$ and $(A(w'_{l'}))_{l'}$ along the  $(a_{jk})_{jk}$ to (locally) have the connections:

% https://q.uiver.app/#q=WzAsNixbMCwyLCJ4X3tqa2xsJ219Il0sWzIsMSwiQSh4X2wpIl0sWzIsMywiQSh4X3tsJ30pIl0sWzIsMCwiQShcXG92ZXJsaW5le2R9KSJdLFsyLDIsIkEoeF9pKSJdLFsyLDQsIkEoXFxvdmVybGluZXtkfScpIl0sWzAsMV0sWzAsMl0sWzAsM10sWzAsNF0sWzAsNV0sWzEsMywiQSh2X2wpIiwyXSxbMSw0LCJBKHdfbCkiXSxbMiw0LCJBKHdfe2wnfSkiLDJdLFsyLDUsIkEodl97bCd9KSJdXQ==
\[\begin{tikzcd}
	&& {A(\overline{d})} \\
	&& {A(x_l)} \\
	{d'_{jkll'm}} && {A(d_i)} \\
	&& {A(d_{l'})} \\
	&& {A(\overline{d}')}
	\arrow["{A(x_l)}"', from=2-3, to=1-3]
	\arrow["{A(w_l)}", from=2-3, to=3-3]
	\arrow[from=3-1, to=1-3]
	\arrow[from=3-1, to=2-3]
	\arrow[from=3-1, to=3-3]
	\arrow[from=3-1, to=4-3]
	\arrow[from=3-1, to=5-3]
	\arrow["{A(w'_{l'})}"', from=4-3, to=3-3]
	\arrow["{A(x'_{l'})}", from=4-3, to=5-3]
\end{tikzcd}\]

A lengthy but straightforward computation gives us that these local connections endowed with the obvious arrows makes  $[\overline{x} : d' \to A(\overline{d}), (g:\overline{d} \to d,\overline{u}' : p(\overline{d}) \to c')]$ and $[\overline{x}' : d' \to A(\overline{d}'), (g':\overline{d}' \to d,\overline{u}'' : p(\overline{d}') \to c')]$ being locally equals elements of  $U^f_{(d,u)}(d')$. This is exactly the mono condition, hence this achieves the proof.

\end{proof}

This specializes into the following theorem which is a relative analogue to the weak Diaconescu's equivalence:

\begin{thm}
Let $p : ({\cal D},K) \to ({\cal C},J)$ be a local fibration and  $f:{\cal F}\to \widehat{\cal C}_J$ be a $\widehat{\cal C}_J$-topos. We have an equivalence
	\[
	\textup{\bf WeakIndGeom}_{\widehat{\cal C}_J}(S_f,S_{C_{p}}) \simeq \textup{\bf ContLocFib}_{({\cal C}, J)}([p], [\pi_f])
	\]
	between the category of indexed weak geometric morphisms from $S_f$ to $S_{C_p}$, where the direction is taken to be the one of the right adjoint, and the category of continuous morphisms of local fibrations over $({\cal C}, J)$ from $[p]$ to $[\pi_f]$.
\end{thm}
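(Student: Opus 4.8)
The plan is to realise this equivalence as the weak refinement of the relative Diaconescu theorem for local fibrations (\ref{diaclocfib}), replacing \emph{morphism of sites} by \emph{continuous functor} throughout, and to use Theorem \ref{morphlocfibcont} as the device matching the \emph{indexedness} of a weak morphism with the morphism-of-local-fibrations condition. Throughout I identify the relevant canonical stacks with the toposes of \ref{xilocal}: on one side $\mathcal{G}(S_{C_p})=(\widehat{\cal D}_K/C_p^*)$ by the very definition of the canonical stack of the relative topos $C_p$; on the other, since $\pi_{\cal F}$ is a dense comorphism of sites (Theorem 8.2.5 of \cite{CaramelloZanfa}) we have $\widehat{({\cal F}/f^*)}_{J_f}\simeq{\cal F}$, hence $C_{\pi_f}\simeq f$, so the canonical stack of $C_{\pi_f}$ is $S_f$ itself, with total category $\mathcal{G}(S_f)=({\cal F}/f^*)$. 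An object of $\textup{\bf ContLocFib}_{({\cal C},J)}([p],[\pi_f])$ is a continuous functor $A:({\cal D},K)\to(({\cal F}/f^*l_J),J_f)$ with $\pi_f A\simeq p$ sending locally cartesian arrows to locally cartesian ones; an object of $\textup{\bf WeakIndGeom}_{\widehat{\cal C}_J}(S_f,S_{C_p})$ is a pair of indexed adjoint functors whose right-adjoint (direct-image) part goes $S_f\to S_{C_p}$.

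\emph{From continuous morphisms of local fibrations to indexed weak geometric morphisms.} Given such an $A$, I take its $\eta$-extension $\widetilde A^{*}$, which by the identifications above is a colimit-preserving functor $\mathcal{G}(S_{C_p})\to\mathcal{G}(S_f)$ (\ref{etaextensionproperties}(4)) satisfying $\widetilde A^{*}\eta_{\cal D}\simeq\eta_{({\cal F}/f^*)}A$ (\ref{etaextensionproperties}(5)); being colimit-preserving between Grothendieck toposes it has a right adjoint $\widetilde A_{*}$. Theorem \ref{morphlocfibcont}, applied with $p'=\pi_f$, shows that because $A$ is a morphism of local fibrations the extension $\widetilde A^{*}$ is a morphism of fibrations; since it is always a morphism of opfibrations, the adjunction $\widetilde A_{*}\vdash\widetilde A^{*}$ is then indexed. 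This is an indexed weak geometric morphism whose right adjoint $\widetilde A_{*}:S_f\to S_{C_p}$ realises the stated direction.

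\emph{From indexed weak geometric morphisms to continuous morphisms of local fibrations.} Given an indexed weak geometric morphism with inverse image $b=a^{*}:\mathcal{G}(S_{C_p})\to\mathcal{G}(S_f)$, the functor $b$ is indexed, hence a morphism of fibrations preserving cartesian arrows, and is a fibrewise left adjoint, hence colimit-preserving. I set $A:=b\,\eta_{\cal D}:({\cal D},K)\to(({\cal F}/f^*l_J),J_f)$. Then $\pi_f A\simeq p$, since $\eta_{\cal D}$ records $p$ on the base component and $b$ leaves it unchanged; $A$ is continuous, as a composite of the continuous morphism of sites $\eta_{\cal D}$ with the colimit-preserving $b$; and $A$ is a morphism of local fibrations, because $\eta_{\cal D}$ sends locally cartesian arrows to cartesian (hence locally cartesian) arrows of $\mathcal{G}(S_{C_p})$ and $b$ preserves cartesian arrows, which in $\mathcal{G}(S_f)$ are exactly the locally cartesian ones (\ref{canonicalcartesian}). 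Thus $A\in\textup{\bf ContLocFib}_{({\cal C},J)}([p],[\pi_f])$.

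That the two constructions are pseudo-inverse, and that they extend to the morphisms (indexed geometric transformations versus natural transformations), follows from the density of the $\eta$ functors and of the projection $\pi_{\cal F}$, exactly as in the proof of Theorem 3.15 of \cite{bartolicaramello}: on one side $\widetilde A^{*}\eta_{\cal D}\simeq\eta_{({\cal F}/f^*)}A$ together with the canonicity of $({\cal F}/f^*,J_f)$ recovers $A$, and on the other both $\widetilde{(b\eta_{\cal D})}^{*}$ and $b$ preserve colimits and agree on the image of the dense $\eta_{\cal D}$, forcing $\widetilde{(b\eta_{\cal D})}^{*}\simeq b$. The main obstacle is not in this assembly but in Theorem \ref{morphlocfibcont} itself, already established above: the equivalence between ``$A$ is a morphism of local fibrations'' and ``$\widetilde A^{*}$ is an indexed (as opposed to merely relative) left adjoint'' is what genuinely uses the double localization of cartesian arrows in the canonical stack. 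The only remaining care is bookkeeping, namely checking that restriction along $\eta_{\cal D}$ of a fibrewise left adjoint stays continuous and local-fibrational, and keeping the variance straight so that the inverse image is the left adjoint $\widetilde A^{*}$ and the induced direction is $S_f\to S_{C_p}$.
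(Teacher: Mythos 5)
Your proposal follows essentially the same route as the paper's own proof: both directions are constructed identically (precomposing the indexed inverse image with $\eta_{\cal D}$ in one direction, and taking the $\eta$-extension $\widetilde{A}^*$ with its right adjoint $\widetilde{A}_*$, made \emph{indexed} via Theorem \ref{morphlocfibcont}, in the other), with the pseudo-inverse verification delegated to the density argument of the morphism-of-sites case exactly as the paper does. Your write-up simply fills in bookkeeping the paper leaves implicit (the identification $C_{\pi_f}\simeq f$, and the continuity and local-fibrational character of $a^*\eta_{\cal D}$), so it is correct and not a genuinely different argument.
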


\begin{proof}
If we start with an indexed weak geometric morphism $a = (a_* \vdash a^*)$ from $S_f$ to $S_{C_p}$, we precompose $a^*$ with $\eta_{\cal D}$ and this gives us a continuous morphism of local fibrations $a^*\eta_{\cal D} : [p] \to [\pi_f]$. In the other direction, if we have a continuous morphism of local fibrations $A : [p] \to [\pi_f]$, we take its extension $\widetilde{A}^*$ with its right adjoint $\widetilde{A}_*$, which is an \emph{indexed} adjunction because of \ref{morphlocfibcont}. The fact that these two constructions are pseudo-inverses can be deduced in the same way as for the morphism of sites.
\end{proof}

In particular, this indexed weak Diaconescu's theorem simplifies for relative sites:

\begin{thm}\label{weakdiacindex}
Let $p : (\mathcal{G}(\mathbb D),K) \to ({\cal C},J)$ be a relative site and  $f : {\cal F} \to \widehat{\cal C}_J$ be a $\widehat{\cal C}_J$-topos. We have an equivalence
	\[
	\textup{\bf WeakIndGeom}_{\widehat{\cal C}_J}(S_f,S_{C_{p}}) \simeq \textup{\bf ContFib}_{({\cal C}, J)}([p], [\pi_f])
	\]
	between the category of indexed weak geometric morphisms from $S_f$ to $S_{C_p}$, and the category of continuous morphisms of fibrations over $({\cal C}, J)$ from $[p]$ to $[\pi_f]$.
\end{thm}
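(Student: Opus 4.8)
The plan is to obtain this statement as an immediate specialization of the general indexed weak Diaconescu theorem established just above, applied to the relative site $p$ regarded as a local fibration. By Remark~(a) following the definition of local fibration, every relative site $p : (\mathcal{G}(\mathbb D),K) \to ({\cal C},J)$ is in particular a local fibration, so the preceding theorem applies verbatim and yields an equivalence
\[
\textup{\bf WeakIndGeom}_{\widehat{\cal C}_J}(S_f,S_{C_{p}}) \simeq \textup{\bf ContLocFib}_{({\cal C}, J)}([p], [\pi_f]).
\]
Since the left-hand side already coincides with the one in the statement, the entire task reduces to showing that the category $\textup{\bf ContLocFib}_{({\cal C}, J)}([p], [\pi_f])$ coincides with $\textup{\bf ContFib}_{({\cal C}, J)}([p], [\pi_f])$; that is, that in this fibrational setting a continuous morphism of local fibrations is the same datum as a continuous morphism of fibrations.

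The key observation is that both categories have the \emph{same} objects and morphisms as ambient data: continuous functors $A : \mathcal{G}(\mathbb D) \to (\cal F/f^*l_J)$ with $\pi_f A \simeq p$, together with the natural transformations between them. Hence it suffices to verify, for such an $A$, that preservation of locally cartesian arrows is equivalent to preservation of cartesian arrows. This is precisely the content of the Remark following the Corollary of Section~\ref{section4}, which I would invoke directly. Spelled out: the implication ``morphism of fibrations $\Rightarrow$ morphism of local fibrations'' is Proposition~\ref{morphfibimpkfib} (valid because $A$ is continuous); conversely, if $A$ is a continuous morphism of local fibrations and $g$ is a cartesian arrow of $\mathcal{G}(\mathbb D)$, then $g$ is in particular locally cartesian (Remark following the definition of locally cartesian arrows), so $A(g)$ is a locally cartesian arrow of the canonical relative site $((\cal F/f^*l_J),J_f)$, which by Proposition~\ref{canonicalcartesian}(i) is automatically cartesian. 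Thus $A$ preserves cartesian arrows and the two subcategories literally agree.

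The argument is essentially formal once the general theorem is in hand, so I do not expect a substantive obstacle. The only point requiring genuine care is the collapse used above---namely that the target $[\pi_f]$ is a canonical relative site in which locally cartesian arrows reduce to cartesian ones---but this is exactly Proposition~\ref{canonicalcartesian}(i), and it is what makes the two morphism-conditions equivalent only on this particular codomain. Because the identification of $\textup{\bf ContLocFib}$ with $\textup{\bf ContFib}$ is an equality of categories rather than a mere equivalence, the pseudo-inverse constructions $a \mapsto a^{*}\eta_{\mathbb D}$ and $A \mapsto (\widetilde{A}^{*} \dashv \widetilde{A}_{*})$ of the general theorem restrict without loss, and no further bookkeeping on morphisms is needed.
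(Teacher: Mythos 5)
Your proposal is correct and follows essentially the same route as the paper: the paper's proof also consists of applying the preceding local-fibration theorem and then identifying continuous morphisms of local fibrations into $(({\cal F}/f^*l_J),J_f)$ with continuous morphisms of fibrations, using exactly the two facts you cite (Proposition \ref{morphfibimpkfib} for one direction, and the fact that locally cartesian arrows of the canonical relative site are cartesian, Proposition \ref{canonicalcartesian}, for the other). The only cosmetic difference is that the paper reproves this identification inline rather than citing the Remark at the end of Section \ref{section4}, which records the same observation.
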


\begin{proof}
We apply the precedent theorem, recalling that a continuous morphism of local fibrations $A : (\mathcal{G}(\mathbb D),K) \to (({\cal F} / f^*l_J),J_f)$ is exactly a morphism of fibrations. Indeed, since $J_f$-cartesian arrows are the cartesian ones \ref{canonicalcartesian}, the cartesian arrows from $\mathcal{G}(\mathbb D)$ are sent to cartesian ones in $({\cal F} / f^*l_J)$, giving that a morphism of local fibrations is in particular a morphism of fibrations. Conversely, a continuous morphism of fibrations $A : (\mathcal{G}(\mathbb D),K) \to (({\cal F} / f^*l_J),J_f)$ sends locally cartesian arrows to $J_f$-cartesian ones, because of \ref{morphfibimpkfib}.
\end{proof}

The hypothesis of this theorem become more concise when we work with the Giraud topology:

\begin{thm}\label{weakdiacindextrivial}
Let $p : (\mathcal{G}(\mathbb D),J_{\mathbb D}) \to ({\cal C},J)$ be a relative site where the topology on the fibration is the Giraud one, and  $f : {\cal F} \to \widehat{\cal C}_J$ be a $\widehat{\cal C}_J$-topos. We have an equivalence
	\[
	\textup{\bf WeakIndGeom}_{\widehat{\cal C}_J}(S_f,S_{C_{p}}) \simeq \textup{\bf Fib}([p], [\pi_f])
	\]
	between the category of indexed weak geometric morphisms from $S_f$ to $S_{C_p}$, and the category of morphisms of fibrations over $({\cal C}, J)$ from $[p]$ to $[\pi_f]$.
\end{thm}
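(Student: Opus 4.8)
The plan is to obtain this statement as an immediate specialization of Theorem \ref{weakdiacindex}, whose right-hand side is the category $\textup{\bf ContFib}_{({\cal C}, J)}([p], [\pi_f])$ of \emph{continuous} morphisms of fibrations. Since the only difference with the present statement is the dropping of the continuity requirement, it suffices to prove that, when the topology on $\cal G(\mathbb D)$ is the Giraud topology $J_{\mathbb D}$, every morphism of fibrations $A : \cal G(\mathbb D) \to ({\cal F}/f^*l_J)$ with $\pi_f A \simeq p$ is automatically continuous. This yields an equality of categories $\textup{\bf ContFib}_{({\cal C}, J)}([p], [\pi_f]) = \textup{\bf Fib}([p], [\pi_f])$---the former being a full subcategory of the latter, as continuity is a mere property and the $2$-cells of both categories (morphisms of the underlying morphisms of fibrations) are the same---and the desired equivalence then transports directly from \ref{weakdiacindex}.

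To establish the automatic continuity, I would first reduce it to cover-preservation, so that the task becomes checking that $A$ sends $J_{\mathbb D}$-covering sieves to $J_f$-covering families. Here the defining feature of the Giraud topology intervenes: by Definition \ref{comorphinduitgeom}, $J_{\mathbb D}$ is \emph{generated} by the families of cartesian arrows $((f_i,1) : (\mathbb D(f_i)(x),c_i) \to (x,c))_i$ lying over $J$-covering families $(f_i : c_i \to c)_i$, being the minimal topology making $p$ a comorphism of sites. Since a topology is determined by a generating set of covers and the image under $A$ of a sieve containing such a generating family still contains the image of that family, it is enough to check that $A$ sends each generating family to a $J_f$-covering one. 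This is exactly where the fibrational nature of $A$ is used: because $A$ preserves cartesian arrows and satisfies $\pi_f A \simeq p$, each $A(f_i,1)$ is cartesian with $\pi_f(A(f_i,1)) \simeq f_i$, so that $(A(f_i,1))_i$ is precisely a family of cartesian arrows over the $J$-covering family $(f_i)_i$ with codomain $A(x,c)$.

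Such a family is a generating cover for the Giraud topology of the fibration $\pi_f$; and since the canonical relative site $\pi_f : (({\cal F}/f^*l_J),J_f) \to ({\cal C},J)$ is in particular a relative site, its topology $J_f$ contains that Giraud topology, whence the family is $J_f$-covering. Thus $A$ is cover-preserving, hence continuous, and the two categories coincide. I expect the only delicate point to be the first reduction---confirming that, in the precise sense of continuity relevant here (the one ensuring that $\widetilde{A}^*$ is the inverse image of a weak geometric morphism, as in \ref{morphlocfibcont}), continuity of a morphism of fibrations amounts to cover-preservation; once this is granted, the cartesian-over-$J$-cover generation of $J_{\mathbb D}$ makes the verification routine, in the same spirit as the cover-preservation of $\eta_{\mathbb D}$ recorded in \ref{etamorphfib} and the implication established in \ref{morphfibimpkfib}.
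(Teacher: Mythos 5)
Your global strategy is exactly the paper's: specialize Theorem \ref{weakdiacindex} by showing that, when the fibration carries the Giraud topology, every morphism of fibrations $A : \cal G(\mathbb D) \to ({\cal F}/f^*l_J)$ over $({\cal C},J)$ is automatically continuous, so that $\textup{\bf ContFib}_{({\cal C}, J)}([p], [\pi_f])$ coincides with $\textup{\bf Fib}([p], [\pi_f])$. The paper disposes of the automatic continuity in one line by citing Corollary 4.47 of \cite{denseness}; you instead attempt to prove it, and this is where your argument has a genuine gap. The step you yourself flag as ``the only delicate point''---that continuity of a morphism of fibrations ``amounts to cover-preservation''---is not a harmless reduction: for a general functor between sites, cover-preservation does \emph{not} imply continuity (continuity in the relevant sense means that $- \circ A^{\textup{op}}$ sends $J_f$-sheaves to $J_{\mathbb D}$-sheaves, which is precisely what makes $\widetilde{A}^*$ and the weak morphism $\Sh(A)$ exist). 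Your cover-preservation computation is correct and easy, but it only shows that the image of a generating family of $J_{\mathbb D}$ generates a $J_f$-cover; it says nothing about transferring the sheaf condition.

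Concretely, here is what is missing. Given a $J_f$-sheaf $Q$ and a matching family $(s_g)_g$ for $Q \circ A^{\textup{op}}$ on the sieve generated by a family of cartesian lifts $((f_i,1))_i$, one must produce a well-defined matching family for $Q$ on the sieve generated by $(A(f_i,1))_i$, by setting $t_{A(f_i,1)\circ m} := Q(m)(s_{(f_i,1)})$ for \emph{arbitrary} arrows $m$ of $({\cal F}/f^*l_J)$. Well-definedness when $A(f_i,1)\circ m = A(f_j,1)\circ m'$ cannot be deduced from the matching condition in the source, because $m$ and $m'$ need not lie in the image of $A$: one has to factor $m$ and $m'$ through the cartesian arrows $A(\pi_f(m),1)$ and $A(\pi_f(m'),1)$ (using that $A$ preserves cartesian arrows), identify the resulting vertical components via the unique factorization property of cartesian arrows in the target, and only then invoke matching in the source for the cartesian lifts. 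This argument---not cover-preservation---is the actual content of the corollary the paper cites. So either carry it out explicitly, or simply cite Corollary 4.47 of \cite{denseness} as the paper does, in which case your cover-preservation computation becomes unnecessary; as written, your proof establishes a true statement by an unjustified (and, for general functors, false) equivalence.
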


\begin{proof}
This follows from the fact that, for the Giraud topology, every morphism of fibrations is continuous (Corollary 4.47. \cite{denseness}).
\end{proof}

In the absolute case, for a topos $\cal E$, the following equivalence is well known: 

$$\mathbf{WeakGeom}(\cal E,\mathbf{Set}) \simeq \cal E$$

As already known, the same result holds for indexed weak morphims of toposes. The previous theorem allows us to obtain another proof of this result:

\begin{prop}
Let $f : \cal F \to \cal E$ be a relative topos. We have an equivalence:

$$\mathbf{WeakIndGeom}_{\cal E}(S_f,S_{\cal E}) \simeq \cal F$$
\end{prop}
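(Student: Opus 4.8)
The plan is to deduce this equivalence from the indexed weak Diaconescu theorem \ref{weakdiacindextrivial} by realizing the codomain stack $S_{\cal E}$ as the canonical stack of the simplest possible relative site. First I would present $\cal E$ as $\widehat{\cal E}_{J_{\cal E}^{can}}$ and take, over the base site $(\cal E, J_{\cal E}^{can})$, the \emph{terminal fibration}: the constant indexed category $\mathbb D$ with $\mathbb D(c) = \onecat$ for every object $c$ of $\cal E$. Its Grothendieck construction is $\cal E$ itself, with projection the identity functor $1_{\cal E} : \cal E \to \cal E$, which is a fibration in which every arrow is cartesian. A routine check shows that the associated Giraud topology $J_{\mathbb D}$ on $\mathcal{G}(\mathbb D) \simeq \cal E$ is exactly $J_{\cal E}^{can}$ (a sieve is $J_{\mathbb D}$-covering iff its projection is, and $p$ is the identity), so $1_{\cal E} : (\cal E, J_{\cal E}^{can}) \to (\cal E, J_{\cal E}^{can})$ is a relative site of Giraud type with $C_{1_{\cal E}} \simeq 1_{\cal E}$, hence $S_{C_{1_{\cal E}}} = S_{\cal E}$.

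Applying Theorem \ref{weakdiacindextrivial} with this $p = 1_{\cal E}$ and with the given relative topos $f : \cal F \to \cal E$ then yields
\[
\mathbf{WeakIndGeom}_{\cal E}(S_f, S_{\cal E}) \simeq \mathbf{Fib}([1_{\cal E}], [\pi_f]),
\]
so that the problem reduces to identifying the category of morphisms of fibrations over $\cal E$ from the terminal fibration to the canonical stack $\pi_f : (\cal F/f^*) \to \cal E$ with $\cal F$ itself.

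For this last identification I would observe that a morphism of fibrations $1_{\cal E} \to \pi_f$ over $\cal E$ is precisely a \emph{cartesian section} of $\pi_f$, i.e. a functor $s : \cal E \to (\cal F/f^*)$ with $\pi_f s \simeq 1_{\cal E}$ sending every arrow (all of which are cartesian for $1_{\cal E}$) to a cartesian arrow. By the description of cartesian arrows in \ref{enumloccart}(i), this cartesianness forces $s(E') \simeq S_f(u)(s(E))$ for every $u : E' \to E$, so $s$ is determined up to canonical isomorphism by its value $s(\mathbf 1_{\cal E})$ in the fiber $(\cal F/f^*(\mathbf 1_{\cal E})) = (\cal F/\mathbf 1_{\cal F}) \simeq \cal F$; conversely any object $F$ of $\cal F$ extends to a cartesian section by reindexing along the maps $!_E : E \to \mathbf 1_{\cal E}$, using that $f^*$ preserves finite limits. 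The 2-cells, being vertical natural transformations, reduce at $\mathbf 1_{\cal E}$ to arrows of $\cal F$, so evaluation at $\mathbf 1_{\cal E}$ furnishes an equivalence $\mathbf{Fib}([1_{\cal E}], [\pi_f]) \simeq \cal F$. Composing with the previous step gives the stated equivalence.

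The main obstacle is not the Diaconescu step but the final bookkeeping: one must verify carefully that the two assignments (evaluation at $\mathbf 1_{\cal E}$ and reindexing along the $!_E$) are pseudo-inverse, that reindexing genuinely produces a \emph{cartesian} section (which is where the pullback-stability of cartesian arrows from \ref{enumloccart}(ii) and the preservation of finite limits by $f^*$ are used), and that the equivalence is natural so that it upgrades to the level of morphisms. Realizing $S_{\cal E}$ as $S_{C_{1_{\cal E}}}$ and confirming $J_{\mathbb D} = J_{\cal E}^{can}$ is comparatively immediate.
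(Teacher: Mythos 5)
Your proposal is correct and follows essentially the same route as the paper: both apply Theorem \ref{weakdiacindextrivial} to the identity functor on $\cal E$ viewed as the fibration of the terminal indexed category (whose Giraud topology is the canonical one, so that its Giraud topos is $1_{\cal E}$), and then identify $\mathbf{Fib}([1_{\cal E}],[\pi_f])$ with $\cal F$. Your final step simply spells out in detail what the paper summarizes as \emph{global sections of the stack $S_f$ form the category $S_f(\mathbf{1}_{\cal E}) \simeq \cal F$}, namely the equivalence between cartesian sections and the fiber over $\mathbf{1}_{\cal E}$ given by evaluation and reindexing along the maps $!_E$.
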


\begin{proof}
By the previous theorem, we know that $\textup{\bf WeakIndGeom}_{\cal E}(S_f,S_{\cal E}) \simeq \textup{\bf Fib}(\cal E, (\cal F/f^*))$, because we can see the identity functor $\cal E \to \cal E$ as a fibration (associated to the terminal indexed category on $\cal E$): the Giraud topology on it is the canonical one, so that its Giraud topos is the identity geometric morphism on $\cal E$. In this light, morphisms of fibrations $\cal E \to (\cal F/f^*)$ are morphisms of indexed category from the terminal one to $S_f$, that is, global sections of $S_f$; but the category of global sections of $S_f$ is $S_f(\mathbf{1}_{\cal E}) \simeq \cal F$, giving us the wished equivalence.
\end{proof}

As explained in the following remarks, this theorem allows us to draw how different are weak indexed and relative geometric morphisms:

\begin{remarks}
\begin{enumerate}[(i)]
    \item The previous proof is an abstract one; but one can show, more concretly, that the equivalence is given by associating to an indexed weak geometric morphism $a : S_{f} \to S_{\cal E}$ the inverse image of the terminal object of the fiber over the terminal object, that is, $(a^*)_{\mathbf{1}_{\cal E}}(\mathbf{1}_{\cal E})$.
    \item We saw at the beginning of the section that the equivalence between indexed and relative geometric morphisms is given by associating to a relative geometric morphism its $\eta$-extension, and to an indexed geometric morphism its component on the terminal object of the base topos.
    \item Also, we already know from the discussion after \ref{relativeandindexed} that a weak relative geometric morphism is in particular forced to preserve the terminal object.
    \item Because of this, if we have a weak relative geometric morphism $a : [f] \to [1_{\cal E}] $ and we associate to it its $\eta$-extension $\widetilde{a^*} : (\cal F/f^*) \to (\cal E/\cal E)$, the component of its inverse image at the terminal object $(\widetilde{a^*})^*_{\mathbf{1}_{\cal E}}$ will always preserve the terminal object of $\cal E$. Since weak indexed geometric morphisms are precisely characterized by this object, there will be only one (up to equivalence) weak relative geometric morphism giving a weak indexed geometric morphism. This unique weak relative geometric morphism is in fact a relative geometric morphism, namely, the terminal one $f : [f] \to [1_{\cal E}]$.
\end{enumerate}
\end{remarks}

Finally, we present an application of the indexed weak Diaconescu theorem concerning Giraud toposes. One might be tempted to believe that, over a nontrivial site, the appropriate notion of relative site is given by a fibration that is already a stack—that is, a  \textquotedblleft sheaf of categories \textquotedblright rather than a mere \textquotedblleft presheaf of categories\textquotedblright. In this subsection, we show precisely that the Giraud topos of a fibration and that of its stackification are canonically equivalent. 

The following proposition explains why the Giraud toposes of a fibration and of its stackification are equivalent, in light of the universal property they share as a consequence of the indexed weak Diaconescu theorem:

\begin{prop}\label{Giraudtoposofstackif}
Let $p : (\mathcal{G}(\mathbb D),J_{\mathbb D}) \to ({\cal C},J) $ be a fibration and its $J$-stackification $s_J(p) : (s_J(\mathcal{G}(\mathbb D)),J_{s_J(\mathbb D)}) \to ({\cal C},J) $, endowed with their respective Giraud topologies. We have an equivalence of relative toposes $[C_p] \simeq [C_{s_J(p)}]$ induced by the unit of the stackification, as a dense morphism of sites, $\zeta^{\mathbb D}_J : \mathcal{G}(\mathbb D) \to s_J(\mathcal{G}(\mathbb D))$.
\end{prop}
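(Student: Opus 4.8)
The plan is to derive the equivalence from the universal property that the indexed weak Diaconescu theorem \ref{weakdiacindextrivial} attaches to the Giraud topos of a fibration carrying its Giraud topology, combined with the defining universal property of the $J$-stackification. The theorem exhibits $S_{C_p}$ and $S_{C_{s_J(p)}}$ through the $2$-functors $f \mapsto \mathbf{WeakIndGeom}_{\widehat{\cal C}_J}(S_f, S_{C_p})$ and $f \mapsto \mathbf{WeakIndGeom}_{\widehat{\cal C}_J}(S_f, S_{C_{s_J(p)}})$ on $\widehat{\cal C}_J$-toposes; I would show these two $2$-functors are naturally equivalent via precomposition with $\zeta^{\mathbb D}_J$, and then transport the equivalence to the representing objects by a Yoneda argument. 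The resulting comparison is exactly the one induced by the unit $\zeta^{\mathbb D}_J$, which is the content of the statement.

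First I would fix an arbitrary $\widehat{\cal C}_J$-topos $f : {\cal F} \to \widehat{\cal C}_J$ and apply \ref{weakdiacindextrivial} to the two relative sites $p$ and $s_J(p)$, both fibrations equipped with their Giraud topologies, obtaining natural equivalences
\[\mathbf{WeakIndGeom}_{\widehat{\cal C}_J}(S_f, S_{C_p}) \simeq \Fib([p],[\pi_f]), \qquad \mathbf{WeakIndGeom}_{\widehat{\cal C}_J}(S_f, S_{C_{s_J(p)}}) \simeq \Fib([s_J(p)],[\pi_f]).\]
The crucial input is that the canonical stack $[\pi_f]$, viewed over $({\cal C},J)$, is a $J$-stack; hence the universal property of the stackification — the unit $\zeta^{\mathbb D}_J$ being initial among morphisms of fibrations from $\mathbb D$ into a $J$-stack — yields a natural equivalence
\[(- \circ \zeta^{\mathbb D}_J) : \Fib([s_J(p)],[\pi_f]) \xrightarrow{\ \sim\ } \Fib([p],[\pi_f]).\]
Composing the three displayed equivalences gives an equivalence $\mathbf{WeakIndGeom}_{\widehat{\cal C}_J}(S_f, S_{C_{s_J(p)}}) \simeq \mathbf{WeakIndGeom}_{\widehat{\cal C}_J}(S_f, S_{C_p})$ natural in $f$.

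With this natural equivalence of the two $2$-functors in hand, I would extract the comparison morphisms by the usual Yoneda device. Recalling that the equivalence of \ref{weakdiacindextrivial} sends a weak indexed geometric morphism $a$ to $a^{\ast}\circ\eta_{\mathbb D}$, the identity on $S_{C_{s_J(p)}}$ (the case $f = C_{s_J(p)}$) corresponds to $\eta_{s_J(\mathbb D)}$, itself a morphism of fibrations by \ref{etamorphfib}; precomposing with $\zeta^{\mathbb D}_J$ and transporting back produces a weak indexed geometric morphism $S_{C_{s_J(p)}} \to S_{C_p}$, which is precisely the comparison induced by $\zeta^{\mathbb D}_J$. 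Evaluating instead at $f = C_p$ supplies the reverse comparison, and naturality forces the two to be mutually (weakly) inverse, yielding an equivalence in the $2$-category of $\widehat{\cal C}_J$-indexed toposes and weak indexed geometric morphisms.

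The main obstacle is the passage from this \emph{weak} equivalence to a genuine equivalence of relative toposes. A priori the inverse image of a weak indexed geometric morphism need not preserve finite limits fiberwise; however, since our comparison is an equivalence, its inverse image is fiberwise an equivalence of categories and hence automatically preserves all finite limits. It is therefore an indexed geometric morphism, so that the associated relative geometric morphism $[C_p] \to [C_{s_J(p)}]$ is an equivalence. The remaining care lies in making the bicategorical Yoneda step precise in the weak setting and in checking that the equivalence $\Fib([s_J(p)],[\pi_f]) \simeq \Fib([p],[\pi_f])$ is natural in $f$, which follows from the naturality of the stackification unit; once these are settled, the desired equivalence of relative toposes, realized concretely by the dense morphism of sites $\zeta^{\mathbb D}_J$, follows.
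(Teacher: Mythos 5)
Your proposal is correct and follows essentially the same route as the paper: both combine the indexed weak Diaconescu theorem with the universal property of the stackification (using that the canonical stack $[\pi_f]$ is a $J$-stack) to show that $[C_p]$ and $[C_{s_J(p)}]$ enjoy the same universal property, i.e.\ represent the same $2$-functor of weak indexed geometric morphisms, and then conclude by a Yoneda-type argument. Your extra care in extracting the comparison induced by $\zeta^{\mathbb D}_J$ and in upgrading the weak indexed equivalence to a genuine equivalence of relative toposes (an equivalence is fiberwise an equivalence of categories, hence preserves finite limits) only makes explicit what the paper leaves implicit.
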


\begin{proof}
With the inclusion-stackification adjunction together with \ref{weakdiacindex}, for a relative topos $f : \cal E \to \widehat{\cal C}_J$ we have the following chain of equivalences:

% https://q.uiver.app/#q=WzAsNCxbMSwwLCJcXG1hdGhiZntGaWJ9X3tcXGNhbCBDfShcXG1hdGhjYWx7R30oXFxtYXRoYmIgRCkse1xcY2FsIEZ9L2ZeKmxfSikiXSxbMCwwLCJcXG1hdGhiZntJbmRXZWFrR2VvbX0oU19mLFNfe0NfcH0pIl0sWzEsMSwiXFxtYXRoYmZ7RmlifV97XFxjYWwgQ30oc19KKFxcbWF0aGNhbHtHfShcXG1hdGhiYiBEKSkse1xcY2FsIEZ9L2ZeKmxfSikiXSxbMCwxLCJcXG1hdGhiZntJbmRXZWFrR2VvbX0vKFNfZixTX3tDX3tzX0oocCl9fSkiXSxbMSwwLCJcXHNpbWVxIiwxLHsic3R5bGUiOnsiYm9keSI6eyJuYW1lIjoibm9uZSJ9LCJoZWFkIjp7Im5hbWUiOiJub25lIn19fV0sWzAsMiwiXFxzaW1lcSIsMSx7InN0eWxlIjp7ImJvZHkiOnsibmFtZSI6Im5vbmUifSwiaGVhZCI6eyJuYW1lIjoibm9uZSJ9fX1dLFsyLDMsIlxcc2ltZXEiLDEseyJzdHlsZSI6eyJib2R5Ijp7Im5hbWUiOiJub25lIn0sImhlYWQiOnsibmFtZSI6Im5vbmUifX19XV0=
\[\begin{tikzcd}
	{\mathbf{WeakIndGeom}_{\widehat{\cal C}_J}(S_f,S_{C_p})} & {\mathbf{Fib}_{\cal C}(\mathcal{G}(\mathbb D),({\cal F}/f^*l_J))} \\
	{\mathbf{WeakIndGeom}_{\widehat{\cal C}_J}(S_f,S_{C_{s_J(p)}})} & {\mathbf{Fib}_{\cal C}(s_J(\mathcal{G}(\mathbb D)),({\cal F}/f^*l_J))}
	\arrow["\simeq"{description}, draw=none, from=1-1, to=1-2]
	\arrow["\simeq"{description}, draw=none, from=1-2, to=2-2]
	\arrow["\simeq"{description}, draw=none, from=2-2, to=2-1]
\end{tikzcd}\]

Here, the first and the last equivalences are given by the indexed weak Diaconescu theorem, and the middle one comes from the universal property of the stackification together with the fact that $S_f$ is a stack. Hence, $[C_p]$ and $[C_{s_J(p)}]$ enjoy the same universal property, that is they are equivalent.
\end{proof}

\textbf{Acknowledgements}: Olivia Caramello has benefited for this work of the support of the Université Paris-Saclay in the framework on the Jean D’Alembert 2024 Programme.

\vspace{1cm}

\textsc{Léo Bartoli} 

\vspace{0.2cm}
{\small \textsc{Department of Mathematics, ETH Zurich, Rämistrasse 101
8092 Zurich, Switzerland.}\\
\emph{E-mail address:} \texttt{lbartoli@ethz.ch}

\vspace{0.2cm}

{\small \textsc{Istituto Grothendieck ETS, Corso Statuto 24, 12084 Mondovì, Italy.}\\
	\emph{E-mail address:} \texttt{leo.bartoli@ctta.igrothendieck.org}}

\vspace{0.6cm}

\textsc{Olivia Caramello} 

\vspace{0.2cm}
{\small \textsc{Dipartimento di Scienza e Alta Tecnologia, Universit\`a degli Studi dell'Insubria, via Valleggio 11, 22100 Como, Italy.}\\
	\emph{E-mail address:} \texttt{olivia.caramello@uninsubria.it}}

\vspace{0.2cm}

{\small \textsc{Istituto Grothendieck ETS, Corso Statuto 24, 12084 Mondovì, Italy.}\\
	\emph{E-mail address:} \texttt{olivia.caramello@igrothendieck.org}}

\vspace{0.2cm}

\small \textsc{Université Paris-Saclay, CentraleSupélec, Mathématiques et Informatique pour la Complexité et les Systèmes, 91190, Gif-sur-Yvette, France.}\\
	\emph{E-mail address:} \texttt{olivia.caramello@centralesupelec.fr}}

\end{document}